\newcommand{\ol}{\overline}
\newcommand{\Univ}{\mathbf{U}}
\newcommand{\defbold}{\textbf}
\newcommand{\inv}{^{-1}}
\newcommand{\tdlc}{t.d.l.c.\@\xspace}
\newcommand{\scpo}{scopo\@\xspace}
\newcommand{\scpos}{scopos\@\xspace}
\newcommand{\triv}{\{1\}}
\newcommand{\rist}{\mathrm{rist}}
\newcommand{\id}{\mathrm{id}}
\newcommand{\Aut}{\mathrm{Aut}}
\newcommand{\Alt}{\mathrm{Alt}}
\newcommand{\Sym}{\mathrm{Sym}}
\newcommand{\Res}{\mathrm{Res}}
\newcommand{\bs}{\backslash}
\newcommand{\propP}[1]{(\mathrm{P}_{#1})}
\newcommand{\Nb}{\mathbb{N}}
\newcommand{\Zb}{\mathbb{Z}}
\newcommand{\mc}[1]{\mathcal{#1}}
\newcommand{\ms}[1]{\mathscr{#1}}
\newcommand{\grp}[1]{\langle #1 \rangle}
\tikzstyle{Vertex}=[shape=circle, draw=black, fill=black, inner sep=0pt, minimum size=3]
\tikzstyle{OpenVertex}=[shape=circle, draw=black, fill=white, inner sep=0pt, minimum size=3]
\tikzstyle{RedVertex}=[shape=circle, draw=red, fill=red, inner sep=0pt, minimum size=3]
\tikzstyle{TextNode}=[shape=rectangle, inner sep=0pt, minimum size=3]
\tikzset{snake it/.style={decorate, decoration=snake}}
\tikzset{->-/.style={decoration={
  markings,
  mark=at position .5 with {\arrow{>}}},postaction={decorate}}}
\theoremstyle{thmstyleone}%
\newtheorem{thm}{Theorem}[section]
\newtheorem{lem}[thm]{Lemma}
\newtheorem{prop}[thm]{Proposition}
\newtheorem{cor}[thm]{Corollary}
\newtheorem{restatedthm}{Theorem}
\renewcommand{\therestatedthm}{0}
\newtheorem{restatedcor}{Corollary}
\renewcommand{\therestatedcor}{0}
\theoremstyle{thmstyletwo}%
\newtheorem{ex}[thm]{Example}
\newtheorem{que}{Question}
\newtheorem{rem}[thm]{Remark}
\theoremstyle{thmstylethree}%
\newtheorem{defn}[thm]{Definition}
\begin{document}

\title[Groups acting on trees with Tits' independence property $\propP{}$]{Groups acting on trees with Tits' independence property $\propP{}$}

\subtitle{With an appendix by Stephan Tornier}

\author[1]{\fnm{Colin D.} \sur{Reid}}\email{C.Reid5@westernsydney.edu.au}

\author*[2]{\fnm{Simon M.} \sur{Smith}}\email{SiSmith@lincoln.ac.uk}

\equalcont{Both authors contributed equally to this work.}

\affil[1]{
\orgname{Western Sydney University},
\orgdiv{School of Computer, Data and Mathematical Sciences},
\orgaddress{
	\city{Penrith},
	\postcode{NSW 2751},
	\country{Australia}}}

\affil*[2]{\orgdiv{Charlotte Scott Research Centre for Algebra}, \orgname{University of Lincoln}, \orgaddress{\street{Brayford Pool}, \city{Lincoln}, \postcode{LN6 7TS}, \country{United Kingdom}}}

\abstract{
\begingroup
\renewcommand\thefootnote{}\footnote{Appendix by Stephan Tornier, The University of Newcastle, Australia; Email
stephan.tornier@newcastle.edu.au.\\

\today. This version of the article has been accepted for publication in Mathematische Annalen, after peer review, but it is
not the Version of Record and does not reflect post-acceptance improvements, or any corrections. The Version of Record is available online (open access) at \url{https://doi.org/10.1007/s00208-026-03412-w}.}
\addtocounter{footnote}{-1}
\endgroup
Local actions (actions of a vertex stabiliser on the neighbours of that vertex) have become an important approach to group actions on trees since J.~Tits' introduction in 1970 of the independence property $(\mathrm{P})$ and especially since a 2000 paper by M.~Burger and Sh.~Mozes.  This `local-to-global' approach has been critical in the development of the theory of totally disconnected locally compact groups because it allows the construction of  nondiscrete group actions on trees while keeping control over the action of a vertex stabiliser, in a way that is not practical under the classical Bass--Serre approach. 
The majority of constructions of nonlinear nondiscrete locally compact simple groups use $(\mathrm{P})$ and its generalisations.

In this article we give a full classification and description of all closed group actions on trees with Tits' independence property $(\mathrm{P})$ using a new coherent theory for local actions that applies to all actions on trees. 
This theory is a `local action' complement to classical Bass--Serre theory.
On the one hand, our theory gives a decomposition of a group acting on a tree into a `local action diagram' (a decorated graph that encodes all `local' information), and on the other hand a construction of a group acting on a tree from a given local action diagram. One can read directly from the local action diagram whether the resulting group has certain properties, like geometric density, compact generation and simplicity.
}

\keywords{Group actions on trees, totally disconnected locally compact groups, permutation groups}

\pacs[MSC Classification]{22D05, 20E08, 20B07}

\maketitle

\tableofcontents

\section{Introduction}

Actions on trees have long played an important role in group theory.  The most well-established perspective is that of Bass--Serre theory and the theory of ends of groups, in which actions on trees are interpreted as a generalisation of the free product and HNN constructions.  In addition, a complementary approach has emerged based on local actions, that is, the action of a vertex stabiliser on the neighbouring vertices.  In particular, two articles concerning groups acting on trees have been very important for the recent development of the theory of totally disconnected, locally compact (\tdlc) groups: a 1970 article \cite{Tits70} of Jacques Tits, which introduced property $\propP{}$ as a condition to produce the first examples of nonlinear nondiscrete locally compact simple groups, showing for instance that the automorphism group of a regular tree is virtually simple; and a 2000 article \cite{BurgerMozes} of Marc Burger and Shahar Mozes, which rediscovered the approach of Tits and used it to produce an interesting class of (virtually) simple \tdlc groups acting on trees with property $\propP{}$, which moreover arise naturally in the study of lattices in products of trees.  Since then, the majority of new constructions of compactly generated simple \tdlc groups have used the ideas of \cite{Tits70} and \cite{BurgerMozes}.  More recently in \cite{SmithDuke}, the second named author generalised the Burger--Mozes construction to obtain a kind of product of permutation groups, often resulting in a permutation group that is both primitive and simple; this was used to show that there are $2^{\aleph_0}$ isomorphism types of nondiscrete compactly generated simple \tdlc groups.

The groups constructed by Burger and Mozes in \cite{BurgerMozes}, and by the second author in \cite{SmithDuke}, are foundational constructions in a growing body of work that might now be called the {\it local-to-global theory of groups acting on trees}. In these two constructions, for an infinite tree $T$, some desired `local action' is specified on balls of radius $1$ in $T$, and the construction, subject to some conditions on the specified local action, then yields a subgroup $U \leq \Aut(T)$ that is `universal' with respect to the specified local action; this is, $U$ contains an $\Aut(T)$-conjugate of all subgroups of $\Aut(T)$ that have the specified local action. These universal groups have property $\propP{}$ and are closed as subgroups of $\Aut(T)$.

The local-to-global approach has a significant advantage over Bass--Serre theory for constructing nondiscrete groups acting on trees; that is actions $(T,G)$ in which no pointwise stabiliser of finitely many vertices fixes every vertex in $T$. The reason for this is that in Bass--Serre theory, each vertex group in a graph of groups is an entire vertex stabiliser of the group action, and consequently the graph of groups contains both local (in our sense) and global information (since a stabiliser $G_v$ includes information about how it acts on vertices at all distances from $v$).
This issue can easily be seen for example if one tries to use Bass--Serre theory to construct a nondiscrete group $G$ of automorphisms of the $n\geq 3$ regular tree $T_n$ such that $G$ has some specified local action $F \lneqq S_n$. To use Bass--Serre theory one needs to know {\it a priori} that such a group exists, since the input data in the graph of groups includes the vertex stabiliser of such an action.  This limitation disappears when using a local-to-global approach.
Conversely, given a group acting on a tree, the local-to-global approach gives a more parsimonious description of the local information, since it is not necessary to specify vertex stabilisers in full, only their action on the immediate neighbours of the vertex.

As is the case with Bass--Serre theory, the local-to-global approach only sees the faithful quotient of the action $(T, G)$ acting on the tree. For this reason (unless otherwise stated) we implicitly associate tree actions $(T,G)$ with their image in the automorphism group $\Aut(T)$ of $T$.

If a group $G \leq \Aut(T)$ has property $\propP{}$ and is closed, we say that it is \defbold{$\propP{}$-closed}. For any group $G \leq \Aut(T)$ there is a smallest $\propP{}$-closed supergroup of $G$ in $\Aut(T)$, and we call this the \defbold{$\propP{}$-closure of $G$}. The universal groups described above are $\propP{}$-closed. In this paper we give a complete description of all $\propP{}$-closed groups: they are all `universal groups' of a natural combinatorial object we call a local action diagram. 
From this one immediately obtains a detailed understanding of all group actions on trees with property $\propP{}$: if $G \leq \Aut(T)$ has property $\propP{}$ then its closure $\overline{G}$ is the universal group of a local action diagram.

Property $\propP{}$ admits a natural generalisation to $\propP{k}$, introduced in Christopher Banks, Murray Elder and George A. Willis' paper \cite{BanksElderWillis}, where 
for closed subgroups of $\Aut(T)$ 
property $\propP{1}$ is just property $\propP{}$.
Property $\propP{}$ and this 
generalisation provide a general tool for understanding all actions on trees: one starts by defining something called the $\propP{k}$-closure of any action on a tree for all $k \ge 1$
 (see Definition~\ref{PropPk}); these 
$\propP{k}$-closures give rise to a series of approximations to the original action determined by how the original action behaves on balls of radius $k$; the $\propP{k}$-closures then converge to the closure of the original action (in the permutation topology of $\Aut(T)$). 
 A group $G \leq \Aut(T)$ is $\propP{1}$-closed if and only if it is $\propP{}$-closed. 

In their 2011 paper \cite{CapDeM}, Pierre-Emmanuel Caprace and Tom de~Medts focus on 
compactly generated locally compact 
 $\propP{}$-closed groups acting on trees 
 and state that the central theme of their work is ``to investigate to what extent the global structure of these groups is determined by their local structure'', where again the `local structure' of a group refers to actions of vertex stabilisers on neighbours. 
An immediate consequence of Banks, Elder and Willis' paper  \cite{BanksElderWillis} is that, for locally finite trees, all properties of these groups must be completely determined by their `local structure'. Exploring the consequences of this local-to-global relationship has been hindered by the absence of a formal notion of what constitutes `local structure' for groups acting on trees. 

Our theory of local action diagrams addresses this deficiency (for all trees, not just those that are locally finite): local action diagrams are just graphs decorated with colours and groups, and they offer a concrete and accessible way to perfectly describe the notion of `local structure' in general. Because these local action diagrams correspond (ignoring some minor topological issues) precisely to the $\propP{1}$-closures of all actions on trees, we see that nothing in a local action diagram is superfluous and nothing is omitted. Using the theory we can thus precisely answer questions of the form ``for groups acting on trees, is some property $\mathcal{X}$ a property of local structure?'' The answer is ``no'' if and only if there are two actions with the same local action diagram, with one action satisfying $\mathcal{X}$ and the other action not.\\

The goal of the present article is twofold: (i) to advance the local-to-global theory of groups acting on trees by developing a `local action' complement to classical Bass--Serre theory, and (ii) to describe and classify all possible closed actions of groups on trees with the independence property $\propP{}$, or equivalently, by describing all possible
$\propP{}$-closures of actions on trees. In fact our `local action' complement to classical Bass--Serre theory precisely gives this classification and description of all $\propP{}$-closures of actions on trees.

Our classification is achieved using a graph decorated with colours and groups representing `local actions', and is called the {\em local action diagram}; it is analogous to a graph of groups in Bass--Serre theory.
Under this analogy, the universal groups in \cite{BurgerMozes} are a special case of HNN extensions, and the universal groups in \cite{SmithDuke} play the role of the amalgamated free products. As in \cite{BurgerMozes} and \cite{SmithDuke}, we wish to describe the group in terms of its local actions; but in contrast to these papers, we make no assumptions about the homogeneity of the local actions or the structure of the orbits of the group on vertices or arcs of the tree. Indeed, even in the vertex-transitive case, we obtain a larger class of groups than those considered by Burger and Mozes; see Section~\ref{sec:vertex_transitive}.  As a result, we need a slightly more complicated way to describe the local actions.

\begin{defn}
A \defbold{local action diagram $\Delta = (\Gamma,(X_a),(G(v)))$} consists of the following information:
\begin{itemize}
\item A connected graph $\Gamma$.  (We define graphs in the sense of Serre, except that a loop may or may not be equal to its own reverse: see Section~\ref{sec:graphs}.)
\item For each arc $a$ of $\Gamma$, a nonempty set $X_a$ (called the \defbold{colour set} of $a$).
\item For each vertex $v$ of $\Gamma$, a group $G(v)$ (called the \defbold{local action} at $v$) with the following properties: write $X_v$ to denote the disjoint union $\bigsqcup_{a \in o\inv(v)}X_a$, then the group $G(v)$ is a closed subgroup of $\Sym(X_v)$ and the sets $X_a$ are the orbits of $G(v)$ on $X_v$.
\end{itemize}
\end{defn}

There is a natural notion of isomorphism of local action diagrams.  For actions on trees the natural way to define isomorphism is conjugacy, where we say $(T,G)$ and $(T',G')$ are conjugate if there is a graph isomorphism $\theta: T \rightarrow T'$ that intertwines the two actions.  For a local action diagram $\Delta$ arising from $(T, G)$, we then define a group \defbold{$\Univ(\Delta)$} that is universal among those groups acting on $T$ with associated local action diagram $\Delta$.

A central theorem of this article, which we prove in Section~\ref{sec:correspondence} using these universal groups, is as follows.
The theorem is a classification and a complete description of closed groups with Tits' independence property $\propP{}$.

\begin{thm}\label{thm:correspondenceIntro}
There is a natural one-to-one correspondence between conjugacy classes of $\propP{}$-closed actions on trees and isomorphism classes of local action diagrams. 
\end{thm}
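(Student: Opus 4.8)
The plan is to construct mutually inverse maps between the two classes. Given a $\propP$-closed action $(T,G)$, I would take $\Gamma := G \bs T$. For a vertex $v$ of $T$, the stabiliser $G_v$ acts on $o\inv(v)$, and its orbits correspond bijectively to the arcs of $\Gamma$ issuing from the image $\bar v$ of $v$ (two arcs of $o\inv(v)$ lie in the same $G$-orbit iff they lie in the same $G_v$-orbit); I assign to such an arc $\bar a$ the corresponding orbit $X_{\bar a}$, and to $\bar v$ the permutation group $G(\bar v) \le \Sym(o\inv(v)) = \Sym(X_{\bar v})$ induced by $G_v$, which one checks is closed using that $G$ is closed in $\Aut(T)$. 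One must then verify that replacing $v$ by another representative, or $(T,G)$ by a conjugate action, changes this data only by an isomorphism of local action diagrams, which follows by transporting everything along the relevant tree isomorphisms; so $\Phi$ descends to a map on conjugacy classes.

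\textbf{The map $\Psi$ (diagrams to actions).} Given $\Delta = (\Gamma,(X_a),(G(v)))$, I would build a tree $T_\Delta$ together with a graph morphism $p : T_\Delta \to \Gamma$ and a \emph{legal colouring}: each arc $\alpha$ of $T_\Delta$ receives a colour in $X_{p(\alpha)}$ so that at every vertex $x$ the colouring restricts to a bijection $o\inv(x) \to X_{p(x)}$. This is the evident inductive branching construction -- a vertex over $\bar v$ has exactly one issuing arc of each colour in $X_{\bar v}$, an arc of colour $c \in X_{\bar a}$ being sent by $p$ to $\bar a$ -- and connectedness of $\Gamma$ yields a connected, hence tree, result. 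I then set
\[
\us(\Delta) := \{\, g \in \Aut(T_\Delta) : p \circ g = p,\ \sigma_x(g) \in G(p(x)) \text{ for every vertex } x \,\},
\]
where $\sigma_x(g) \in \Sym(X_{p(x)})$ is the permutation induced by $g : o\inv(x) \to o\inv(gx)$ read through the colourings at $x$ and at $gx$. Since all of these conditions are closed, $\us(\Delta)$ is a closed subgroup of $\Aut(T_\Delta)$; and since membership is a conjunction of conditions each depending only on a single vertex, an element fixing a path may be modified independently on the branches hanging off that path while remaining in $\us(\Delta)$, which is exactly property $\propP{}$. Thus $\Psi(\Delta)$ is a $\propP$-closed action.

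\textbf{$\Phi \circ \Psi = \id$, the main obstacle.} I must identify $\Phi(T_\Delta, \us(\Delta))$ with $\Delta$: that $\us(\Delta)$ acts with quotient graph $\Gamma$ and induces exactly $G(\bar v)$ at a vertex over $\bar v$. Both reduce to a realisation statement -- transitivity on each fibre needs an automorphism in $\us(\Delta)$ carrying one vertex of a fibre to another, and the local action needs, for each $\tau \in G(\bar v)$, an element of $\us(\Delta)$ inducing $\tau$ at a chosen vertex. I would prove this by constructing the required automorphism outwards from a finite subtree, one vertex at a time, choosing at each step in every new direction a local permutation in the relevant $G(\bar w)$; because the $X_{\bar a}$ are precisely the $G(\bar w)$-orbits, the constraints imposed on a step by the already-determined colours can always be met inside $G(\bar w)$, and property $\propP{}$ guarantees that the independent local choices assemble into a genuine automorphism. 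Passing to the limit produces the desired element. This extension argument, together with the bookkeeping of colours along paths, is the step I expect to be the main obstacle, and it is precisely where the two defining conditions on a local action diagram -- closedness of each $G(v)$ and the orbit condition on the $X_a$ -- are used.

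\textbf{$\Psi \circ \Phi = \id$.} Finally, let $(T,G)$ be $\propP$-closed with $\Delta := \Phi(T,G)$. Since $G \bs T = \Gamma$, a legal colouring of $T$ relative to the data of $\Delta$ exists and yields a graph isomorphism $\theta : T \to T_\Delta$ over $\Gamma$; as $G$ has property $\propP{}$ with local actions the $G(\bar v)$, every element of $\theta G \theta\inv$ satisfies the defining conditions of $\us(\Delta)$, so $\theta G \theta\inv \le \us(\Delta)$. For the reverse inclusion I would invoke the characterisation -- to be recorded beforehand -- that the $\propP$-closure of any action on a tree coincides with $\us$ of its local action diagram; since $G$ is $\propP$-closed and, by the previous step, has the same local action diagram as $\us(\Delta)$, this forces $\theta G \theta\inv = \us(\Delta)$, so $\theta$ is a conjugacy $(T,G) \to \Psi(\Phi(T,G))$. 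Combined with the well-definedness already noted, $\Phi$ and $\Psi$ are mutually inverse bijections between the two sets of classes, which proves the theorem.
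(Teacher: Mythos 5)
Your overall architecture coincides with the paper's: form the quotient diagram, build a coloured tree $T_\Delta$ over $\Gamma$, cut out the universal group by local conditions, and verify the two compositions, with the realisation step for $\Phi\circ\Psi=\id$ done by an outward vertex-by-vertex induction exactly as in Theorem~\ref{thm:diag_to_U}. That part of the plan, including your identification of the extension argument as the main technical burden there, is sound.

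The genuine gap is in the step ``$\theta G\theta\inv\le \us(\Delta)$'' of your $\Psi\circ\Phi=\id$ argument (equivalently, in the independence of $\us(\Delta)$, up to conjugacy, from the choice of legal colouring). A graph isomorphism $\theta:T\to T_\Delta$ over $\Gamma$ does exist (Lemma~\ref{lem:tree_isomorphism}), but it cannot in general be chosen to carry the legal colouring of $T$ to that of $T_\Delta$: extending a colour-preserving $\theta$ across a vertex forces the colour of the arc pointing back towards the already-built part on both sides, and these two forced colours need not agree --- the paper explicitly warns of this immediately after Lemma~\ref{lem:tree_isomorphism}. For an arbitrary $\theta$ over $\Gamma$, the local action of $\theta g\theta\inv$ at $\theta(v)$, read through the colouring of $T_\Delta$, is $d_{gv}\,\sigma_v(g)\,d_v\inv$, where $d_w$ is the colour-class-preserving bijection of $X_{\pi(w)}$ measuring the discrepancy between the two colourings at $w$; these twisted permutations need not lie in $G(\pi(v))$, so membership in $\us(\Delta)$ does not follow from $G$ having local actions $G(\bar v)$ with respect to its own colouring. (Concretely: two legal colourings of the $d$-regular tree yield two Burger--Mozes groups $U_{\mc{L}}(H)$ and $U_{\mc{L}'}(H)$ that are conjugate but generally not equal, and producing the conjugating element is a real argument.) The paper closes this gap with Theorem~\ref{thm:unique_U}, constructing the conjugating automorphism inductively: at each vertex one chooses a bijection $o\inv(v)\to o\inv(\phi(v))$ conjugating the induced (uncoloured) local permutation group of one universal group onto that of the other, and uses transitivity of the local groups on each colour class to make this choice compatible with the arc already fixed towards the base point. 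Your appeal to ``the $\propP{}$-closure coincides with $\us$ of its local action diagram'' does not supply this, because that characterisation naturally lives on $T$ equipped with its own colouring; the transport to the abstractly built $T_\Delta$ with its independently chosen colouring is precisely the missing step.
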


As with the Fundamental Theorem of Bass--Serre theory, the power of the correspondence lies not just in its existence, but in the richness of the relationship between the action on the tree and the associated diagram.  Here is a summary of Section~\ref{sec:correspondence}, highlighting the similarities with Bass--Serre theory. The various identifications and isomorphisms occurring in this summary are described precisely in the section itself.

{\it
\begin{enumerate}
\item
	For an action $(T,G)$ of a group $G$ on a tree $T$, we have an associated local action diagram $\Delta$ (akin to a graph of groups in Bass--Serre theory).
\item
	The local action diagram admits a $\Delta$-tree, $\mathbf{T}$ (akin to the universal cover of a graph of groups in Bass--Serre theory).
\item
	The $\Delta$-tree $\mathbf{T}$ admits a universal group $\Univ(\Delta)$ which acts naturally on $\mathbf{T}$ (akin to the fundamental group of a graph of groups in Bass--Serre theory).
\item
	The action $(\mathbf{T}, \Univ(\Delta))$ is in fact an action  of $\Univ(\Delta)$ on the original tree $T$; this latter action is $\propP{}$-closed and has associated local action diagram $\Delta$. Moreover, the action $(\mathbf{T}, \Univ(\Delta))$ is isomorphic to the $\propP{}$-closure of $(T, G)$ (akin to the Fundamental Theorem of Bass--Serre theory, \cite[\S5.4]{Serre:trees}).
\item
	In particular, we have that every closed action $(T,G)$ with Tits' independence property $\propP{}$ is equal to $(\mathbf{T}, \Univ(\Delta))$, where $\Delta$ is the local action diagram of $(T,G)$; and moreover, every local action diagram $\Delta$ gives rise to a closed action $(\mathbf{T}, \Univ(\Delta))$ that enjoys Tits' independence property $\propP{}$.
\end{enumerate}
}

In this way, we obtain a complete description of all closed actions of groups acting on trees with Tits' independence property $\propP{}$: they are precisely the universal groups of local action diagrams.

It should be emphasised that this description of closed actions $(T,G)$ with Tits' independence property $\propP{}$ via local action diagrams is usable, in that one can construct novel examples by drawing new local action diagrams, or analyse existing examples by examining their local action diagrams. 
Importantly, in the former one retains full control of the tree and the resulting action on the tree, which is often impossible to achieve in Bass--Serre theory.
As we describe below, important global properties of the group can be read directly from the local action diagram.

There are no surprises in how a local action diagram is obtained from a ($\propP{}$-closed) action of a group $G$ on a tree $T$.  The graph $\Gamma$ is the quotient graph $G \backslash T$; each local action $G(v)$ represents the closure of the action of a vertex stabiliser $G_{v^*}$ (where $v^* \in VT$ lies in the preimage of $v$) on the arcs $o\inv(v^*)$ of $T$ originating at $v^*$; those arcs are partitioned into $G_{v^*}$-orbits, represented by the colour sets, with the result that there is a natural one-to-one correspondence between $o\inv(v)$ and $G_{v^*}$-orbits on $o\inv(v^*)$.

The significance of the correspondence, then, is in the following two observations.
\begin{enumerate}[(1)]
\item The local action diagram exactly describes the $\propP{}$-closure of the original action up to conjugacy.  In particular, any `large-scale' information about the original group can be recovered from the quotient graph $\Gamma$ together with the local actions.
\item All possible local action diagrams arise in this manner.  In particular, $\Gamma$ can be any connected graph in our sense, and apart from how $\Gamma$ limits the number of orbits of the local actions, there are no compatibility conditions on which local actions can be combined.
\end{enumerate}
For comparison, the Burger--Mozes framework corresponds to the case when $\Gamma$ is a single vertex with a set of loops, each of which is its own reverse; the framework of \cite{SmithDuke} (when the local actions are transitive) corresponds to the case that $\Gamma$ has two vertices and no loops.

As an example of what this means in practice, consider the class $\mc{C}(n,d)$ of $\propP{}$-closed actions on trees $(T,G)$ such that $G$ has at most $n$ orbits on vertices and no vertex has degree greater than $d$.  Theorem~\ref{thm:correspondenceIntro} immediately shows that for given natural numbers $n$ and $d$, there are only finitely many conjugacy classes of actions in $\mc{C}(n,d)$; but because of all the possible graphs and decorations, the number of conjugacy classes will grow quite rapidly with $n$ and $d$.\\

As noted previously, important global properties of the action $(T,G)$ can be read directly from the local action diagram. To that end, the next part of the article is concerned with characterising various natural properties of interest for groups acting on trees in terms of the local action diagram, including various properties of topological groups.

Recall that Tits' main theorem on property~$\propP{}$, ensuring that the subgroup $G^+$ generated by arc stabilisers is trivial or simple, only applies to $\propP{}$-closed actions that are  \defbold{geometrically dense}, meaning that there is no proper invariant subtree or fixed end.  Fixed ends and invariant subtrees can be recognised in the local action diagram, since they correspond to so-called scopos. This means that sufficient conditions for the simplicity of $G^+$ can be read directly from the local action diagram for $G$.

\begin{defn} \label{def:introScopoDelta}
Let $\Delta = (\Gamma,(X_a),(G(v)))$ be a local action diagram.  A \defbold{strongly confluent partial orientation} (henceforth, \defbold{\scpo}) of $\Delta$ is a subset $O$ of $A\Gamma$ such that:
\begin{enumerate}[(i)]
\item If $a \in O$, then $\ol{a} \not\in O$ and $|X_a|=1$;
\item For all $v \in V\Gamma$, if $O$ contains an arc $a$ originating at $v$, then $O$ contains all arcs other than $\ol{a}$ that terminate at $v$.
\end{enumerate}
\end{defn}

\begin{thm}[See Section~\ref{sec:invariants}]\label{thm:invariants}
Given a group $G$ acting on a tree $T$, then the invariant subtrees and fixed ends of the action naturally correspond to \scpos of the local action diagram, with the empty \scpo corresponding to $T$ itself.
\end{thm}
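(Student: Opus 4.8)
The plan is to set up a bijection between the two kinds of objects and check it respects the natural identifications. First I would recall (from the correspondence theorem, Theorem~\ref{thm:correspondenceIntro}) that it suffices to work with a $\propP{}$-closed action $(T,G)$ together with its local action diagram $\Delta = (\Gamma, (X_a), (G(v)))$, where $\Gamma = G\backslash T$ and the colour sets $X_a$ are identified with $G_{v^*}$-orbits on the arcs out of a chosen lift $v^*$. Recall also that in a $\propP{}$-closed action the colour of an arc of $T$ (its $\Gamma$-image together with the $G_{v^*}$-orbit it lies in) carries a lot of information: a single-colour arc $a$ with $|X_a|=1$ corresponds to a $G$-orbit of arcs that is "rigid" in the sense that knowing one endpoint-plus-colour determines the arc, and property~$\propP{}$ lets us build half-tree-fixing elements elsewhere.

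The forward map: given an invariant subtree $S$ or a fixed end $\xi$, I want to produce a \scpo $O$. For a fixed end $\xi$, at every vertex $w$ of $T$ there is a unique arc $a_w$ pointing "towards $\xi$"; let $O$ be the set of colours of the reverses $\overline{a_w}$ (the arcs pointing away from $\xi$), i.e. the arcs one must \emph{not} traverse if one wants to stay heading to $\xi$ --- actually the cleanest choice is to let $O$ be the set of colours of arcs $\overline{a_w}$ pointing away from $\xi$. Two things must be checked: (i) that this is well-defined on colours, i.e. if two arcs of $T$ have the same colour then either both or neither point away from $\xi$; this is exactly where $\propP{}$ and the single-colour condition $|X_a|=1$ enter --- if an arc pointing away from $\xi$ had the same colour as an arc not pointing away, one could use property~$\propP{}$ (fixing a suitable half-tree) to move the latter onto the former while fixing $\xi$, forcing a contradiction, and this argument will also force $|X_a|=1$, giving condition~(i) of the \scpo definition. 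For (ii): if at $v$ some arc out of $v^*$ pointing away from $\xi$ is in $O$, then every other arc at $v^*$ except the one pointing toward $\xi$ also points away and hence lies in $O$ --- this is immediate from the tree structure and gives the confluence condition~(ii). For an invariant subtree $S \subsetneq T$, every vertex $w \notin S$ has a unique arc toward $S$, and every vertex of $S$ may have several arcs leaving $S$; take $O$ to be the colours of arcs pointing \emph{away} from $S$ (equivalently, arcs whose terminal half-tree misses $S$). The same $\propP{}$-argument gives well-definedness on colours and $|X_a|=1$, and confluence is again a tree-combinatorics check.

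The reverse map: given a \scpo $O$, reconstruct the invariant set. Pull $O$ back to a set $\widetilde{O}$ of arcs of $T$ (all arcs whose colour lies in $O$); condition~(i) makes $\widetilde{O}$ an \emph{orientation} in the sense that $\widetilde{O} \cap \overline{\widetilde{O}} = \emptyset$, and $G$-invariance is automatic since $O$ is defined on colours. Confluence condition~(ii) translates to: the arcs \emph{not} in $\widetilde{O} \cup \overline{\widetilde{O}}$ form, at each vertex, either everything (if $\widetilde{O}$ is empty there) or exactly the two halves of a single undirected edge; tracing this out shows that the "spine" of edges avoided by $\widetilde{O}$ is a subtree $S$ (possibly a single bi-infinite line, possibly empty-in-the-$O=\varnothing$ case where $S=T$), and that the arcs of $\widetilde{O}$ all point strictly away from $S$, so $S$ is $G$-invariant; if $S$ degenerates to a ray or a line with all arcs pointing consistently "outward along it" one gets a fixed end instead. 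Finally I would check the two maps are mutually inverse and compatible with the conjugacy/isomorphism identifications, which is routine once the two constructions are in place. The empty \scpo clearly corresponds to $S = T$, matching the statement.

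The main obstacle is the well-definedness of the forward map on colours --- equivalently, proving that an arc lying "in the direction of" a fixed end or invariant subtree must have a singleton colour set and that its whole colour class lies in the same direction. This is precisely the point where property~$\propP{}$ does real work (one uses it to produce an element fixing one half-tree and acting nontrivially on the other, and plays this off against invariance of $S$ or $\xi$), so I would isolate it as a lemma: \emph{in a $\propP{}$-closed action, if $e$ is an edge such that one of the half-trees determined by $e$ contains no point of an invariant subtree $S$ (resp. is not the half-tree containing a fixed end $\xi$), then the arc of $e$ pointing into that half-tree has a singleton colour set, and every arc in its colour class points into a half-tree disjoint from $S$ (resp. away from $\xi$).} Everything else is bookkeeping with the definitions of \scpo and of the local action diagram.
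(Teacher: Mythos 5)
Your construction orients the arcs the wrong way round, and this is not a cosmetic slip: it breaks both conditions in the definition of a \scpo. If $O$ consists of the arcs pointing \emph{away} from a fixed end $\xi$ (or away from an invariant subtree $S$), then at a vertex of degree $d \ge 3$ we get $|o\inv(v) \cap O| = d-1 > 1$, violating confluence; indeed your own verification of ``condition (ii)'' (``every other arc at $v^*$ except the one pointing toward $\xi$ also points away and hence lies in $O$'') describes exactly this violation rather than the required property. The singleton-colour claim also fails for away-pointing arcs: take $G = \Aut(T_3)_{v_0}$ with invariant subtree $S = \{v_0\}$; the three arcs leaving $v_0$ each point into a half-tree disjoint from $S$, yet they form a single $G_{v_0}$-orbit, so their colour set has size $3$. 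The same happens for the arcs pointing away from the fixed end of a focal action. In particular the lemma you isolate at the end is false as stated. The correct choice (the paper's $O_z$) is the set of arcs pointing \emph{towards} $z$: then each relevant vertex emits exactly one $O$-arc, every arc terminating at $v$ other than the reverse of that one also points towards $z$ (strong confluence), and $|X_a|=1$ holds for the elementary reason that $G_{o(a)}$ fixes the geodesic from $o(a)$ to the invariant object and hence fixes $a$, so $a$ is a singleton orbit of its origin stabilizer.

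Relatedly, property~$\propP{}$ plays no role here, contrary to what you identify as ``the point where $\propP{}$ does real work''. The theorem is stated for an arbitrary action $(T,G)$, and the paper's argument is purely combinatorial: $G$-invariant partial orientations of $T$ are exactly the preimages $\pi\inv(O')$ of partial orientations $O'$ of $\Gamma = G\backslash T$ (Lemma~\ref{lem:invariant_orientation}); such a preimage is a \scpo of $T$ if and only if $O'$ is a \scpo of $\Gamma$ with $|X_a|=1$ for all $a \in O'$, via the count $|o\inv(v)\cap\pi\inv(O')| = \sum_{a \in o\inv(\pi(v))\cap O'}|X_a|$ together with local surjectivity of $\pi$ (Lemma~\ref{lem:strongly_confluent_preimage}); and the \scpos of a connected graph are classified by the attractor argument of Theorem~\ref{thm:strongly_confluent_attractor}, which on a tree leaves only the ``towards a subtree'' and ``towards an end'' types. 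Your reverse map is in the right spirit, but your translation of strong confluence (``the arcs not in $\widetilde{O}\cup\overline{\widetilde{O}}$ form, at each vertex, \dots exactly the two halves of a single undirected edge'') is also inaccurate: strong confluence says that a vertex emitting an $O$-arc has \emph{no} unoriented incident edges at all, so the unoriented edges are confined to the set of vertices emitting no $O$-arc, and it is that set which spans the invariant subtree (or is empty, in which case one extracts an end).
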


In particular, the action $(T,G)$ is geometrically dense if its local action diagram is \defbold{irreducible}, meaning that the only \scpo is the empty one.  The \scpos corresponding to fixed ends, and the \scpos corresponding to invariant subtrees, can be described precisely (see Section~\ref{TypesOfActionRevisited}).

Since \scpos are quite special, it is easy to write down sufficient conditions for a local action diagram to give rise to an action on the tree that is geometrically dense. One of the obstacles to a geometrically dense action is when the local action diagram $\Delta = (\Gamma,(X_a),(G(v)))$ has a stray leaf: a \defbold{leaf} of a graph is a vertex $v$ with exactly one outgoing edge, such that the edge is not a loop, and a \defbold{stray leaf} of $\Delta$ is a leaf $v$ of $\Gamma$ such that $|X_v| = 1$ (or equivalently, $v$ is a leaf of $\Gamma$ such that $G(v)=\triv$).  The other kinds of obstacle, which will be defined in Section~\ref{Section:TitsRevisited}, are \defbold{focal cycle}, \defbold{horocyclic end}, and \defbold{stray half-tree}. As we shall see in Proposition~\ref{prop:irreducible_check}, the local action diagram $\Delta$ is irreducible if and only if it is not a focal cycle and has no horocyclic ends, no stray half-trees and no stray leaves.  With these we obtain the following result, allowing one to read the simplicity of $G^+$ directly from the local action diagram of $G$.

\begin{cor} \label{cor:DirectlyReadingSimplicity} Let $T$ be a tree, and let $G \leq \Aut(T)$ have $\propP{}$. Suppose $\Delta$ is the local action diagram of $(T, G)$. Then $\Delta$ is not a focal cycle and has no horocyclic ends, no stray half-trees and no stray leaves if and only if $G$ is geometrically dense. In particular, if $\Delta$ is not a focal cycle and has no horocyclic ends, no stray half-trees and no stray leaves then $G^+$ is abstractly simple or trivial.
\end{cor}

In particular, for the local action diagram $\Delta = (\Gamma,(X_a),(G(v)))$, if $\Gamma$ is a finite graph that is not a cycle graph (where a \defbold{cycle graph} is a finite connected graph in which all vertices have degree $2$), then one sees from the definitions that $\Delta$ is not a focal cycle and has no horocyclic ends or stray half-trees, so irreducibility is equivalent to the absence of stray leaves.

\begin{cor}\label{cor:DirectlyReadingSimplicity:finite} Let $T$ be a tree, and let $G \leq \Aut(T)$ have $\propP{}$. Suppose $\Delta = (\Gamma,(X_a),(G(v)))$ is the local action diagram of $(T, G)$, such that $\Gamma$ is finite and not a cycle graph. Then $G$ is geometrically dense if and only if for every leaf $v$ of $\Gamma$ we have $|X_v|>1$. In particular, if $\Delta$ has no stray leaves then $G^+$ is abstractly simple or trivial.\end{cor}

Note that the details of the local actions are not important here, only the structure of the quotient graph $\Gamma$ and the sizes of the colour sets. A small amount of `local' information is enough to determine whether or not $G^+$ is trivial: $G^+$ being trivial is equivalent (see Lemma~\ref{lem:free_diagram}) to all of the local actions in $\Delta$ being free (i.e. semi-regular).
In the case that $(T,G)$ is not geometrically dense, we can also describe which of the degenerate cases of actions on trees (if any) it falls into using the local action diagram (see Sections~\ref{sec:Tits_revisited} and \ref{TypesOfActionRevisited}).

The next theorem is almost an exact characterisation of simplicity for groups with faithful $\propP{}$-closed actions, with two minor caveats: one is that we need to exclude a couple of cases that are degenerate from the perspective of local-to-global structure, and the other is that we need to ensure that the group has closed action on any invariant subtree.

\begin{defn}
Say that $G \le \Aut(T)$ is \defbold{strongly closed} if for every $G$-invariant subtree $T'$ of $T$, the action of $G$ on $T'$ is closed.
\end{defn}

In particular, every closed geometrically dense action is strongly closed. For our purposes, the requirement of being strongly closed is easily achieved: as we will see in
Corollary~\ref{cor:StronglyClosedIsNoBigDeal}, a locally compact $\propP{}$-closed subgroup of $\Aut(T)$ that acts with translation (i.e.~contains a translation) is always strongly closed.

\begin{thm}[See Section~\ref{sec:Tits_revisited}]\label{thm:UDelta_simple}
Let $(T,G)$ be a faithful $\propP{}$-closed and strongly closed action on a tree $T$.  Then the following are equivalent:
\begin{enumerate}[(i)]
\item $G$ is a simple group, $G$ acts with translation, and there is no finite set of vertices whose pointwise stabiliser is trivial.
\item  \label{item:lad_description} There is an invariant subtree $T'$ (possibly equal to $T$) which is infinite and on which $G$ acts faithfully.  Moreover, letting $\Delta = (\Gamma,(X_a),(G(v)))$ be the local action diagram of $(T',G)$, then $\Delta$ is irreducible; $\Gamma$ is a tree; and each of the groups $G(v)$ is closed and generated by point stabilisers, with $G(v) \neq \triv$ for some $v \in V\Gamma$.
\end{enumerate}
Furthermore, in (\ref{item:lad_description}) the action $(T', G)$ is $(T', \Univ(\Delta))$.
\end{thm}

Note that the condition that there is no finite set of vertices whose pointwise stabiliser is trivial is equivalent to saying that $G$ is nondiscrete in the permutation topology on $T$ (i.e.~the $\Aut(T)$ topology). \\

Next, we describe some topological properties of $\propP{}$-closed subgroups of $\Aut(T)$ with the permutation topology; these are already well-understood in the locally finite case, but in the present context we are making no assumptions about the degree of $T$.  We highlight the following special case.
Note that a permutation group is \defbold{subdegree-finite} if all orbits of point-stabilisers are finite.

\begin{thm}[See Section~\ref{sec:topology:proofs}]\label{thm:comp_gen+geom_dense}
Let $\Delta = (\Gamma,(X_a),(G(v)))$ be a local action diagram.  Then the following are equivalent:
\begin{enumerate}[(i)]
\item $\Univ(\Delta)$ is compactly generated, locally compact and has geometrically dense action on its associated tree;
\item $\Delta$ is irreducible; $\Gamma$ is finite; and each of the groups $G(v)$ is compactly generated and subdegree-finite.
\end{enumerate}
Moreover, if (i) and (ii) hold, then $\Univ(\Delta)$ is Polish, acting on a countable tree, and all arc stabilisers of $\Univ(\Delta)$ are compact.
\end{thm}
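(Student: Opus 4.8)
The plan is to establish (i)$\,\Leftrightarrow\,$(ii) by proving three biconditionals, one for each clause of (i), and then to deduce the ``Polish'' assertion from countability of the associated tree $T$. The equivalence ``the action is geometrically dense $\,\Leftrightarrow\,$ $\Delta$ is irreducible'' is immediate from Theorem~\ref{thm:invariants}: the proper invariant subtrees and the fixed ends of $(T,U(\Delta))$ correspond to the nonempty \scpos of $\Delta$, so there is no nonempty \scpo exactly when the action has neither. Assume henceforth that $\Delta$ is irreducible. Two structural facts will be used throughout. First, for $v^* \in VT$ lying over $v \in V\Gamma$, the stabiliser $U(\Delta)_{v^*}$ induces on $o\inv(v^*)$ precisely $\ol{G(v)}$, with kernel the open subgroup $\Fix_{U(\Delta)}(B_1(v^*))$. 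Second, by property $\propP{}$ this kernel decomposes as a direct product $\prod_{w} F_w$ over the neighbours $w$ of $v^*$, where $F_w$ fixes $v^*$, $w$ and the edge between them and acts on the half-tree beyond $w$; the local action of $F_w$ at $w$ is the stabiliser in $\ol{G(u)}$ of the colour of the arc $wv^*$, where $u$ is the image of $w$ in $\Gamma$, and $F_w$ itself decomposes in the same way further out.

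\emph{Local compactness versus subdegree-finiteness of the local actions.} Iterating the product decomposition, $\Fix_{U(\Delta)}(B_1(v^*))$ has all orbits on $VT$ finite --- equivalently, being closed in $\Aut(T)$, is profinite, hence compact --- precisely when every $\ol{G(w)}$, $w \in V\Gamma$, is subdegree-finite; connectedness of $\Gamma$ ensures every local action is met in the recursion. As $\Fix_{U(\Delta)}(B_1(v^*))$ is open, this shows that (ii) forces $U(\Delta)$ to be locally compact. Conversely, if $U(\Delta)$ is locally compact, every $\Fix_{U(\Delta)}(B_1(v^*))$ is locally compact, so in the product $\prod_w F_w$ all but finitely many $F_w$ are compact; the factors $F_w$ corresponding to neighbours in a single colour set are pairwise conjugate in $U(\Delta)$, so when that colour set is infinite all of them are compact, and one then reads off recursively the subdegree-finiteness of the relevant local actions --- while if every colour set is finite every $\ol{G(w)}$ is a finite group and there is nothing to prove.

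\emph{Compact generation.} Assume in addition that $U(\Delta)$ is locally compact, so by the previous step every $\ol{G(w)}$ is subdegree-finite and every $\Fix_{U(\Delta)}(B_1(v^*))$ is compact. If $\Gamma$ is finite, the Bass--Serre fundamental-domain argument gives $U(\Delta) = \langle \bigcup_{v^*} U(\Delta)_{v^*}, g_1, \dots, g_m \rangle$, where $v^*$ runs over a lift to $T$ of a spanning tree of $\Gamma$ and the $g_i$ realise the finitely many remaining edges; since $U(\Delta)_{v^*}$ is an extension of $\ol{G(v)}$ by a compact group, it is compactly generated whenever $\ol{G(v)}$ is, and hence so is $U(\Delta)$. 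Conversely, suppose $U(\Delta) = \langle K \rangle$ with $K$ symmetric and compact. Then $Kv^*$ is finite (it meets finitely many cosets of the open subgroup $U(\Delta)_{v^*}$), so the subtree $T_0$ spanned by $\{v^*\} \cup Kv^*$ is finite; since $v^* \in kT_0$ for all $k \in K$, the nested union of the subtrees $k_1 \cdots k_n T_0$ is connected, so $U(\Delta)T_0 = \langle K \rangle T_0$ is a $U(\Delta)$-invariant subtree containing $v^*$ and hence equals $T$ by geometric density; thus $\Gamma = U(\Delta) \backslash T$ is the finite image of $T_0$. It remains to show that each $\ol{G(v)}$ is compactly generated, and this is the delicate point: compact generation must be transferred from $U(\Delta)$ to the open vertex stabiliser $U(\Delta)_{v^*}$, equivalently to $\ol{G(v)}$, which is not formal for open subgroups in general. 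Here it works because $\Gamma$ is finite and, in the resulting finite graph-of-groups decomposition of $U(\Delta)$ over $\Gamma$, each edge group is a compact-by-point-stabiliser open subgroup of infinite index in the adjacent vertex groups, so that the edge groups are ``small'' --- which is exactly where the compactness of the ball fixators is used.

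\emph{The Polish assertion, and the main obstacle.} Under (ii), each colour set $X_a$ is an orbit of the compactly generated group $\ol{G(v)}$ acting with open point stabilisers, hence countable (a compact generating set meets only finitely many cosets of an open point stabiliser, and one iterates), so with $\Gamma$ finite the tree $T$ is countable; then $\Aut(T)$ is Polish in the permutation topology, and so is its closed subgroup $U(\Delta)$. The main obstacle in the argument is the second half of the compact-generation step --- deducing compact generation of every $\ol{G(v)}$ from that of $U(\Delta)$ --- which needs a careful analysis of $U(\Delta)$ as the fundamental group of the finite graph of groups over $\Gamma$, using the compactness of ball fixators; the extraction of subdegree-finiteness of all local actions from mere local compactness is the comparable delicate point in the local-compactness step.
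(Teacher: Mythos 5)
Your overall architecture coincides with the paper's: geometric density is read off from the \scpo correspondence, local compactness is reduced to compactness of arc stabilizers and thence to subdegree-finiteness of the local actions, compact generation is handled by a Bass--Serre argument over the finite quotient graph, and the Polish claim follows from countability of $T$. But the two steps you yourself flag as delicate are not actually carried out, and in both cases the reason you offer for why they should work is not the one that does the job. For the extraction of subdegree-finiteness from local compactness: your product decomposition of $\Fix(B_1(v^*))$ only yields that all but finitely many factors $F_w$ are compact, and your conjugacy argument upgrades this to \emph{all} factors only for neighbours lying in an infinite colour class. The factor controlling the point stabilizer $\ol{G(v)}_x$ (for $x$ the colour of an arc $a$ from $v^*$ to $w$) is the factor of $\Fix(B_1(w))$ indexed by $v^*$, whose colour class is $X_{\ol{a}}$; if $|X_{\ol{a}}|$ is finite while some other colour class is infinite (e.g.\ a biregular tree of degrees $3$ and $\aleph_0$), your argument gives nothing, and the fallback ``if every colour set is finite every $\ol{G(w)}$ is a finite group'' neither covers this mixed case nor is literally true when $\Gamma$ is locally infinite. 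The missing ingredient is Lemma~\ref{lem:point_towards}: one translates any given arc so that a prescribed finite set with compact fixator lies in either one of its two half-trees in turn, which by property $\propP{}$ makes \emph{each} direct factor of the arc stabilizer compact; this is how Proposition~\ref{prop:locally_compact} proves (i)$\Rightarrow$(ii)$\Rightarrow$(iii).

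For the descent of compact generation from $U(\Delta)$ to each $\ol{G(v)}$: your explanation --- that the edge groups have infinite index in the adjacent vertex groups and are therefore ``small'' --- is neither true in general (an arc stabilizer is compact \emph{open} in the vertex stabilizer and has finite index whenever the local action is a finite group) nor the mechanism actually used. The argument in Proposition~\ref{prop:compact_generation} writes the vertex group as a directed union $\bigcup_i H_i$ of compactly generated open subgroups, with $H_0$ containing the finitely many incident edge groups, lets $K_i$ be the set of elements expressible as reduced words whose vertex-group letters from that vertex lie in $H_i$, and uses the normal form theorem (Theorem~\ref{thm:NormalForm}) together with compact generation of the whole group and openness of $H_0$ to force equality at some finite stage. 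Without an argument of this kind your compact-generation biconditional is unproved. The remaining steps (finiteness of $\Gamma$ from a compact generating set plus geometric density, the converse implications, and countability of $T$ giving the Polish property) are correct and agree with the paper.
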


Combining Corollary~\ref{cor:DirectlyReadingSimplicity:finite}, Theorem~\ref{thm:UDelta_simple} and Theorem~\ref{thm:comp_gen+geom_dense} gives the following corollary, where $\ms{S}$ denotes the class of nondiscrete compactly generated, topologically simple \tdlc groups.
Understanding $\ms{S}$ and constructing novel examples of groups in $\ms{S}$ is a central theme of research in the theory of locally compact groups (see \cite{CapraceSimple}, for example).
Our corollary allows us to construct local action diagrams that yield new groups in $\ms{S}$. Note that the conditions in part (ii) of the corollary also imply that the local action diagram $\Delta$ is irreducible.

\begin{cor}\label{cor:comp_gen+simple}
Let $(T,G)$ be a faithful $\propP{}$-closed and strongly closed action on a tree $T$.  Then the following are equivalent:
\begin{enumerate}[(i)]
\item We have $G \in \ms{S}$ and the action does not fix any vertex of $T$.
\item \label{item:lad_description:comp_gen} There is a unique smallest invariant subtree $T'$ (possibly equal to $T$) on which $G$ acts faithfully.  Moreover, letting $\Delta = (\Gamma,(X_a),(G(v)))$ be the local action diagram of $(T',G)$, then $\Gamma$ is a finite tree, and each of the groups $G(v)$ is closed, compactly generated, subdegree-finite and generated by point stabilisers, with $G(v) \neq \triv$ for every leaf $v$ of $\Gamma$.
\end{enumerate}
Furthermore, in (\ref{item:lad_description:comp_gen}) the action $(T', G)$ is $(T', \Univ(\Delta))$.
\end{cor}

We also consider the structure of open subgroups. Tits' main theorem from \cite{Tits70} imposes a restriction on the normal subgroups of a $\propP{}$-closed group.  In the case of compact arc stabilisers, we find comparable restrictions on the closed subgroups that can be constructed from open subgroups, including all closed subnormal subgroups; unlike Tits' theorem we do not need to make any assumption about the minimality of the action.  See Section~\ref{sec:open_hyperbolic}. 

 Our study of open subgroups also leads to the following theorem, which  should be compared with a theorem of Caprace--De Medts, \cite[Theorem~A and Theorem~3.9]{CapDeM}. (Note that in \cite{CapDeM}, unlike in the present article, it is assumed that the tree is locally finite.) 
 Our theorem includes as a special case $2^{\aleph_0}$ nonisomorphic groups in $\ms{S}$ that can be constructed following \cite[Remark 40]{SmithDuke}.

\begin{thm}[See Section~\ref{sec:local_ends}]\label{thm:open_primitive}
Let $T$ be a tree such that every vertex of $T$ has at least three neighbours, and let $G$ be a nondiscrete $\propP{}$-closed subgroup of $\Aut(T)$, such that $G$ does not preserve any proper subtree of $T$.
Then the following are equivalent:
\begin{enumerate}[(i)]
\item Every proper open subgroup of $G$ has bounded action on $T$ and point stabilisers in  $G$ are pairwise incomparable with respect to the subgroup inclusion partial order.
\item There are adjacent vertices $v,w$ of $T$ such that $G_v$ and $G_w$ are distinct maximal subgroups of $G$.
\item $G$ preserves the natural bipartition of the vertices of $T$ and acts primitively on each part.
\item Letting $\Delta = (\Gamma,(X_a),(G(v)))$ be the local action diagram of $(T,G)$, then $|V\Gamma| = 2$ and for all $v \in V\Gamma$, $G(v)$ is primitive but not regular.
\end{enumerate}
Moreover, if (i)--(iv) hold then $G$ is simple; $\Gamma$ consists of a single undirected edge with two distinct endpoints; the action of $G$ on $T$ is geometrically dense; and $G$ is an amalgamated free product $G_v \ast_{G_{(v,w)}} G_w$.
\end{thm}

Section~\ref{sec:examples} is devoted to examples.  We show how the local action diagram can be used for classifying group actions on trees by classifying all 70 of the $\propP{}$-closed vertex-transitive actions on trees of degree $0 \le d \le 5$, and give a GAP (\cite{GAP4}) implementation due to Stephan Tornier that can be used to classify vertex-transitive actions of larger degrees.\\

Finally, we give an example of how Corollary~\ref{cor:comp_gen+simple} can be used to produce more groups in $\ms{S}$, which demonstrates that within $\ms{S}$, the groups $\Univ(\Delta)$ have a certain universality property.

\begin{thm}[See Section~\ref{sec:new_simple}]\label{thm:combination_simple}
Let $n$ be a positive integer and let $G_1,\dots,G_n$ be nontrivial compactly generated \tdlc groups, such that for each $G_i$ there is a compact open subgroup $U_i$ such that $G_i = \grp{gU_ig\inv \mid g \in G_i}$ and $\bigcap_{g \in G_i}gU_ig\inv = \triv$. For example, we can take $G_i \in \ms{S}$ and $U_i$ to be any compact open subgroup.  Then there exists $\Univ(\Delta) \in \ms{S}$ acting continuously on a countable tree $T$, vertex stabilisers $O_1,\dots,O_n$ of $\Univ(\Delta)$ and compact normal subgroups $K_i$ of $O_i$, such that $O_i \cong K_i \rtimes G_i$ for $1 \le i \le n$.
\end{thm}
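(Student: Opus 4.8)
The plan is to exhibit a local action diagram $\Delta=(\Gamma,(X_a),(G(v)))$ to which Corollary~\ref{cor:comp_gen+simple} applies, so that $U(\Delta)\in\ms{S}$ acts on a countable tree $T$ without fixing a vertex, and then to compute the stabilisers of a suitable family of vertices of $T$. I would take $\Gamma$ to be the ``star'' with a central vertex $c$ joined by a single edge to each of $n$ leaves $v_1,\dots,v_n$ (so a single edge when $n=1$). Writing $b_i$ for the arc $c\to v_i$ and $\ol{b_i}$ for $v_i\to c$, so that $o\inv(v_i)=\{\ol{b_i}\}$ and $o\inv(c)=\{b_1,\dots,b_n\}$, I would set $X_{\ol{b_i}}:=G_i/U_i$ with $G(v_i):=G_i$ acting by left translation, and $X_{b_i}:=\{1,2,3\}$ with $G(c):=\prod_{i=1}^{n}\Sym(3)$ acting on $X_c=\bigsqcup_i X_{b_i}$ via its factors. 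To see this is a local action diagram I would use the hypotheses on $(G_i,U_i)$: $\bigcap_{g}gU_ig\inv=\triv$ says $G_i$ acts faithfully on $G_i/U_i$, and a locally compact subgroup of a Hausdorff topological group is closed, so $G_i$ embeds (as a topological group) as a closed transitive subgroup of $\Sym(G_i/U_i)$; and the orbits of $G(c)$ are precisely the sets $X_{b_i}$.

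Next I would verify condition~(ii) of Corollary~\ref{cor:comp_gen+simple} for the action of $U(\Delta)$ on $T$. The graph $\Gamma$ is a finite tree. For irreducibility: $G_i\neq\triv$ forces $U_i\neq G_i$ (otherwise $\bigcap_g gU_ig\inv=G_i$), so $|X_{\ol{b_i}}|=|G_i/U_i|\ge2$, and clause~(i) of the definition of \scpo rules $\ol{b_i}$ out of every \scpo; a nonempty \scpo would then contain some $b_i$, which for $n\ge2$ forces (clause~(ii) at $c$) some $\ol{b_j}$ with $j\neq i$ into the orientation --- impossible --- and for $n=1$ forces $|X_{b_1}|=1$, which is false. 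Each $G(v)$ is closed, compactly generated, subdegree-finite and generated by point stabilisers: for $G(c)$ this is routine, as it is finite and $\Sym(3)$ is generated by its point stabilisers on $3$ points; for $G(v_i)=G_i$, closedness was noted above, compact generation is a hypothesis, each point stabiliser $gU_ig\inv$ is compact and open, hence has only finite orbits on $G_i/U_i$, so $G(v_i)$ is subdegree-finite, and $G_i=\grp{gU_ig\inv\mid g\in G_i}$ is exactly the statement that $G(v_i)$ is generated by point stabilisers. Since $G(c)\neq\triv$, Corollary~\ref{cor:comp_gen+simple} yields $U(\Delta)\in\ms{S}$, acting on $T$ with no fixed vertex; and $T$ is countable, because $\Gamma$ is finite and each colour set is countable ($G_i$ is $\sigma$-compact and $U_i$ is open), with the action continuous.

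Then I would compute the vertex stabilisers. For each $i$ fix a lift $v_i^*$ of $v_i$ and put $O_i:=U(\Delta)_{v_i^*}$. The local action at $v_i^*$ gives a surjection $O_i\to G(v_i)=G_i$ whose kernel $K_i$ --- a normal subgroup of $O_i$ --- is the pointwise stabiliser in $U(\Delta)$ of $v_i^*$ together with all of its neighbours. The first point needing care is that $K_i$ is compact. Since all the local actions are subdegree-finite, an induction on distance from $v_i^*$ shows every $K_i$-orbit on $VT$ is finite: the $K_i$-stabiliser of a vertex $w^*$ permutes the arcs issuing from $w^*$ through a subgroup of a point stabiliser of the local action at $w^*$, which has finite orbits, so a finite $K_i$-orbit of $w^*$ yields finite $K_i$-orbits for the neighbours of $w^*$. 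As a closed subgroup of $\Aut(T)$ all of whose vertex-orbits are finite is compact, $K_i$ is compact, and $O_i$ is compact-by-$G_i$.

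Finally I would split this extension, which I expect to be the main obstacle. I would exploit the fact that the legal colouring of $T$ compatible with $\Delta$ can be chosen freely up to conjugacy of $U(\Delta)$, and pick it so that, for each $i$, every arc running from a neighbour of $v_i^*$ back to $v_i^*$ carries one fixed colour $z_i\in X_{b_i}$; this is possible since $\Gamma$ is finite and such a colour may be assigned arbitrarily when the colouring is built outwards from $v_i^*$. With this colouring the half-trees hanging off the distinct neighbours of $v_i^*$ are pairwise isomorphic as rooted coloured trees (their distinguished arcs now all carry colour $z_i$), so each $\tau\in G_i$ extends uniquely to an automorphism $g_\tau$ of $T$ that fixes $v_i^*$, induces $\tau$ on the arcs issuing from $v_i^*$, and is colour-preserving (has trivial local action) at every other vertex; one checks $g_\tau\in U(\Delta)$. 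The map $\tau\mapsto g_\tau$ is an injective continuous homomorphism $G_i\to O_i$ whose image meets $K_i$ trivially and together with $K_i$ exhausts $O_i$, and a standard Polish-group argument --- $G_i$ is second countable, being a closed subgroup of $\Sym(G_i/U_i)$ with $G_i/U_i$ countable --- shows this image is a closed complement. Hence $O_i\cong K_i\rtimes G_i$ with $K_i$ compact and normal, as desired. The two delicate steps are the compactness of $K_i$, which is exactly where subdegree-finiteness of all the local actions is used, and the colouring-adapted construction of the section $\tau\mapsto g_\tau$, where one must check that the prescribed local actions genuinely assemble into a well-defined element of $U(\Delta)$; the rest is bookkeeping with the hypotheses on $(G_i,U_i)$ and with Corollary~\ref{cor:comp_gen+simple}.
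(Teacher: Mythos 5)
Your construction of $\Delta$ is exactly the paper's (the same star with $\prod_i\Sym(3)$ at the centre and $G_i \curvearrowright G_i/U_i$ at the leaves), and the verification of Corollary~\ref{cor:comp_gen+simple}(ii) and the identification $O_i\cong K_i\rtimes G_i$ follow the same route, so this is essentially the paper's proof. The only difference is that where you re-derive the compactness of $K_i$ and build the splitting by hand via a colouring-adapted section, the paper simply cites Proposition~\ref{prop:locally_compact} ($K_i$ fixes an arc, hence is compact) and Proposition~\ref{prop:semidirect} (the local-action homomorphism of a vertex stabilizer splits) --- the latter is precisely the consistency check you flag as delicate, so you could invoke it and skip that step.
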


$\,$

The structure of the paper is as follows. Let $T$ be a tree with $G \leq \Aut(T)$.
In Section~\ref{Preliminaries} we introduce background material from permutation groups (\S\ref{sec:groups}), graph theory (\S\ref{sec:graphs}) and groups acting on trees (\S\ref{sec:action_types}--\S\ref{BassSerreTheory}). In particular, in \S\ref{sec:action_types} we describe the six classes of group action on a tree: Fixed vertex, Inversion, Lineal, Horocyclic, Focal and General (summarised in Table~\ref{fig:types} on page \pageref{fig:types}). In \S\ref{subsec:P-closure} we give Tits' independence property $\propP{}$ and consider related notions like $\propP{}$-closure and $\propP{k}$-closure. In \S\ref{BassSerreTheory} we summarise the Bass--Serre Theory we require.

Section~\ref{sec:correspondence} contains the core of our theory of local action diagrams. Here we define (associated) local action diagrams, $\Delta$-trees, $\Delta$-colourings and universal groups. We also define isomorphisms and automorphism for these structures and prove that the various choices made during construction do not (up to isomorphism) matter. We prove the correspondence result (Theorem~\ref{thm:correspondenceIntro}) from the introduction showing that local action diagrams completely describe $\propP{}$-closed actions on trees. We conclude with two explicit examples of local action diagrams: the automorphism group of a specific tree and a Burger--Mozes group.

In Section~\ref{sec:tree} we investigate subgroups of $\propP{}$-closed groups that are themselves $\propP{}$-closed.
To do this we introduce the natural notion of a local subaction diagram in \S\ref{LocalSubactionDiagrams}. We then present sufficient conditions for a subgroup of a $\propP{}$-closed group $G$ to be $\propP{}$-closed in \S\ref{BeingPClosed}. From this we go on to consider specific types of subgroup: vertex stabilisers (\S\ref{VertexStabsAndPClosure}), the subgroup $G^+$ generated by all arc-stabilisers (\S\ref{sec:G+}), open subgroups containing a translation (\S\ref{sec:open_hyperbolic}) and stabilisers of so-called locally invariant ends (these are ends $\xi$ whose stabiliser $G_\xi$ is open) (\S\ref{sec:local_ends}). 	 

In Section~\ref{InvariantStructures} we determine precisely how to detect certain kinds of $G$-invariant structures in $T$ from the local action diagram $\Delta$ of the action $(T, G)$. In \S\ref{sec:invariants} we focus on detecting $G$-invariant ends and proper $G$-invariant subtrees. From this we derive our precise characterisation of geometrically dense actions (Theorem~\ref{thm:invariants}) for $\propP{}$-closed groups. 
We define \scpos and cotrees for graphs and local action diagrams. We show that all \scpos of connected graphs and local action diagrams are one of three types ((\ref{scposTypeI})--(\ref{scposTypeIII})), with each type arising from either a cotree or an end. We develop tools for recognising the type of a  \scpo and we relate $G$-invariant ends and subtrees of $T$ to \scpos of $\Delta$.
In \S\ref{sec:Tits_revisited} we revisit Tits' Theorem (Theorem~\ref{prop:TitsClassification}) with our now complete understanding of property $\propP{}$ and geometric density, via our theory of local action diagrams. We prove 
Corollaries~\ref{cor:DirectlyReadingSimplicity} and \ref{cor:DirectlyReadingSimplicity:finite} and Theorem~\ref{thm:UDelta_simple} from the introduction, and in Lemma~\ref{lem:free_diagram} we determine from the local action diagram precisely when $G^+$ is trivial. In \S\ref{TypesOfActionRevisited} we revisit the six types of action on a tree (Fixed vertex, Inversion, Lineal, Horocyclic, Focal and General) and give rules for detecting these actions from the local action diagram.

In Section~\ref{sec:topology} we explore how some of the topological properties of the local actions in the local action diagram relate to the topological properties of the action $(T, G)$. We characterise Polish $\propP{}$-closed groups in \S\ref{PolishPropPGroups} and locally compact $\propP{}$-closed groups in \S\ref{LocallyCompactPClosedGroups}, and in the latter subsection we go on to examine compact generation. In \S\ref{sec:topology:proofs} we prove the topological results from the introduction: Theorem~\ref{thm:comp_gen+geom_dense} and Corollary~\ref{cor:comp_gen+simple}.

In Section~\ref{sec:examples} we give three example applications of our theory of local action diagrams. The first example is \S\ref{sec:vertex_transitive}, where we use our theory to list all $\propP{}$-closed actions on trees whose degree is at most $5$. Our second example is \S\ref{sec:conn_one}, where we use our theory to determine all automorphism groups of simple, nontrivial, vertex-transitive graphs with vertex connectivity one (the automorphism groups of such graphs have been of interest to graph theorists since the 1970s --- see \cite{Watkins78} for example). Our third example is \S\ref{sec:new_simple} where we use our theory to give a new construction technique for combining simple groups in $\mathscr{S}$ to make new simple groups in $\mathscr{S}$.

In Section~\ref{questions}, we list open questions arising from our theory and ideas for extending the results.

Appendix~\ref{app:vt_1cl_gap} is a GAP implementation due to Stephan Tornier.

\paragraph{Acknowledgements}
The first author is a Friedrich Wilhelm Bessel awardee, with research supported by ARC grant FL170100032 and the Alexander von Humboldt Foundation.  Research was conducted while the first author was employed at the University of Newcastle, Australia. The second author was supported for part of this project by an EPSRC Standard Grant (\#EP/V036874/1). The research was also supported by a London Mathematical Society Research in Pairs (Scheme 4) grant  \#41745.

\section{Preliminaries} \label{Preliminaries}

In this section we briefly introduce preliminary concepts we require. We consider permutation groups in \S\ref{sec:groups} and graph theory in \S\ref{sec:graphs}. In \S\ref{sec:action_types}--\S\ref{BassSerreTheory} we turn our attention to groups acting on trees, describing the six classes of group action on a tree in \S\ref{sec:action_types},
Tits' independence property $\propP{}$, $\propP{}$-closure and $\propP{k}$-closure in \S\ref{subsec:P-closure} and Bass--Serre Theory in \S\ref{BassSerreTheory}.

\subsection{Permutation groups}\label{sec:groups}

For further background information on permutation groups see \cite{DixonMortimer}, and for the relationship between permutation groups and topological groups see \cite{KronMoller}.

For a set $\Omega$ the group of all permutations of $\Omega$ is $\Sym(\Omega)$. Actions in this paper are from the left, and so we follow this convention for permutation groups. If $\omega \in \Omega$ and $G \leq \Sym(\Omega)$ we denote the stabiliser of $\omega$ in $G$ by $G_\omega$. An orbit of a stabiliser in $G$ is called a \defbold{suborbit} of $G$ and the cardinalities of these suborbits are called the \defbold{subdegrees} of $G$. As noted previously, a group is \defbold{subdegree-finite} if all its suborbits are finite.
If $G_\omega$ is trivial for all $\omega \in \Omega$ we say that $G$ is \defbold{semi-regular} (this type of action is also called \defbold{free}). If $G$ is semi-regular and transitive it is \defbold{regular}. Notice that if $\Lambda$ is a subset of $\Omega$ and $G$ leaves $\Lambda$ invariant (in other words, $G$ setwise stabilises $\Lambda$) then the action of $G$ on $\Lambda$ induces a subgroup of $\Sym(\Lambda)$ on $\Lambda$.

If $\Omega'$ is a set, then $G \leq \Sym(\Omega)$ and $H \leq \Sym(\Omega')$ are \defbold{permutation isomorphic} via the permutation isomorphism $(\theta, \varphi)$ if $\theta : \Omega \rightarrow \Omega'$ is a bijection and $\varphi: G \rightarrow H$ is an isomorphism such that $\theta(g \omega) = \varphi(g)\theta(\omega)$ holds for all $g \in G$ and all $\omega \in \Omega$.

Now suppose $G$ is transitive on $\Omega$. A nonempty subset $\Lambda$ of $\Omega$ is called a \defbold{block} if for all $g \in G$ the image $g\Lambda$ and $\Lambda$ are either equal or disjoint. A group always admits $\Omega$ as a block and singletons $\{\omega\}$ as blocks for all $\omega \in \Omega$. Such blocks are called \defbold{trivial blocks}, and any other blocks are called \defbold{nontrivial blocks}. If $G$ admits no nontrivial blocks then we say that $G$ is \defbold{primitive}; otherwise we say $G$ is imprimitive. If $\Lambda$ is a block then the set of images $\Sigma = \{g\Lambda : g \in G\}$ is called a \defbold{system of imprimitivity}.

Thus we see that any permutation group $G \leq \Sym(\Omega)$ can be decomposed first into its transitive constituents: the transitive permutation groups induced by the action of $G$ on each of its orbits. Each transitive constituent $H$ can be further decomposed into two ``constituent''  transitive actions in the following way: one finds a system of imprimitivity $\Sigma$ for $H$ and considers on the one hand the action induced on $\Sigma$ by $H$, and on the other hand the action induced on any block $\Lambda \in \Sigma$ by the setwise stabiliser of $\Lambda$ in $H$.  (The above decomposition process can then be applied individually to these two constituent transitive actions, and so on.)  Primitive permutation groups are then precisely those permutation groups that cannot be nontrivially decomposed using the above process of decomposition.

There are two further ways that are commonly used to identify primitive permutation groups. Suppose $G$ is transitive and $\Omega$ contains at least two elements. Then $G$ is primitive if and only if all point stabilisers in $G$ are maximal subgroups of $G$, and this is true if and only if the only $G$-invariant equivalence classes on $\Omega$ are the trivial relation (in which each element in $\Omega$ is related only to itself) and the universal relation (in which all elements in $\Omega$ are pairwise related).

Now let $G$ be any group acting on $\Omega$. The action is \defbold{faithful} if the pointwise stabiliser of $\Omega$ in $G$ is trivial. There is a natural topology on $G$ called the \defbold{permutation topology} in which a neighbourhood basis of the identity consists of all pointwise stabilisers in $G$ of finite subsets of $\Omega$. Endowing $\Omega$ with the discrete topology and considering elements of $G$ as maps from $\Omega$ to $\Omega$, the permutation topology is equal to the topology of pointwise convergence. The permutation topology on $G$ is $T_0$ (that is, it distinguishes any two points in $G$) if and only if the action is faithful.  However if the action is faithful, then under the permutation topology, $G$ is Hausdorff, with a base of topology consisting of clopen sets. 
If $G$ is faithful then we consider $G \leq \Sym(\Omega)$. If $G \leq \Sym(\Omega)$ is closed under the permutation topology, then $G$ is compact if and only if all orbits of $G$ are finite, because in the latter case $G$ is a closed subgroup of a direct product of finite groups (the symmetric groups on each of the finite orbits) and the permutation topology coincides with the product topology (see Lemma~\ref{lem:lc_permutation}). Thus, every closed and subdegree-finite permutation group is a totally disconnected and locally compact topological group.
As we shall see in Section~\ref{subsec:P-closure}, this relationship works also in the other direction: every totally disconnected and locally compact topological group is isomorphic as a topological group to a closed subdegree-finite permutation group.

\subsection{Graphs}\label{sec:graphs}

 A \defbold{graph} $\Gamma = (V,A,o,r)$ consists of a vertex set $V = V\Gamma$, a set $A = A\Gamma$ of arcs, a map $o:A \rightarrow V$ assigning to each arc an \defbold{origin} (or \defbold{initial}) \defbold{vertex}, and a bijection $r: A \rightarrow A$, denoted $a \mapsto \ol{a}$ and called \defbold{edge reversal} (or sometimes \defbold{edge inversion}), such that $r^2 = \mathrm{id}$. The \defbold{terminal vertex} of an edge is $t(a) := o(\ol{a})$. A \defbold{loop} is an arc $a$ such that $o(a) = t(a)$.  If $a$ is a loop, it is important that we allow both $\ol{a} = a$ and $\ol{a} \neq a$ as possibilities. A \defbold{leaf} is a vertex $v$ such that $|o\inv(v)| = 1$ and the arc in $o\inv(v)$ is not a loop.
 We call the pair $\{a, \overline{a}\}$ an \defbold{edge} between the vertices $o(a)$ and $t(a)$. Two vertices are \defbold{adjacent} if there is an edge between them. 

A \defbold{subgraph} $\Gamma' = (V',A',o',r')$ of $\Gamma$ is a graph such that $V' \subseteq V$ and $A' \subseteq A$ with $o' : A' \rightarrow V'$ (resp.~$r' : A' \rightarrow A'$) equal to the restriction  of $o$ to $A'$ (resp. $r'$ to $A'$). For a subset $V' \subseteq V$ the \defbold{subgraph of $\Gamma$ induced by $V'$} is the subgraph of $\Gamma$ with vertex set $V'$ and arc set $\{a \in A\Gamma : o(a), t(a) \in V'\}$.

Since graphs in this paper are not simple, the graph subtraction operation is not well behaved and so we avoid it. Instead, we define for subsets $V' \subseteq V\Gamma$ and $A' \subseteq A\Gamma$ the following graphs: \defbold{$\Gamma \smallsetminus V'$} is the subgraph of $\Gamma$ induced by $V\Gamma \smallsetminus V'$, and \defbold{$\Gamma \smallsetminus A'$} is the subgraph of $\Gamma$ with vertex set $V\Gamma$ and arc set $A\Gamma \smallsetminus (A' \cup \overline{A'})$.

For an interval $I \subseteq \Zb$, let $\hat{I} = \{i \in I : i+1 \in I\}$. A \defbold{path} indexed by an interval $I \subseteq \Zb$ in $\Gamma$ is then a sequence of vertices $(v_i)_{i \in I}$ and edges $(\{a_i, \overline{a_i}\})_{i \in \hat{I}}$ such that $\{a_i, \overline{a_i}\}$ is an edge in $\Gamma$ between $v_i$ and $v_{i+1}$ for all $i \in \hat{I}$. Where there is no ambiguity, we will sometimes specify a path simply by giving the sequence of vertices or of arcs.  If $I$ is finite, say $I = \{0,\dots,n\}$, we say this is a path from $v_0$ to $v_{n}$ (or between $v_0$ and $v_n$) and the \defbold{length} of the path is $n$ (that is, the length of the sequence of arcs).  We allow paths of length $0$, that is, a single vertex with no arcs is a path.
The path \defbold{backtracks} if there is some $i \in I$ such that $a_{i+1} = \ol{a_i}$, in other words the same undirected edge is used twice in a row.  The path is \defbold{simple} if all vertices are distinct; we will sometimes regard a simple path as a subgraph of $\Gamma$.  If $n > 0$ and $I = \{0,\dots,n\}$, $v_0 = v_{n}$ and all vertices $v_0,v_1, \dots, v_{n-1}$ are distinct, then the path is called a \defbold{cycle} of length $n$.  Note that a single vertex $v$ together with a loop at $v$ together form a cycle.  The same definitions apply to \defbold{directed} paths, except that we only include the arcs $a_i$ where $o(a_i) = v_i$ and $t(a_i) = v_{i+1}$ for $i \in \hat{I}$.  If there is a path between two vertices $v$, $w$, then there is a (directed) path of minimal length, called a \defbold{shortest (directed) path} from $v$ to $w$, and the length of this shortest path is the \defbold{distance} between $v$ and $w$.  If there is no path then the distance is taken to be infinite.

An \defbold{orientation} of a graph $\Gamma$ is a subset $O$ of $A\Gamma$ such that for all $a \in A\Gamma$, either $a$ or $\overline{a}$ is in $O$, but not both.  In particular, an \defbold{orientable graph} is a graph in the sense of Serre, in other words, there are no edges such that $a = \overline{a}$.  More generally, a \defbold{partial orientation} is any (possibly empty) subset $O$ of $A\Gamma$ such that given $a \in A\Gamma$, $O$ does not contain both of $a$ and $\overline{a}$, but it could contain neither of them.  Given a group $G$ of automorphisms of $\Gamma$, we say the (partial) orientation is \defbold{$G$-invariant} if $gO = O$ for all $g \in G$.

The cardinality of $\Gamma$ is defined to be that of the set $V\Gamma \sqcup A\Gamma$.
 The \defbold{degree} of a vertex $v \in V$ is $\deg(v):=|o\inv (v)|$, and the graph is \defbold{locally finite} if every vertex has finite degree.  The \defbold{degree} of the graph is defined to be 
\[
\deg(\Gamma) := \sup_{v \in V\Gamma} \deg(v).
\]

 The graph $\Gamma$ is \defbold{simple} if the map $A \rightarrow V \times V$ by $a \mapsto (o(a),t(a))$ is injective and no arc is a loop.  In this case, the arc $a$ is sometimes identified with the pair $(o(a),t(a))$.  The graph is \defbold{connected} if there is a path between any two distinct vertices. In a simple graph $\Gamma$ a \defbold{ray} is a one-way infinite simple path. The \defbold{ends} of $\Gamma$ are equivalence classes\footnote{There are competing inequivalent definitions of an end that are all equivalent in the locally finite case. The definition we give here gives rise to so-called \defbold{vertex-ends} in the non-locally-finite case; other types of ends are edge-ends and metric ends. See \cite{KronEnds} for more details.}
of rays, in which two rays $R_1, R_2$ lie in the same end if and only if there is another ray $R$ in $\Gamma$ that contains infinitely many vertices of $R_1$ and infinitely many vertices of $R_2$.
 A \defbold{tree} is a nonempty simple, connected graph that contains no cycles.  In a tree, there is a unique shortest path between any two vertices $v$ and $w$, which we denote $[v,w]$.
  
A \defbold{graph homomorphism} $\theta: \Gamma \rightarrow \Gamma'$ is a pair of maps $\theta_V: V\Gamma \rightarrow V\Gamma'$ and $\theta_A: A\Gamma \rightarrow A\Gamma'$ that respect origin vertices and edge reversal: $\theta_V(o(a)) = o(\theta_A(a))$ and $\ol{\theta_A(a)}=\theta_A(\ol{a})$.  We say $\theta$ is an \defbold{isomorphism} if in addition, $\theta_V$ and $\theta_A$ are both bijections, and an \defbold{automorphism} if it is an isomorphism such that $\Gamma = \Gamma'$.  The automorphisms of $\Gamma$ form a group, denoted $\Aut(\Gamma)$.  When $\Gamma$ is a simple graph, the automorphisms of $\Gamma$ act faithfully as the group of permutations of $V$ that respect the edge relation in $V\times V$. In this case we identify $\Aut(\Gamma)$ with the corresponding subgroup of $\Sym(V)$.

For $G$ a group acting on a graph $\Gamma$ and a vertex, arc or edge $e$ of $G$, the orbit of $e$ under $G$ is denoted $Ge$. The action of $G$ gives a \defbold{quotient graph} $G \backslash \Gamma$ as follows: the vertex set $V_G$ is the set of $G$-orbits on $V$ and the arc set $A_G$ is the set of $G$-orbits on $A$. The origin map $\tilde{o}:A_G\rightarrow A_G$ is defined by $\tilde{o}(Ga):=Go(a)$; this is well-defined since graph automorphisms send origin vertices to origin vertices. The reversal $\tilde{r}:A_G\rightarrow A_G$ is given by $Ga\mapsto G\ol{a}$; this map is also well-defined. We will abuse notation and write $o$ and $r$ for $\tilde{o}$ and $\tilde{r}$. We denote the quotient map of the action $(\Gamma, G)$ by $\pi_{(\Gamma,G)}$.

A subset of the vertices of a connected graph is \defbold{bounded}, respectively \defbold{unbounded}, if it has finite, respectively infinite diameter in the graph metric.  Given a vertex $v$ of a graph $\Gamma$, write $B_n(v)$ (the \defbold{ball of radius $n$}) for the induced subgraph formed by all vertices $w$ such that $d_{\Gamma}(v,w) \le n$, and $S_n(v)$ (the \defbold{sphere of radius $n$}) for the set of vertices $w$ such that $d_{\Gamma}(v,w) = n$.

A graph homomorphism $\theta: \Gamma \rightarrow \Gamma'$ is \defbold{surjective} if it is surjective on both vertices and edges, and is \defbold{locally surjective} if for each $v \in V\Gamma$, we have $o\inv_{\Gamma'}(\theta(v)) = \theta(o\inv_{\Gamma}(v))$.  Note that for a locally surjective map $\theta: \Gamma \rightarrow \Gamma'$, if $\Gamma$ is nonempty and $\Gamma'$ is connected (as will typically be the case throughout this paper), then $\theta$ is surjective; indeed, for all edges $e$ of $\Gamma'$, if $o(e)$ or $t(e)$ is in $\theta(\Gamma)$, then $e$ is also in $\theta(\Gamma)$.

\begin{lem}\label{lem:locally_surjective}
Let $\Gamma$ be a graph and let $G \le \Aut(\Gamma)$.  Then $\pi_{(\Gamma,G)}$ is locally surjective.
\end{lem}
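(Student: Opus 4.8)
The plan is to unwind the definition of local surjectivity against the explicit description of the quotient graph recorded above. Write $\pi = \pi_{(\Gamma,G)}$ and $\Gamma' = G\backslash\Gamma$, so that $\pi(v) = Gv$ for $v \in V\Gamma$ and $\pi(a) = Ga$ for $a \in A\Gamma$. What must be shown is that for every $v \in V\Gamma$ one has $o\inv_{\Gamma'}(\pi(v)) = \pi(o\inv_{\Gamma}(v))$, and I would prove the two inclusions separately.

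The inclusion $\pi(o\inv_{\Gamma}(v)) \subseteq o\inv_{\Gamma'}(\pi(v))$ is immediate and amounts to observing that $\pi$ is a graph homomorphism: if $o(a) = v$, then by the definition of the origin map on $\Gamma'$ we get $o_{\Gamma'}(\pi(a)) = o_{\Gamma'}(Ga) = Go(a) = Gv = \pi(v)$, so $\pi(a) \in o\inv_{\Gamma'}(\pi(v))$. For the reverse inclusion, I would start from an arbitrary arc $\alpha$ of $\Gamma'$ with $o_{\Gamma'}(\alpha) = \pi(v)$. Since the arcs of $\Gamma'$ are exactly the $G$-orbits on $A\Gamma$, write $\alpha = Gb$ for some $b \in A\Gamma$; the condition $o_{\Gamma'}(Gb) = Gv$ says precisely that $Go(b) = Gv$, i.e. $o(b)$ and $v$ lie in the same $G$-orbit. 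Pick $g \in G$ with $g\,o(b) = v$ and put $a := gb$. Because $g$ is an automorphism it commutes with $o$, so $o(a) = o(gb) = g\,o(b) = v$; thus $a \in o\inv_{\Gamma}(v)$, while $\pi(a) = Ga = Gb = \alpha$ since $g \in G$. Hence $\alpha \in \pi(o\inv_{\Gamma}(v))$, which finishes the argument.

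I do not anticipate a genuine obstacle: the whole content is the orbit manipulation in the second inclusion, and the only point needing a moment's attention is that ``$Go(b) = Gv$'' really does supply a $g \in G$ with $g\,o(b) = v$ (not merely some element carrying $v$ into the orbit of $o(b)$), together with the fact that this same $g$ then sends the arc $b$ to an arc with origin $v$ --- both of which are automatic from $G \le \Aut(\Gamma)$ and the compatibility of automorphisms with the origin map. I would present the computation as two short chains of equalities and keep the surrounding prose to a minimum.
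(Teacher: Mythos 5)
Your proposal is correct and follows essentially the same route as the paper: pick a preimage $b$ of the given arc of the quotient, use transitivity of $G$ on the fibre of $\pi$ over $\pi(v)$ to find $g$ with $g\,o(b)=v$, and observe that $gb$ is the required arc originating at $v$. The only difference is that you also spell out the easy inclusion $\pi(o\inv_{\Gamma}(v)) \subseteq o\inv_{\Gamma'}(\pi(v))$, which the paper leaves implicit.
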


\begin{proof}
Let $\pi = \pi_{(\Gamma,G)}$ and let $\Gamma' = \pi(\Gamma)$.
Let $v \in V\Gamma$ and $a' \in o\inv_{\Gamma'}(\pi(v))$.  Since $\pi$ is surjective, there exists $a \in A\Gamma$ such that $\pi(a) = a'$, and hence
\[
\pi(o(a)) = o(\pi(a)) = o(a') = \pi(v);
\]
there is then $g \in G$ such that $go(a) = v$, and hence $ga \in o\inv(v)$.  

Thus $\pi(ga) = \pi(a) = a'$; in particular, $a' \in \pi(o\inv_{\Gamma}(v))$.  Given the choice of $v$ and $a$, we conclude that $\pi$ is locally surjective.
\end{proof}

\subsection{Types of action on a tree}\label{sec:action_types}

In this subsection we describe the various types of action on a tree (see Theorem~\ref{thm:types}) and explore their properties (summarised in Table~\ref{fig:types} on page \pageref{fig:types}). We revisit these types in Section~\ref{TypesOfActionRevisited} where we characterise them in terms of their local action diagrams.

If $T$ is a tree, a \defbold{line} in $T$ is a two-way infinite simple path.  A \defbold{translation} of a line $L$ is an orientation-preserving automorphism of $L$ that does not fix any point on the line.

We recall that automorphisms of $T$ come in three types.

\begin{prop}[{\cite[Proposition~3.2]{Tits70}}]\label{prop:TitsClassification}
Let $T$ be a tree and let $g \in \Aut(T)$.  Then exactly one of the following holds:
\begin{enumerate}
\item $g$ fixes a vertex;
\item $g$ inverts an edge; 
\item there is a unique line $L$ in $T$ (called the \defbold{axis} of $g$) on which $g$ induces a translation.
\end{enumerate}
\end{prop}

If $g$ inverts an edge, we call it an \defbold{inversion} of $T$, and if $g$ translates a line we call it a \defbold{translation} of $T$ (or say it is \defbold{hyperbolic} on $T$).  Elements that are not translations are called \defbold{elliptic}.  Notice that an inversion fixes no ends of the tree, and a translation fixes exactly two ends (namely, the ends of its axis).

The \defbold{natural bipartition} or \defbold{vertex parity} of $VT$ is the partition of vertices of $T$ into two classes $V_0T$ and $V_1T$, such that two vertices are in the same class if and only if the distance between them is even.  We say an automorphism or group of automorphisms is \defbold{parity-preserving} if it preserves each part of the parity and \defbold{parity-reversing} otherwise.  In general, given $G \le \Aut(T)$, the parity-preserving automorphisms form a normal subgroup of $G$ of index at most $2$, called the \defbold{parity-preserving subgroup} of $G$.  Note that automorphisms that fix a vertex are parity-preserving, whereas inversions are parity-reversing.  A translation is parity-preserving if it translates vertices along its axis by an even distance and parity-reversing otherwise.

Given $a \in AT$, we define the associated \defbold{half-tree} to be the subgraph $T_a$ induced on the vertices $v$ such that $d(t(a),v) < d(o(a),v)$.  Note that $VT = VT_a \sqcup VT_{\ol{a}}$ and every end of $T$ is an end of exactly one of $T_a$ and $T_{\ol{a}}$, however $a$ and $\ol{a}$ do not lie in $T_a$ nor do they lie in $T_{\ol{a}}$.

A group $G$ acting on a tree $T$ is said to act \defbold{without inversion} if there is no pair $a \in AT$ and $g \in \Aut(T)$ such that $ga = \overline{a}$. The group $G$ acts \defbold{without translation} if no element of $G$ is a translation; otherwise we say $G$ acts \defbold{with translation} or $G$ \defbold{translates} $T$.  
An action on a tree $T$ is said to be \defbold{minimal} if it does not preserve any proper subtree of $T$.
An action is \defbold{geometrically dense} if it is minimal and does not preserve any end of $T$.
  Notice that for $G$ acting on $T$, the set of leaves $L$ of $T$ is $G$-invariant, and $T \smallsetminus L$ is connected and therefore a $G$-invariant subtree. Thus if the action of $G$ is geometrically dense, then either $T$ is leafless or $T$ consists of two vertices connected by a single edge, and $G$ contains element inverting that edge.

Geometrically dense actions are characterised by the action on the set of half-trees ordered by inclusion. The following is similar in spirit to many arguments concerning groups acting on trees; we can find no reference to this precise formulation so include a proof for completeness.

\begin{lem}\label{lem:geom_dense}
Let $T$ be a tree and let $G \le \Aut(T)$.  Then the following are equivalent:
\begin{enumerate}[(i)]
\item $G$ is geometrically dense;
\item For all $a \in AT$, there exists $g \in G$ such that $T_{ga} \subseteq T_{\ol{a}}$;
\item For all $a,b \in AT$, there exists $g \in G$ such that $T_{gb} \subseteq T_a$.
\end{enumerate}
\end{lem}

\begin{proof}
If $T$ has no edges then all three statements are vacuously true. If $T$ has only one edge, then (i)--(iii) are true if $G$ flips the edge and false otherwise.  If $T$ has more than one edge, but has a leaf, it is easy to see that (i)--(iii) are false. Thus we may assume $|o\inv(v)| \ge 2$ for every $v \in VT$.  Given $a,b \in AT$, note that $T_b \subseteq T_a$ if and only if $T_b$ is disjoint from $T_{\ol{a}}$.

Suppose (i) holds and let $a \in AT$; we suppose for a contradiction that $T_{\ol{a}}$ does not contain any $G$-translate of $T_a$.  Then for all $g,h \in G$, the half-trees $T_{ga}$ and $T_{ha}$ intersect.  Let $B_a$ be the set of half-trees $T_b$ such that $b \in Ga$ and $T_b \subseteq T_a$.  Given $T_{b_1},T_{b_2} \in B_a$, since these half-trees are not disjoint and both are contained in $T_a$, we see that one of $T_{b_1}$ and $T_{b_2}$ contains the other; that is, $B_a$ is totally ordered by inclusion.  Moreover, we see that $B_a$ has no infinite ascending chain, by considering the length of the path between $t(a)$ and $o(b)$ for $T_b \in B_a$.  If $B_a = \{T_a\}$, then we see that $t(a)$ is contained in $T_{ga}$ for every $g \in G$; in that case, the intersection $\bigcap_{g \in G}T_{ga}$ is a proper nonempty $G$-invariant subtree, contradicting (i).  Thus there is some $s(a) \in Ga$ such that $T_{s(a)}$ is maximal among elements of $B_a \smallsetminus \{T_a\}$; since $B_a$ is totally ordered, in fact the arc $s(a)$ is unique.  We can then form a sequence $a_i$ of arcs with $a_0 = a$ and $a_{i+1} = s(a)$, so that the trees $T_{a_i}$ form a descending chain; there is then a unique end $\xi$ representable by a directed ray that contains $a_i$ for all $i \ge 0$.  Since $G$ is geometrically dense, it does not fix this end, so $g\xi \neq \xi$ for some $g \in G$.  We then see that for some $i,j \ge 0$, the half-trees $T_{a_i}$ and $T_{ga_j}$ are disjoint, a contradiction.  Thus in fact $T_{\ol{a}}$ does contain a $G$-translate of $T_a$, proving (ii).

Suppose (ii) holds and let $a,b \in AT$.  By considering the relative orientation of $a$ and $b$ on a path containing both, we see that $T_{a'}$ contains $T_{b'}$ for some $a' \in \{a,\ol{a}\}$ and $b' \in \{b,\ol{b}\}$.  Then by (ii), $T_a$ contains a $G$-translate of $T_{a'}$, while $T_{b'}$ contains a $G$-translate of $T_b$.  Thus $T_a$ contains a $G$-translate of $T_b$ and (iii) follows.

Suppose (iii) holds.  Then clearly no $G$-orbit on vertices is confined to a half-tree; since any proper subtree of $T$ is contained in a half-tree, it follows that $G$ does not preserve any proper subtree.  Given an end $\xi$, then there is an arc $a$ such that $\xi$ is an end of $T_a$ and not an end of $T_{\ol{a}}$.  There is then $g \in G$ such that $T_{ga} \subseteq T_{\ol{a}}$, and so $g\xi \neq \xi$.  Thus $G$ does not fix an end, and (i) follows, completing the proof that (i)--(iii) are equivalent.
\end{proof}

Groups acting without translation have the following fixed point property.

\begin{lem}[{See \cite[Proposition~3.4]{Tits70}}]\label{lem:fixed_point}
Let $T$ be a tree and let $G \le \Aut(T)$.  Suppose $G$ acts without translation.  Then $G$ fixes a vertex, preserves an undirected edge, or fixes an end.
\end{lem}

For the purposes of studying groups acting on trees, it is useful to split the actions into six cases, as follows.

\begin{thm}[See \cite{Serre:trees}]\label{thm:types}
Let $G$ be a group acting on a tree $T$.  Then exactly one of the following holds:
\begin{description}
\item[(Fixed vertex)] $G$ fixes some vertex (not necessarily unique);
\item[(Inversion)] $G$ preserves a unique undirected edge and includes an inversion of that edge;
\item[(Lineal)] $G$ fixes exactly two ends and translates the line between them;
\item[(Horocyclic)] $G$ fixes a unique end, does not fix any vertices, and acts without translation;
\item[(Focal)] $G$ fixes a unique end and includes a translation towards this end;
\item[(General type)] $G$ acts with translation and does not fix any end.
\end{description}
\end{thm}

\begin{proof}
We may suppose that $G$ does not fix any vertex.  Suppose $Gv$ has finite diameter for some $v \in VT$.  Then there is a smallest subtree $T'$ containing $Gv$, and $T'$ has finite diameter.  In particular, $G$ preserves a subtree $T'$ of finite diameter.  By repeatedly pruning the leaves of $T'$, we arrive at a $G$-invariant subtree $T''$ in which all vertices are leaves.  Since $G$ does not fix any vertex, $T''$ must be a single undirected edge, and $G$ includes a reversal of that edge.  Thus $G$ is of inversion type.

From now on we may suppose that $G$ has unbounded action (that is, all orbits of vertices under $G$ are of unbounded diameter).  Suppose that $\xi_1$ and $\xi_2$ are distinct ends of $T$ fixed by $G$.  Then there is a unique line $L$ with ends $\xi_1$ and $\xi_2$, which is therefore $G$-invariant.  Since $G$ has unbounded action, it must translate $L$; it then follows that $G$ cannot fix any more ends.  Thus $G$ has lineal action.

From now on we may suppose that $G$ fixes at most one end.  Suppose that $G$ fixes an end $\xi$.  If $G$ acts without translation, then it is horocyclic, whereas if $G$ acts with translation, then it is focal.

In the remaining case, $G$ is unbounded and fixes no ends.  Then $G$ acts with translation by Lemma~\ref{lem:fixed_point}.
\end{proof}

From a geometric perspective, the fixed vertex and inversion cases can be grouped together as `bounded' (in other words, every orbit has finite diameter), but from the perspective of local actions it is useful to distinguish them.  Note that compact subgroups of $\Aut(T)$ have finite orbits, so they are bounded; the converse is true if $T$ is locally finite, but not otherwise.

If $G$ has a horocyclic action then every element in $G$ must be elliptic, and no element can be an inversion (because inversions fix no end); thus every element must fix a vertex. 
A corollary to our next lemma shows that such a group $G$ cannot be compactly generated.

\begin{lem}\label{lem:Serre}
Let $T$ be a tree and let $G$ be a compactly generated locally compact group acting continuously on $T$, such that every element of $G$ fixes a vertex of $T$.  Then $G$ fixes a vertex of $T$.
\end{lem}

\begin{proof}
Note that the hypotheses imply that $G$ acts without inversion. In the case that $G$ is finitely generated, the conclusion is \cite[I.6.5, Corollary 3]{Serre:trees}.

It is a general fact (see \cite[Lemma 2]{MollerFC}, for example) that given any compactly generated \tdlc group $G$ and compact open subgroup $U$ of $G$, there is a finitely generated subgroup $H$ of $G$ such that $G = UH$.  Since $G$ acts continuously on $T$, the action of $U$ has finite orbits.  Now $H$ fixes a vertex $v$ of $T$, which means that the orbit of $v$ under $G = UH$ is finite.  Let $S$ be the smallest subtree of $T$ spanned by $Gv$.  Then $S$ is a finite tree, so it has a canonical centre, which is either a vertex or a pair of adjacent vertices.  Thus $G$ either fixes a vertex or preserves a pair of adjacent vertices.  Since $G$ acts without inversion, in fact $G$ fixes a vertex.
\end{proof}

\begin{cor}\label{cor:horocyclic}
Let $T$ be a tree and let $G$ be a compactly generated locally compact group acting continuously on $T$.  Then the action is not horocyclic.  Consequently, either the action of $G$ is bounded, or $G$ acts with translation.
\end{cor}

Lineal and focal actions imply a special group structure.  To explain why, it is useful to borrow from CAT(0) geometry an idea introduced by H. Busemann (see \cite{Busemann}).  The interpretation of this idea for trees is easy to understand combinatorially.  The next two lemmas are well-known, but we give proofs here for clarity.

\begin{defn}
Let $T$ be a tree and let $\xi$ be an end of $T$.  A \defbold{(combinatorial) Busemann function with focus $\xi$} is a function $b: VT \rightarrow \Zb$ such that for all arcs $a$ pointing towards $\xi$ (that is, such that $\xi$ is an end of $T_a$), we have $b(o(a)) - b(t(a)) = 1$.  A \defbold{horosphere}, respectively \defbold{horoball centred at $\xi$} is a set of the form $\{v \in VT \mid b(v)=n\}$, respectively $\{v \in VT \mid b(v) \le n\}$, for some Busemann function $b$ and $n \in \Zb$.
\end{defn}

\begin{lem}\label{lem:Busemann}
Let $T$ be a tree.  Choose some vertex $v_0 \in VT$ and an end $\xi$ of $T$.
\begin{enumerate}[(i)]
\item For each $n \in \Zb$ there is exactly one Busemann function $b_n$ with focus $\xi$ such that $b_n(v_0) = n$.  Moreover, $b_n(v) = b_0(v)+n$ for all vertices $v$.
\item Let $b$ be a Busemann function with focus $\xi$.  Then for all $g \in \Aut(T)$ the function $g.b$ is a Busemann function with focus $g.\xi$, where $g.b(v) := b(g\inv v)$.  Consequently, given a group $G \le \Aut(T)$ such that $G$ fixes $\xi$, the function
\[
\beta_{\xi}: G \rightarrow \Zb; \; \beta_{\xi}(g) = g.b(v) - b(v) 
\]
is a homomorphism that does not depend on the choices of $b$ or $v$.
\item \label{lem:Busemann:item:III} Let $G$ be a group of automorphisms of $T$ fixing $\xi$, let $\beta_{\xi}$ be as in (ii) and let $g \in G$.  If $\beta_{\xi}(g) > 0$, then $g$ is a translation with attracting end $\xi$; if $\beta_{\xi}(g)<0$, then $g$ is a translation with repelling end $\xi$; and if $\beta_{\xi}(g)=0$, then $g$ fixes pointwise a ray representing $\xi$.
\end{enumerate}
\end{lem}

\begin{proof}
Fix a ray $(v_0,v_1,\dots)$ of vertices representing $\xi$.

(i)
We set
\[
b_0(v) = \lim_{t \rightarrow \infty} d(v,v_t) - t
\]
where $d$ is the usual graph metric on $VT$.  It is easy to see that $b_0(v_t) = -t$ for all $t \ge 0$.  For a general vertex $v$, we see that for all but finitely many $t \ge 0$ and all $t' > t$, the path from $v$ to $v_{t'}$ passes through $v_t$; we then see that
\[
d(v,v_{t'}) - t' = d(v,v_t) - t,
\]
and hence for any such choice of $t$, we have $b_0(v) = d(v,v_t) - t$.  Thus $b_0$ is a well-defined integer-valued function.  Given an arc $a$ terminating at $v$ such that $\xi$ is an end of $T_a$, then the path from $o(a)$ to $v_{t'}$ passes through $v$ and $v_t$ for all sufficiently large $t$ and $t' > t$; hence, for $t$ sufficiently large we have
\[
b_0(o(a)) = d(o(a),v_t) - t = 1 + d(v,v_t) - t = b_0(t(a)) + 1.
\]
Thus $b_0$ is a Busemann function with $b_0(v_0) = 0$.  Clearly also $b_n(v) := b_0(v)+n$ defines a Busemann function such that $b_n(v_0)=n$, for all $n \in \Zb$.  To see that $b_n$ is unique for all $n \in \Zb$, note that any Busemann function $b$ is uniquely specified by its value at a given vertex: once we know $b(v)$ for some vertex $v$, then for $a \in o\inv(v)$, we have $b(t(a)) = b(v)-1$ if $a$ points towards $\xi$ and $b(t(a))=b(v)+1$ otherwise, so we also know $b(w)$ for every neighbour $w$ of $v$, and hence by connectedness the value is specified at every vertex.

(ii)
Let $g \in \Aut(T)$.  If $a \in AT$ points towards $g.\xi$, then $g\inv a$ points towards $\xi$, so
\[
g.b(o(a)) - g.b(t(a)) = b(g\inv o(a)) - b(g\inv t(a)) =  b(o(g\inv a)) - b(t(g\inv a)) = 1.
\]
Hence $g.b$ is a Busemann function with focus $g.\xi$.

Now suppose that $G \le \Aut(T)$ fixes $\xi$ and let $g \in G$.  By (i), we have $b = b_n$ and $g.b = b_m$ for some $m,n \in \Zb$; thus $g.b(v)-b(v) = m-n$, independent of the choice of $v$.  Clearly adding a constant to $b$ commutes with the action of $g$ on the set of Busemann functions, so $g.b(v)-b(v)$ is also independent of the choice of $b$.  To see that $\beta:= \beta_{\xi}$ is a homomorphism, we see that
\[
\beta(gh) = gh.b(v) - b(v) = g.(h.b)(v) - (h.b)(v) + h.b(v) - b(v) = \beta(g) + \beta(h),
\]
using the fact that $h.b$ is also a Busemann function.

(iii)
Let $G \le \Aut(T)$ fix $\xi$ and let $g \in G$.  Then $(g\inv .v_0,g\inv.v_1,\dots)$ is a ray representing $\xi$, so there is some $n \in \Zb$ such that for all but finitely many $t \ge 0$, we have $g\inv .v_t = v_{t-n}$.  We then have
\[
\beta(g) = b(v_{t-n}) - b(v_t) = n.
\]
Moreover, we see that $\xi$ is an attracting end of $g$ if and only if $n >0$ and a repelling end if $n < 0$.  If $n=0$, then $g$ fixes pointwise the ray $(v_t,v_{t+1},\dots)$ for $t$ sufficiently large.
\end{proof}

We now see why, in the taxonomy of Theorem~\ref{thm:types}, the name {\it horocyclic} is used. Suppose $(G, T)$ is horocyclic with fixed end $\xi$, and for some $n \in \Zb$ and some Busemann function $b$ with focus $\xi$, let $H = \{v \in VT \mid b(v)=n\}$ be a horosphere. By Lemma~\ref{lem:Busemann}(\ref{lem:Busemann:item:III}), we have $\beta_\xi(g) = 0$ for any $g \in G$. Therefore $gH = \{gv \in VT \mid b(v)=n\} = \{w \in VT \mid g.b(w) = n\} = H$. Thus for a horocyclic group $G$, each Busemann function, each horoball and each horocycle is invariant under $G$.

\begin{lem}\label{lem:Busemann:bis}
Let $T$ be a tree and let $G \le \Aut(T)$, equipped with the subspace topology in $\Aut(T)$.
\begin{enumerate}[(i)]
\item If $G$ is lineal or focal, then $G$ decomposes as a semidirect product $E \rtimes \grp{s}$, where $E$ is an open subgroup consisting of all elliptic elements of $G$ and $s$ is a translation.  If $G$ is lineal then $E$ fixes pointwise the axis of $\xi$, whereas if $G$ is focal then $E$ is horocyclic.
\item Suppose $G$ is topologically perfect (i.e., the commutator $[G, G]$ subgroup is dense in $G$). Then $G$ is of fixed vertex, horocyclic or general type, and if $G$ is of general type then $G$ does not preserve any line of $T$.
\end{enumerate}
\end{lem}

\begin{proof}
(i)
Let $\xi$ be an end of $T$ fixed by $G$ and take vertices $v_t$ for $t \ge 0$ and the homomorphism $\beta := \beta_{\xi}$ as in Lemma~\ref{lem:Busemann}.  Then given $g \in G$, we see that $\beta(g) = 0$ if and only if $g$ is elliptic.  Thus $E: = \ker\beta$ is a subgroup consisting of all the elliptic elements; in particular $E$ contains all vertex stabilisers, so it is open.  Taking a translation $s$ in $G$ such that $|\beta(s)|$ is minimised, we then see that for each $g \in G$ the value $|\beta(s)|$ must divide $|\beta(g)|$. Consequently, $\beta(\grp{s}) = \beta(G)$ and hence $G = E \rtimes \grp{s}$.  Now $E$ acts without translation and fixes an end; by considering the six types of action on a tree, we conclude that $E$ is of fixed vertex or horocyclic type.  To finish the proof of this part, we argue that $E$ fixes a vertex if and only if $G$ is lineal.

Suppose $E$ fixes a vertex $v$.  Then, being normal, $E$ also fixes $s^k.v$ for all $k \in \Zb$, and hence fixes pointwise the smallest subtree $T'$ containing $\{s^k.v \mid k \in \Zb\}$.  Since $T'$ is an $\grp{s}$-invariant subtree, it contains the axis of $s$.  Thus $E$ fixes pointwise the axis of $s$, and hence $G$ preserves the axis of $s$, showing that $G$ is lineal.

Conversely, suppose $G$ is lineal and let $\xi$ and $-\xi$ be the two fixed ends of $G$, in other words the ends of the axis $L$ of $s$.  Then given $g \in E$, by Lemma~\ref{lem:Busemann}, $g$ fixes pointwise a ray $R$ representing $\xi$ and $g$ also fixes pointwise a ray $-R$ representing $-\xi$.  The union of these two rays contains all but finitely many vertices of $L$, while the remaining vertices of $L$ lie on the shortest path from $-R$ to $R$, so in fact $g$ fixes $L$ pointwise.  Thus $E$ fixes $L$ pointwise.

(ii)
Since $G$ is topologically perfect, it has no proper open normal subgroup $N$ such that the quotient $G/N$ is abelian. Hence, by (i), $G$ cannot be lineal or focal.  If $G$ is of inversion type, or $G$ is of general type and preserves a line $L$, then it has an open subgroup $H$ of index $2$: in the former case we take $H$ to be the parity-preserving subgroup of $G$, and in the latter case we take $H$ to be the subgroup that preserves the orientation of $L$.  In either case, $G/H$ is a discrete abelian group of order $2$, which contradicts the hypothesis that $G$ is topologically perfect.  Thus $G$ must be of one of the remaining types, that is, fixed vertex, horocyclic or general type, and in the general type case, $G$ cannot preserve any line.
\end{proof}

The \defbold{smallest invariant subtree} of an action $(T, G)$  is a proper nonempty subtree $T'$ of $T$ such that $gT' = T'$ and such that $T' \subseteq T''$ for every $G$-invariant subtree $T''$ of $T$. Note that if $(T, G)$ has a smallest invariant subtree then the smallest invariant subtree is necessarily unique.

\begin{lem}\label{lem:minimal_invariant}
Let $(T,G)$ be an action on a tree.  Then $(T,G)$ has a smallest invariant subtree $T'$ exactly when one of the following holds:
\begin{enumerate}[(i)]
\item $G$ fixes a unique vertex, in which case $T'$ is that vertex;
\item $G$ is of inversion type, in which case $T'$ is the invariant undirected edge;
\item \label{lem:minimal_invariant:translation}
$G$ acts with translation, in which case $T'$ is the union of the axes of translation of $G$, and $T'$ is either a line or an infinitely ended tree. In particular, if $G$ is of general type then $G$ acts geometrically densely on $T'$.
\end{enumerate}
\end{lem}

\begin{proof}
In cases (i) and (ii), the smallest invariant subtree is given by \cite[Corollaire~3.5]{Tits70} and its proof. If $G$ acts with translation, then there is a smallest invariant subtree $T'$ by \cite[Corollaire~3.5]{Tits70}, and moreover $T'$ is infinite.  By \cite[Lemma~2.1(iii)]{MollerVonk}, $T'$ is the union of the axes of translation of $G$ on $T$.  If $T'$ is not a line, then there are at least two distinct axes; since the powers of a translation $g$ do not fix any end other than the ends of the axes of $g$, it follows that $G$ has an infinite orbit on the ends of $T'$, and in particular, $T'$ is infinitely ended.  If $G$ is of general type, then $G$ fixes no end, so clearly the action of $G$ on $T'$ is geometrically dense.

By Theorem~\ref{thm:types}, the remaining cases are that $G$ fixes more than one vertex, or that $G$ is horocyclic.  In the former case, distinct fixed vertices give disjoint invariant subtrees, so there is no smallest invariant subtree. If $G$ is horocyclic with fixed end $\xi$, we see from Lemma~\ref{lem:Busemann} that there is a $G$-invariant Busemann function with focus $\xi$, so each of the horoballs centred at $\xi$ spans a $G$-invariant subtree; the horoballs centred at $\xi$ have empty intersection, and hence there is no smallest invariant subtree.
\end{proof}

Table~\ref{fig:types} summarises the properties implied by the six types of action on a tree.  The last column describes the smallest invariant subtree.

\begin{table}[h]
\caption{The types of action on a tree}\label{fig:types}%
\begin{tabular}{@{}llllll@{}}
\toprule
Type & Bounded & Inversions & Translations & fixed ends  & s. i. s. \\ 
\midrule
Fixed vertex & Yes & No & No & any & vertex or $\not\exists$ \\
Inversion & Yes & Yes & No & 0 & edge \\
Lineal & No & No & Yes  & 2 & line \\
Horocyclic & No & No & No & 1 & $\not\exists$ \\
Focal & No & No & Yes & 1 & $\infty$-ended \\
General & No & Maybe & Yes & 0 & line or $\infty$-ended \\
\botrule
\end{tabular}
\end{table}

General type action implies a useful dichotomy in the normal subgroups of $G$.

\begin{lem}\label{lem:general_type}
Let $T$ be a tree, let $G \le \Aut(T)$ act with translation, let $T'$ be the smallest invariant subtree of $G$ and let $N$ be a normal subgroup of $G$.  Suppose that $N$ acts nontrivially on $T'$.  Then $N$ is unbounded.  If $G$ is of general type and $T'$ has more than two ends, then $N$ is of general type and the smallest invariant subtree of $N$ is the same as that of $G$.
\end{lem}

\begin{proof}
Suppose that $N$ fixes a vertex.  Then there is a unique smallest subtree $T_0$ containing all the fixed vertices of $N$, and we see that in fact, $N$ fixes $T_0$ pointwise.  In particular, since $N \unlhd G$, $T_0$ is $G$-invariant, so $T' \subseteq T_0$.  But then $N$ acts trivially on $T'$, a contradiction.  Thus $N$ has no fixed vertices.  A similar argument shows that $N$ does not preserve any undirected edge.  So $N$ is unbounded.

Now suppose that $G$ is of general type and $T'$ has more than two ends, and consider the set $E$ of ends fixed by $N$.  Then $E$ is $G$-invariant, and since $N$ has unbounded action, $|E| \le 2$.  However, $G$ cannot fix any end or preserve a set of two ends, since $T'$ has more than two ends.  So $E = \emptyset$.  We deduce that $N$ has general type, with minimal subtree $T''$ spanned by the axes of translation of $N$.  Since $N \le G$ we see that $T'' \subseteq T'$; but since $N \unlhd G$, the tree $T''$ is $G$-invariant, so $T' \subseteq T''$.  Thus $T' = T''$.
\end{proof}

We also note a sufficient condition for $G$ to have finitely many orbits on its smallest invariant subtree.

\begin{lem}[{See also \cite[Lemma~2.4]{CapDeM}}]\label{lem:finite_type}
Let $(T,G)$ be an action with translation on a tree, and let $T'$ be the smallest invariant subtree of $T$.  Suppose that $G$ is generated by finitely many cosets of $G_v$, for some $v \in VT'$.  Then $G$ has finitely many orbits on $VT' \sqcup AT'$.
\end{lem}

\begin{proof}
We can take a symmetric generating set for $G$ of the form $S = F \cup G_v$, where $F$ is finite.  Then the set $\{sv \mid s \in S\}$ is finite and for each $s \in S$, the path $[v,sv]$ from $v$ to $sv$ is contained in $T'$.  Let $T''$ be the subtree spanned by the paths $[v,sv]$ as $s$ ranges over $S$.  Then $T''$ is finite and for each $s \in S$, the graph $T'' \cup sT''$ is connected: specifically, both $T''$ and $sT''$ are connected and contain $sv$.  From here, we see that the graph $\bigcup_{g \in G} gT''$ is also connected, and hence equal to $T'$.  This shows that $G$ has finitely many orbits on $VT' \sqcup AT'$.
\end{proof}

\subsection{The $\propP{}$-closure, property $\propP{}$ and group topology} 
\label{subsec:P-closure}

Recall from the introduction that $G \leq \Aut(T)$ is $\propP{}$-closed if and only if it is closed as a subgroup of $\Aut(T)$ and has Tits' independence property $\propP{}$ (see (Definition~\ref{Def:TitsP})). In this subsection we describe ideas related to property $\propP{}$.
We focus first on a natural notion called the $\propP{k}$-closure of $G$, denoted $G^{\propP{k}}$, originally due to Banks, Elder  and Willis, and we carefully extend this notion to non-locally finite trees in Definition~\ref{PropPk}.
As a consequence of Proposition~\ref{prop:kclosure} and Theorem~\ref{propertyP_oneclosure} we see that for all trees (not just those that are locally-finite) being $\propP{1}$-closed in the sense of Definition~\ref{PropPk} is equivalent to being $\propP{}$-closed. This allows us to define the $\propP{}$-closure of $G$ in Definition~\ref{def:PClosed}.

We define important subgroups $G^+$ and $G^{++}$ in Definition~\ref{def:Gplus_Gplusplus} then state Tits' famous simplicity theorem  for $G^+$ (Theorem~\ref{thm:Tits}) and M\"{o}ller and Vonk's simplicity theorem for $G^{++}$ (Theorem~\ref{thm:MollerVonk}). Following this we explore the concept of a group having closed local actions. We conclude the subsection with elementary results relating the permutational and topological properties of groups.

Given a set $X$, we equip $\Sym(X)$ with the permutation topology (see \S\ref{sec:groups}) and note that this is the coarsest group topology such that the stabiliser of every $x \in X$ is open.  Given a tree $T$, we give $\Aut(T)$ the subspace topology, regarding $\Aut(T)$ as a subgroup of $\Sym(VT)$.  Observe that in fact $\Aut(T)$ corresponds to a closed subgroup of  $\Sym(VT)$; if $VT$ is countable, this ensures that $\Aut(T)$ is Polish (that is, separable and completely metrisable) and also totally disconnected, but $\Aut(T)$ is not necessarily locally compact.  Assuming $T$ has no leaves, one could equivalently define the topology of $\Aut(T)$ with respect to the permutation topology on arcs or undirected edges, or the permutation topology on one part of the natural bipartition of the vertices of $T$: this can be seen by noting that two undirected edges suffice to specify a vertex, two vertices to specify an arc, and two vertices in one part of the bipartition to specify a vertex in the other part.

\begin{defn} \label{PropPk}
Given 
$G \le \Aut(T)$ and $k \ge 1$, the \defbold{$\propP{k}$-closure} of $G$, denoted by $G^{\propP{k}}$, is the set of automorphisms $g \in \Aut(T)$ such that for all $v \in VT$, and every finite set of vertices $X$ all of which are at distance at most $k$ from $v$, there exists $g_X \in G$ such that $gw = g_Xw$ for every vertex $w \in X$.  We say $G$ is \defbold{$\propP{k}$-closed} if $G = G^{\propP{k}}$.
 
This definition 
appears independently in 
\cite{BanksElderWillis} in the context of groups acting on trees with independence (where it is called the $k$-closure\footnote{The term $k$-closure has a well-established meaning in permutation group theory due to Wielandt.}), and in  \cite{SamShep} in the context of generalising Leighton's Theorem (where it is called the $k$-symmetry-restricted closure), but in both cases this property is defined only for locally finite trees. We caution the reader that care must be taken when extending the definition from these papers to trees that are not locally finite in order to avoid having $\propP{k}$-closed groups that are not closed. Note that in our definition we specify the condition on finite subsets of $B_k(v)$ rather than $B_k(v)$ itself in order to ensure that the $\propP{k}$-closure is always closed in the permutation topology. For locally finite trees our definition is equivalent to that given by Banks, Elder and Willis in \cite{BanksElderWillis}.
\end{defn}

We will use some basic properties of the $\propP{k}$-closure.  We include proofs here because the approach of \cite{BanksElderWillis} implicitly assumes that trees are locally finite.

For the rest of this article, we define $G_{(X)} := \{g \in G \mid \forall x \in X: gx = x\}$, where $X$ is a set of vertices of $T$.

\begin{prop}[See \cite{BanksElderWillis} Proposition~3.4]\label{prop:kclosure}Let $T$ be a tree, let $G \le \Aut(T)$ and let $k \in \Nb$.
\begin{enumerate}[(i)]
\item $G^{\propP{k}}$ is a closed subgroup of $\Aut(T)$.
\item $G^{\propP{l}} = (G^{\propP{k}})^{\propP{l}}$ whenever $l \le k$.  In particular, $(G^{\propP{k}})^{\propP{k}} = G^{\propP{k}}$, so $G^{\propP{k}}$ is $\propP{k}$-closed.
\end{enumerate}
\end{prop}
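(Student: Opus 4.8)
The plan is to prove both parts essentially from the definitions, using the elementary fact that the $\propP{k}$-closure only constrains an automorphism through its behaviour on finitely many vertices within distance $k$ of any given vertex. First I would establish (i). To see that $G^{\propP{k}}$ is a subgroup: given $g, h \in G^{\propP{k}}$, a vertex $v$, and a finite set $X$ of vertices at distance at most $k$ from $v$, I would first apply the defining property of $h$ at $v$ to get $h_X \in G$ agreeing with $h$ on $X$; then observe that $h_X X$ is a finite set of vertices at distance at most $k$ from $h_X v = hv$, and apply the defining property of $g$ at the vertex $hv$ with witness set $h_X X$ to obtain $g_{h_X X} \in G$ agreeing with $g$ on $h_X X = h X$. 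Then $g_{h_X X} h_X \in G$ agrees with $gh$ on $X$, so $gh \in G^{\propP{k}}$. A similar (slightly easier) argument handles inverses, using that $g$ and $g_X$ agree on $X$ iff $g\inv$ and $g_X\inv$ agree on $g_X X = g X$. For closedness, I would show the complement is open in the permutation topology: if $g \notin G^{\propP{k}}$, there is a vertex $v$ and a finite $X$ within distance $k$ of $v$ with no element of $G$ matching $g$ on $X$; then every $g'$ in the basic open neighbourhood $\{g' : g'x = gx \text{ for all } x \in X\}$ of $g$ also fails the condition for that same $v$ and $X$, so this neighbourhood misses $G^{\propP{k}}$.

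For (ii), the inclusion $G^{\propP{l}} \supseteq (G^{\propP{k}})^{\propP{l}}$ when $l \le k$ is the substantive direction, and the reverse inclusion $G^{\propP{l}} \subseteq (G^{\propP{k}})^{\propP{l}}$ is immediate from $G \subseteq G^{\propP{k}}$ (more matching elements are available, so the closure can only grow). For the substantive direction, take $g \in (G^{\propP{k}})^{\propP{l}}$, a vertex $v$, and a finite set $X$ of vertices within distance $l \le k$ of $v$. By definition there is $g_X \in G^{\propP{k}}$ with $g_X x = gx$ for all $x \in X$. Now apply the definition of $G^{\propP{k}}$ to $g_X$ at the vertex $v$ with the witness set $X$ (note $X$ consists of vertices within distance $l \le k$, hence within distance $k$, of $v$): this yields $h \in G$ with $hx = g_X x = gx$ for all $x \in X$. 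Hence $g \in G^{\propP{l}}$. The final assertions then follow by specialization: taking $l = k$ gives $(G^{\propP{k}})^{\propP{k}} = G^{\propP{k}}$.

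I do not expect a serious obstacle here; the only point requiring a little care is bookkeeping the distances and base vertices in the subgroup-closure argument for (i) — one must remember that after translating $X$ by $h_X$ the new witness set sits within distance $k$ of the \emph{translated} base vertex $hv$, not of $v$, and invoke the defining property of $g$ at $hv$ rather than at $v$. Everything else is a direct unwinding of definitions, and the non-local-finiteness of $T$ plays no role because the defining condition is already phrased in terms of finite vertex sets.
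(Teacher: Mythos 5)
Your proposal is correct and follows essentially the same route as the paper's proof: the same two-step matching argument for products (translate the witness set and re-base at $hv$), the same open-complement argument for closedness, and the same two-line verification of (ii). The only cosmetic slip is the claim $h_Xv = hv$, which need not hold since $h_X$ only agrees with $h$ on $X$; but this is harmless because $h_XX = hX$ lies within distance $k$ of $hv$ anyway (as $h$ is an automorphism), which is all the argument uses.
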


\begin{proof}  
(i)
Write $A := \Aut(T)$.  Let $g,h \in G^{\propP{k}}$, let $v \in VT$ and let $X$ be a finite set of vertices all of which are at distance at most $k$ from $v$.  Then there exists $h_X \in G$ such that $h_Xw = hw$ for all $w \in X$.  In turn, $hX := \{hw \mid w \in X\}$ is a finite set of vertices, all of which are at distance at most $k$ from $hv$, so there exists $g_{hX} \in G$ such that $g_{hX}w = gw$ for all $w \in hX$.  Thus $g_{hX}h_X$ is an element of $G$ such that $g_{hX}h_Xw = ghw$ for all $w \in X$.  We conclude that $gh \in G^{\propP{k}}$.  Similarly, it is clear that $G^{\propP{k}}$ is closed under inverses.  Thus $G^{\propP{k}}$ is a subgroup of $A$.

Let $\mathcal{X}_k$ be the set of all finite sets $X$ of vertices in $T$, such that there is a vertex $v$ at distance at most $k$ from every vertex in $X$.  Then $A_{(X)}$ is an open subgroup of $A$ for every $X \in \mathcal{X}_k$.  Observe that given $g \in A \smallsetminus G^{\propP{k}}$, then there exists $X_g \in \mathcal{X}_k$ such that no element of $G$ agrees with $g$ on $X_g$, and hence no element of $G^{\propP{k}}$ agrees with $g$ on $X_g$, that is, $G^{\propP{k}} \cap gA_{(X_g)} = \emptyset$.  We can therefore express the complement of $G^{\propP{k}}$ as the following union of open sets:

\[
A \smallsetminus G^{\propP{k}} = \bigcup_{g \in A \smallsetminus G^{\propP{k}}} gA_{(X_g)}.
\]

Hence $G^{\propP{k}}$ is closed in $\Aut(T)$.

(ii)
Since $G \le G^{\propP{k}}$ then $G^{\propP{l}} \le  (G^{\propP{k}})^{\propP{l}}$.  Let $g \in (G^{\propP{k}})^{\propP{l}}$ and let $X$ be a finite set of vertices of $T$, all of which are at distance at most $l$ from some vertex $v$.  Then there exists $g_X \in G^{\propP{k}}$ such that $g_Xw = gw$ for all $w \in X$.  But then all the vertices in $X$ are at distance at most $k$ from $v$, so there exists $g'_X \in G$ such that $g'_Xw = g_Xw = gw$ for all $w \in X$.  Hence $G^{\propP{l}} = (G^{\propP{k}})^{\propP{l}}$.  The remaining conclusions are clear.
\end{proof}

It is useful to note that the property of being $\propP{k}$-closed is inherited by fixators of vertices and is stable under taking intersections of subgroups.

\begin{lem}\label{kclosed_intersection}
Let $T$ be a tree, let $k$ be a positive integer and let $\mc{G}$ be a family of $\propP{k}$-closed subgroups of $\Aut(T)$.  Then $H = \bigcap_{G \in \mc{G}}G$ is $\propP{k}$-closed.
\end{lem}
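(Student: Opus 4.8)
The plan is to show directly from the definition of the $\propP{k}$-closure that $H^{\propP{k}} \subseteq H$; the reverse inclusion $H \subseteq H^{\propP{k}}$ is automatic since any group is contained in its own $\propP{k}$-closure. So the real content is that an automorphism which "locally agrees" with $H$ on every ball-like finite set must already lie in $H$, given that each $G \in \mc{G}$ has this property.

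First I would take $g \in H^{\propP{k}}$ and fix an arbitrary $G \in \mc{G}$; the goal is to prove $g \in G$, as then $g \in \bigcap_{G \in \mc{G}} G = H$. Let $v \in VT$ and let $X$ be any finite set of vertices all at distance at most $k$ from $v$. By definition of $g \in H^{\propP{k}}$, there is $g_X \in H$ with $g_X w = g w$ for all $w \in X$. But $H \le G$, so $g_X \in G$ as well, and $g_X$ witnesses the required agreement of $g$ with an element of $G$ on $X$. Since $v$ and $X$ were arbitrary, this shows $g \in G^{\propP{k}}$. Now invoke the hypothesis that $G$ is $\propP{k}$-closed, i.e. $G^{\propP{k}} = G$, to conclude $g \in G$. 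As $G \in \mc{G}$ was arbitrary, $g \in H$, completing the proof of $H^{\propP{k}} \subseteq H$ and hence of $H^{\propP{k}} = H$.

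There is essentially no obstacle here: the argument is a one-line unwinding of definitions, using only that the "witness" $g_X \in H$ provided by membership in $H^{\propP{k}}$ also lies in each larger group $G$. The only point requiring the tiniest care is making sure the same finite set $X$ (and vertex $v$) is reused for each $G$ — which is immediate since the choice of $X$ does not depend on $G$ — and that $\mc{G}$ is allowed to be empty or infinite, in which case $H = \Aut(T)$ (the empty intersection) is trivially $\propP{k}$-closed, or the argument above applies verbatim. One could also remark that the analogous statement for fixators of vertices follows by the same reasoning, since $\Aut(T)_{(Y)}$ is itself $\propP{k}$-closed whenever the ambient group is, but that is a separate (equally routine) observation.
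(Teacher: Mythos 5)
Your proof is correct and is essentially identical to the paper's: both take $g\in H^{\propP{k}}$, observe that the witness $g_X\in H$ also lies in each $G\in\mc{G}$, deduce $g\in G^{\propP{k}}=G$, and intersect. The extra remarks about the empty family and fixators are harmless but not needed.
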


\begin{proof}
Let $h \in H^{\propP{k}}$ and let $G \in \mc{G}$.  Then for each $v \in VT$ and each finite set $X$ of vertices in $B_k(v)$, there is some $g_X \in H$ such that $g_Xw = hw$ for all $w \in X$.  In particular, $g_X \in G$.  Since $G$ is $\propP{k}$-closed, it follows that $h \in G$; since $G \in \mc{G}$ was arbitrary, in fact $h \in H$.  Thus $H = H^{\propP{k}}$, so $H$ is $\propP{k}$-closed.
\end{proof}

\begin{lem}\label{oneclosed_stabiliser}Let $T$ be a tree, let $G \le \Aut(T)$, let $k \ge 1$ and let $X$ be a set of vertices of $T$.  If $G$ is $\propP{k}$-closed, then so is $G_{(X)}$.\end{lem}

\begin{proof}Suppose $G$ is $\propP{k}$-closed, and let $H = (G_{(X)})^{\propP{k}}$.  Given $v \in X$ and $g \in H$, we see from the definition of the $\propP{k}$-closure that $g$ must fix $v$.  So in fact $H$ is a subgroup of $(G^{\propP{k}})_{(X)}$, which is just $G_{(X)}$.  Hence $H = G_{(X)}$.\end{proof}

We also recall Tits' property $\propP{}$, introduced in \cite{Tits70}.

\begin{defn} \label{Def:TitsP}
Let $T$ be a tree and let $\theta: G \rightarrow \Aut(T)$ be a group homomorphism.  Given a nonempty (finite or infinite)  simple  path $L$ in $T$, let $\pi_L:VT \rightarrow VL$ be the closest point projection of the vertices of $T$ onto $L$; observe that for each $x \in L$, the set $\pi^{-1}_L(x)$ is a nonempty subtree of $T$.  Write $\theta(G)_{(L)}$ for the pointwise stabiliser of $L$ (so $\theta(G)_{(L)}$ preserves setwise each of the fibres $\pi^{-1}_L(x)$ of $\pi_L$).  Then for each vertex $x \in L$, there is a natural homomorphism $\phi_x: \theta(G)_{(L)} \rightarrow \Sym(\pi^{-1}_L(x))$ induced by the action of $\theta(G)_{(L)}$ on $\pi^{-1}_L(x)$.  We can combine the homomorphisms $\phi_x$ in the obvious way to obtain a homomorphism
\[
\phi_L: \theta(G)_{(L)} \rightarrow \prod_{x \in VL} \phi_x(\theta(G)_{(L)}).
\]
In general, $\phi_L$ is injective but not necessarily surjective.  We say $G$ (or more precisely, the action of $G$ on $T$) has \defbold{Tits' independence property $\propP{}$} or briefly \defbold{property $\propP{}$} (with respect to a collection $\mathcal{L}$ of simple  paths) 
if $\phi_L$ is surjective for every possible choice of $L$ (such that $L \in \mathcal{L}$).
\end{defn}

A major motivation of \cite{BanksElderWillis} was to generalise Tits' property $\propP{}$, and indeed property $\propP{}$ has a natural interpretation in terms of the $\propP{1}$-closure.

\begin{thm}[See \cite{BanksElderWillis} Theorem 5.4 and Corollary 6.4]\label{propertyP_oneclosure}Let $T$ be a tree and let $G$ be a closed subgroup of $\Aut(T)$.  Then $G = G^{\propP{1}}$ if and only if $G$ satisfies Tits' property $\propP{}$.  Furthermore, if $G$ has property $\propP{}$ with respect to the edges of $T$, then $G = G^{\propP{1}}$, so $G$ has property $\propP{}$ with respect to all  simple  paths.\end{thm}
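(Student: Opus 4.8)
The plan is to split the statement into two implications. Call them \emph{(a): if $G = G^{\propP{1}}$ then $G$ satisfies property~$\propP{}$ with respect to every path}, and \emph{(b): if $G$ is closed and $\phi_L$ is surjective for every edge $L$ of $T$, then $G^{\propP{1}} \subseteq G$}. Since $G \subseteq G^{\propP{1}}$ always holds, (a) and (b) yield everything: the forward direction of the equivalence is (a), since property~$\propP{}$ with respect to all paths is exactly Tits' property~$\propP{}$; the reverse direction follows from (b), because a group with property~$\propP{}$ is in particular closed with surjective $\phi_L$ for edges, hence satisfies $G^{\propP{1}} \subseteq G \subseteq G^{\propP{1}}$; and the final clause is (b) (giving $G = G^{\propP{1}}$) followed by (a) (upgrading $\propP{}$-for-edges to $\propP{}$-for-all-paths).

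For (b), fix $g \in G^{\propP{1}}$; since $G$ is closed it suffices to show that $g$ agrees with some element of $G$ on every finite subtree $F$ of $T$. Fix $v_0 \in F$ and induct on the radius $n$ of $F$ about $v_0$, the case $n \le 1$ being immediate from $\propP{1}$-closedness. For the step $n \ge 2$, apply the induction hypothesis to $F \cap B_{n-1}(v_0)$ to obtain $h \in G$ agreeing with $g$ there, and replace $g$ by $g' = h^{-1}g \in G^{\propP{1}}$, so that we may assume $g$ fixes $F \cap B_{n-1}(v_0)$ pointwise and need only find $y \in G$ fixing $F \cap B_{n-1}(v_0)$ pointwise with $y|_F = g|_F$; then $hy \in G$ finishes the induction. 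For each $p \in F \cap S_{n-1}(v_0)$, use $g \in G^{\propP{1}}$ on the star $B_1(p)$ to obtain $k_p \in G$ agreeing with $g$ on $B_1(p)$; then $k_p$ fixes $p$ and its parent $p'$ (both being fixed points of $g$), so surjectivity of $\phi_{\{p,p'\}}$ produces $y_p \in G$ fixing $p$ and $p'$, acting as $k_p$ on the half-tree hanging off $p$ away from $v_0$, and acting trivially on the complementary half-tree, which contains $B_{n-1}(v_0)$. One checks that the half-trees attached to distinct $p \in F \cap S_{n-1}(v_0)$ are pairwise disjoint, so the $y_p$ have pairwise disjoint supports; their product $y \in G$ (a genuine finite product, as $F$ is finite) fixes $F \cap B_{n-1}(v_0)$ pointwise and satisfies $y(w) = k_p(w) = g(w)$ for every child $w \in F$ of every such $p$, hence $y|_F = g|_F$.

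For (a), $G$ is closed by hypothesis, so we need only verify surjectivity of $\phi_L$ for an arbitrary path $L$. Given a target $(\sigma_x)_{x \in VL}$ with $\sigma_x \in \phi_x(G_{(L)})$, pick $h_x \in G_{(L)}$ with $h_x|_{\pi^{-1}_L(x)} = \sigma_x$, and let $g^* \in \Sym(VT)$ be the bijection acting as $\sigma_x$ on the fibre $\pi^{-1}_L(x)$ for each $x$. Since each fibre is a subtree and the only edges of $T$ joining distinct fibres are the edges of $L$ itself, all of which $g^*$ fixes (as every $\sigma_x$ fixes $x$), the map $g^*$ is a graph automorphism fixing $L$ pointwise. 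To see $g^* \in G^{\propP{1}} = G$, take $v \in VT$ and a finite $X \subseteq B_1(v)$: if $v \notin VL$ then $B_1(v)$ lies inside a single fibre $\pi^{-1}_L(x)$, so $h_x$ agrees with $g^*$ on $X$; and if $v = x \in VL$, then $h_x$ again agrees with $g^*$ on all of $B_1(x)$, since both fix $x$ and the neighbours of $x$ lying on $L$, and both act as $\sigma_x$ on the remaining neighbours of $x$. Hence $g^* \in G$ and $\phi_L(g^*) = (\sigma_x)_{x}$, so $\phi_L$ is surjective.

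The main obstacle is the propagation argument in (b). Because $T$ need not be locally finite, the induction must be organized over finite subtrees rather than balls; and the crucial point is to exploit property~$\propP{}$ at the parent edges to excise the uncontrolled behaviour of the locally-matched elements $k_p$ everywhere except along a single branch, retaining exactly the star-level data that is needed, so that the pieces recombine into one element of $G$.
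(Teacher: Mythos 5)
Your proposal is correct, and part (a) (the gluing of the $\sigma_x$ across the fibres $\pi^{-1}_L(x)$, followed by the observation that every $1$-ball meets at most one fibre nontrivially away from $L$) is essentially the paper's argument for the forward direction, just with the existence of the glued automorphism $g^*$ made explicit. Part (b) is where you genuinely diverge: the paper argues by contradiction, choosing a finite set $X$ witnessing $g \notin G$ whose spanning subtree $S$ has minimal size, locating an interior edge $L=(x,y)$ of $S$ neither of whose endpoints is a leaf, splitting $S$ into two proper subtrees $S_1, S_2$ along $L$, realizing $h^{-1}g$ on each by minimality, and then invoking $\propP{}$ at $L$ to recombine. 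You instead run a constructive induction on the radius of a finite subtree $F$, and at each step use $\propP{}$ at the \emph{parent edge} of each boundary vertex $p$ to excise the uncontrolled part of a locally-matched element $k_p$, producing corrections $y_p$ with pairwise disjoint supports whose product extends the agreement one layer outward. Both arguments use $\propP{}$ only at single edges and both are inductions over finite subtrees at heart, but yours decomposes at many boundary edges simultaneously rather than at one interior edge; it is somewhat longer but arguably more transparent about how edge-level independence propagates. One small imprecision: you say $k_p$ agrees with $g$ on the star $B_1(p)$, but when $T$ is not locally finite the definition of $G^{\propP{1}}$ only yields agreement on finite subsets of $B_1(p)$; since your construction only uses the values of $k_p$ at $p$, its parent $p'$, and the finitely many children of $p$ lying in $F$, this is harmless, but it should be stated that way.
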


\begin{proof}Let $L$ be a nonempty  simple  path in $T$, and let $g \in \Aut(T)_{(L)}$ such that
\[
{\phi_L(g)\in \prod_{v \in VL}\phi_v(G_{(L)})};
\]
say $\phi_L(g) = (s_v)_{v \in VL}$.  We now claim that $g \in G^{\propP{1}}$ (indeed, $g \in (G_{(L)})^{\propP{1}}$).  Let $X$ be a finite set of vertices, all adjacent to some vertex $w$ of $T$.  
We will show that there exists $g_X \in G_{(L)}$ such that $g$ agrees with $g_X$ on $X$.
We may assume that $X \cap VL = \emptyset$, since the vertices of $L$ all are all fixed by both $g$ and $G_{(L)}$.  Let $x = \pi_L(w)$.  We observe that since $X$ is disjoint from $VL$ and the set $X \cup \{w\}$ spans a subtree, any path from $X$ to $L$ must pass through $x$, in other words $X \subseteq \pi^{-1}_L(x)$.  There is then $g_X \in G_{(L)}$ such that $\phi_v(g_X) = s_x$, so that $g$ agrees with $g_X$ on $\pi^{-1}_L(x)$ and in particular on $X$.  Given the freedom of choice of $X$, we conclude that $g \in G^{\propP{1}}$ as claimed.  Thus if $G = G^{\propP{1}}$, then $G$ has property $\propP{}$.

Conversely, suppose that $G$ is closed and satisfies property $\propP{}$ with respect to the edges of $T$.  Suppose that $G \neq G^{\propP{1}}$ and let $g \in G^{\propP{1}} \smallsetminus G$.  Since $G$ is closed, the set $G^{\propP{1}} \smallsetminus G$ is a neighbourhood of $g$ in $G^{\propP{1}}$, so there is a finite set $X$ of vertices such that $g(G^{\propP{1}})_{(X)} \cap G = \emptyset$.  Let $S$ be the smallest subtree of $T$ containing $X$; note that $\Aut(T)_{(X)} = \Aut(T)_{(S)}$, since every vertex of $S$ lies on the  shortest  path between a pair of vertices in $X$.  Let us suppose that $X$ has been chosen so that $|S|$ is minimised.

By the definition of $G^{\propP{1}}$, we see that $S$ is not a star, so for every $x \in S$, there is a vertex in $S$ at distance $2$ from $x$. Hence there exist adjacent vertices $x$ and $y$ of $S$ such that neither $x$ nor $y$ is a leaf of $S$.  Let $L$ be the path formed by  the single arc $(x,y)$.  By the minimality of $|S|$, there is some $h \in G$ such that $gx = hx$ and $gy = hy$, so that $h^{-1}g$ fixes $L$ pointwise.  Let
\[
S_1 = (S \cap \pi^{-1}_L(x)) \cup \{y\} \text{ and } S_2 = (S \cap \pi^{-1}_L(y)) \cup \{x\}.
\]
Note that for $i=1,2$, then $S_i$ is the set of vertices of a subtree of $S$ that contains $L$.  The condition that neither $x$ nor $y$ is a leaf of $S$ ensures that there is some neighbour of $x$ in $S$ that is not contained in $S_2$, and similarly there is some neighbour of $y$ in $S$ that is not contained in $S_1$.  Hence $S_1$ and $S_2$ are both proper subtrees of $S$, so by the minimality of $|S|$, there exists $h_1,h_2 \in G$ such that $h_iw_i = h^{-1}gw_i$ for all $w_i \in S_i$ ($i=1,2$).  Indeed, $h_1$ and $h_2$ are elements of $G_{(L)}$, since $h_1$ and $h_2$ both agree with $h^{-1}g$ on $L$.  In particular, we see that the action of $h^{-1}g$ induces an element of $\phi_x(G_{(L)}) \times \phi_y(G_{(L)})$.  But then by (the restricted) property $\propP{}$, we have $h^{-1}g \in G_{(L)}$ and hence $g \in G$, a contradiction.
\end{proof}

If $G \leq \Aut(T)$ is closed with Tits' property $\propP{}$ then $G = G^{\propP{1}}$ by Theorem~\ref{propertyP_oneclosure}. On the other hand, if $G \leq \Aut(T)$ satisfies $G = G^{\propP{1}}$ then  Proposition~\ref{prop:kclosure} guarantees that $G$ is closed and so by Theorem~\ref{propertyP_oneclosure} we have that $G$ satisfies Tits' property $\propP{}$.
Thus, $G$ is $\propP{1}$-closed if and only if $G$ is closed in $\Aut(T)$ and has Tits' property $\propP{}$. This observation motivates the following definition.

\begin{defn} \label{def:PClosed}
Recall from the introduction that an action on a tree is $\propP{}$-closed if it is closed and has Tits' independence property $\propP{}$. From now on we can refer to the \defbold{$\propP{}$-closure} of an action $G \leq \Aut(T)$, written $G^{\propP{}}$, meaning the smallest $\propP{}$-closed subgroup of $\Aut(T)$ that contains $G$. 
We will use repeatedly without comment the fact that being $\propP{}$-closed is equivalent to being $\propP{1}$-closed.
\end{defn}

\begin{lem}\label{lem:Pclosed_subtree}
Let $T$ be a tree, let $G \le \Aut(T)$ and let $T'$ be a $G$-invariant subtree of $T$.  Suppose that $(T,G)$ 
is $\propP{}$-closed and that the action of $G$ on $T'$ is closed.  Then $(T',G)$ 
is also $\propP{}$-closed.
\end{lem}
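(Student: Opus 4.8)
The plan is to check Tits' property $\propP{}$ for the restricted action directly from the definition. Write $\theta \colon G \to \Aut(T')$ for the restriction homomorphism and $\ol{G} := \theta(G)$. One half of property $\propP{}$ — that $\ol{G}$ is closed in $\Aut(T')$ — is precisely the hypothesis that the action of $G$ on $T'$ is closed, so the only thing left to prove is that $\phi_L \colon \ol{G}_{(L)} \to \prod_{x \in VL}\phi_x(\ol{G}_{(L)})$ is surjective for every nonempty path $L$ of $T'$. The idea is to lift the data defining a target element back into $G$, apply property $\propP{}$ for $(T,G)$ there, and then restrict the element produced to $T'$.

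First I would record two compatibility facts. Since $T'$ is a subtree of $T$ containing $L$, the geodesic in $T$ from any $v \in VT'$ to $L$ already lies in $T'$; hence the closest point projections onto $L$ computed in $T$ and in $T'$ agree on $VT'$, so $(\pi_L^{T'})^{-1}(x) \subseteq (\pi_L^{T})^{-1}(x)$ for every $x \in VL$, and the $T'$-fibres $(\pi_L^{T'})^{-1}(x)$ partition $VT'$. Second, because $VL \subseteq VT'$, an element of $G$ whose restriction to $T'$ fixes $VL$ pointwise also fixes $VL$ pointwise as an automorphism of $T$; thus $\theta(G_{(L)}) = \ol{G}_{(L)}$, where on the left $G_{(L)}$ denotes the pointwise stabilizer of $VL$ in $G \le \Aut(T)$.

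Then comes the main step: fix a nonempty path $L$ in $T'$ and an element $g \in \Aut(T')_{(L)}$ with $\phi_L(g) = (s_x)_{x \in VL}$ and each $s_x \in \phi_x(\ol{G}_{(L)})$. Using the second compatibility fact, for each $x \in VL$ I pick $\gamma_x \in G_{(L)}$ such that $\theta(\gamma_x)$ induces $s_x$ on $(\pi_L^{T'})^{-1}(x)$ — equivalently, $\gamma_x$ agrees with $g$ on $(\pi_L^{T'})^{-1}(x)$. Viewing the $\gamma_x$ as automorphisms of $T$, the tuple $(\phi_x^{T}(\gamma_x))_{x \in VL}$ lies in $\prod_{x \in VL}\phi_x^{T}(G_{(L)})$, so property $\propP{}$ for $(T,G)$ yields $\gamma \in G_{(L)}$ with $\phi_x^{T}(\gamma) = \phi_x^{T}(\gamma_x)$ for all $x$, i.e.\ $\gamma$ agrees with $\gamma_x$ on $(\pi_L^{T})^{-1}(x)$. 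Restricting to $T'$ and using the fibre inclusion, $\theta(\gamma)$ agrees with $g$ on each $(\pi_L^{T'})^{-1}(x)$; since these sets cover $VT'$ we get $\theta(\gamma) = g$. Hence $g = \theta(\gamma) \in \ol{G}_{(L)}$ realizes the prescribed image, which proves surjectivity of $\phi_L$ and finishes the verification.

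I expect the only delicate point to be the bookkeeping around the two closest point projections $\pi_L^T$ and $\pi_L^{T'}$ and the fact that the action of $G$ on $T'$ need not be faithful; once the inclusions $(\pi_L^{T'})^{-1}(x) \subseteq (\pi_L^{T})^{-1}(x)$ and the identification $\theta(G_{(L)}) = \ol{G}_{(L)}$ are in place, the rest is a direct application of property $\propP{}$ on $T$. Note that no local finiteness or compactness hypothesis is used; alternatively one could run the whole argument through $\propP{1}$-closures via Theorem~\ref{propertyP_oneclosure}, but the direct route above seems cleanest.
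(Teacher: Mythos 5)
Your proposal is correct and follows essentially the same route as the paper's proof: both lift the prescribed fibre-wise data from $T'$ back into $G_{(L)}$, invoke surjectivity of $\phi_L$ for the ambient action $(T,G)$, and use the inclusion $(\pi_L^{T'})^{-1}(x) \subseteq (\pi_L^{T})^{-1}(x)$ to restrict the resulting element to $T'$. The extra bookkeeping you flag (agreement of the two projections on $VT'$ and the identification $\theta(G_{(L)}) = \ol{G}_{(L)}$) is exactly what the paper's argument relies on as well.
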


\begin{proof}
Let $L$ be a  simple  path in $T'$.  Since $(T,G)$ has property~$\propP{}$, the natural homomorphism
\[
\phi_L: G_{(L)} \rightarrow \prod_{x \in VL} \phi_x(G_{(L)})
\]
is surjective, where $\phi_L$ and $\phi_x$ are defined with respect to $T$.  Now consider what happens if we replace $\phi_L$ and $\phi_x$ with $\phi'_L$ and $\phi'_x$ respectively, which are now defined with respect to $T'$.  We also have closest point projections $\pi_L$ for $L$ as a  simple path in $T$ and $\pi'_L$ for $L$ as a  simple path in $T'$, but in fact $\pi_L$ and $\pi'_L$ agree on $VT'$.  If we choose $g_x \in \phi'_x(G_{(L)})$ for each $x \in VL$, then there is some $h_x \in G_{(L)}$ such that $\phi'_x(h_x) = g_x$, and then by the surjectivity of $\phi_L$, there is $g \in G_{(L)}$ such that $\phi_L(g) = (\phi_x(h_x))_{x \in VL}$.  But then since
\[
(\pi')\inv_L(x) = \pi\inv_L(x) \cap VT' \subseteq \pi\inv_L(x),
\]
we immediately see that
\[
\phi'_L(g) = (\phi'_x(h_x))_{x \in VL} = (g_x)_{x \in VL}.
\]
Thus $\phi'_L$ is surjective, so $G$ has property~$\propP{}$ on $T'$. 
\end{proof}

\begin{lem}\label{lem:StronglyClosedFromCompactArcs} Let $T$ be a tree, let $G \le \Aut(T)$ be closed and let $T'$ be an invariant subtree for $(T,G)$.  If  there is an arc $a \in AT'$ such that the arc stabiliser $G_a$ is compact in $(T, G)$, then  the action of $G$ on $T'$  is closed.
\end{lem}
\begin{proof} Suppose $T'$ is an invariant subtree of $(T, G)$, and $a \in AT'$ is such that $G_a$ is compact in $\Aut(T)$. The action of $G$ on $T'$ is continuous because $(T, G)$ is Hausdorff and vertex stabilisers $G_v$ for $v \in VT'$ are open in $(T, G)$. The continuous image of the compact group $G_a$ is compact in $(T', G)$, and therefore closed in the Hausdorff group $\Aut(T')$. In other words, the stabiliser in $(T', G)$ of $a \in T'$  is a closed subgroup of $\Aut(T')$, and it follows then that $(T', G)$ is a closed subgroup of $\Aut(T')$.
\end{proof}

Property~$\propP{}$ was originally introduced as a sufficient condition to obtain a simple group.  We recall two relevant results from the literature.

\begin{defn} \label{def:Gplus_Gplusplus}
Given $G \le \Aut(T)$, write $G^+$ for the subgroup of $G$ generated by arc stabilisers in $G$.  Given $a \in AT$, write $\rist_G(T_a)$ for the rigid stabiliser of $T_a$, in other words, the pointwise stabiliser of $T_{\ol{a}}$; write $G^{++}$ for the closure of the subgroup of $G$ generated by the rigid stabilisers of half-trees in $G$.
\end{defn}

\begin{thm}[\cite{Tits70} Th\'{e}or\`{e}me~4.5]\label{thm:Tits} Let $T$ be a tree and let $G$ be a geometrically dense subgroup of $\Aut(T)$ with property $\propP{}$.  Then every nontrivial subgroup of $G$ normalised by $G^+$ contains $G^+$.  In particular, $G^+$ is trivial or abstractly simple.\end{thm}

\begin{thm}[\cite{MollerVonk} Theorem~2.4]\label{thm:MollerVonk}Let $T$ be a tree and let $G$ be a closed geometrically dense subgroup of $\Aut(T)$.  Then every nontrivial closed subgroup of $G$ normalised by $G^{++}$ contains $G^{++}$.  In particular, $G^{++}$ is trivial or topologically simple.\end{thm}

\begin{rem} \label{rem:Gpp_vs_Gp}
Note that in general $G^{++} \le G^+$; if $G$ is $\propP{}$-closed, then equality holds.  Since every proper subtree is contained in a half-tree, another way of expressing the condition that $G^{++}$ is nontrivial is the following: there exists $g \in G$, such that the convex hull of $\{v \in VT \mid gv \neq v\}$ is not the whole of $T$.

In the situations of Theorems~\ref{thm:Tits} and~\ref{thm:MollerVonk}, we note that as soon as $G^+$ or $G^{++}$ is nontrivial, then it is geometrically dense by Lemma~\ref{lem:general_type}.  (Since $G$ is geometrically dense, the tree has no leaves; the case that $T$ has fewer than three ends is ruled out by the existence of nontrivial arc stabilisers.)
\end{rem}

An additional complication that occurs when working with locally infinite trees is that the local action of a given group need not be closed.  We say a subgroup $G \le \Aut(T)$ has \defbold{closed local actions} if for every $v \in VT$, the permutation group induced by $G_v$ on $o\inv(v)$ is closed.  In practice, most of the groups we are interested in will have closed local actions; the next lemma justifies this assumption by providing some sufficient conditions.

\begin{lem}\label{lem:closed_local_actions}
Let $T$ be a tree and let $G \le \Aut(T)$.  If one or more of the following conditions holds, then $G$ has closed local actions:
\begin{enumerate}[(i)]
\item $G$ is $\propP{}$-closed;
\item $G_a$ is compact for every $a \in AT$;
\item There is some group $L \le \Aut(T)$, such that $L^+ \le G \le L$ and $L$ has closed local actions.
\end{enumerate}
\end{lem}

\begin{proof}
Fix $v \in VT$ and let $\theta: \Aut(T)_v \rightarrow \Sym(o\inv(v))$ be the action of $\Aut(T)_v$ on $o\inv(v)$.  We aim to show in each case that $\theta(G_v)$ is closed in $\Sym(o\inv(v))$.

For (i), we suppose that $G$ is $\propP{}$-closed.  Suppose there is $h \in \Sym(o\inv(v))$ and a net $(g_i)_{i \in I}$ of elements of $G_v$ such that $\theta(g_i) \rightarrow h$.  Then for all finite subsets $F$ of $o\inv(v)$, there is $i \in I$ such that for all $j > i$ and $a \in F$ we have $g_ja = ha$.  We build an automorphism $g$ of $T$ as follows: set $gv = v$ and if $a \in o\inv(v)$, set $ga = ha$ and $g\ol{a} = \ol{ha}$.  For each $a \in o\inv(v)$, we choose $j_a \in I$ large enough that $g_{j_a}a = ha$ (the choices of $j_a$ can be made independently of one another).  We then set $gr = g_{j_a}r$, for all vertices and arcs $r$ of $T_a$.  It is then easy to check that $g$ is an automorphism of $T$ such that for all $w \in VT$ and all finite subsets $F \subseteq o\inv(w)$, there is some $g_F \in G$ such that $g_F$ agrees with $g$ on $F$.  Hence
\[
g \in G^{\propP{1}} = G^{\propP{}} = G.
\]
In particular, we have obtained an element of $G_v$ that acts as $h$ on $o\inv(v)$; given the choice of $h$, we conclude that $\theta(G_v)$ is closed, so $G$ has closed local actions.

Case (ii) follows from Lemma~\ref{lem:StronglyClosedFromCompactArcs}, by considering the action of $G_v$ on the $G_v$-invariant subtree spanned by $v$ and its neighbours.

In case (iii), we see that $\theta(G_v)$ contains all the point stabilisers of $\theta(L_v)$, so $\theta(G_v)$ is open in $\theta(L_v)$.  Since $\theta(L_v)$ is closed in $\Sym(o\inv(v))$, we deduce that $\theta(G_v)$ is closed in $\Sym(o\inv(v))$.
\end{proof}

For some parts of this article we will be particularly interested in the case that the closed subgroup $G \le \Aut(T)$ is locally compact.  Since $\Aut(T)$ is totally disconnected, the same is true of any subgroup with the induced topology.  Here we recall some basic properties of totally disconnected, locally compact (\tdlc) groups that will be used without comment later.

\begin{thm}[{Van Dantzig, \cite[TG. 39]{vD}}]
Let $G$ be a \tdlc group.  Then $G$ has a base of neighbourhoods of the identity consisting of compact open subgroups.
\end{thm}

\begin{lem}\label{lem:lc_permutation}
Let $X$ be a set and let $G \le \Sym(X)$ be closed.  Then $G$ is compact if and only if it has only finite orbits; $G$ is locally compact if and only if there is a finite subset $Y$ of $X$ such that the pointwise fixator of $Y$ in $G$ has only finite orbits on $X$.
\end{lem}

\begin{proof}
Write $G_{(Y)}$ for the pointwise fixator of $Y$ in $G$.  By the definition of the permutation topology, the subgroups $G_{(Y)}$ for $Y \subseteq X$ finite form a base of open neighbourhoods of the identity in $G$.  If some $G_{(Y)}$ is compact, then clearly $G$ is locally compact.  Conversely if $G$ is locally compact, then $G$ is a \tdlc group, so by Van Dantzig's theorem, we have $G_{(Y)} \subseteq U$ for some compact open subgroup $U$ of $G$ and some finite subset $Y$ of $X$.  In that case $G_{(Y)}$ is an open subgroup of $U$, hence also closed, and so $G_{(Y)}$ is compact.

It now suffices to characterise when $G_{(Y)}$ is compact; without loss of generality we can assume $G$ fixes $Y$ pointwise and determine whether $G$ is compact.  If $G$ is compact, then since point stabilisers are open, it must have only finite orbits by the orbit-stabiliser theorem. 
Conversely if $G$ has only finite orbits, say $\{X_i\}_{i \in I}$ is its set of orbits, then $G$ is embedded as a closed subgroup of $\prod_{i \in I}\Sym(X_i)$, which is in turn embedded as a closed subgroup of $\Sym(X)$.  Note that as a subgroup of $\Sym(X)$, the group $\prod_{i \in I}\Sym(X_i)$ carries the product topology, with each of the finite groups $\Sym(X_i)$ carrying the discrete topology.  The group $\prod_{i \in I}\Sym(X_i)$ is thus compact by Tychonoff's theorem, so $G$ is also compact.
\end{proof}

Given the above two results we have a ``Cayley--Van Dantzig theorem'' for \tdlc groups: Every \tdlc group is isomorphic as a topological group to a closed permutation group (not necessarily transitive) in which every point stabiliser has finite orbits.  For instance, one can take the natural left translation action of $G$ on the set
\[
\bigsqcup \{G/U \mid U \text{ compact open subgroup of } G\}. 
\]
We will not need this fact, but it is useful to keep in mind with respect to the universality of permutational constructions in the theory of \tdlc groups.

A topological group is \defbold{compactly generated} if it can be generated as an abstract group by some compact subset.  Compactly generated \tdlc groups are related to general \tdlc groups as follows.

\begin{lem}\label{lem:tdlc_union}
Let $G$ be a \tdlc group.  Then $G$ is the directed union of a family of compactly generated open subgroups.
\end{lem}

\begin{proof}
Let $U$ be a compact open subgroup of $G$, and let $I$ be the set of all finite subsets of $G$.  For each $S \in I$ write $G_S = \grp{S \cup U}$.  Then it is clear that $G = \bigcup_{S \in I}G_S$; the groups $G_S$ form a directed family, since
\[
G_{S_1},\dots, G_{S_n} \le G_{S_1 \cup \dots \cup S_n}.
\]
Each of the groups $G_S$ is a union of cosets of $U$, so $G_S$ is an open subgroup of $G$.
\end{proof}

\subsection{Bass--Serre theory} \label{BassSerreTheory}

Here we recall some standard results in Bass--Serre theory for groups acting on trees.  In this article we will not be using Bass--Serre theory to construct the groups but we will use it occasionally to analyse them.  Note that conventional Bass--Serre theory considers only actions on trees without inversion, whereas we allow reversal of edges; we keep track of these edge reversals in the quotient graph by allowing a loop to be its own inverse.  This added generality has no deep significance, since an action with inversion can always be converted to an action without inversion by subdividing edges, but it necessitates some adjustments to the statements.

Given a group $G$ acting on a tree $T$, we define the \defbold{inversion-free subdivision} $T^i$ by subdividing in two parts those edges $a$ of $T$ such that $\ol{a} \in Ga$.  Analogously, in the quotient graph $\Gamma = G \backslash T$, we define the  \defbold{inversion-free subdivision} (or \defbold{orientable subdivision}) of $\Gamma^i$ of $\Gamma$ by taking each loop $a$ such that $a = \ol{a}$ (with $o(a) = t(a) = v$, say), adding a new vertex $v_a$, and replacing $a$ with the geometric edge $\{a',\ol{a'}\}$ where $o(a') = v$ and $t(a') = v_a$, so that $\Gamma^i$ is an orientable graph.  The action of $G$ on $T^i$ is then without inversion, and the quotient map from $T$ to $\Gamma$ naturally gives rise to a quotient map from $T^i$ to $\Gamma^i$.

\begin{lem}[{\cite[I.3.1, Proposition~14]{Serre:trees}}]\label{lem:subtree_lift}
Let $G$ be a group acting without inversion on a tree $T$.  Then every subtree of $G \backslash T$ lifts to a subtree of $T$.
\end{lem}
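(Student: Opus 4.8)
The plan is to build the lift vertex by vertex using a maximality argument (Zorn's lemma), exactly as in Serre. Let $\pi = \pi_{(T,G)}: T \to \Gamma := G\backslash T$ be the quotient map, and let $S$ be a subtree of $\Gamma$. Call a subtree $\widetilde{S}$ of $T$ a \emph{partial lift} of $S$ if $\pi$ restricts to an injective graph homomorphism $\widetilde{S} \to S$; order the set of partial lifts by inclusion of subtrees. The goal is to produce a partial lift $\widetilde{S}$ with $\pi(\widetilde{S}) = S$. First I would note this set is nonempty: any single vertex of $T$ lying over a vertex of $S$ is a partial lift. A chain of partial lifts has an upper bound given by its union (a nested union of subtrees is a subtree, and injectivity of $\pi$ on the union follows from injectivity on each member, since any two vertices of the union lie in a common member of the chain). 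So by Zorn's lemma there is a maximal partial lift $\widetilde{S}$.

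The crux is to show $\pi(\widetilde{S}) = S$. Suppose not; since $S$ is connected and $\pi(\widetilde{S})$ is a nonempty subtree of $S$, there is an edge $\{b,\ol b\}$ of $S$ with $o(b) \in \pi(\widetilde{S})$ but $t(b) \notin \pi(\widetilde{S})$. Pick $\widetilde{v} \in V\widetilde{S}$ with $\pi(\widetilde v) = o(b)$. By local surjectivity of $\pi$ (Lemma~\ref{lem:locally_surjective}), the arc $b \in o\inv_S(o(b)) \subseteq o\inv_\Gamma(o(b))$ is the image of some arc $\widetilde b \in o\inv_T(\widetilde v)$. Set $\widetilde w := t(\widetilde b)$, so $\pi(\widetilde w) = t(b) \notin \pi(\widetilde S)$; in particular $\widetilde w \notin V\widetilde S$, and the arc $\widetilde b$ joins $\widetilde v \in V\widetilde S$ to a vertex outside $\widetilde S$. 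Let $\widetilde{S}' := \widetilde{S} \cup \{\widetilde w\}$ together with the edge $\{\widetilde b, \ol{\widetilde b}\}$. This is a subtree of $T$ (adding one leaf edge to a subtree of a tree keeps it a subtree, with no cycle created since $\widetilde w$ was not already present). It remains to check that $\pi$ is still injective on $V\widetilde{S}'$: any vertex of $\widetilde{S}'$ other than $\widetilde w$ lies in $\widetilde S$, where $\pi$ is injective, and $\pi(\widetilde w) = t(b) \notin \pi(V\widetilde S)$ by the choice of $b$, so $\widetilde w$ is not $G$-equivalent to any vertex of $\widetilde S$. Hence $\widetilde{S}'$ is a partial lift strictly larger than $\widetilde{S}$, contradicting maximality. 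Therefore $\pi(\widetilde S) = S$, and since $\pi|_{\widetilde S}$ is an injective locally... — here one should also confirm $\pi|_{\widetilde S}$ is surjective \emph{on arcs} onto $AS$, which follows because a vertex-surjective injective homomorphism between trees is an isomorphism onto its image and $\pi(\widetilde S) = S$ as subgraphs. Thus $\widetilde S$ is the desired lift.

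The one place needing care — and the step I expect to be the main obstacle — is the bookkeeping around \emph{edges} versus \emph{vertices}, given the paper's convention that a loop may or may not equal its reverse and that we are working in graphs in Serre's sense. Concretely: when I extend $\widetilde S$ by the arc $\widetilde b$, I must be sure that I am not accidentally identifying $\widetilde b$ with $\ol{\widetilde b}$ in the quotient in a way that breaks injectivity of the \emph{edge} map $A\widetilde S' \to AS$; but since $\widetilde w \notin V\widetilde S$, the edge $\{\widetilde b,\ol{\widetilde b\}}$ is genuinely new, and its image is the edge $\{b,\ol b\}$ of $S$, which is not the image of any edge of $\widetilde S$ (as those edges have both endpoints mapping into $\pi(V\widetilde S)$, whereas $t(b) \notin \pi(V\widetilde S)$). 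Also, since $G$ acts \emph{without reversal} by hypothesis, no issue arises from an arc of $\widetilde S'$ being $G$-mapped to its own reverse. Modulo this combinatorial care, the argument is the standard Zorn's-lemma tree-lifting argument and presents no real difficulty; I would in fact simply cite \cite[I.3.1, Proposition~14]{Serre:trees} and remark that the only adjustment needed for our slightly more general notion of graph is that we have already arranged (by passing to reversal-free subdivisions when necessary) that $G$ acts without reversal.
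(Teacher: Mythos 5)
Your proof is correct: it is the standard Zorn's-lemma lifting argument, which is exactly the argument in the cited source --- the paper itself offers no proof and simply refers to \cite[I.3.1, Proposition~14]{Serre:trees}, as you also suggest doing. The only point to tighten is the existence of the boundary edge $b$: one should observe that a partial lift whose image contains every vertex of $S$ is already all of $S$ (a connected spanning subgraph of a tree is the whole tree), so $\pi(\widetilde S)\neq S$ forces a missing \emph{vertex}, which is what your choice of $b$ requires; your closing remark about arc-surjectivity covers essentially the same ground and could simply be merged into this observation.
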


\begin{thm}[{\cite[I.5.4, Corollary 1 \& Exercise 2]{Serre:trees}}]\label{thm:BassSerreSimplicity}
Suppose that $T$ is a tree, and let $G$ be a group acting on $T$ without inversion. Let $R$ be the group generated by all vertex stabilisers $G_v$, $v \in VT$. Then $R$ is a normal subgroup of $G$, and $G/R$ is isomorphic to the fundamental group of $G \backslash T$. Moreover, $G=R$ if and only if $G \backslash T$ is a tree.
\end{thm}

\begin{thm}[{\cite[I.5.4, Theorem~13]{Serre:trees}}]\label{thm:BassSerre}(Bass--Serre structure theorem for groups acting on trees)
Let $G$ be a group acting on a tree $T$.  For the inversion-free subdivision $T^i$, let $\pi: T^i \rightarrow G \backslash T^i$ be the quotient map of $(T^i,G)$.  Choose a subtree $T'$ of $T^i$ that is a lift of a maximal subtree of $G \backslash T^i$.  Choose a subset $E^+ \subseteq AT^i$ such that $o(a) \in VT'$ for all $a \in E^+$, such that $\pi$ is injective on $E^+$ and $\pi(E)$ is an orientation of $G \backslash T^i$ and set $E = E^+ \cup \ol{E^+}$.  For each $a \in AT^i$ let $\tau_a$ be the inclusion of $G_a$ into $G_{t(a)}$.  For each $a \in E^+$ choose $s_a \in G$ so that $s\inv_at(a) \in VT'$, with $s_a=1$ if $a \in AT'$, and set $s_{\ol{a}} = s^{-1}_a$.  Write $F(E)$ for the free group over $\{s_a \mid a \in E\}$.  Then $G$ has the form
\[
\frac{F(E) \ast \Asterisk_{v \in VT'}G_v}{\langle \langle s_a\tau_a(g)s_{\ol{a}}\tau_{\overline{a}}(g)^{-1} \; (a \in E, g \in G_a), \; s_as_{\ol{a}} \; (a \in E), \; s_a \; (a \in AT') \rangle \rangle}.
\]
\end{thm}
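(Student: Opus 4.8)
The plan is to reduce the statement to the classical Bass--Serre structure theorem for actions \emph{without} inversion, i.e.\ to \cite[I.5.4, Theorem~13]{Serre:trees} itself, applied to the action $(T^i,G)$. First I would check that $G$ acts on $T^i$ without reversal: by construction, the only edges of $T$ that some element of $G$ could invert are the arcs $a$ with $\ol{a} \in Ga$, and each such edge has been subdivided in passing to $T^i$, with the inserted midpoint fixed by any element inverting the original edge; hence no edge of $T^i$ is inverted. Since subdivision of a tree yields a tree, $T^i$ is a tree, so the hypotheses of Serre's theorem are satisfied by $(T^i,G)$.

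Next I would verify that the quotient data are as described, i.e.\ that $G\backslash T^i$ is naturally identified with $\Gamma^i = (G\backslash T)^i$, so that the choices of maximal subtree, orientation, tree of representatives $T'$, edge set $E = E^+ \cup \ol{E^+}$, coset representatives $s_a$, and edge inclusions $\tau_a$ are exactly the data appearing in Serre's formulation. The point is that subdivision commutes with passing to the quotient by $G$: a $G$-orbit of arcs $\{a,\ol{a}\}$ in $T$ with $\ol{a} \in Ga$ maps to a loop $a'$ in $\Gamma$ with $a' = \ol{a'}$, and the midpoints of the edges in that orbit form a single $G$-orbit in $T^i$, giving the new vertex $v_{a'}$ of $\Gamma^i$, while all other orbits are unaffected. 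With this identification in hand, Serre's theorem produces precisely the presentation
\[
\frac{F(E) \ast \Asterisk_{v \in VT'}G_v}{\langle \langle s_a\tau_a(g)s_{\ol{a}}\tau_{\overline{a}}(g)^{-1}, \; s_as_{\ol{a}}, \; s_a \; (a \in AT') \rangle \rangle},
\]
where the relations $s_as_{\ol{a}}$ encode $s_{\ol{a}} = s_a^{-1}$ and the relations $s_a$ for $a \in AT'$ encode the normalization $s_a = 1$ on the tree of representatives.

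I do not expect a serious obstacle, since the content is classical; the only thing requiring care is the bookkeeping that the ad hoc definition of the reversal-free subdivision $\Gamma^i$ of the quotient really coincides with $G\backslash T^i$, together with tracking how the stabilizers $G_{v_{a'}}$ of the new vertices (which are stabilizers of midpoints, hence contain the corresponding arc stabilizer with index $1$ or $2$) fit into the vertex groups $G_v$ of the presentation. Once that compatibility is recorded, the theorem follows by citing \cite[I.5.4, Theorem~13]{Serre:trees} for $(T^i,G)$.
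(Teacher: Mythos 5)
Your proposal is correct and coincides with what the paper does: the theorem is stated entirely in terms of the reversal-free subdivision $T^i$ precisely so that it becomes a direct citation of Serre's structure theorem for the inversion-free action $(T^i,G)$, and the paper's only added content is the remark that subdividing the reversible edges removes inversions and is compatible with passing to the quotient. Your verification that no arc of $T^i$ is inverted (an element with $ga=\ol{a}$ fixes the midpoint and swaps the two half-edges rather than inverting either) and that $G\backslash T^i$ is naturally $\Gamma^i$ is exactly the bookkeeping the paper leaves implicit.
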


\begin{defn}
Retain the hypotheses and notation of the previous theorem. Let $c$ be a directed path in $G \backslash T^i$ with vertex sequence $(v_0,v_1,\dots,v_n)$ and arc sequence $(a_1,a_2,\dots,a_n)$.  A \defbold{word of type $c$} is then a word $w = g_0s_{e_1}g_1 \dots g_{n-1}s_{e_n}g_n$ over $E \sqcup \bigsqcup_{v \in VT'}G_v$ such that $g_i \in G_{v'_i}$ where $\pi(v'_i) = v_i$ and $\pi(e_i) = a_i$.  Say that $w$ is \defbold{reduced} if it is of type $c$ for some directed path $c$ in $G \backslash T^i$, and satisfies the following conditions:

If $n=0$ then $g_0 \neq 1$; if $n \ge 1$, then for each index $i$ such that $a_{i+1} = \ol{a_i}$, then $g_i \not\in G_{e_i}$.
\end{defn}

\begin{thm}[{\cite[I.5.2, Theorem~11]{Serre:trees}}]\label{thm:NormalForm}(Normal form theorem of Bass--Serre theory)
Under the hypotheses of Theorem~\ref{thm:BassSerre}, every reduced word evaluates to a nontrivial element of $G$.
\end{thm}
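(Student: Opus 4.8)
The plan is to argue geometrically, following Serre: evaluate the word to an automorphism of $T$, pass to the reversal-free subdivision $T^i$ on which $G$ acts without inversion, and show that a reduced word traces out a non-backtracking path in $T^i$; since such a path has distinct endpoints, the element it defines cannot be the identity. Throughout I retain the notation of Theorem~\ref{thm:BassSerre}: $\pi\colon T^i \to \Gamma^i := G\backslash T^i$ is the quotient map, $T'$ is the chosen lift of a maximal subtree (cf.\ Lemma~\ref{lem:subtree_lift}), $\widetilde v \in VT'$ is the lift of a vertex $v$ of $\Gamma^i$, and $e_i$ is the arc in $E$ lying over $a_i$. The case $n = 0$ is immediate: then $w = g_0 \in G_{\widetilde{v_0}} \smallsetminus \triv$ is a nontrivial automorphism of $T$, so $w \ne 1$ in $G$; assume from now on that $n \ge 1$, with $c = (v_0, a_1, v_1, \dots, a_n, v_n)$.

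To $w = g_0 s_{e_1} g_1 \cdots s_{e_n} g_n$ I would attach a path $\gamma = (x_0, f_1, x_1, \dots, f_n, x_n)$ of length $n$ in $T^i$. Writing $h_i := g_0 s_{e_1} g_1 \cdots s_{e_i} g_i$ for $0 \le i \le n$ (so $h_n = w$), put $x_0 := \widetilde{v_0}$, let $f_i$ be the $h_{i-1}$-translate of a suitable lift of $a_i$ based at $\widetilde{v_{i-1}}$ (such a lift exists by local surjectivity of $\pi$, cf.\ Lemma~\ref{lem:locally_surjective}), and set $x_i := t(f_i)$. Using that each $g_j$ fixes $\widetilde{v_j}$ and that the coset representatives $s_{e_j}$ were chosen so that the relevant lifted arc joins $\widetilde{v_{j-1}}$ to an $s_{e_j}$-translate of $\widetilde{v_j}$, one checks that $o(f_i) = x_{i-1}$ and $x_n = w\,\widetilde{v_n}$, so $\gamma$ is a genuine path running from $\widetilde{v_0}$ to $w\,\widetilde{v_n}$.

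The crux is the claim that $\gamma$ is non-backtracking, i.e.\ $f_{i+1} \ne \overline{f_i}$ for $1 \le i \le n-1$. Projecting a would-be backtrack by $\pi$ forces $a_{i+1} = \overline{a_i}$; substituting the definitions of $f_i$ and $f_{i+1}$ and cancelling the common left factor reduces the equality $f_{i+1} = \overline{f_i}$ to the assertion that the single middle factor $g_i$ lies in the arc stabilizer $G_{e_i}$ --- which is exactly what the definition of a reduced word forbids at an interior index with $a_{i+1} = \overline{a_i}$. Hence $\gamma$ is a geodesic of length $n \ge 1$ in the tree $T^i$, so its endpoints $\widetilde{v_0}$ and $w\,\widetilde{v_n}$ are distinct. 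If $v_0 = v_n$ this already gives $w\,\widetilde{v_0} \ne \widetilde{v_0}$, hence $w \ne 1$. If $v_0 \ne v_n$ and one had $w = 1$, then $\gamma$ would be the unique geodesic between the distinct vertices $\widetilde{v_0}, \widetilde{v_n}$ of the subtree $T'$, hence would lie in $T'$, so $c = \pi(\gamma)$ would be a non-backtracking path inside the maximal subtree; then every $s_{e_i}$ is trivial and $w = g_0 g_1 \cdots g_n$, which the reducedness conditions once more keep from collapsing to the identity.

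The step I expect to be the main obstacle is precisely this matching in the crux: choosing, coherently for both orientations of each edge, the lift of $a_i$ based at $\widetilde{v_{i-1}}$ and tracking how the coset representatives $s_{e_i}$ of Theorem~\ref{thm:BassSerre} enter, so that a backtrack of $\gamma$ becomes exactly the forbidden membership $g_i \in G_{e_i}$; this is where Serre's argument carries its weight. (One could instead sidestep the tree and let the group presented in Theorem~\ref{thm:BassSerre} act on the set of reduced words by a van der Waerden-style ping-pong argument; but since $G$ already comes equipped with its action on $T^i$, the geometric route above is the natural one.)
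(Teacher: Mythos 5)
The paper does not prove this statement (it is quoted from \cite{Serre:trees}), so there is nothing internal to compare against; your geometric route --- reduced word gives a non-backtracking path in $T^i$ --- is in substance Serre's own argument, and its core is sound. In particular the crux you flag does work out: with $h_i = g_0s_{e_1}g_1\cdots s_{e_i}g_i$ and $f_i = h_{i-1}\epsilon_i$, where $\epsilon_i$ is the lift of $a_i$ running from $\widetilde{v_{i-1}}$ to $s_{e_i}\widetilde{v_i}$, a backtrack $f_{i+1}=\ol{f_i}$ forces $e_{i+1}=\ol{e_i}$ and (since an arc of the tree $T^i$ is determined by its endpoints) $\epsilon_{i+1}=s_{e_i}^{-1}\ol{\epsilon_i}$, whence $s_{e_i}g_is_{e_i}^{-1}$ stabilizes $\epsilon_i$; this says $g_i$ lies in the embedded copy of the edge group $G_{e_i}$ inside $G_{\widetilde{v_i}}$, which is what reducedness is meant to exclude (the paper's ``$g_i\notin G_{e_i}$'' is a loose transcription of Serre's condition, which refers to this conjugated copy --- worth a remark, but not a gap). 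For a closed path ($v_0=v_n$) your argument is then complete: a non-backtracking path of length $n\ge 1$ in a tree is a geodesic, so $w\widetilde{v_0}\ne\widetilde{v_0}$ and $w\ne 1$.

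The genuine gap is your final case, and it cannot be repaired because the statement is false there as literally written. When $v_0\ne v_n$ and $\gamma\subseteq T'$ you assert that $w=g_0g_1\cdots g_n$, ``which the reducedness conditions once more keep from collapsing to the identity''. They do not: reducedness constrains $g_i$ only at indices with $a_{i+1}=\ol{a_i}$, so along a non-backtracking path $c$ it imposes no condition whatsoever. Taking $c$ to be a single edge of the maximal subtree and $g_0=g_1=1$ gives a reduced word $w=s_{e_1}$ of length $n=1$ that evaluates to $1$, since the presentation of Theorem~\ref{thm:BassSerre} imposes $s_a=1$ for $a\in AT'$. Serre's Theorem~11 asserts non-triviality in the path group --- the quotient of $F(E)\ast\Asterisk_{v\in VT'}G_v$ by the edge relations only, before the generators $s_a$ ($a\in AT'$) are killed --- and this descends to $G$ exactly for words whose type $c$ is a closed path at the base vertex. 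That closed-path version is what your argument actually proves, and it is all that is used later (in the proof of Proposition~\ref{prop:compact_generation}); you should restate the theorem in that form (or work in the path group) rather than attempt the non-closed case.
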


The following corollary is valid without assuming that $G$ acts without inversion, since on the one hand, every group generated by vertex stabilisers acts without inversion (since it is parity-preserving) and on the other, if $\Gamma = G \backslash T$ is a tree then $\Gamma$ has no loops, so certainly $G$ acts on $T$ without inversion.

\begin{cor}[{\cite[I.5.4, Exercise~2]{Serre:trees}}]\label{cor:vertex_group}
Let $G$ be a group acting on a tree $T$.  Then $G \backslash T$ is a tree if and only if $G$ is generated by vertex stabilisers.  Moreover, if $G$ is generated by vertex stabilisers, then $G = \grp{G_v \mid v \in VT'}$ where $T'$ is a lift of a maximal subtree in $G \backslash T$.
\end{cor}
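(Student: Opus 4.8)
The plan is to deduce the corollary from the Bass--Serre structure theorem (Theorem~\ref{thm:BassSerre}). Note first that, by the two observations recorded immediately before the statement, in both situations of interest $G$ acts on $T$ \emph{without reversal}: if $G$ is generated by vertex stabilizers because such a group preserves the natural bipartition of $VT$, and if $\Gamma := G\backslash T$ is a tree because a tree has no loops. So in either case $T^i = T$ and $\Gamma$ is exactly the quotient graph $G\backslash T^i$ to which Theorem~\ref{thm:BassSerre} applies. I therefore fix, once and for all, the Bass--Serre data of that theorem: a subtree $T'$ of $T$ lifting a maximal subtree of $\Gamma$ (which exists by Lemma~\ref{lem:subtree_lift}), a set of arcs $E = E^+ \cup \ol{E^+}$, elements $s_a \in G$, and the resulting presentation of $G$.

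I would first handle the implication ``$\Gamma$ is a tree $\Rightarrow$ $G = \grp{G_v \mid v \in VT'}$''. If $\Gamma$ is a tree, then a maximal subtree of $\Gamma$ is all of $\Gamma$, so $\pi|_{T'}$ is an isomorphism of $T'$ onto $\Gamma$ and $\pi(AT') = A\Gamma$; one may then choose $E^+ \subseteq AT'$, so that $E \subseteq AT'$ and every generator $s_a$ ($a \in E$) is killed by the relations $s_b = 1$ ($b \in AT'$) in the presentation. It follows that in the presentation $G$ is generated by the images of the vertex groups $G_v$ ($v \in VT'$). This yields the final sentence of the corollary and, in particular, the implication ``$\Gamma$ is a tree $\Rightarrow$ $G$ is generated by vertex stabilizers''.

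For the reverse implication I would use a retraction argument. Let $F$ be the group with presentation $\grp{\, s_a \ (a \in E) \mid s_a s_{\ol a}\ (a \in E),\ s_a\ (a \in AT')\,}$; eliminating $s_{\ol a} = s_a^{-1}$ one sees, as in the standard theory, that $F$ is free on a set of representatives of the edges of $\Gamma$ lying outside the maximal subtree $\pi(T')$, so $F = \triv$ if and only if $\Gamma$ is a tree. There is a homomorphism $\bar\rho : G \to F$ sending every $G_v$ to $1$ and each $s_a$ to its class in $F$: the only thing to check is that the relators of Theorem~\ref{thm:BassSerre} are respected, and indeed each relator $s_a \tau_a(g) s_{\ol a} \tau_{\ol a}(g)^{-1}$ is sent to $s_a s_{\ol a}$ (as $\tau_a(g)$ and $\tau_{\ol a}(g)$ lie in vertex groups), while $s_a s_{\ol a}$ and $s_a\ (a \in AT')$ are themselves relators of $F$. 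Now $\pi(T')$, being a maximal subtree of the connected graph $\Gamma$, contains every vertex of $\Gamma$, so $VT'$ meets every $G$-orbit on $VT$; hence every vertex stabilizer of $(T,G)$ is $G$-conjugate to some $G_v$ with $v \in VT'$ and therefore lies in $\ker\bar\rho$. If $G$ is generated by vertex stabilizers it follows that $\bar\rho$ is trivial; but $\bar\rho$ is surjective by construction, so $F = \triv$ and $\Gamma$ is a tree.

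I do not expect a genuine obstacle: the two routine points are the verification that $\bar\rho$ is well defined and the identification of $F$ with the fundamental group of $\Gamma$ (so that its triviality is equivalent to $\Gamma$ being a tree). As an alternative to the retraction one could invoke the Normal Form Theorem (Theorem~\ref{thm:NormalForm}) to see that $s_a$ is a nontrivial element of $G$ whenever $a \in E^+ \setminus AT'$, but showing that such an element cannot be written in terms of vertex stabilizers still amounts to constructing $\bar\rho$, so the argument above seems the most direct.
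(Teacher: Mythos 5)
Your proposal is correct. Note, though, that the paper offers no proof of this corollary at all: it is cited directly to Serre (I.5.4, Exercise~2), and the only original content in the paper is the one-sentence remark, immediately preceding the statement, explaining why the ``without reversal'' hypothesis of Theorem~\ref{thm:BassSerre} can be dropped --- a remark you correctly reuse at the start of your argument. Your proof is the standard solution to Serre's exercise: read off the forward implication from the presentation in Theorem~\ref{thm:BassSerre} after choosing $E^+\subseteq AT'$ (possible exactly because the maximal subtree is all of $\Gamma$), and for the converse retract $G$ onto the fundamental group $F$ of the quotient graph by killing the vertex groups, observing that $\ker\bar\rho$ is normal and hence contains every vertex stabilizer, not just those indexed by $VT'$. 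The two points you flag as routine really are routine here: the relator check for $\bar\rho$ is the one-line computation you give, and the identification of $F$ with a free group on the edges outside the maximal subtree is unproblematic precisely because, in the relevant direction, $G$ preserves the bipartition of $VT$, so the quotient has no edges with $a=\ol{a}$ and no $C_2$ factors arise. This is a complete and correct proof of a statement the paper delegates to the literature.
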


\section{A parametrisation of $\propP{}$-closed groups}\label{sec:correspondence}

This section contains the main ideas underpinning our theory of local action diagrams. We define local action diagrams (Definition~\ref{def:LADs}) and isomorphisms of local action diagrams (Definition~\ref{def:IsoOfLADs}). For a given local action diagram $\Delta$ we define a $\Delta$-tree (Definition~\ref{def:deltaTree}), construct it and show that it is unique up to isomorphism (Lemma~\ref{lem:tree_isomorphism}). Each $\Delta$-tree $\mathbf{T}$ consists of an underlying tree $T$ with some additional extra structure consisting of arc colours and a projection map.

For $G \leq \Aut(T)$ we define an associated local action diagram $\Delta$ (Definition~\ref{def:AssocLocalActionDiagram}) and observe that $T$ can be equipped with the additional structure of a $\Delta$-tree $\mathbf{T}$ so that the underlying tree for $\mathbf{T}$ is $T$. Thus for any $G \leq \Aut(T)$ we have an associated pair $(\Delta,\mathbf{T})$ where $\Delta$ is a local action diagram and $\mathbf{T}$ is a $\Delta$-tree. This associated pair is unique up to isomorphism (Lemma~\ref{lem:sameT}).

In a series of definitions and results (Definition~\ref{UniversalGpOfLocalActions} to Definition~\ref{def:THEUniversalGp}) we show the following (all statements are up to isomorphism).
\begin{enumerate}[(i)]
\item
	We define the universal group $\Univ(\Delta)$ of a local action diagram $\Delta$ as a group of automorphisms of its $\Delta$-tree $\mathbf{T}$. In particular we have $\Univ(\Delta) \leq \Aut(T)$ where $T$ is the underlying tree for $\mathbf{T}$.
\item
	We show that this universal group exists and is unique.
\item
	We show that the local action diagram associated to $\Univ(\Delta)$ acting on $T$ is $\Delta$.
\item
	We show that if $\Delta$ is the local action diagram associated to an action $G \leq \Aut(T)$ then $\Univ(\Delta)$ is the $\propP{}$-closure of $G$. 
\end{enumerate}
Together these observations form the basis of the correspondence theorem (Theorem~\ref{thm:correspondence}).

We conclude the section with two examples of local action diagrams (Examples~\ref{Ex:AutT} and \ref{Ex:BM}).\\

\begin{defn} \label{def:LADs}
A \defbold{local action diagram $\Delta = (\Gamma,(X_a),(G(v)))$} consists of the following information:
\begin{itemize}
\item A connected graph $\Gamma$.
\item For each arc $a$ of $\Gamma$, a nonempty set $X_a$ (called the \defbold{colour set} of $a$). We take the colour sets of distinct arcs to be disjoint from one another, and the elements of $\bigsqcup_{a \in A\Gamma}X_a$ are the \defbold{colours} of $\Delta$.
\item
	For each vertex $v$ of $\Gamma$, a group $G(v)$ (called the \defbold{local action} at $v$) with the following properties: write $X_v$ to denote the disjoint union $\bigsqcup_{a \in o\inv(v)}X_a$, then the group $G(v)$ is a closed subgroup of $\Sym(X_v)$ and the sets $X_a$ are the orbits of $G(v)$ on $X_v$.
\end{itemize}
\end{defn}
 
Examples~\ref{Ex:AutT} and \ref{Ex:BM} can be found starting on page~\pageref{Ex:AutT}.

\begin{defn} \label{def:IsoOfLADs}
Let $\Delta = (\Gamma,(X_a),(G(v)))$ and $\Delta' = (\Gamma',(X'_a),(G'(v)))$ be local action diagrams.

An \defbold{isomorphism} $\boldsymbol{\theta} = (\theta,(\theta_v))$ from $\Delta$ to $\Delta'$ is an isomorphism $\theta: \Gamma \rightarrow \Gamma'$ of graphs, together with a bijection $\theta_v: X_v \rightarrow X'_{\theta(v)}$ for each $v \in V\Gamma$ that restricts to a bijection from $X_a$ to $X_{\theta(a)}$ for each $a \in o\inv(v)$, and such that $\theta_vG(v)\theta\inv_v = G'(v')$.
\end{defn}

Local action diagrams have the advantage of having a simple description from a combinatorial perspective.  In terms of the permutation groups $G(v)$, there are no interactions between them or compatibility conditions to check, except that $G(v)$ should have the specified orbit structure.  However, we will see that they provide a parametrisation of all $\propP{}$-closed groups of tree automorphisms, taken up to isomorphisms of the tree.  Our aim in this section is to prove the following:

\begin{thm}\label{thm:correspondence}
There is a natural one-to-one correspondence between isomorphism classes of local action diagrams, and isomorphism classes of pairs $(T,G)$ where $T$ is a tree and $G$ is a $\propP{}$-closed subgroup of $\Aut(T)$.
\end{thm}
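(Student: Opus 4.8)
The plan is to realise the correspondence by writing down two mutually inverse assignments. Given a pair $(T,G)$, I attach the local action diagram $\Delta(T,G)$ whose underlying graph is $\Gamma := G\backslash T$: for each $v \in V\Gamma$ fix a representative $v^* \in VT$ with $\pi_{(T,G)}(v^*) = v$, put $X_v := o\inv(v^*)$, let $G(v) \le \Sym(X_v)$ be the closure of the image of the stabilizer $G_{v^*}$ acting on $o\inv(v^*)$, and for each arc $a \in o\inv(v)$ of $\Gamma$ let $X_a$ be the $G_{v^*}$-orbit on $o\inv(v^*)$ carried onto $a$ by $\pi_{(T,G)}$ (here Lemma~\ref{lem:locally_surjective} gives that $\pi_{(T,G)}$ maps $o\inv(v^*)$ onto $o\inv(v)$, with fibres the $G_{v^*}$-orbits). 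Since the orbits of a permutation group on a discrete set are unchanged by taking the closure in the permutation topology, the orbits of $G(v)$ are exactly the sets $X_a$, so $\Delta(T,G)$ is indeed a local action diagram. I then check it is well defined on isomorphism classes: changing $v^*$ within its $G$-orbit conjugates $(G_{v^*},o\inv(v^*))$ by an element of $G$, and an isomorphism $(T,G) \to (T',G')$ descends to the quotient graphs and intertwines the corresponding stabilizer actions; in each case one obtains an isomorphism of local action diagrams.

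Conversely, given $\Delta = (\Gamma,(X_a),(G(v)))$ I first build a \emph{$\Delta$-coloured tree}: a tree $T_\Delta$ with a graph morphism $p:T_\Delta \to \Gamma$ and, for each $x \in VT_\Delta$, a bijection $c_x : o\inv(x) \to X_{p(x)}$ carrying the arcs of $T_\Delta$ lying over each $a \in o\inv(p(x))$ onto $X_a$. This comes from the usual universal-cover construction: starting from a single vertex over a chosen vertex of $\Gamma$, at each already-built vertex $x$ over $v$ and each $a \in o\inv(v)$ attach, for every colour of $X_a$ not yet used by an arc at $x$, a new arc and new terminal vertex over $t(a)$, giving each new reverse arc a fixed default colour from $X_{\ol{a}}$; an induction on distance from a base vertex, using that each $c_x$ is bijective, shows $(T_\Delta,p,c)$ is unique up to isomorphism and that an isomorphism $\Delta \to \Delta'$ lifts to one of coloured trees. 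I then define the universal group $U(\Delta) \le \Aut(T_\Delta)$ to consist of the automorphisms $g$ with $p\circ g = p$ and $c_{gx}\,g\,c_x\inv \in G(p(x))$ for every $x$; since each $G(v)$ is closed and any violation of these conditions is seen on a finite ball, $U(\Delta)$ is closed in $\Aut(T_\Delta)$.

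It remains to prove three things. (a) \emph{$U(\Delta)$ has property~$\propP{}$.} Given a path $L$ in $T_\Delta$ and automorphisms $h_x \in U(\Delta)_{(L)}$ for $x \in VL$, the map that agrees with $h_x$ on the fibre $\pi_L\inv(x)$ of the closest-point projection (for each $x \in VL$) is a well-defined automorphism fixing $L$ pointwise — the fibres partition $VT_\Delta$, each $h_x$ preserves every fibre because it fixes $L$, and distinct fibres meet only along edges of $L$ — it lies in $U(\Delta)$ because its local action at every vertex is that of the relevant $h_x$, and it realises the tuple $(\phi_x(h_x))_{x \in VL}$; hence $\phi_L$ is surjective, so $U(\Delta)$ has property~$\propP{}$ by Theorem~\ref{propertyP_oneclosure}. (b) \emph{The local action diagram of $(T_\Delta,U(\Delta))$ is $\Delta$.} I show $U(\Delta)$ is transitive on each fibre $p\inv(v)$ and each fibre $p\inv(a)$ — building the needed automorphisms vertex by vertex, at each step extending a value forced on one arc to an element of $G(p(\cdot))$, which is possible precisely because $G(v)$ is transitive on each colour set $X_a$ — so that $U(\Delta)\backslash T_\Delta \cong \Gamma$, and that the stabilizer of a vertex $x^*$ over $v$ induces on $o\inv(x^*) \cong X_v$ exactly $G(v)$ (containment is immediate; conversely any $s \in G(v)$ can be prescribed at $x^*$ and extended outward). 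Thus the graph, colour sets and local actions recovered from $(T_\Delta,U(\Delta))$ agree with those of $\Delta$. (c) \emph{If $(T,G)$ is $\propP{}$-closed then $(T_{\Delta(T,G)},U(\Delta(T,G)))$ is conjugate to $(T,G)$.} Transport the colouring of $T_{\Delta(T,G)}$ onto $T$ itself using the representatives $v^*$ and the bijections $g\inv : o\inv(gv^*) \to o\inv(v^*)$; by uniqueness of the coloured tree this yields $T \cong T_{\Delta(T,G)}$ compatibly with the type maps. Under this identification $G \le U(\Delta(T,G))$, since the local action of any $g \in G$ at any vertex agrees with the local action of an element of the corresponding stabilizer $G_{v^*}$, hence lies in $G(v)$; and $U(\Delta(T,G)) \le G$, since for $g \in U(\Delta(T,G))$, a vertex $v$ and a finite $X \subseteq B_1(v)$ one produces $g_X \in G$ agreeing with $g$ on $X$ — first matching $gv$ (using that $p$ is $G$-invariant), then matching the finitely many relevant arcs at $v$ (using density of the image of $G_v$ in $G(v)$) — so $g \in G^{\propP{1}} = G$ by Theorem~\ref{propertyP_oneclosure}. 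Together (a)–(c) show that $\Delta(\cdot)$ and $\Delta \mapsto (T_\Delta,U(\Delta))$ induce mutually inverse bijections between the two sets of isomorphism classes.

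I expect the main obstacle to be the careful construction and uniqueness of the $\Delta$-coloured tree $T_\Delta$, together with the bookkeeping in part (b) identifying $U(\Delta)\backslash T_\Delta$ with $\Gamma$ and the recovered local actions with the $G(v)$ — in particular, handling loops $a = \ol{a}$ (which correspond to inversions) and keeping the default colours on reverse arcs consistent, so that the ``extend a forced value to an element of $G(v)$'' steps in (b) and (c) genuinely go through. By contrast, the property~$\propP{}$ verification in (a) should be short once $T_\Delta$ and its colouring are in place.
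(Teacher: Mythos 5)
Your overall architecture matches the paper's: extract $\Delta(T,G)$ from the quotient graph and local actions, build a coloured tree, define the universal group by local conditions, verify property $\propP{}$, recover $\Delta$ from $(T_\Delta,U(\Delta))$, and show $G=U(\Delta(T,G))$ for $\propP{}$-closed $G$. Steps (a) and (b), and the inclusion arguments in (c), are essentially the paper's Theorems~\ref{thm:diag_to_U} and \ref{thm:G_to_U}.

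There is, however, one genuine gap: your claim that the $\Delta$-coloured tree $(T_\Delta,p,c)$ is \emph{unique up to isomorphism} is false if ``isomorphism'' is meant to respect the colouring, and your argument leans on exactly that reading. The definition of a colouring only constrains $c_x$ on each set $o\inv(x)$; it imposes no relation between the colour of an arc and the colour of its reverse. Already for $\Gamma$ a single vertex with one non-orientable loop, $X_a=\{1,2\}$ and $G(v)=\Sym(2)$, the tree is the line $\Zb$, and the colouring assigning $1$ to every rightward arc is not colour-isomorphic to one in which the rightward colours alternate, although both are valid $\Delta$-trees. What is true (Lemma~\ref{lem:tree_isomorphism}) is only that any two $\Delta$-trees are isomorphic \emph{over} $\Gamma$, i.e.\ compatibly with $p$ but not with $c$; the paper explicitly flags this. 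Consequently two things in your proof are unsupported: (1) well-definedness of $\Delta\mapsto[(T_\Delta,U(\Delta))]$, since different choices of base vertex and of ``default'' reverse colours produce coloured trees that need not be colour-isomorphic, and $U(\Delta)$ is defined in terms of the colouring; and (2) the final identification in (c), where you transport a colouring onto $T$ from the group action (which will generally not be of your default-colour form) and then invoke ``uniqueness of the coloured tree'' to identify $(T,G)$ with $(T_{\Delta(T,G)},U(\Delta(T,G)))$. What is needed in both places is the statement that any two $\Delta$-tree structures yield \emph{conjugate} universal groups even though the colourings cannot be matched; this is the paper's Theorem~\ref{thm:unique_U}, and it requires a genuine argument: one builds, ball by ball, a $\pi$-compatible automorphism $\phi$ of $T$ using bijections $\rho_{v,w}\colon o\inv(v)\to o\inv(w)$ that conjugate the induced vertex-stabilizer permutation groups (which exist because both are permutationally isomorphic to $G(\pi(v))$ with matching orbit labellings), chosen consistently along arcs pointing back toward the base vertex. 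Supplying this step would close the gap; without it the two assignments are not known to be well defined and mutually inverse on isomorphism classes.
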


\begin{defn} \label{def:deltaTree}
Given a local action diagram $\Delta$, a \defbold{$\Delta$-tree} $\mathbf{T}$ is a tree $T$ together with a surjective graph homomorphism $\pi: T \rightarrow \Gamma$ and a \defbold{$\Delta$-colouring}, that is, a map $\mc{L}: AT \rightarrow \bigsqcup_{a \in A\Gamma}X_a$, such that for each vertex $v \in VT$, and each arc $a$ in $o\inv(\pi(v))$, the map $\mc{L}$ restricts to a bijection $\mc{L}_{v,a}$ from $\{b \in o\inv(v) \mid \pi(b) = a\}$ to $X_a$.  Write $\mathbf{T} = (T, \mc{L}, \pi)$, and given $v \in VT$, write $\mc{L}_v$ for the restriction of $\mc{L}$ to a bijection from $o\inv(v)$ to $X_{\pi(v)}$.

Note that the groups $G(v)$ play no role in the definition of $\mathbf{T}$. 
\end{defn}

\begin{lem}\label{lem:tree_isomorphism}
Let $\Delta$ be a local action diagram.  Then there exists a $\Delta$-tree.  Moreover, given any two $\Delta$-trees $(T,\pi,\mc{L})$ and $(T',\pi',\mc{L}')$, there is a graph isomorphism $\alpha: T \rightarrow T'$ such that $\pi' \circ \alpha = \pi$.
\end{lem}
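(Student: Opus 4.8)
The plan is to construct a $\Delta$-tree by a ``universal/free'' construction in which vertices of $T$ are finite walks in $\Gamma$ together with consistent colouring data, and then to prove uniqueness by showing any $\Delta$-tree must be isomorphic to this canonical one, building the isomorphism by induction on distance from a basepoint.

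For existence, I would fix a vertex $v_0 \in V\Gamma$ and define the vertex set of $T$ to be the set of sequences $(a_1,\dots,a_n)$ of arcs of $\Gamma$ forming a walk starting at $v_0$ (so $o(a_1)=v_0$, $t(a_i)=o(a_{i+1})$), subject to the ``reduced walk'' condition that $a_{i+1} \neq \ol{a_i}$, \emph{together with} a choice, for each $i$, of element of the colour set $X_{a_i}$ — except that this colouring data is exactly what distinguishes the multiple lifts of each arc, so more precisely: a vertex of $T$ at distance $n$ from the root is a reduced walk $(a_1,\dots,a_n)$ in $\Gamma$ from $v_0$ decorated with colours $x_i \in X_{a_i}$. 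Declare $o(v_0)$-root, put an arc from the walk $w=(a_1,\dots,a_{n})$ to each extension $(a_1,\dots,a_n,a_{n+1})$ with colour $x_{n+1}$, as $a_{n+1}$ ranges over all arcs with $o(a_{n+1})=t(a_n)$, $a_{n+1}\neq\ol{a_n}$ and $x_{n+1}$ over $X_{a_{n+1}}$, plus the reverse arc back to the truncation; the key bookkeeping point is that the arcs originating at $w$ are in bijection with $X_{t(a_n)} = \bigsqcup_{a\in o\inv(t(a_n))} X_a$ (the ``backward'' arc $\ol{a_n}$ accounting for the colour $x_n \in X_{a_n}$ already used), and this bijection is exactly $\mc{L}_w$. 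Define $\pi$ to send $w$ to $t(a_n)$ (and the root to $v_0$), and $\mc{L}$ to read off the colour on each arc. One then checks $T$ is connected (obvious, truncate the walk), has no cycles (a cycle would give a nontrivial reduced closed walk that nonetheless collapses, contradicting reducedness), $\pi$ is a surjective graph homomorphism (using connectedness of $\Gamma$ for surjectivity), and $\mc{L}$ satisfies the required bijectivity on $\{b\in o\inv(w)\mid \pi(b)=a\}\to X_a$ by construction. The $G(v)$ play no role, as noted.

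For uniqueness, given two $\Delta$-trees $(T,\pi,\mc{L})$ and $(T',\pi',\mc{L}')$: pick $v_0^* \in VT$ and $v_0^{**}\in VT'$ with $\pi(v_0^*) = \pi'(v_0^{**})$, which is possible since both maps are surjective onto the same $\Gamma$. Build $\alpha: T \to T'$ by induction on $d(v_0^*,\cdot)$: set $\alpha(v_0^*) = v_0^{**}$. Having defined $\alpha$ bijectively and compatibly on the ball $B_n(v_0^*)$ (meaning $\pi'\circ\alpha = \pi$ and $\mc{L}'\circ\alpha = \mc{L}$ on the relevant arcs), extend to $S_{n+1}$: for each $w \in S_n$, the arcs in $o\inv(w)$ not pointing back toward the root biject, via $\mc{L}_w$, with a subset of $X_{\pi(w)}$ — precisely those colours in $X_a$ for arcs $a\neq \ol{(\text{incoming arc})}$ — and the same holds at $\alpha(w)$ in $T'$ via $\mc{L}'_{\alpha(w)}$, using the inductive fact that the incoming arc at $\alpha(w)$ has the same colour and the same $\pi$-image as that at $w$. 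Composing $\mc{L}_w$ with $(\mc{L}'_{\alpha(w)})\inv$ gives a bijection between the outgoing arcs at $w$ and those at $\alpha(w)$ respecting colours and $\pi$-images; define $\alpha$ on these arcs and their termini accordingly, and define $\alpha$ on the reverse arcs by $\overline{\alpha(b)} := \alpha(\ol b)$. One verifies this is well-defined (the only overlap is the backward arc, already handled), that it respects $o$ and $r$, and that $\pi'\circ\alpha = \pi$ and $\mc{L}'\circ\alpha = \mc{L}$ persist. The inverse is built symmetrically, so $\alpha$ is a graph isomorphism with $\pi'\circ\alpha = \pi$ (and incidentally $\mc{L}'\circ\alpha = \mc{L}$, which is stronger than claimed but comes for free).

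The main obstacle is the inductive extension step in the uniqueness argument: one must carefully track the ``one arc is already spoken for'' phenomenon — at each vertex $w$ reached from the root via an arc of colour $x$ lying in $X_a$, the backward arc carries colour $x$, so the \emph{new} arcs to be matched correspond to $X_{\pi(w)}\setminus\{x\}$ — and confirm that this set is the same whether computed in $T$ via $\mc{L}_w$ or in $T'$ via $\mc{L}'_{\alpha(w)}$, which rests on the inductive hypothesis that $\alpha$ matches both $\pi$-images and colours on the incoming arc. Everything else (connectedness, acyclicity, that $\pi$ is a graph homomorphism, well-definedness of $r$ on $T$) is routine bookkeeping with walks. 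I would state the canonical construction once, carefully, and then the uniqueness induction mostly writes itself.
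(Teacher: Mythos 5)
There is a genuine gap, and it occurs in both halves of your argument; the root cause is the same in each case, namely insufficient care with the distinction between $X_a$ and $X_{\ol{a}}$. In the existence construction, your reduced-walk condition $a_{i+1} \neq \ol{a_i}$ excludes \emph{every} extension of the walk by the reverse arc, but the backward arc at $w=(a_1,\dots,a_n)$ is a single arc and can carry only \emph{one} colour of $X_{\ol{a_n}}$. If $|X_{\ol{a_n}}|>1$ your vertex $w$ is missing $|X_{\ol{a_n}}|-1$ outgoing arcs, so $\mc{L}_{w,\ol{a_n}}$ is not a bijection onto $X_{\ol{a_n}}$ and the object built is not a $\Delta$-tree. (Test case: $\Gamma$ a single vertex with one loop $a$, $\ol{a}\neq a$, $|X_a|=|X_{\ol{a}}|=2$; the correct tree is $4$-regular, but your non-root vertices have degree $3$.) Your parenthetical that the backward arc "accounts for the colour $x_n\in X_{a_n}$" is also wrong: that arc has $\pi$-image $\ol{a_n}$, so its colour must lie in $X_{\ol{a_n}}$, which is a different set from $X_{a_n}$ unless $a_n$ is a loop equal to its own reverse. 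The paper's construction fixes exactly this by attaching to each vertex label a "reverse label" $(d_1,\dots,d_n)$ with $d_i\in X_{\ol{p(c_i)}}$ and excluding only the single colour $d_n$ from the allowed extensions.

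In the uniqueness argument, your recipe of extending $\alpha$ by $(\mc{L}'_{\alpha(w)})\inv\circ\mc{L}_w$ breaks down at the backward arc. Even if $\alpha$ is colour-preserving on the incoming arc $\ol{a'}$ at $w$, nothing relates $\mc{L}(a')$ to $\mc{L}'(\ol{\alpha(\ol{a'})})$: the colourings of an arc and of its reverse are completely independent choices in the two trees. So the colour-preserving bijection at $w$ will in general disagree with the already-assigned value $\alpha(a')$, and your inductive hypothesis "$\mc{L}'\circ\alpha=\mc{L}$ on the relevant arcs" cannot be maintained. Indeed the paper explicitly remarks, immediately after this lemma, that $\alpha$ cannot in general be chosen to map $\mc{L}$ to $\mc{L}'$ — so your claim that this "comes for free" is false. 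The repair is cheap but changes the mechanism: one should only require $\alpha$ to respect $\pi$-fibres, matching $\{b\in o\inv(w)\mid \pi(b)=a,\ b\neq a'\}$ with $\{b\in o\inv(\alpha(w))\mid \pi'(b)=a,\ b\neq\alpha(a')\}$ by an arbitrary bijection (both sets have cardinality $|X_a|-1$), which is exactly what the paper does. With these two corrections your overall strategy — walks decorated by colours for existence, induction on balls for uniqueness — coincides with the paper's.
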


\begin{proof}
Choose a base vertex $v_0 \in V\Gamma$. We construct a $\Delta$-tree $\mathbf{T}$ as follows.

Given $v \in V\Gamma$ and $c \in X_v$, the \defbold{type} $p(c)$ of $c$ is the unique $a \in A\Gamma$ such that $c \in X_a$.  A \defbold{coloured path (of length $n$)} in $\Gamma$ is a finite sequence $(c_1,c_2,\dots,c_n)$ of colours, where for each $1 \le i < n$, we have $o(p(c_{i+1})) = t(p(c_i))$.  The \defbold{origin} of the coloured path is $o(p(c_1))$.

Vertices $v \in VT$ will be labelled by coloured paths with origin $v_0$.  For vertex labels $v = (c_1,c_2,\dots,c_n)$ and $w = (c_1, c_2, \dots, c_n, c_{n+1}, \dots, c_m)$ we say that $v$ is a \defbold{prefix} of $w$.
For each vertex label $v = (c_1,c_2,\dots,c_n)$, there will also be a reverse label $\overline{v} = (d_1,d_2,\dots,d_n)$ of the same length, where $d_i$ is a colour such that $p(d_i) = \overline{p(c_i)}$, and such that if $v$ is a prefix of $w$, then $\overline{v}$ is the corresponding prefix of $\overline{w}$.  We produce the vertices of $VT$ inductively starting at a root vertex $()$.

Suppose we have defined a vertex $v = (c_1,c_2,\dots,c_n)$ with reverse label $\overline{v} = (d_1,d_2,\dots,d_n)$.  Then we define vertices $v_{+c_{n+1}} = (c_1,\dots,c_n,c_{n+1})$, for all $c_{n+1}$ such that $o(p(c_{n+1})) = t(p(c_n))$ and $c_{n+1} \neq d_n$.  We then set $\overline{v_{+c_{n+1}}} =  (d_1,d_2,\dots,d_{n+1})$, where $d_{n+1}$ is some element of $X_{\overline{p(c_{n+1})}}$ (chosen arbitrarily).

The set $AT_+$ of forward arcs of $T$ consists of ordered pairs $(v,w)$, where $v$ is a prefix of $w$ of length one less than $w$; then $AT_- =: \{(w,v) \mid (v,w) \in AT_+\}$ and $AT := AT_- \sqcup AT_+$.  Origin and terminal vertices and edge reversal are defined in the obvious way, and it is clear that we obtain a tree.  The colouring $\mc{L}$ is defined as follows: given $(v,w) \in AT_+$, then $\mc{L}(v,w)$ is the last entry of $w$ and $\mc{L}(w,v)$ is the last entry of $\overline{w}$.

The graph homomorphism $\pi: T \rightarrow \Gamma$ is given by $\pi(()) = v_0$ for the base vertex; $\pi(v) = t(p(c_n))$ for any vertex $v = (c_1,\dots,c_n)$ in $VT$; and $\pi(a) = p(\mc{L}(a))$ for $a \in AT$.  Given the way in which the entries $c_i$ and $d_i$ were chosen and used to define $\mc{L}$, one sees that $\pi$ is a surjective graph homomorphism.

Given a vertex $v = (c_1,\dots,c_n)$ with reverse label $\overline{v} = (d_1,d_2,\dots,d_n)$, then $v$ has one parent vertex $(c_1,\dots,c_{n-1})$ and a set of child vertices of the form $(c_1,\dots,c_n,c')$, where $c'$ ranges over the set $X_{t(p(c_n))} \smallsetminus  \{d_n\} = X_{\pi(v)} \smallsetminus \{d_n\}$.  The set $o\inv(v)$ is thus in a natural bijection with $X_{\pi(v)}$ in a manner that respects the partition into sets $X_a$ for $a \in o\inv(\pi(v))$, and the colouring produces the same bijection.  In particular, for each arc $a \in o\inv(\pi(v))$, we see that  $\mc{L}$ restricts to a bijection from $\{b \in o\inv(v) \mid \pi(b) = a\}$ to $X_a$.  Thus the object $\mathbf{T}$ we have constructed is a $\Delta$-tree.

Now suppose that we have two $\Delta$-trees $(T,\pi,\mc{L})$ and $(T',\pi',\mc{L}')$.  We construct a graph isomorphism $\alpha: T \rightarrow T'$ compatible with $(\pi,\pi')$ inductively as follows.

Choose $v_1 \in VT$ and $w_1 \in VT'$ such that $\pi(v_1) = \pi'(w_1) = v_0$, and set $\alpha(v_1) = w_1$.  Suppose we have defined $\alpha$ for vertices and arcs in $B_n(v_1)$ ($n \ge 0$), let $v$ be a vertex in $T$ at distance $n$ from $v_1$ and let $w = \alpha(v)$.  Then $\pi(v) = \pi'(w)$ by the induction hypothesis.  Given $a \in o\inv(\pi(v))$, we have bijections $\mc{L}_{v,a}: \{b \in o\inv(v) \mid \pi(b) = a\} \rightarrow X_a$ and $\mc{L}'_{w,a} := \{b \in o\inv(w) \mid \pi'(b) = a\} \rightarrow X_a$.  In particular, the sets $\{b \in o\inv(v) \mid \pi(b) = a\}$ and $\{b \in o\inv(w) \mid \pi'(b) = a\}$ have the same size, so we can extend $\alpha$ to include $o\inv(v)$ in its domain, in such a way that it restricts to a bijection from $\{b \in o\inv(v) \mid \pi(b) = a\}$ to $\{b \in o\inv(w) \mid \pi'(b) = a\}$.  The choice of bijection is unimportant here, except in the case that $\pi\inv(a)$ contains an arc $a'$ starting at $v$ in the direction of $v_1$: in this case, $\alpha(a')$ has already been chosen, so we choose a bijection from $\{b \in o\inv(v) \mid \pi(b) = a, b \neq a'\}$ to $\{b \in o\inv(w) \mid \pi'(b) = a, b \neq \alpha(a')\}$.  For $b \in t\inv(v)$ and $v' = o(b)$, we set $\alpha(b) = \overline{\alpha(\overline{b})}$ and $\alpha(v') = o(\alpha(b))$ respectively.  This extends the definition of $\alpha$ to a ball of radius $n+1$ about $v_1$; notice that $\alpha$ still produces a graph isomorphism from $B_{n+1}(v_1)$ to $B_{n+1}(w_1)$, completing the inductive step.  We can thus extend $\alpha$ to a graph isomorphism from $T$ to $T'$ such that $\pi' \circ \alpha = \pi$.
\end{proof}

Note: we do not claim that $\alpha$ can be chosen to map $\mc{L}$ to $\mc{L}'$.

\begin{defn} \label{def:AssocLocalActionDiagram}
Let $G$ be a group of automorphisms of a tree $T$.  We define an \defbold{associated local action diagram} $\Delta$ and equip $T$ with the structure of a $\Delta$-tree as follows.
\begin{itemize}
\item
	$\Gamma$ is the quotient graph $G \backslash T$, and $\pi$ is the natural quotient map.
\item
	For each $v \in V\Gamma$, choose a vertex $v^* \in \pi\inv(v)$; write $V^*$ for the set of vertices so obtained.  Given $a \in A\Gamma$ such that $v = o(a)$, let $X_a = \{b \in o\inv(v^*) \mid \pi(b) = a\}$.  The set $X_v := o\inv(v^*)$ is then naturally partitioned as required.  Define the group $G(v)$ to be the closure of the permutation group induced on $X_v$ by $G_{v^*}$.
\item
	For each $w \in VT$, choose $g_w \in G$ such that $g_ww \in V^*$.  Then $g_w$ also induces a bijection from $o\inv(w)$ to $X_v$.  Given $b \in o\inv(w)$, set $\mc{L}(b) = g_wb$.
\end{itemize}
\end{defn}

The definition is such that given $v,w \in VT$ such that $\pi(v) = \pi(w)$, the restrictions $\mc{L}_v$ and $\mc{L}_w$ of $\mc{L}$ to $o\inv(v)$ and $o\inv(w)$ respectively are bijections that form two sides of a commuting triangle: 
if $r_{v,w}$ is the map from $o\inv(v)$ to $o\inv(w)$ induced by $g\inv_w g_v$, then
\[
\mc{L}_v = \mc{L}_w r_{v,w}.
\]

There are many choices for the associated local action diagram, but they are all isomorphic, as we see in the following lemma.

\begin{lem}\label{lem:sameT}
Let $T$ be a tree and let $G$ be a group of automorphisms of $T$.  Then any two local action diagrams $\Delta = (\Gamma,(X_a),(G(v)))$ and $\Delta' = (\Gamma,(X'_a),(G'(v)))$ associated to $G$ are isomorphic, via an isomorphism $\boldsymbol{\theta}$ that is the identity map on the graph $\Gamma$.
\end{lem}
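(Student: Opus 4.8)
The plan is to build an explicit isomorphism $\Delta \to \Delta'$ whose underlying graph map is $\id_\Gamma$, by using translating elements of $G$ to pass between the two choices of base vertices. Recall that $\Delta$ arises from a choice of vertex $v^* \in \pi\inv(v)$ for each $v \in V\Gamma$, with $X_v = o\inv(v^*)$, the sets $X_a$ ($a \in o\inv(v)$) being the fibres of $\pi$ on $o\inv(v^*)$, and $G(v)$ the image of $\overline{G_{v^*}}$ under the restriction homomorphism $\rho_v \colon \Aut(T)_{v^*} \to \Sym(o\inv(v^*)) = \Sym(X_v)$; let $v^{**} \in \pi\inv(v)$, $X'_v = o\inv(v^{**})$, $\rho'_v \colon \Aut(T)_{v^{**}} \to \Sym(X'_v)$ and $G'(v) = \rho'_v(\overline{G_{v^{**}}})$ be the analogous data defining $\Delta'$. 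Since $\pi(v^*) = v = \pi(v^{**})$, for each $v \in V\Gamma$ we may choose $h_v \in G$ with $h_v v^* = v^{**}$, and we take $\theta = \id_\Gamma$ together with the bijections $\theta_v \colon X_v \to X'_v$, $b \mapsto h_v b$.

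The first thing I would check is the colour-set compatibility. Because $\pi \circ h_v = \pi$, the bijection $\theta_v$ satisfies $\pi(\theta_v b) = \pi(b)$ for all $b \in o\inv(v^*)$, so for each $a \in o\inv(v)$ it carries $X_a = \{ b \in o\inv(v^*) \mid \pi(b) = a \}$ onto $X'_a = \{ b \in o\inv(v^{**}) \mid \pi(b) = a \}$; since $\id_\Gamma$ fixes $a$, this is precisely the requirement that $\theta_v$ restrict to a bijection from $X_a$ to $X_{\theta(a)}$.

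It then remains to verify $\theta_v G(v) \theta_v\inv = G'(v)$ for each $v$. From $h_v v^* = v^{**}$ we get $G_{v^{**}} = h_v G_{v^*} h_v\inv$, and since conjugation by $h_v$ is a self-homeomorphism of $\Aut(T)$ in the permutation topology, passing to closures gives $\overline{G_{v^{**}}} = h_v \overline{G_{v^*}} h_v\inv$. For $g \in \overline{G_{v^*}}$ with $\rho_v(g) = \sigma$, the element $h_v g h_v\inv$ of $\overline{G_{v^{**}}}$ acts on $o\inv(v^{**})$ by $\rho'_v(h_v g h_v\inv) = \theta_v \sigma \theta_v\inv$, since for $b \in o\inv(v^*)$ we have $(h_v g h_v\inv)(\theta_v b) = h_v(gb) = \theta_v(\sigma b)$. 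Applying $\rho'_v$ to the equal subgroups $\overline{G_{v^{**}}}$ and $h_v \overline{G_{v^*}} h_v\inv$ therefore yields $G'(v) = \theta_v \rho_v(\overline{G_{v^*}}) \theta_v\inv = \theta_v G(v) \theta_v\inv$, which completes the verification that $(\id_\Gamma, (\theta_v)_{v \in V\Gamma})$ is an isomorphism $\Delta \to \Delta'$.

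There is no serious obstacle here; the only place requiring care is the bookkeeping in the last step, namely that passing to closures commutes with conjugation by $h_v$ (valid because conjugation by a fixed element is a homeomorphism of $\Aut(T)$, so the only closure used is that in $\Aut(T)$) and that the restriction homomorphisms $\rho_v, \rho'_v$ intertwine this conjugation with conjugation by $\theta_v$. I would also remark that the colourings $\mc{L}, \mc{L}'$ and the auxiliary elements $g_w$ used to define them play no role, since the statement concerns only the data $(\Gamma, (X_a), (G(v)))$.
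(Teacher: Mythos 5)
Your proposal is correct and follows essentially the same route as the paper: both take $\theta = \id_\Gamma$ and define $\theta_v$ as the restriction to $o\inv(v^*)$ of an element of $G$ carrying $v^*$ to $v^{**}$, then verify the conjugacy of local actions. The only cosmetic difference is that the paper assumes $G$ closed without loss of generality at the outset, whereas you handle the closures directly by observing that conjugation by a fixed element is a homeomorphism of $\Aut(T)$; your direct check that $\pi \circ h_v = \pi$ also gives the colour-set compatibility slightly more explicitly than the paper's appeal to orbits being sent to orbits.
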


\begin{proof}
Without loss of generality we can assume $G$ is closed in $\Aut(T)$.  From the definition, we see that $\Delta$ and $\Delta'$ have the same associated graph $\Gamma = G \backslash T$; let $\theta$ be the trivial graph automorphism of $\Gamma$.  Given $v \in V\Gamma$, say the chosen element of $\pi\inv(v)$ is $v^*$ in the construction of $\Delta$, and $v^{**}$ in the construction of $\Delta'$.  Then $v^{**} = g_vv^*$ for some $g_{v} \in G$, since $\pi(v^*) = Gv^*$.  We can thus define a bijection $\theta_v$ from $X_v:= o\inv(v^*)$ to $X'_v := o\inv(v^{**})$ by setting $\theta_v(a) = g_va$.  Given that $g_vG_{v^*}g\inv_v = G_{v^{**}}$, and $G(v)$ and $G'(v)$ are determined by the actions of the vertex stabilisers $G_{v^*}$ and $G_{v^{**}}$ respectively, we see that $\theta_vG(v)\theta\inv_v = G'(v)$.  In particular, $\theta_v$ sends orbits of $G(v)$ to orbits of $G'(v)$, so it restricts to a bijection from $X_a$ to $X'_a$ for each $a \in o\inv(v)$.  Thus $(\theta,\theta_v)$ is an isomorphism of local action diagrams.
\end{proof}

Thus from now on, we can talk about {\it the} local action diagram $\Delta(T,G)$ associated to $(T,G)$ without ambiguity.

\begin{defn} \label{UniversalGpOfLocalActions}
An \defbold{automorphism} of the $\Delta$-tree $\mathbf{T}$ is a graph automorphism $\theta$ of $T$ such that $\pi \circ \theta = \pi$.  Write $\Aut_{\pi}(T)$ for the group of all such automorphisms.  Given $g \in \Aut_{\pi}(T)$, a vertex $v \in VT$, and $\mc{L}$ the colouring associated to $\mathbf{T}$, we define the \defbold{$\mc{L}$-local action} of $g$ at $v$ as follows:
\[
\sigma_{\mc{L},v}(g): X_{\pi(v)} \rightarrow X_{\pi(v)} \quad \quad\sigma_{\mc{L},v}(g)(c) := \mc{L}g\mc{L}|^{-1}_{o\inv(v)}(c).
\]
We see that $\sigma_{\mc{L},v}(g)$ is a permutation of $X_{\pi(v)}$, and for a vertex $v \in VT$ the map $g \mapsto \sigma_{\mc{L},v}(g)$ is continuous.
Finally, we define the \defbold{universal group of $\mathbf{T}$ with respect to local actions $(G(v))_{v \in V\Gamma}$} to be the set $\Univ(\mathbf{T},(G(v)))$ of all elements $g$ of $\Aut_{\pi}(T)$ such that for every $v \in VT$, the permutation $\sigma_{\mc{L},v}(g)$ belongs to $G(\pi(v))$.
\end{defn}

\begin{thm}\label{thm:diag_to_U}
Let $\Delta$ be a local action diagram, let $\mathbf{T}$ be a $\Delta$-tree, and let $H = \Univ(\mathbf{T},(G(v)))$. Then $H$ is a $\propP{}$-closed subgroup of $\Aut(T)$; $\Delta$ is isomorphic to a local action diagram associated to $H$; and for every vertex $v \in VT$ and $g \in G(\pi(v))$, there is $h \in H_v$ such that $\sigma_{\mc{L},v}(h) = g$.
\end{thm}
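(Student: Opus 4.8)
The plan is to establish the three assertions in order, with the bulk of the work going into verifying that $H = U(\mathbf{T},(G(v)))$ is $\propP{}$-closed, equivalently (by Theorem~\ref{propertyP_oneclosure}) that $H = H^{\propP{1}}$.

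\textbf{Step 1: $H$ is a subgroup of $\Aut(T)$ that is closed.} First I would record the cocycle identity for the local actions: for $g,h \in \Aut_\pi(T)$ and $v \in VT$ one has $\sigma_{\mc{L},v}(gh) = \sigma_{\mc{L},hv}(g)\,\sigma_{\mc{L},v}(h)$, and $\sigma_{\mc{L},v}(g^{-1}) = \sigma_{\mc{L},g^{-1}v}(g)^{-1}$. These are immediate from the definition $\sigma_{\mc{L},v}(g)(c) = \mc{L}\,g\,\mc{L}|_{o^{-1}(v)}^{-1}(c)$ once one notes $\pi(v)=\pi(hv)$. Since each $G(v)$ is a subgroup of $\Sym(X_v)$, closure of $H$ under products and inverses follows. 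For topological closure: if $g \notin H$ then some $\sigma_{\mc{L},v}(g) \notin G(\pi(v))$; as $G(\pi(v))$ is closed in $\Sym(X_{\pi(v)})$ and $X_{\pi(v)}$ is the orbit set attached to $\pi(v)$ (hence $|o^{-1}(v)|$-indexed), there is a finite subset of $o^{-1}(v)$ on which disagreement is detected, so $g$ has a neighbourhood in $\Aut(T)$ disjoint from $H$; thus $H$ is closed. (This mirrors the complement-of-open-sets argument in the proof of Proposition~\ref{prop:kclosure}(i).)

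\textbf{Step 2: $H = H^{\propP{1}}$.} Let $g \in H^{\propP{1}}$; I want $g \in H$. Fix $v \in VT$. By definition of the $\propP{1}$-closure applied to the finite set $X = o^{-1}(v) \cup \{v\}$ (all within $B_1(v)$; strictly one uses the terminal vertices of arcs in $o^{-1}(v)$, which all lie in $S_1(v)$), there is $h_v \in H$ agreeing with $g$ on $v$ and on all neighbours of $v$. Then $g$ and $h_v$ induce the same bijection $o^{-1}(v) \to o^{-1}(v)$ (a tree automorphism is determined on arcs at $v$ by its action on $v$ and its neighbours), and in particular $\pi(gv) = \pi(v)$, so $g \in \Aut_\pi(T)$, and $\sigma_{\mc{L},v}(g) = \sigma_{\mc{L},v}(h_v) \in G(\pi(v))$. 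Since $v$ was arbitrary, $g \in H$. Hence $H = H^{\propP{1}}$, i.e.\ $H$ is $\propP{}$-closed by Theorem~\ref{propertyP_oneclosure}. I expect \emph{this} step — pinning down exactly which $\propP{1}$-witness to request and checking it forces membership vertex-by-vertex — to be the main obstacle, though it is short; the subtlety is only that $\propP{1}$ is phrased via balls of radius $1$ in the vertex metric while local actions live on arcs, and one must confirm these match up (which is why the paper remarked earlier that two vertices specify an arc).

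\textbf{Step 3: the local action diagram of $H$ is (isomorphic to) $\Delta$, and local actions are realized at each vertex.} For the realization claim, fix $v \in VT$ and $g \in G(\pi(v))$. Build $h \in \Aut_\pi(T)$ fixing $v$ and inducing $\sigma_{\mc{L},v}(h)=g$ by the same inductive spreading-out argument as in the proof of Lemma~\ref{lem:tree_isomorphism}: having defined $h$ on $B_n(v)$ compatibly with $\pi$ and with all local actions so far chosen to lie in the appropriate $G(\cdot)$, extend to $B_{n+1}(v)$ at each vertex $w \in S_n(v)$ by picking any permutation of $X_{\pi(w)}$ in $G(\pi(w))$ that agrees with the already-determined value on the one arc of $o^{-1}(w)$ pointing back toward $v$ — such a permutation exists because $G(\pi(w))$ acts on $X_{\pi(w)}$ with the prescribed orbits, and the back-arc lies in a single orbit $X_a$, on which $G(\pi(w))$ is transitive. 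This gives $h \in H_v$ with $\sigma_{\mc{L},v}(h)=g$. Finally, to see $\Delta$ is associated to $H$: take the construction of the associated diagram with $v^* = v$ for each $v \in V\Gamma$ chosen appropriately; the quotient graph is $\pi(T) = \Gamma$ by surjectivity of $\pi$, the colour set $X_a$ matches by the bijection $\mc{L}_{v^*,a}$, and by the realization just proved the group induced on $o^{-1}(v^*)$ by $\overline{H_{v^*}}$ is exactly $\{\sigma_{\mc{L},v^*}(h) : h \in H_{v^*}\} = G(\pi(v^*))$ transported along $\mc{L}_{v^*}$ — here $\overline{H_{v^*}} = H_{v^*}$ is not needed since $H$ is already closed and $H_{v^*}$ is closed, but in any case the induced permutation group is closed, matching the closed group $G(v)$. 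This yields an isomorphism of local action diagrams, completing the proof.
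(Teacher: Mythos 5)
Your overall strategy matches the paper's (cocycle identities for the group structure, a ball-by-ball spreading-out construction to realize local actions, and a finite-approximation argument for $\propP{}$-closedness), but Step 2 has a genuine gap. The paper makes no local finiteness assumption on $T$, so $o^{-1}(v)$ may be infinite, while the definition of the $\propP{1}$-closure only lets you request agreement on a \emph{finite} set of vertices at distance at most $1$ from $v$. Your witness set $X = o^{-1}(v)\cup\{v\}$ is therefore not legitimate in general, and the conclusion $\sigma_{\mc{L},v}(g)=\sigma_{\mc{L},v}(h_v)$ is not available. What you actually get is: for every finite $Y \subseteq o^{-1}(v)$ there is $h_Y \in H$ such that $\sigma_{\mc{L},v}(g)$ agrees with $\sigma_{\mc{L},v}(h_Y)$ on $\mc{L}(Y)$, i.e.\ $\sigma_{\mc{L},v}(g)$ lies in the closure of $G(\pi(v))$ in $\Sym(X_{\pi(v)})$. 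To conclude $\sigma_{\mc{L},v}(g)\in G(\pi(v))$ you must invoke the hypothesis, built into the definition of a local action diagram, that each $G(v)$ is a \emph{closed} subgroup of $\Sym(X_v)$ --- a hypothesis your Step 2 never uses, which is itself a warning sign. This is exactly how the paper closes the argument, and it is the reason the closedness requirement appears in the definition of $\Delta$ at all.

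A second, smaller gap is in Step 3: identifying the quotient graph of the $H$-action with $\Gamma$ requires showing that $H$ acts transitively on each fibre of $\pi$ (the orbits of $H \le \Aut_{\pi}(T)$ a priori only refine the fibres), not merely that $\pi$ is surjective. This does follow from the same spreading-out construction you use for the realization claim --- start by sending one arc of a fibre to another rather than by prescribing a local permutation at a fixed vertex --- but it must be stated and proved; the paper does this explicitly and uses it both to identify $H\backslash T$ with $\Gamma$ and to match the colour sets.
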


\begin{proof}
Let $g,h \in \Univ(\mathbf{T},(G(v)))$ and let $v \in VT$.  It is clear that $g\inv, gh \in \Aut_{\pi}(T)$, so there is a fixed vertex $w \in V\Gamma$ such that $w = \pi(v) = \pi(hv) = \pi(gv)=\pi(g^{-1}v)$.  It is easily seen that $\sigma_{\mc{L},v}(g\inv)$ and $\sigma_{\mc{L},v}(gh)$ are given by the following formulae:
\[
\sigma_{\mc{L},v}(gh) = \sigma_{\mc{L},hv}(g)\sigma_{\mc{L},v}(h)
\]
\[
\sigma_{\mc{L},v}(g\inv) = \left ( \sigma_{\mc{L},g^{-1} v}(g) \right )\inv
\]
Since $\sigma_{\mc{L},hv}(g)$, $\sigma_{\mc{L},v}(h)$ and 
$\sigma_{\mc{L},g^{-1}v}(g)$
are all in the group $G(w)$, we see that $\sigma_{\mc{L},v}(gh)$ and $\sigma_{\mc{L},v}(g\inv)$ are also elements of $G(w)$.  This proves $\Univ(\mathbf{T},(G(v)))$ is closed under products and inverses.  Since $\Univ(\mathbf{T},(G(v)))$ clearly also contains the trivial automorphism of $T$, we conclude that $H:= \Univ(\mathbf{T},(G(v)))$ is a subgroup of $\Aut_{\pi}(T)$.

Since $H \le \Aut_{\pi}(T)$, certainly every orbit of $H$ is contained in a fibre of $\pi$.  We claim that  if $e$ is any vertex or arc of $\Gamma$ then $H$ acts transitively on $\pi\inv(e)$. It is enough to show that $H$ is transitive when $e$ is an arc of $\Gamma$, as the vertex case will then follow by considering origin vertices of arcs.  So fix arcs $a,b \in \pi\inv(e)$; we aim to construct $g \in H$ such that $ga = b$.  We define $g$ in stages on balls of radius $n$ about $v_0:=o(a)$.

Let $v'_0 = o(b)$ and let $w = \pi(v_0)$.  Choose an element $h_0 \in G(w)$ such that $h_0\mc{L}(a) = \mc{L}(b)$; this is possible since by definition, $\mc{L}(a)$ and $\mc{L}(b)$ must lie in the same $G(w)$-orbit.  Then there is a unique graph isomorphism $g_1$ from $B_1(v_0)$ to $B_1(v'_0)$ such that $\mc{L}g_1\mc{L}|_{o\inv(v_0)}(c) = h_0(c)$ for all $c \in X_w$.

Let us also pause to note that by varying $b$, we can obtain every element of $G(w)$ as a suitable $h_0$ whilst also fixing $v_0$: specifically, given $h \in G(w)$, then $h\mc{L}(a) = \mc{L}(b)$ for some unique $b \in o\inv(v_0)$, and hence in this case we can take $h_0 = h$.  Thus, provided we can extend $g_1$ to an element of $H$, we will have shown that $H_{v_0}$ achieves all possible values of $\sigma_{\mc{L},v_0}$ at the vertex $v_0$.  By varying $a$, the vertex $v_0$ can also be made an arbitrary vertex of $T$.

Suppose we have defined a graph isomorphism $g_n$ from $B_n(v_0)$ to $B_n(v'_0)$, such that 
\[
\mc{L}g_n\mc{L}|_{o\inv(v)} \in G(\pi(v)) \text{ for all }v \in VB_{n-1}(v_0).
\]
Let $S_n(v_0)$ be the sphere of radius $n$ about $v_0$ and let $v \in S_n(v_0)$.  We have already defined $g_nv$ and also $g_nr$ for the unique arc $r \in o\inv(v)$ in the direction of $v_0$.  Similar to before, we see that $\mc{L}(r)$ and $\mc{L}(g_nr)$ lie in the same $G(\pi(v))$-orbit, so there is $h_n \in G(\pi(v))$ such that $h_n\mc{L}(r) = \mc{L}(g_nr)$.  There is then a unique graph isomorphism $h'(v)$ from $B_1(v)$ to $B_1(g_nv)$ such that $\mc{L}h'\mc{L}|_{o\inv(v)}(c) = h_n(c)$ for all $c \in X_{\pi(v)}$.  Note the domains of the maps $\{h'(v) \mid v \in S_n(v_0)\}$ are pairwise disjoint; for each $v \in S_n(v_0)$, the domains of $g_n$ and $h'(v)$ overlap only on a single edge and its endpoints, and for this overlap, $g_n$ and $h'(v)$ agree.  We can thus combine $g_n$ with the set of maps $\{h'(v) \mid v \in S_n(v_0)\}$ to produce a graph isomorphism $g_{n+1}$ from $B_{n+1}(v_0)$ to $B_{n+1}(v'_0)$.  By construction, we see that $\mc{L}g_{n+1}\mc{L}|_{o\inv(v)} \in G(\pi(v))$  for all $v \in VB_{n}(v_0)$, completing the inductive step.

By combining the sequence $(g_n)$ of graph isomorphisms, we thus obtain $g \in \Aut_{\pi}(T)$ such that $ga = b$ and such that $\sigma_{\mc{L},v}(g) \in G(\pi(v))$ for all $v \in VT$, so $g \in H$.

Now let $\Delta' = (\Gamma',(X'_a),(H(v)))$ be a local action diagram associated to $H$.  We aim to construct an isomorphism $\boldsymbol{\theta} = (\theta,(\theta_v))$ of local action diagrams from $\Delta'$ to $\Delta$.  We have shown that $H$ acts transitively on each fibre of $\pi$, so the quotient graph $\Gamma' = H \backslash T$ can be naturally identified with $\Gamma$ and $\theta$ can be taken to be the trivial graph isomorphism on $\Gamma$.  Given $v \in V\Gamma$, let $v^*$ be the chosen vertex of $VT$ in the construction of $\Delta'$.  Then for each $a \in A\Gamma$ such that $v = o(a)$, by definition $X'_a = \{b \in o\inv(v^*) \mid \pi(b) = a\}$.  The definition of $\mathbf{T}$ then provides a bijection $\mc{L}_{v^*,a}$ from $X'_a$ to $X_a$.  We can thus construct a bijection $\theta_v$ from $X'_v$ to $X_v$ by setting $\theta_v(c) = \mc{L}_{v^*,a}(c)$.  By definition, $H(v)$ is just the closure of the group of permutations induced by $H_{v^*}$ on $o\inv(v^*)$.  As previously observed, we obtain in this way the set of all permutations $h$ of $o\inv(v^*)$ such that the permutation induced by $\mc{L}h\mc{L}|_{o\inv(v^*)}$ on $X_v$ is an element of $G(v)$.  Given $c \in X_v$, we see from the definition of $\theta_v$ that
\[
\mc{L}h\mc{L}|^{-1}_{o\inv(v^*)}(c) = \theta_vh\theta\inv_v(c).
\]
Thus $\theta_vH(v)\theta\inv_v = G(v)$, completing the proof that $(\theta,(\theta_v))$ is an isomorphism of local action diagrams from $\Delta'$ to $\Delta$.

Finally, let $g$ be an element of the $\propP{}$-closure of $H$.  Then for every finite subset $Y$ of $o\inv(v)$, there is some $h \in H$ such that $gv = hv$ and $gy=hy$ for all $y \in Y$.  In other words, $h$ is such that $\sigma_{\mc{L},v}(h)(c) = \sigma_{\mc{L},v}(g)(c)$ for all $c \in \mc{L}(Y)$; note that by definition, $\sigma_{\mc{L},v}(h) \in G(\pi(v))$.  Since $Y$ can be any finite subset of $o\inv(v)$ and $G(\pi(v))$ is closed, it follows that $\sigma_{\mc{L},v}(g) \in G(\pi(v))$ for all $v \in VT$, and hence $g \in H$.  Thus $H$ is $\propP{}$-closed.
\end{proof}

\begin{thm}\label{thm:G_to_U}
Let $T$ be a tree, let $G \le \Aut(T)$, let $\Delta = \Delta(T,G)$ and let $\mathbf{T}$ be an associated $\Delta$-tree structure on $T$.  Then $\Univ(\mathbf{T},(G(v)))$ is the $\propP{}$-closure of $G$.
\end{thm}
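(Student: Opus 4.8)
The plan is to put $H := U(\mathbf{T},(G(v)))$ and establish the two inclusions $G \subseteq H$ and $H \subseteq G^{\propP{1}}$. Recall that the $\propP{}$-closure of $G$ coincides with $G^{\propP{1}}$ (Theorem~\ref{propertyP_oneclosure}), which by Proposition~\ref{prop:kclosure}(ii) is the smallest $\propP{1}$-closed subgroup of $\Aut(T)$ containing $G$. Since $H$ is $\propP{}$-closed by Theorem~\ref{thm:diag_to_U}, the first inclusion forces $G^{\propP{1}} \subseteq H$, and combined with the second we get $H = G^{\propP{1}}$, as required.

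For $G \subseteq H$: fix $g \in G$. As $\pi$ is the quotient map of $(T,G)$ we have $\pi \circ g = \pi$, so $g \in \Aut_{\pi}(T)$, and it remains to check that $\sigma_{\mc{L},v}(g) \in G(\pi(v))$ for every $v \in VT$. Write $v^*$ for the representative of $\pi\inv(\pi(v))$ chosen in building $\Delta(T,G)$ and the associated $\Delta$-tree structure. Unwinding the definitions of $\mc{L}$ (recall $\mc{L}(b) = g_wb$ for $b \in o\inv(w)$) and of $\sigma_{\mc{L},v}$, one finds, for $c \in X_{\pi(v)} = o\inv(v^*)$, that
\[
\sigma_{\mc{L},v}(g)(c) = g_{gv}\, g\, g_v\inv\, c .
\]
Here $g_{gv}gg_v\inv$ lies in $G$ and fixes $v^*$ (indeed $g_v\inv$ sends $v^*$ to $v$, then $g$ sends $v$ to $gv$, then $g_{gv}$ sends $gv$ back to $v^*$), so $g_{gv}gg_v\inv \in G_{v^*}$; hence $\sigma_{\mc{L},v}(g)$ is a permutation of $o\inv(v^*)$ induced by an element of $G_{v^*} \subseteq \overline{G_{v^*}}$, and therefore lies in $G(\pi(v))$.

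For $H \subseteq G^{\propP{1}}$: fix $h \in H$, a vertex $v \in VT$, and a finite set $X$ of vertices at distance at most $1$ from $v$; we must find $g \in G$ agreeing with $h$ on $X$. Put $v' := hv$; since $h \in \Aut_{\pi}(T)$ we have $\pi(v') = \pi(v)$, so the same representative $v^*$ works for both, and the computation above, now applied to $h$, shows that $\tilde{h} := g_{v'}\,h\,g_v\inv$ fixes $v^*$ and induces $\sigma_{\mc{L},v}(h)$ on $o\inv(v^*)$. Because $h \in H$, the permutation $\sigma_{\mc{L},v}(h)$ lies in $G(\pi(v))$, which by definition is the image of $\overline{G_{v^*}}$ in $\Sym(o\inv(v^*))$; choose $\bar{g} \in \overline{G_{v^*}}$ inducing it, so that $\bar{g}$ agrees with $\tilde{h}$ on all of $o\inv(v^*)$. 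By density of $G_{v^*}$ in $\overline{G_{v^*}}$ there is $g^* \in G_{v^*}$ agreeing with $\bar{g}$ --- hence with $\tilde{h}$ --- on the finitely many arcs $\mc{L}(a)$ with $a \in o\inv(v)$ and $t(a) \in X$ (agreement on the terminal vertices of those arcs, together with fixing $v^*$, forces agreement on the arcs themselves since $T$ is simple). Setting $g := g_{v'}\inv\, g^*\, g_v \in G$, a direct check yields $gv = v' = hv$ and $ga = ha$ for every arc $a$ as above, so $g$ agrees with $h$ on $X$, and therefore $h \in G^{\propP{1}}$.

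The derivation of the formula for $\sigma_{\mc{L},v}$ in terms of the coset representatives $g_w$ is routine, needing only care to distinguish arcs from vertices. The step I expect to be the main obstacle is the inclusion $H \subseteq G^{\propP{1}}$: converting the abstract membership $\sigma_{\mc{L},v}(h) \in G(\pi(v))$ into a genuine element of $G$ that approximates $h$ on $B_1(v)$. This is precisely what forces both the conjugation $g = g_{v'}\inv g^* g_v$, which transports the local action from the reference vertex $v^*$ back to $v$, and the appeal to density of $G_{v^*}$ in $\overline{G_{v^*}}$, since $G(\pi(v))$ is defined via $\overline{G_{v^*}}$ rather than $G_{v^*}$ itself.
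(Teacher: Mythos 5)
Your proof is correct and follows essentially the same route as the paper's: the same computation $\sigma_{\mc{L},v}(g)(c) = (g_{gv}gg_v^{-1})c$ gives $G \le H$, and the same conjugation back to the reference vertex $v^*$ gives $H \le G^{\propP{1}}$. The only (harmless) difference is that the paper first reduces to the case where $G$ is closed so that elements of $G(\pi(v))$ are induced exactly by elements of $G_{v^*}$, whereas you keep $G$ arbitrary and instead invoke density of $G_{v^*}$ in $\overline{G_{v^*}}$ to approximate on the finitely many relevant arcs.
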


\begin{proof}
By Theorem~\ref{thm:diag_to_U}, the set $H = \Univ(\mathbf{T},(G(v)))$ is a $\propP{}$-closed subgroup of $\Aut(T)$.  Let $V^*$ be the set of vertices of $T$ used to define the $\Delta$-tree structure, and let $v \in VT$.  Then there is some $w \in V^*$ such that $\pi(v) = \pi(w)$.  In particular, $v$ and $w$ lie in the same $G$-orbit, and given the way in which $\mathbf{T}$ is constructed, there is some $g_v \in G$ such that $g_vv = w$ and the colouring for $b \in o\inv(v)$ is given by $\mc{L}(b) = g_vb$.

Let $g \in G$.  Then $\pi(v) = \pi(gv) = \pi(w)$, so $g_{gv}(gv) = w$ where $g_{gv}$ is as in the definition of the colouring.  Let $b \in o\inv(v)$ and let $c = \mc{L}(b)$.  Then $c = g_vb$ and $\mc{L}(gb) = g_{gv}gb = (g_{gv}gg\inv_v)c$, so
\[
\sigma_{\mc{L},v}(g)(c) = (g_{gv}gg\inv_v)c.
\]
We see that $(g_{gv}gg\inv_v)w = w$, so $g_{gv}gg\inv_v \in G_w$.  Thus by definition, the permutation induced by $g_{gv}gg\inv_v$ on $w$ is an element of $G(\pi(w))=G(\pi(v))$, that is, $\sigma_{\mc{L},v}(g) \in G(\pi(v))$.  Since this holds for all $v \in VT$, we see that $g \in H$.  Thus $G \le H$; since $H$ is $\propP{}$-closed, in fact $G^{\propP{}} \le H$.

Conversely, let $h \in H$.  Then by the definition of $H$,
\[
\sigma_{\mc{L},v}(h) \in G(\pi(v)).
\]
Given the definition of $G(\pi(v))$, for every finite subset $X$ of $o\inv(w)$ there is $g'_X \in G_w$ such that for all $c \in X$,
\[
g'_Xc = \sigma_{\mc{L},v}(h)(c).
\]
Since $h \in \Aut_{\pi}(T)$ we have $\pi(hv) =\pi(v)$, so we can write $g_{hv}(hv) = w$.  
Given a finite subset $X$ of $o\inv(v)$ and $b \in X$, we have $\mc{L}(hb) = \sigma_{\mc{L},v}(h)(g_vb)$ and also $\mc{L}(hb) = g_{hv}hb$.  Thus
\[
hb = g\inv_{hv}\sigma_{\mc{L},v}(h)(g_vb) = g\inv_{hv}g'_{g_vX}g_vb.
\] 
Thus on the set $X$, we see that $h$ agrees with the element $g\inv_{hv}g'_{g_vX}g_v$ of $G$.  Since this can be achieved at every vertex $v \in VT$ and for every finite subset $X$ of $o\inv(v)$, we conclude that $h \in G^{\propP{}}$.  This proves that $G^{\propP{}} = H$ as required.
\end{proof}

Since the local action diagram can be recovered from the group $\Univ(\mathbf{T},(G(v)))$, we have the following corollary.

\begin{cor}
Let $T$ be a tree, let $G \le \Aut(T)$ and let $\Delta$ be an associated local action diagram.  Then $\Delta$ is also an associated local action diagram for the action of $G^{\propP{}}$ on $T$.
\end{cor}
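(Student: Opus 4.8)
The plan is simply to chain together Theorems~\ref{thm:G_to_U} and \ref{thm:diag_to_U}. Recall that, as shown above, any two local action diagrams associated to a given action on a tree are isomorphic (via an isomorphism that is the identity on the quotient graph), so it suffices to exhibit \emph{one} local action diagram associated to $(T,G^{\propP{}})$ that is isomorphic to $\Delta$; in particular, since the property of being an associated local action diagram of $(T,G^{\propP{}})$ is invariant under isomorphism of local action diagrams, we may without loss of generality take $\Delta$ to be $\Delta(T,G)$ itself.

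First I would fix an associated $\Delta$-tree structure $\mathbf{T} = (T,\pi,\mc{L})$ on $T$, namely the one produced by the construction of the associated local action diagram: choose representatives $V^*$ for the fibres of the quotient map $\pi$, set $G(v)$ to be the permutation group induced on $o\inv(v^*)$ by $\ol{G_{v^*}}$, and colour $b \in o\inv(w)$ by $\mc{L}(b) = g_w b$ for a suitable $g_w \in G$ with $g_w w \in V^*$. By Theorem~\ref{thm:G_to_U}, the universal group $H := U(\mathbf{T},(G(v)))$ is exactly the $\propP{}$-closure $G^{\propP{}}$; in particular $G^{\propP{}}$ acts on the very same tree $T$, now carrying this $\Delta$-tree structure.

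Then I would apply Theorem~\ref{thm:diag_to_U} to this $\Delta$-tree $\mathbf{T}$: it asserts that $\Delta$ is isomorphic to a local action diagram associated to $H = U(\mathbf{T},(G(v)))$. Since $H = G^{\propP{}}$, this says precisely that $\Delta$ is isomorphic to a local action diagram associated to the action of $G^{\propP{}}$ on $T$; by the uniqueness-up-to-isomorphism remark above, $\Delta$ \emph{is} an associated local action diagram of $(T,G^{\propP{}})$, as required.

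There is no genuine obstacle here: all of the substantive work has already been carried out in Theorems~\ref{thm:G_to_U} and \ref{thm:diag_to_U}. The only point requiring a moment's care is to use the \emph{same} $\Delta$-tree structure $\mathbf{T}$ on $T$ in both invocations — which is harmless, since $G$ and $G^{\propP{}}$ are subgroups of $\Aut(T)$ for one and the same tree $T$, and Lemma~\ref{lem:tree_isomorphism} in any case guarantees that the particular choice of $\Delta$-tree is irrelevant up to a compatible graph isomorphism.
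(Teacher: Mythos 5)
Your proposal is correct and is essentially the paper's own argument: the paper derives this corollary immediately from Theorem~\ref{thm:G_to_U} (identifying $U(\mathbf{T},(G(v)))$ with $G^{\propP{}}$) together with Theorem~\ref{thm:diag_to_U} (which recovers $\Delta$ as an associated local action diagram of that universal group). Your write-up merely makes explicit the bookkeeping about using the same $\Delta$-tree structure and uniqueness up to isomorphism, which the paper leaves implicit.
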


We have now shown that the $\propP{}$-closed subgroups of $\Aut(T)$ are exactly the groups realisable as a group $\Univ(\mathbf{T},(G(v)))$ definable from some local action diagram.  It remains to show that each local action diagram $\Delta$ gives rise to only one group $\Univ(\mathbf{T},(G(v)))$ up to tree isomorphisms, in other words, the choices made in defining the $\Delta$-tree $\mathbf{T}$ are not significant.

\begin{thm}\label{thm:unique_U}
Let $\Delta = (\Gamma,(X_a),(G(v)))$ be a local action diagram and let $\mathbf{T} = (T,\pi,\mc{L})$ and $\mathbf{T'}=(T',\pi',\mc{L}')$ be $\Delta$-trees.  Then there is a graph isomorphism $\phi: T \rightarrow T'$ such that $\phi \Univ(\mathbf{T},(G(v))) \phi\inv = \Univ(\mathbf{T}',(G(v)))$.
\end{thm}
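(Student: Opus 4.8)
The plan is to construct $\phi$ directly by a ball-by-ball induction that mirrors the construction of the isomorphism $\alpha$ in Lemma~\ref{lem:tree_isomorphism}, but with one extra constraint built into each step. For any graph isomorphism $\phi: T \rightarrow T'$ with $\pi' \circ \phi = \pi$ and any $v \in VT$, define the \emph{local discrepancy}
\[
\rho_v(\phi) := \mc{L}'_{\phi(v)} \circ \phi|_{o\inv(v)} \circ \mc{L}\inv_v \in \Sym(X_{\pi(v)}).
\]
Since $\phi$ respects the two projections and each of $\mc{L},\mc{L}'$ restricts to a bijection from the arcs over $a$ onto $X_a$, this permutation preserves each colour set $X_a$. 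The aim of the construction is to arrange that $\rho_v(\phi) \in G(\pi(v))$ for \emph{every} $v \in VT$.

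Granting such a $\phi$, a short computation finishes the proof. Given $g \in U(\mathbf{T},(G(v)))$, the conjugate $\phi g \phi\inv$ clearly lies in $\Aut_{\pi'}(T')$, and for any $v \in VT$, writing $w = \phi(v)$, a direct calculation analogous to the identities for $\sigma_{\mc{L},\cdot}$ used in the proof of Theorem~\ref{thm:diag_to_U} gives
\[
\sigma_{\mc{L}',w}(\phi g \phi\inv) = \rho_{gv}(\phi)\, \sigma_{\mc{L},v}(g)\, \rho_v(\phi)\inv .
\]
As $g \in \Aut_{\pi}(T)$ preserves the fibres of $\pi$ we have $\pi(gv) = \pi(v)$, so all three factors on the right lie in $G(\pi(v)) = G(\pi'(w))$, whence $\phi g \phi\inv \in U(\mathbf{T}',(G(v)))$. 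This yields $\phi U(\mathbf{T},(G(v)))\phi\inv \subseteq U(\mathbf{T}',(G(v)))$; applying the same reasoning to $\phi\inv$ (noting $\rho_w(\phi\inv) = \rho_v(\phi)\inv \in G(\pi(v))$) gives the reverse inclusion, hence equality.

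It remains to carry out the induction. Fix $v_1 \in VT$ and $w_1 \in VT'$ with $\pi(v_1) = \pi'(w_1)$, set $\phi(v_1) = w_1$, and extend over $o\inv(v_1)$ by $\phi|_{o\inv(v_1)} := (\mc{L}'_{w_1})\inv \circ \mc{L}_{v_1}$, so $\rho_{v_1}(\phi) = \id$. Inductively, suppose $\phi$ has been defined as a graph isomorphism on $B_n(v_1)$ with $\rho_u(\phi) \in G(\pi(u))$ whenever $d(v_1,u) < n$. For $u \in S_n(v_1)$, let $r$ be the unique arc of $o\inv(u)$ pointing towards $v_1$; both $\phi(u)$ and $\phi(r)$ are already defined. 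The colours $c := \mc{L}(r)$ and $c' := \mc{L}'(\phi(r))$ both lie in $X_{\pi(r)}$ (using $\pi' \circ \phi = \pi$), and $X_{\pi(r)}$ is a single $G(\pi(u))$-orbit by the defining property of a local action diagram; so we may choose $h_u \in G(\pi(u))$ with $h_u(c) = c'$ and set $\phi|_{o\inv(u)} := (\mc{L}'_{\phi(u)})\inv \circ h_u \circ \mc{L}_u$. Because $h_u$ preserves each colour set, this respects the two projections, and $h_u(c) = c'$ guarantees agreement with the already-chosen value $\phi(r)$; extending to arcs terminating at $u$ via $\phi(\ol b) := \ol{\phi(b)}$ and to the new vertices via origins, exactly as in Lemma~\ref{lem:tree_isomorphism}, produces a graph isomorphism on $B_{n+1}(v_1)$ with $\rho_u(\phi) = h_u \in G(\pi(u))$ for all $u \in S_n(v_1)$. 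The union of these maps over $n$ is the desired $\phi$.

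The one genuinely new point over Lemma~\ref{lem:tree_isomorphism} is that the recolouring permutation $h_u$ can be chosen inside $G(\pi(u))$ rather than in the full stabiliser of the colour-set partition — and this is precisely what makes $\phi$ conjugate the two universal groups. It works exactly because each $X_a$ is a $G(v)$-orbit, so the constraint $h_u(c) = c'$ can always be satisfied within $G(\pi(u))$; I expect this to be the only spot requiring care. The rest of the bookkeeping — compatibility of the locally-defined pieces across edges and their reversals, including loops in $\Gamma$ that may equal their own reverse — is routine and identical to the verification already done in Lemma~\ref{lem:tree_isomorphism}. (Alternatively, one could first invoke Lemma~\ref{lem:tree_isomorphism} to reduce to the case $T = T'$, $\pi = \pi'$ and two colourings on the same base, then run the same induction to build a $\pi$-preserving automorphism; this is the same argument repackaged.)
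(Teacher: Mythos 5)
Your proof is correct and follows essentially the same route as the paper: a ball-by-ball induction that builds $\phi$ so that its local discrepancy at each vertex lies in the prescribed local action group, using the fact that each $X_a$ is a single $G(o(a))$-orbit to satisfy the one compatibility constraint coming from the arc pointing back towards the root. Your explicit conjugation identity $\sigma_{\mc{L}',w}(\phi g\phi\inv)=\rho_{gv}(\phi)\,\sigma_{\mc{L},v}(g)\,\rho_v(\phi)\inv$ is a slightly more direct way of verifying $\phi U(\mathbf{T},(G(v)))\phi\inv=U(\mathbf{T}',(G(v)))$ than the paper's comparison of induced vertex-stabilizer actions, but the substance is the same.
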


\begin{proof}
Let $G = \Univ(\mathbf{T},(G(v)))$ and $H = \Univ(\mathbf{T}',(G(v)))$.  By Lemma~\ref{lem:tree_isomorphism}, there is a graph isomorphism $\alpha: T \rightarrow T'$ such that $\pi' \circ \alpha = \pi$.  Thus by applying $\alpha$ to $\mathbf{T}$ and replacing $G$ with $\alpha G \alpha\inv$, we may assume that $T = T'$ and $\pi = \pi'$, in other words, $G$ and $H$ act on the same tree with the same orbits, in such a way that the quotient graph can be naturally identified with $\Gamma$.

By Theorem~\ref{thm:diag_to_U}, for every pair of vertices $v,w \in VT$ such that $\pi(v) = \pi(w)$, we have
\[
\sigma_{\mc{L},v}(G_v) = G(\pi(v)) = \sigma_{\mc{L}',w}(H_w).
\]
Let $G^*_v$ and $H^*_w$ be the permutation groups induced by $G_v$ on $o\inv(v)$ and $H_w$ on $o\inv(w)$ respectively.  We see that $G^*_v$ and $H^*_w$ are both isomorphic to $G(\pi(v))$ as permutation groups; moreover, since $\pi = \pi'$, given any element $g \in G$ such that $gv = w$, then the groups $gG^*_vg\inv$ and $H^*_w$ also have the same orbits on $o\inv(w)$, with the same correspondence between orbits on $o\inv(w)$ and elements of $o\inv(\pi(v))$.  There is thus a bijection $\rho_{v,w}$ from $o\inv(v)$ to $o\inv(w)$ such that $\pi(\rho_{v,w}(a)) = \pi(a)$ for all $a \in o\inv(v)$ and such that $\rho_{v,w}G^*_v\rho\inv_{v,w} = H^*_w$.  Since $G^*_v$ is transitive on each $\pi$-fibre in $o\inv(v)$, for a single given $a \in o\inv(v)$, we are free to choose $\rho_{v,w}(a)$ from the set $\{b \in o\inv(w) \mid \pi(b) = \pi(a)\}$.

We now aim to construct $\phi \in \Aut_{\pi}(T)$ such that
\[
\phi \Univ(\mathbf{T},(G(v))) \phi\inv = \Univ(\mathbf{T}',(G(v))).
\]
We construct $\phi$ successively on balls of radius $n$ centred on some vertex $v_0 \in VT$, starting with $\phi(v_0) = v_0$, such that at each stage $\phi$ is a graph automorphism on $B_n(v_0)$ that commutes with $\pi$.  To define $\phi$ on $B_1(v_0)$, we let it act as $\rho_{v_0,v_0}$ on $o\inv(v_0)$, and then extend to the remaining arcs and vertices in $B_1(v_0)$ via the equations $\phi(\overline{a}) = \overline{\phi(a)}$ and $\phi(t(a)) = t(\phi(a))$.  Now suppose we have defined $\phi$ on $B_n(v_0)$ for some $n \ge 1$, and let $v \in S_n(v_0)$; write $\phi_n$ for the automorphism of $B_n(v_0)$.  Let $a$ be the unique arc in $o\inv(v)$ in the direction of $v_0$.  Then we have already specified $\phi(a)$, and it has been chosen in such a way that $\pi(\overline{\phi(a)}) = \pi(\overline{a})$; hence it is also the case that $\pi(\phi(a)) = \pi(a)$.  We can thus choose $\rho_{v,\phi(v)}$ so that $\rho_{v,\phi(v)}(a) = \phi(a)$.  There is then a unique isomorphism $\phi_v$ from $B_1(v)$ to $B_1(\phi(v))$ that is compatible with both $\phi_n$ and with $\rho_{v,v_0}$.  We then define $\phi(e)$ for $e$ a vertex or arc of $B_{n+1}(v_0)$ to be $\phi_n(e)$ or $\phi_v(e)$ as applicable, and observe that we have produced a graph automorphism of $B_{n+1}(v_0)$ that commutes with $\phi$.  By induction, we produce $\phi \in \Aut_{\pi}(T)$.

Now let $v \in VT$, write $w =\phi(v)$ and consider $\phi G_v \phi\inv$.  The construction of $\phi$ was such that the permutation group induced by $\phi G_v \phi\inv$ on $o\inv(w)$ is $\rho_{v,w}G_v\rho\inv_{v,w}$, or in other words, it is the same permutation group as the one induced by $H_w$ on $o\inv(w)$. Thus $\sigma_{\mc{L}',w}(\phi G_v \phi\inv) = G(\pi(w))$. By varying $v$ so that $w$ ranges over $VT$, we see (from the definition of $H = \Univ(\mathbf{T}',(G(v)))$) that $\phi G \phi\inv \le H$.  On the other hand, we see by a similar argument that $\phi\inv H \phi \le G$, so $\phi G \phi \inv  \ge H$.  So in fact $\phi G \phi\inv = H$, as required.
\end{proof}

\begin{defn} \label{def:THEUniversalGp}
We can thus define {\it the} universal group of a local action diagram: $\Univ(\Delta)$ is defined as $\Univ(\mathbf{T},(G(v)))$ where $\mathbf{T}$ is some $\Delta$-tree.  Then $\Univ(\Delta)$ is defined up to isomorphisms of the tree on which it acts; we write $\Univ_{\mathbf{T}}(\Delta)$ if we want to impose a specific action on a specific tree.
\end{defn}

To conclude the section, we now prove the correspondence theorem.

\begin{proof}[Proof of Theorem~\ref{thm:correspondence}]
Given a local action diagram $\Delta$, we have an associated pair $(T,\Univ(\Delta))$, and by Theorem~\ref{thm:unique_U} the pair $(T,\Univ(\Delta))$ is specified uniquely up to isomorphisms; on the other hand, it is clear from the construction that if $\Delta$ and $\Delta'$ are isomorphic as local action diagrams, then they will produce isomorphic pairs $(T,\Univ(\Delta))$ and $(T',\Univ(\Delta'))$.  
Thus we have a well-defined mapping $\beta$ from isomorphism classes of local action diagram to isomorphism classes of pairs $(T,G)$ where $T$ is a tree and $G \le \Aut(T)$ is $\propP{}$-closed on $T$. 
Theorem~\ref{thm:G_to_U} shows that $\beta$ is surjective and Theorem~\ref{thm:diag_to_U} shows that $\beta$ is injective.  Thus we have a natural one-to-one correspondence as claimed.
\end{proof}

\begin{ex} \label{Ex:AutT}
Let $T$ be the $(2,4)$-biregular tree; that is the tree in which all vertices in one part of the natural bipartition have valency $2$, and vertices in the other part have valency $4$. Suppose $G = \Aut(T)$. Then $G$ is $\propP{}$-closed, has two orbits on $VT$, two orbits on $AT$ and its two local actions are the symmetric groups $\Sym(2)$ and $\Sym(4)$. In Figure~\ref{Fig:AutTAssocLAD} on page \pageref{Fig:AutTAssocLAD} we see $T$ (top) with the associated local action diagram $\Delta$ (bottom). The two orbit representatives  $v^*, w^* \in VT$  used in the construction of the associated local action diagram are indicated; so too are labels $\{1,2\}$ for $o\inv(v^*)$ and $\{3,4,5,6\}$ for  $o\inv(w^*)$.  In this example we have $G = \Univ(\Delta)$. 

\begin{figure}
\begin{center}
\scalebox{0.7}{ % Usual scale command doesn't reduce fonts
\begin{tikzpicture}
%
% -= Tree
\begin{scope}
    \foreach \x in {-5,-3,...,5} {
        \node[Vertex] (VCen\x) at (\x,5) {};
    };
    \foreach \x in {-4,-2,...,4} {
        \node[OpenVertex] (VCen\x) at (\x,5) {};
        \foreach \y in {6} {
            \node[Vertex] (VMA\x) at (\x,\y) {};
            \node[OpenVertex] (VMU\x) at (\x,\y+0.5) {};
            \node[Vertex] (VTop\x) at (\x,\y+0.5+0.5) {};
            \node[Vertex] (VMUL\x) at (\x-0.5,\y+0.5) {};
            \node[Vertex] (VMUR\x) at (\x+0.5,\y+0.5) {};
        };
        \foreach \y in {4} {
            \node[Vertex] (VMB\x) at (\x,\y) {};
            \node[OpenVertex] (VML\x) at (\x,\y-0.5) {};
            \node[Vertex] (VBot\x) at (\x,\y-0.5-0.5) {};
            \node[Vertex] (VMLL\x) at (\x-0.5,\y-0.5) {};
            \node[Vertex] (VMLR\x) at (\x+0.5,\y-0.5) {};
        };
    };
    \foreach \x in {-4,0,4} {
        \draw [color=black, line width=0.8] (VMA\x) to (VTop\x);
        \draw [color=black, line width=0.8 ] (VMUL\x) to (VMUR\x);
        % Redraw white nodes so not covered by edge
        \node[OpenVertex] at (VMU\x) {};

        \draw [color=black, line width=0.8] (VMB\x) to (VBot\x);
        \draw [color=black, line width=0.8 ] (VMLL\x) to (VMLR\x);
        % Redraw white nodes so not covered by edge
        \node[OpenVertex] at (VML\x) {};

        \draw [color=black, line width=0.8 ] (VMA\x) to (VMB\x);
        % Redraw white nodes so not covered by edge
        \node[OpenVertex] at (VCen\x) {};
    };
    \foreach \x in {-2,2} {
        \draw [color=black, line width=0.8 ] (VMA\x) to (VTop\x);
        \draw [color=black, line width=0.8] (VMUL\x) to (VMUR\x);
        % Redraw white nodes so not covered by edge
        \node[OpenVertex] at (VMU\x) {};

        \draw [color=black, line width=0.8 ] (VMB\x) to (VBot\x);
        \draw [color=black, line width=0.8] (VMLL\x) to (VMLR\x);
        % Redraw white nodes so not covered by edge
        \node[OpenVertex] at (VML\x) {};

        \draw [color=black, line width=0.8] (VMA\x) to (VMB\x);
        % Redraw white nodes so not covered by edge
        \node[OpenVertex] at (VCen\x) {};
    };
    
    % Draw edges along central axis
    \draw [color=black, line width=0.8] (VCen-5)--(VCen-3);
    \draw [color=black, line width=0.8 ] (VCen-3)--(VCen-1);
    \draw [color=black, line width=0.8] (VCen-1)--(VCen1);
    \draw [color=black, line width=0.8 ] (VCen1)--(VCen3);
    \draw [color=black, line width=0.8] (VCen3)--(VCen5);
    % Redraw white vertices along centre
    \foreach \x in {-4,-2,...,4} {
        \node[OpenVertex] (VCen\x) at (\x,5) {};
    };
    
    % Label v* and w*
    \node[TextNode] at ([shift=(80:0.3)]VCen-3) {$v^*$};
    \node[TextNode] at ([shift=(40:0.4)]VCen4) {$w^*$};
    
    % Label arcs coming from v*
    \node[TextNode] at ([shift=(205:0.5)]VCen-3) {1};
    \node[TextNode] at ([shift=(-25:0.5)]VCen-3) {2};    
    % Label arcs coming from w*
    \node[TextNode] at ([shift=(205:0.5)]VCen4) {3};
    \node[TextNode] at ([shift=(-25:0.5)]VCen4) {4};
    \node[TextNode] at ([shift=(115:0.5)]VCen4) {5};
    \node[TextNode] at ([shift=(250:0.6)]VCen4) {6};
\end{scope}
%
% -= Local action diagram
\draw[rounded corners] (-5.7, -1.5) rectangle (5.7, 2) {};
%
% -= Vertex V
\node[Vertex] (V) at (-4,0) {};
    % Vertex label
    \node[TextNode] at ([shift=(180:0.3)]V) {$v$};
    % Local action
    \node[TextNode] at ([shift=(90:1.6)]V) {$G(v) = \Sym(\{1,2\})$};
%
% -= Vertex W
\node[OpenVertex] (W) at (4,0) {};
    % Vertex label
	\node[TextNode] at ([shift=(0:0.3)]W) {$w$};
    % Local action
    \node[TextNode] at ([shift=(230:1.3)]W) {$G(w) = \Sym(\{3,4,5,6\})$};
%
% -= Edge a [leaving v]
\draw [color=black, ->-, line width=0.8] (V) to [bend left=30] node[above] (colour_a) {$a$} (W);
    % Colour set
    \node[TextNode,align=left] at ([shift=(0:2)]colour_a) {$X_a = \{1, 2\}$};
\draw [color=black, ->-, line width=0.8] (W) to [bend right=15]  node[below] (colour_aInv) {$\overline{a}$} (V);
    % Colour set
    \node[TextNode,align=left] at ([shift=(-15:2.4)]colour_aInv) {$X_{\overline{a}} = \{3,4, 5, 6\}$};    
% -= Final box
\draw[rounded corners] (-5.9, -1.8) rectangle (5.9, 7.4) {};
\end{tikzpicture}
}
\caption{A figure for Example~\ref{Ex:AutT} showing part of the $(2,4)$-biregular tree $T$ with labels (top), and (bottom) the associated local action diagram for the action $(T, G)$ when $G = \Aut(T)$.}
\label{Fig:AutTAssocLAD}
\end{center}
\end{figure}
\end{ex}

\begin{ex} \label{Ex:BM} Let $T$ be the $6$-regular tree and suppose $H := \langle (1 \ 2 \ 3), (4 \ 5 \ 6) \rangle \leq \Sym(6)$. If $G$ is the Burger--Mozes group $U(H)$ (see \cite{BurgerMozes}) then $G$ is $\propP{}$-closed, has one orbit on $VT$ and $2$ orbits on $AT$. Notice that any arc and its reverse lie in a common orbit. In Figure~\ref{Fig:AssocLADBMGp} on page \pageref{Fig:AssocLADBMGp} we see the local action diagram $\Delta$ for the action $(T, U(H))$. In this example we have that the Burger--Mozes group $U(H)$ is the universal group $\Univ(\Delta)$ of the local action diagram $\Delta$.

\begin{figure}
\begin{center}
\scalebox{0.85}{ % Usual scale command doesn't reduce fonts
\begin{tikzpicture}
\draw[rounded corners] (-5.7, -2) rectangle (5.7, 2) {};
%
% -= Vertex V
\node[Vertex] (V) at (0,0) {};
    % Vertex label
    \node[TextNode] at ([shift=(180:0.3)]V) {$v$};
    % Local action
    \node[TextNode] at ([shift=(180:2.8)]V) {$G(v) = \langle (1 \ 2 \ 3), (4 \ 5 \ 6) \rangle$};
% -= Loop a
    \draw[color=black, line width=0.8] (V) to  [out=40,in=120,distance=15mm] node[above] (colour_a) {$a = \overline{a}$} (V);
    % Colour set
    \node[TextNode,align=left] at ([shift=(-10:2.5)]colour_a) {$X_a = X_{\overline{a}} = \{1,2,3\}$};
% -= Loop b
    \draw[color=black, line width=0.8] (V) to [out=-40,in=-120,distance=15mm] node[below] (colour_b) {$b = \overline{b}$} (V);
    % Colour set
    \node[TextNode,align=right] at ([shift=(10:2.5)]colour_b) {$X_b = X_{\overline{b}} = \{4,5,6\}$};    
\end{tikzpicture}
}
\caption{A figure for Example~\ref{Ex:BM}, showing the local action diagram for the Burger--Mozes group $U(\langle (1 \ 2 \ 3), (4 \ 5 \ 6) \rangle)$.}
\label{Fig:AssocLADBMGp}
\end{center}
\end{figure}
\end{ex}

\section{$\propP{}$-closed subgroups of $\propP{}$-closed groups}\label{sec:tree}

Let $G$ be a $\propP{}$-closed group of automorphisms of the tree $T$.   
In this section, we examine subgroups of $\propP{}$-closed groups and look for those subgroups that are themselves $\propP{}$-closed. In \S\ref{LocalSubactionDiagrams}
we introduce local subaction diagrams. In \S\ref{BeingPClosed},
we present sufficient conditions for a subgroup of $G$ to be $\propP{}$-closed. Following this, we look at specific types of subgroups: 
vertex stabilisers in \S\ref{VertexStabsAndPClosure}; $G^+$ in \S\ref{sec:G+}; 
open subgroups containing a translation in \S\ref{sec:open_hyperbolic}
and stabilisers of locally invariant ends in \S\ref{sec:local_ends}. In \S\ref{sec:local_ends} we also prove Theorem~\ref{thm:open_primitive} from the introduction.

\subsection{Local subaction diagrams} \label{LocalSubactionDiagrams}

In this subsection we give a general description of $\propP{}$-closed subgroups of a $\propP{}$-closed group based on the respective local action diagrams.

\begin{defn} \label{defn:local_subaction_diagram}
Suppose $\Delta = (\Gamma, (X_a), (G(v)))$ is a local action diagram. A \defbold{local subaction diagram} of $\Delta$ is a local action diagram $\Lambda = (\hat{\Gamma}, (\hat{X}_{\hat{a}}), (H(\hat{v})))$ admitting a locally surjective graph homomorphism $p: \hat{\Gamma} \rightarrow \Gamma$, and for each $\hat{v} \in \hat{\Gamma}$ a bijection $\theta_{\hat{v}}: \hat{X}_{\hat{v}} \rightarrow X_{p(\hat{v})}$, satisfying the following conditions:
\begin{enumerate}[(i)]
\item
	For each vertex $\hat{v}$ of $\hat{\Gamma}$, then $\theta_{\hat{v}}H(\hat{v})\theta\inv_{\hat{v}}$ is a closed subgroup of $G(p(\hat{v}))$.
\item
	For each arc $a$ of $\Gamma$ and each vertex $\hat{v}$ of $\hat{\Gamma}$ such that $p(\hat{v}) = o(a)$, then $\theta_{\hat{v}}$ restricts to a bijection
	\[
	\theta_{\hat{v},a}: \bigsqcup_{\hat{a} \in o\inv(\hat{v}), p(\hat{a}) = a} \hat{X}_{\hat{a}} \rightarrow X_a. 
	\]
\end{enumerate}

If $\Lambda$ is a local subaction diagram of $\Delta$ with associated graph homomorphism $p: \hat{\Gamma} \rightarrow \Gamma$, then we write $\Lambda \leq_p \Delta$.  We combine the bijections $\theta_{\hat{v}}$ into a map
\[
\theta_p: \bigsqcup_{\hat{v} \in V\hat{\Gamma}}\hat{X}_{\hat{v}} \rightarrow \bigsqcup_{v \in V\Gamma}X_v; \; \text{ for } x \in \hat{X}_{\hat{v}}, \; \theta_p(x) := \theta_{\hat{v}}(x).
\]
In general $\theta_p$ is surjective but not necessarily injective.
\end{defn}

\begin{prop}\label{Prop:Subgroups_of_local_groups} 
Suppose that $\Delta = (\Gamma, (X_a), (G(v)))$ is a local action diagram and $\Lambda = (\hat{\Gamma}, (X_{\hat{a}}), (H(\hat{v})))$ is a local subaction diagram of $\Delta$, with $\Lambda \leq_p \Delta$.  If $\mathbf{\hat{T}} = (T, \mc{L}, \hat{\pi})$ is a $\Lambda$-tree, then $\mathbf{T}  = (T, \theta_p \circ \mc{L}, p \circ \hat{\pi})$ is a $\Delta$-tree and $\Aut_{\hat{\pi}}(T) \leq \Aut_{p \circ \hat{\pi}}(T)$. Moreover, $\Univ(\mathbf{\hat{T}}, (H(\hat{v}))) \leq \Univ(\mathbf{T},(G(v)))$.
\end{prop}
\begin{proof}
The $\Lambda$-tree $\mathbf{\hat{T}}$ is a tree $T$ together with a surjective graph homomorphism $\hat{\pi} : T \rightarrow \hat{\Gamma}$ and a $\Lambda$-colouring $\mc{L}: AT \rightarrow \bigsqcup_{\hat{a} \in A\hat{\Gamma}}X_{\hat{a}}$. For each vertex $v \in VT$ and each arc $\hat{a} \in o\inv(\hat{\pi}(v))$, the map $\mc{L}$ restricts to a bijection $\mc{L}_{v, \hat{a}}$ from $\{b \in o\inv (v) \mid \hat{\pi}(b) = \hat{a}\}$ to $\hat{X}_{\hat{a}}$ (where here $o\inv$ has codomain $AT$).

Define $\pi := p \circ \hat{\pi}$, and note that $\pi : T \rightarrow \Gamma$ is a surjective graph homomorphism.  Now we see that $\theta_p \circ \mc{L}$ is a map from $AT$ to $\bigsqcup_{a \in A\Gamma}X_{a}$.  Let us now consider the restriction of $\mc{L}' := \theta_p \circ \mc{L}$ to $o\inv(v)$ for some $v \in VT$.  Then $\hat{\pi}(v)$ is some vertex $\hat{v} \in V\hat{\Gamma}$ and $\mc{L}$ restricts to a bijection from $o\inv(v)$ to $\hat{X}_{\hat{v}}$; in turn, $\theta_p$ restricts to $\theta_{\hat{v}}$ on $\hat{X}_{\hat{v}}$.  Thus $\mc{L}'$ restricts to a bijection from $o\inv(v)$ to $X_{p(\hat{v})} = X_{\pi(v)}$.  Now fix $a \in o\inv(\pi(v))$; given $b \in o\inv(v)$, consider when $\mc{L}'(b) \in X_a$.  By the second condition in Definition~\ref{defn:local_subaction_diagram}, this occurs if and only if $\mc{L}(b) \in \hat{X}_{\hat{b}}$ for some $\hat{b} \in o\inv(\hat{v})$ such that $p(\hat{b}) = a$. By the definition of a $\Lambda$-colouring, this is equivalent to the condition that $p\circ \hat\pi(b) = a$, in other words, $\pi(b)=a$. Thus $\mc{L}'$ restricts to a bijection
\[
\mc{L}'_{v,a} : \{b \in o\inv(v) \mid \pi(b) = a\} \rightarrow X_a.
\]
Thus $\mc{L}'$ is a $\Delta$-colouring and hence $\mathbf{T}$ is a $\Delta$-tree.  For $\theta \in \Aut_{\hat{\pi}}(T)$ we have $\theta \in \Aut(T)$ and $\hat{\pi} \circ \theta = \hat{\pi}$. Hence $(p \circ \hat{\pi}) \circ \theta = p \circ \hat{\pi}$, so $\theta \in \Aut_{\pi}(T)$.

Fix $g \in \Univ(\mathbf{\hat{T}}, (H(v)))$; note that $g \in \Aut_{\hat{\pi}}(T) \leq \Aut_{\pi}(T)$.  Given $v \in VT$, then $g$ has local actions with respect to $\mc{L}$ and $\mc{L}'$ at $v$, namely 
$\sigma_{\mc{L},v}(g) \in \Sym(\hat{X}_{\hat{\pi}(v)})$ and $\sigma_{\mc{L}',v}(g) \in \Sym(X_{\pi(v)})$; moreover $\sigma_{\mc{L},v}(g) \in H(\hat{\pi}(v))$.  From the construction of $\mc{L}'$ from $\mc{L}$, we see that in fact 
\[
\sigma_{\mc{L}',v}(g)\theta_{\hat{\pi}(v)} = \theta_{\hat{\pi}(v)} \sigma_{\mc{L},v}(g).
\]
The first condition of Definition~\ref{defn:local_subaction_diagram} now ensures that $\sigma_{\mc{L}',v}(g) \in G(\pi(v))$.  Hence $g \in \Univ(\mathbf{T}, (G(v)))$, showing that $\Univ(\mathbf{\hat{T}}, (H(v))) \le \Univ(\mathbf{T}, (G(v)))$.
\end{proof}

In light of Theorem~\ref{thm:unique_U} and the definition of $ \Univ(\Delta)$, we have the following immediate corollary.

\begin{cor} \label{cor:U_of_subdiagram_is_subgroup} If $\Delta$ is a local action diagram with local subaction diagram $\Lambda$, then $\Univ(\Lambda)$ occurs as a subgroup of $\Univ(\Delta)$. \qed
\end{cor}

Conversely, every $\propP{}$-closed subgroup of a $\propP{}$-closed group has an associated local subaction diagram.

\begin{prop}\label{prop:P-closed_subgroup_diagram}
Let $T$ be a tree, let $G$ be a $\propP{}$-closed subgroup of $\Aut(T)$, and let $H \le G$ such that $H = H^{\propP{}}$.  Let $\Delta = \Delta(T,G)$ and $\Lambda = \Delta(T,H)$.  Then $\Lambda$ is a local subaction diagram of $\Delta$.
\end{prop}

\begin{proof}
Note that $G$ and $H$ both have closed local actions by Lemma~\ref{lem:closed_local_actions}.  We write $\Delta = (\Gamma, (X_a), (G(v)))$ and $\Lambda = (\hat{\Gamma}, (\hat{X}_{\hat{a}}), (H(\hat{v})))$, with associated quotient maps $\pi: T \rightarrow \Gamma$ and $\hat{\pi}: T \rightarrow \hat{\Gamma}$ respectively.  Suppose we have defined $\Delta$ using the representative vertices $V^* \subseteq VT$ (one vertex for each $G$-orbit on $VT$), and we have defined $\Lambda$ using the representative vertices  $V^{**} \subseteq VT$ (one vertex for each $H$-orbit on $VT$).  For each $v \in VT$ we have chosen $g_v \in G$ and $h_v \in H$ such that $g_vv \in V^*$ and $h_vv \in V^{**}$.  By Lemma~\ref{lem:sameT}, the choices we make in the constructions do not change $\Delta$ and $\Lambda$ up to isomorphism of local action diagrams.  Since every $G$-orbit is partitioned into $H$-orbits, we may assume $V^* \subseteq V^{**}$ without loss of generality.

If $e$ is a vertex or arc of $T$, then by definition $\pi(e) = Ge$ and $\hat{\pi}(e) = He$.  Since $H \le G$, we can simply set $p(He) = GHe = Ge$ without ambiguity.  This defines a  
locally surjective graph homomorphism from $\hat{\Gamma}$ to $\Gamma$, with the property that $p \circ \hat{\pi} = \pi$.

Given $\hat{v} \in V\hat{\Gamma}$, there is a unique $w \in V^{**}$ such that $\hat{\pi}(w)=\hat{v}$, and then a unique $w' \in V^*$ in the same $G$-orbit as $w$; in fact $w' = g_ww$, and we note that $\pi(w') = \pi(w)$.  The construction of $\Delta$ and $\Lambda$ is such that $X_{\pi(w')} = o\inv(w')$ and $\hat{X}_{\hat{v}} = o\inv(w)$.  We can thus define the bijection $\theta_{\hat{v}}: \hat{X}_{\hat{v}} \rightarrow X_{p(v)}$ by setting $\theta_{\hat{v}}(b) = g_wb$.  
 
Let $K$ be the subgroup of $\Sym(o\inv(w))$ induced by the action of $H_w$ on $o\inv(w) = \hat{X}_{\hat{v}}$. Since $\Lambda$ is an associated local action diagram, by definition $H(\hat{v})$ is the closure of $K$. However, $K$ is closed because $H$ has closed local actions, so in fact $H(\hat{v}) = K$. Thus $\theta_{\hat{v}}H(\hat{v})\theta\inv_{\hat{v}}$ is the action of $g_wH_wg\inv_w$ on $o\inv(w')$; the latter is clearly a closed subgroup of $G_{w'}$, so we see that 
$\theta_{\hat{v}}H(\hat{v})\theta\inv_{\hat{v}}$ 
is a closed subgroup of $G(p(\hat{v}))$.  Thus condition (i) of Definition~\ref{defn:local_subaction_diagram} is satisfied.

For condition (ii), let $\hat{v}$, $w$ and $w'$ be as before and let $a \in A\Gamma$ be such that $p(\hat{v}) = o(a)$.  Then $a = Gb$ for some $b \in o\inv(w)$.  Given $c \in \hat{X}_{\hat{v}}$, then $c \in \hat{X}_{\hat{a}}$ for some $\hat{a} \in o\inv(\hat{v}) \subseteq A\hat{\Gamma}$; we can write $\hat{X}_{\hat{a}} = H_wb'$  for some $b' \in o\inv(w)$.  If $p(\hat{a}) = a$ then $Gb' = Gb$, so $g_wc \in Gb \cap o\inv(w') = X_a$.  If $p(\hat{a}) \neq a$ then $Gb'$ is disjoint from $Gb$ and hence $g_wc$ is not in $X_a$.  Thus $\theta_{\hat{v}}$ restricts to a bijection $\theta_{\hat{v},a}$ as required for condition (ii) of Definition~\ref{defn:local_subaction_diagram}. We conclude that $\Lambda$ is a local subaction diagram of $\Delta$ as required.
\end{proof}

\begin{rem} \label{rem:limitations_of_subaction}
As a description of $\propP{}$-closed subgroups of $\propP{}$-closed groups, the results we have just obtained fall short of a classification in two respects.
\begin{enumerate}[(i)]
\item There is no obvious recipe to construct all the local subaction diagrams $\Lambda = (\hat{\Gamma}, (X_{\hat{a}}), (H(\hat{v})))$ of a given local action diagram $\Delta = (\Gamma, (X_a), (G(v)))$.  The case $\Gamma = \hat{\Gamma}$ is relatively straightforward: $\Lambda$ is then determined by choosing the tuple $(H(v))_{v \in V\Gamma}$, where $H(v)$ is a closed subgroup of $G(v)$ acting transitively on each $G(v)$-orbit, and the entries of $(H(v))_{v \in V\Gamma}$ can be chosen independently of one another.  However as soon as we move away from this case, there are dependencies between the choices of local actions $H(\hat{v})$.  For example, even if we assume $V\Gamma = V\hat{\Gamma}$, then for each arc $a \in A\Gamma$, the number of orbits of $H(o(a))$ on $X_a$ must be the same as the number of orbits of $H(t(a))$ on $X_{\ol{a}}$.
\item Generally speaking, when classifying subgroups of a group of a special kind, one seeks a classification up to conjugacy.  In other words, given a $\propP{}$-closed action $(T,G)$ on a tree $T$, we would want a classification of $\propP{}$-closed subgroups of $G$ up to conjugacy in $G$.  Local subaction diagrams of $\Delta(T,G)$ only yield a classification of $\propP{}$-closed subgroups of $G$ up to conjugacy in $\Aut(T)$, which is a coarser classification in general.
\end{enumerate}
\end{rem}

In the rest of this section, we will focus on the relationship between $\propP{}$-closed subgroups of $G$ and open subgroups of $G$, where $G$ is a $\propP{}$-closed group.  This will lead to strong restrictions on the open subgroups, and also to the proof of Theorem~\ref{thm:open_primitive} from the introduction.

\subsection{Sufficient conditions for a subgroup to be $\propP{}$-closed} \label{BeingPClosed}

\begin{lem}\label{lem:arc_stabiliser:1closed}
Let $T$ be a tree, let $G$ be a $\propP{}$-closed subgroup of $\Aut(T)$, and let $H \le G$.  Suppose that $H$ contains an arc stabiliser of $G$.  Then $H$ is $\propP{}$-closed.
\end{lem}

\begin{proof}
Since $T$ is a simple graph, we associate each arc $a \in AT$ with the vertex pair $(o(a), t(a))$ and write $(o(a), t(a)) \in AT$. Now suppose
that $H$ contains an arc stabiliser $G_{(x,y)}$ of $G$ for some arc $(x,y) \in AT$.  
Note that $G$ must be a closed subgroup of $\Aut(T)$ by Proposition~\ref{prop:kclosure}. Since $H$ is an open subgroup of $G$, it is also a closed subgroup of $\Aut(T)$. Thus, by Theorem~\ref{propertyP_oneclosure}, it suffices for us to show that $H$ has property $\propP{}$ with respect to the edges of $T$.

Let $(v,w) \in AT$; we claim that $H$ has $\propP{}$ with respect to $(v,w)$.  If $(v,w)$ is equal to $(x,y)$ or its reverse, then $H_{(v,w)} = G_{(x,y)}$ and the claim is clear, so we may assume $(v,w) \not\in \{(x,y),(y,x)\}$.  Then there is a simple path that passes through $x$ and $y$ in some order, and then later through $v$ and $w$ in some order; without loss of generality, let us say that the path passes through these points in the order $x,y,v,w$.

Since $G$ has property $\propP{}$, we have $G_{(v,w)} =  \rist_G(T_{(v,w)}) \times \rist_G(T_{(w,v)})$.  Note that $(x,y)$ is outside $T_{(v,w)}$, and hence
\[
\rist_G(T_{(v,w)}) \le G_{(x,y)} \le H.
\]
For $h \in H_{(v,w)}$ we can write $h = k_1 k_2$ with $k_1 \in \rist_G(T_{(v,w)})$ and $k_2  \in \rist_G(T_{(w,v)})$.  Then $k_1 \in H$, so also $k_2 \in H$.  The arc stabiliser $H_{(v,w)}$ therefore decomposes as
\[
H_{(v,w)} = \rist_H(T_{(v,w)}) \times \rist_H(T_{(w,v)}).
\]
Thus $H$ satisfies property $\propP{}$ with respect to all edges in $T$.
\end{proof}

Every vertex stabiliser contains the stabilisers of the arcs incident with that vertex, so the following is a special case of Lemma~\ref{lem:arc_stabiliser:1closed}.

\begin{cor}\label{cor:vertex_stabiliser:1closed}
Let $G$ be a $\propP{}$-closed subgroup of $\Aut(T)$, and let $H \le G$.  Suppose that $H$ contains a vertex stabiliser of $G$.  Then $H$ is $\propP{}$-closed.
\end{cor}

\subsection{Vertex stabilisers and $\propP{}$-closure} \label{VertexStabsAndPClosure}

Given a $\propP{}$-closed action of a group $G$ on a tree $T$, then every vertex stabiliser of $G$ itself has $\propP{}$-closed action, by Corollary~\ref{cor:vertex_stabiliser:1closed}.  In turn, $\propP{}$-closed subgroups of $\Aut(T)$ that fix a vertex have a special structure.  This generalises the observation \cite[Proposition 15]{SmithDuke} that the box product of two permutation groups $M$ and $N$ contains isomorphic copies of $M$ and $N$ as subgroups.

\begin{prop}\label{prop:semidirect}
Let $G$ be a $\propP{}$-closed subgroup of $\Aut(T)$, 
let $\epsilon \in VT$ and let $B_r$ be the closed ball of radius $r$ around $\epsilon$.  Let $G_r$ be the pointwise stabiliser of $B_r$ in $G$.  Then there is an increasing sequence 
$(H_r)_{r \ge 1}$
of subgroups of $G_0$, each closed in $\Aut(T)$, such that $G_0 = G_r \rtimes H_r$ as topological groups for all $r \ge 1$.  In particular, there is a permutational isomorphism between the action of $H_1$ on $o\inv(\epsilon)$ and the corresponding vertex group $G(\pi(\epsilon))$ of the local action diagram of $G$.
\end{prop}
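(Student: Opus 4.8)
The plan is to realise $(T,G)$ via a conveniently normalised $\Delta$-tree and then to take $C_r$ to be the subgroup of $G_0$ consisting of those elements whose local actions are trivial at all vertices sufficiently far from $\epsilon$.

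\emph{Normalisation.} Property~$\propP{}$ makes $G$ closed, and I may assume the action is faithful (the kernel of $G\to\Aut(T)$ lies in every $G_r$), so $G = G^{\propP{1}}$ and hence, by Theorems~\ref{thm:G_to_U} and~\ref{thm:unique_U}, up to conjugacy in $\Aut(T)$ I may take $G = U(\mathbf{T},(G(v)))$ for a $\Delta$-tree $\mathbf{T}=(T,\pi,\mc{L})$ of my own choosing, with $\epsilon$ as its root. The naive guess ``$C_r=$ everything acting trivially beyond radius $r$'' fails, because the colours carried by an arc and its reverse are not synchronised across $T$, so once an element permutes arcs near $\epsilon$ it is forced to act nontrivially deeper down. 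The remedy is to make the free choices in Lemma~\ref{lem:tree_isomorphism} coherently: fix a basepoint $x_a^*\in X_a$ for every arc $a$ of $\Gamma$, take $\pi(\epsilon)$ to be the distinguished base vertex, and arrange $\mc{L}$ so that every arc pointing towards $\epsilon$ carries the relevant basepoint colour. Then the rooted coloured branch of $T$ below a vertex $u\neq\epsilon$ depends only on $\pi(u)$ and the $\pi$-type of the arc joining $u$ to its parent; in particular, whenever an element of $G_0$ carries one vertex to another at the same distance from $\epsilon$, the branches below the two vertices coincide as rooted coloured trees. Checking that such a $\mathbf{T}$ exists and still represents a conjugate of $G$ is the most delicate point of the argument; everything else is bookkeeping.

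\emph{The subgroups $C_r$.} For $r\ge 1$ I set $C_r := \{\,g\in G_0 : \sigma_{\mc{L},u}(g)=\id \text{ whenever } d(\epsilon,u)\ge r\,\}$ — equivalently, the $g\in G_0$ acting colour-preservingly on every branch hanging off the sphere $S_r(\epsilon)$. Since $G_0$ fixes $\epsilon$ and so preserves distance from $\epsilon$, the identity $\sigma_{\mc{L},u}(gh)=\sigma_{\mc{L},hu}(g)\sigma_{\mc{L},u}(h)$ shows $C_r$ is a subgroup; as each condition ``$\sigma_{\mc{L},u}(g)=\id$'' cuts out a closed subset of $\Aut(T)$, and $G_0$ is closed, $C_r$ is closed. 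Evidently $C_r\subseteq C_{r+1}$. An element of $G_r$ has trivial local action at every vertex of distance $\le r-1$ from $\epsilon$, so an element of $C_r\cap G_r$ has trivial local action everywhere and fixes $\epsilon$, hence is $1$; thus $C_r\cap G_r=\triv$. (Note $G_r\trianglelefteq G_0$, as $G_0$ preserves $B_r$.)

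\emph{Complementation and the final assertion.} It remains to show $G_0=C_rG_r$, i.e.\ that each $h\in G_0$ agrees on $B_r$ with an element of $C_r$. Given $h$, I define $c$ to equal $h$ on $B_r$ and, on each branch $T_v$ with $v\in S_r(\epsilon)$, to equal the unique colour-preserving isomorphism onto $T_{hv}$ — which exists precisely because the normalisation makes $T_v$ and $T_{hv}$ the same rooted coloured tree. One checks these pieces glue (they meet only at $v$), that $c\in\Aut(T)$ fixes $\epsilon$ with $\sigma_{\mc{L},u}(c)=\sigma_{\mc{L},u}(h)\in G(\pi(u))$ for $d(\epsilon,u)<r$ and $\sigma_{\mc{L},u}(c)=\id$ for $d(\epsilon,u)\ge r$, so that $c\in C_r$, and $c\inv h\in G_r$. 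Hence $G_0=G_r\rtimes C_r$. Finally, for $r=1$ the map $g\mapsto\sigma_{\mc{L},\epsilon}(g)$ is a homomorphism $C_1\to G(\pi(\epsilon))$, injective because an element of $C_1$ with trivial local action at $\epsilon$ is trivial, and surjective because any $\tau\in G(\pi(\epsilon))$ can be realised at $\epsilon$ and extended colour-preservingly below (again using the normalisation); by the definition of $\sigma_{\mc{L},\epsilon}$ this isomorphism intertwines the $C_1$-action on $o\inv(\epsilon)$ with the $G(\pi(\epsilon))$-action on $X_{\pi(\epsilon)}$ through $\mc{L}_\epsilon$, giving the asserted permutational isomorphism.
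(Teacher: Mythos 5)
Your proposal is correct and follows essentially the same route as the paper: $C_r$ is defined as the set of elements of $G_0$ with trivial $\mc{L}$-local action at every vertex of distance at least $r$ from $\epsilon$, after normalising the colouring so that every arc pointing towards $\epsilon$ carries a canonical colour. The only (minor) difference is that the paper obtains this normalisation automatically by first replacing $G$ with $G_0$, which forces $|X_a|=1$ for all inward-pointing arcs of the quotient, whereas you keep the local action diagram of $G$ and synchronise the inward colours by choosing basepoints by hand.
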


\begin{proof}
Let $\Gamma = G \backslash T$, let $\pi = \pi_{(T,G)}$ and let $v_0 = \pi(\epsilon)$.  Form the local action diagram $\Delta = (\Gamma,(X_a),(G(v)))$ for $(T,G)$.  If we replace $G$ with $G_0 = G_{\epsilon}$, then it will not affect the permutational isomorphism type of $G(v_0)$, nor the structure of $G_0$, so let us assume that $G = G_0$.
Since $G$ fixes $\epsilon$, any local action of $G$ at a vertex $w \in VT\smallsetminus \{\epsilon\}$ must fix pointwise the unique edge incident with $w$ contained in $[v_0,w]$. Thus in any colouring $\mc{L}$ such that $\mathbf{T} = (T,\pi,\mc{L})$ is a $\Delta$-tree,  if $a \in AT$ such that $d(t(a),\epsilon) < d(o(a),\epsilon)$, then $|X_{\pi(a)}|=1$, so $\mc{L}(a)$ must be the unique element of $X_{\pi(a)}$.  Fix such a colouring $\mc{L}$ and for $r \geq 1$, set 
\[
H_r = \{g \in G_0 \mid \forall w \in VT: d(w,\epsilon) \ge r \Rightarrow \sigma_{\mc{L},w}(g) = 1\}.
\]
Given that $G_0$ preserves distance from $\epsilon$, it is easy to see that $H_r$ is a subgroup of $G_0$; $H_r \cap G_r = \triv$; and $H_{r} \le H_{r'}$ whenever $r \le r'$.  It is also clear that $H_r$ is determined as a subgroup of $G_0$ by its orbits on arcs, so $H_r$ is closed in $G_0$; since $G_0$ is closed in $\Aut(T)$, it follows that $H_r$ is closed in $\Aut(T)$.  To see that $G_0 = G_rH_r$, consider some $r \ge 1$, an element $h \in G_0$, and a vertex $w \in VT$ with $d(\epsilon,w) = r$.  
Then $h$ maps the arc in $o\inv(w)$ directed towards $\epsilon$ to the arc in $o\inv(hw)$ directed towards $\epsilon$. This corresponds to a fixed point for the permutation $\sigma_{\mc{L},w}(h) \in G(\pi(w))$, so the action of $h$ on $B_r$ is compatible with having trivial local action at $r$.  Hence there is $g \in \Aut(T)$ that has the same action as $h$ on $B_r$, but has trivial local action for every vertex $w \in VT$ such that $d(w,\epsilon) \ge r$.  
We then see that in fact $g \in H_r$ and $h \in G_rg$, showing that $h \in G_rH_r$.  In particular, it is now clear that $\sigma_{\mc{L},\epsilon}$ restricts to a permutational isomorphism from $H_1$ acting on $o\inv(\epsilon)$ to $G(v_0)$ acting on $X_{v_0}$.

Finally, we show that as a topological group, $G_0$ splits as the semidirect product $G_r \rtimes H_r$.  Certainly $G_0$ splits this way abstractly, since $G_r$ is a closed normal subgroup that has trivial intersection with $H_r$.  To show that the permutation topology of $G_0$ is the product of the permutation topologies on $G_r$ and $H_r$, it suffices to show for every vertex $w \in VT$ that the vertex stabiliser $K = G_w$ satisfies $K = (G_r \cap K)(H_r \cap K)$.  Let $(a_1,\dots,a_t)$ be the directed path of arcs from $\epsilon$ to $w$.  Given $g \in G$, by assumption we have $g \in G_0$, and thus we observe that $g \in K$ if and only if $g$ fixes $a_i$ for $1 \le i \le t$, which occurs if and only if $\sigma_{\mc{L},o(a_i)}(g)$ fixes $\mc{L}(a_i)$ for $1 \le i \le t$.  On the other hand, if $g = g_rh_r$ for $g_r \in G_r$ and $h_r \in H_r$, then
\[
\sigma_{\mc{L},o(a_i)}(g) =
\begin{cases}
\sigma_{\mc{L},o(a_i)}(h_r) &\mbox{if} \;  i \le r\\
\sigma_{\mc{L},h_ro(a_i)}(g_r) &\mbox{if} \;  i > r.
\end{cases}
\]
From this, we see that $g$ fixes $w$ if and only if both $g_r$ and $h_r$ fix $w$.  Thus $K = (G_r \cap K)(H_r \cap K)$ as desired.  
\end{proof}
We note that for an arbitrary group action $(T,G)$ on a tree, the operation of taking the $\propP{}$-closure behaves well with respect to the subgroup  $G^+$  generated by the arc stabilisers.

\begin{prop}\label{propP_plusgroup}
Let $G$ be a group acting on a tree $T$.  Then
\[
(G^{\propP{}})^+ = (G^+)^{\propP{}}.
\]
\end{prop}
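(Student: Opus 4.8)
The plan is to prove the two inclusions separately, using the characterization of the $\propP{}$-closure via local actions together with the universal group machinery from Theorems~\ref{thm:diag_to_U}--\ref{thm:G_to_U}. First I would observe that $(G^+)^{\propP{}} \le (G^{\propP{}})^+$ is the easier direction. Indeed, $G \le G^{\propP{}}$, so every arc stabilizer $G_{(a)}$ is contained in $(G^{\propP{}})_{(a)} \le (G^{\propP{}})^+$; hence $G^+ \le (G^{\propP{}})^+$. Now $(G^{\propP{}})^+$ is the subgroup of the $\propP{}$-closed group $G^{\propP{}}$ generated by arc stabilizers; since $G^{\propP{}}$ is closed (Proposition~\ref{prop:kclosure}) and contains a vertex stabilizer of itself (trivially it contains $(G^{\propP{}})^+ \supseteq (G^{\propP{}})_{(v,w)}$, but more to the point $(G^{\propP{}})^+$ contains arc stabilizers of $G^{\propP{}}$), Lemma~\ref{lem:arc_stabilizer:1closed} shows $(G^{\propP{}})^+$ is itself $\propP{}$-closed. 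Therefore $(G^+)^{\propP{}} \le ((G^{\propP{}})^+)^{\propP{}} = (G^{\propP{}})^+$.

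For the reverse inclusion $(G^{\propP{}})^+ \le (G^+)^{\propP{}}$, the key point is that both $G^{\propP{}}$ and $(G^+)^{\propP{}}$ are $\propP{}$-closed, so it suffices to show that every arc stabilizer of $G^{\propP{}}$ is contained in $(G^+)^{\propP{}}$: then $(G^{\propP{}})^+ \le (G^+)^{\propP{}}$ follows since the right-hand side is a group. Fix an arc $e = (v,w)$ of $T$ and let $g \in (G^{\propP{}})_{(e)}$. I would work with a $\Delta$-tree structure on $T$ for $\Delta = \Delta(T,G)$ and recall from Theorem~\ref{thm:G_to_U} that $G^{\propP{}} = U(\mathbf{T},(G(v)))$; since $G^{\propP{}}$ has property $\propP{}$, the arc stabilizer decomposes as $(G^{\propP{}})_{(e)} = K_{v,w} \times K_{w,v}$ where $K_{v,w}, K_{w,v}$ are the pointwise stabilizers of the two fibres of the closest-point projection onto $e$. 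So it suffices to treat $g$ lying entirely in one half, say $g \in K_{w,v}$, i.e. $g$ fixes the half-tree $T_{\ol{e}}$ pointwise and acts only on the half-tree $T_e$. The aim is then to approximate such a $g$ by elements of $G^+$.

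The heart of the argument is a density/approximation step: given $g$ supported on the half-tree $T_e$ and a finite set $X$ of vertices with a common vertex within distance $k$, I want to produce $h \in G^+$ agreeing with $g$ on $X$ (and fixing $e$), so that $g \in (G^+)^{\propP{}}$. Here I would use Proposition~\ref{prop:semidirect}: working inside the vertex stabilizer $(G^{\propP{}})_{v^*}$ (which is $\propP{}$-closed by Corollary~\ref{cor:vertex_stabilizer:1closed}), the local actions $G(\pi(u))$ are exactly realized by vertex stabilizers of $G^{\propP{}}$, and since $g$ has only finitely many nontrivial local actions within $B_k$, I can build $h$ as a finite product of elements each supported on a single half-tree hanging off the path from $v$ — each such elementary piece fixes a half-tree pointwise, hence lies in $G^{++} \le \ol{G^+}$, and more carefully can be taken in $G^+$ itself since it fixes an arc. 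Assembling these gives $h \in G^+$ with the desired agreement. The main obstacle I anticipate is the bookkeeping in this last step: one must be careful that the "elementary" automorphisms realizing a prescribed local action at a single vertex while trivial elsewhere actually lie in $G^+$ (not merely in $(G^{\propP{}})^+$), which requires going back to the construction in Theorem~\ref{thm:diag_to_U} and noting that the element $h \in H_{v_0}$ produced there realizing $\sigma_{\mc{L},v_0}(h) = g_0 \in G(\pi(v_0))$ can be chosen to fix an incident arc, hence is a product of arc stabilizers; combined with the fact that $G$ and $G^{\propP{}}$ have the same local action diagram, the local actions available to $G^+$ are the same as those available to $(G^{\propP{}})^+$. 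Once this is in place, the approximation works and both inclusions are established, giving $(G^{\propP{}})^+ = (G^+)^{\propP{}}$.
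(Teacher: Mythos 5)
Your first inclusion $(G^+)^{\propP{}} \le (G^{\propP{}})^+$ is correct: $G^+ \le (G^{\propP{}})^+$, and $(G^{\propP{}})^+$ is $\propP{}$-closed by Lemma~\ref{lem:arc_stabilizer:1closed} since it contains arc stabilizers of the $\propP{}$-closed group $G^{\propP{}}$, so monotonicity of the $\propP{}$-closure finishes it. (The paper instead writes $(G^+)^{\propP{}} = G^+(G^+)^{\propP{}}_a$ and checks both factors land in $(G^{\propP{}})^+$; either route works.) The reduction of the reverse inclusion to showing $(G^{\propP{}})_a \le (G^+)^{\propP{}}$ for each arc $a$ is also the right move and matches the paper.

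The gap is in the heart of the hard direction: the claim that your ``elementary pieces'' lie in $G^+$. You produce, via property $\propP{}$ of $G^{\propP{}}$, automorphisms supported on a single half-tree realizing a prescribed local action, and then assert they lie in $G^{++} \le \ol{G^+}$, or ``can be taken in $G^+$ itself since [they fix] an arc.'' But $G^{++}$ is generated by half-tree fixators \emph{in $G$}, and $G$ itself need not contain any nontrivial element fixing a half-tree pointwise (it need not have property $\propP{}$ at all — that is the whole point of taking the closure). Likewise, ``fixes an arc'' only places your element in an arc stabilizer of the ambient group it actually belongs to, namely $G^{\propP{}}$; that gives membership in $(G^{\propP{}})^+$, which is precisely what you are trying to bound — the step is circular. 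The assertion that ``the local actions available to $G^+$ are the same as those available to $(G^{\propP{}})^+$'' is, in the relevant finite-approximation sense, exactly the content of the proposition and cannot be invoked. The paper closes this gap with an induction on $d(v,o(a))$: given $g \in (G^{\propP{}})_a$ and a finite $F \subseteq o\inv(v)$, one first finds (inductively) $h' \in G^+$ such that $h'g$ fixes the arc $b \in o\inv(v)$ pointing towards $o(a)$; then the element $h \in G$ that agrees with $h'g$ on $F$ (which exists because $h'g \in G^{\propP{}}$) is forced to lie in $G_b \le G^+$ \emph{because $h'g$ already fixes $b$}. It is this use of the already-achieved partial agreement to trap the approximating element of $G$ inside an arc stabilizer of $G$ that your proposal is missing; without it, you only ever produce approximants in $(G^{\propP{}})^+$.
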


\begin{proof}
We see that $(G^+)^{\propP{}}$ has the same orbits on arcs as $G^+$, so $(G^+)^{\propP{}} = G^+ (G^+)^{\propP{}}_a$ for any $a \in AT$.  In turn, it is clear that $G^+ \le (G^{\propP{}})^+$ and $(G^+)^{\propP{}}_a \le (G^{\propP{}})_a \le (G^{\propP{}})^+$,  Thus $(G^{\propP{}})^+ \ge (G^+)^{\propP{}}$.

It remains to show that $(G^{\propP{}})^+ \le (G^+)^{\propP{}}$.  In fact it suffices to show that $(G^{\propP{}})_a \le (G^+)^{\propP{}}$ for $a \in AT$.  In other words we wish to show, for all $a \in AT$, $v \in VT$, $F$ finite subsets of $o\inv(v)$ and $g \in (G^{\propP{}})_a$, there exists $h \in G^+$ such that $hg$ fixes $F$ pointwise.  For this discussion we fix $a$ and $g$; we will proceed by induction on $d = d(v,o(a))$.

In the base case, $d = 0$, in other words $v = o(a)$.  Since $g \in G^{\propP{}}$, for any finite $F \subseteq o\inv(v)$ there is $h \in G$ such that $hg$ fixes $F$ pointwise; since $g$ fixes $a$, in fact $h \in G_a \le G^+$. Thus, $g \in (G^+)^{\propP{}}$.

From now on, we may assume $d > 0$.  Let $b$ be the arc in $o\inv(v)$ pointing towards $o(a)$.  Then by induction there is $h' \in G^+$ such that $h'g$ fixes $\ol{b} \in o\inv(t(b))$, and hence fixes $b$.  In turn for any finite $F \subseteq o\inv(v)$ there is $h \in G$ such that $h(h'g)$ fixes $F \cup \{b\}$ pointwise.  Since $h'g$ fixes $b$, in fact $h \in G_b \le G^+$, and hence $hh' \in G^+$.  Thus, $g \in (G^+)^{\propP{}}$. This completes the inductive step and hence completes the proof.
\end{proof}

\subsection{The local action of $G^+$} \label{sec:G+}

Given a tree $T$ and a $\propP{}$-closed group of automorphisms $G$, we can describe both the action of $G^+$ on $T$ and the action of $G$ on $G^+ \backslash T$ using local action diagrams derived from the local action diagram of $G$.
This description is given in Theorem~\ref{thm:plus_subaction_diagram}. The theorem has two interesting corollaries (\ref{cor:G+_vertex} and \ref{cor:quotient_tree:G+}) that apply to all groups acting on trees with closed local actions, not just to those that have Tits' property $\propP{}$.

\begin{defn}\label{defn:plus_subaction_diagram}
Given a local action diagram $\Delta = (\Gamma,(X_a),(G(v)))$ and $v \in V\Gamma$, write $G(v)^+$ for the subgroup of $G(v)$ generated by the point stabilisers of its action on $X_v$.

We use the relationship between the groups $G(v)$ and $G(v)^+$ to define two more local action diagrams, which will reappear later.

The \defbold{reduced local action diagram} of $\Delta$ is $\Delta^* = (\Gamma^*,(X^*_a),(G^*(v)))$, where $\Gamma^* = \Gamma$; $G^*(v) = G(v)/G(v)^+$; and $X^*_a$ is the $G^*(v)$-set formed by the set of $G(v)^+$-orbits on $X_a$.
Note that since $G(v)^+$ is normal and contains every point stabiliser of the action on $X_a$, actually $X^*_a$ is a regular $G^*(v)$-set. In particular, each group $G^*(v)$ is closed in the permutation topology.

The \defbold{plus local subaction diagram} of $\Delta$ is $\Delta^+ = (\Gamma^+,(X^+_a),(G^+(v)))$, constructed as follows.  We take $\Gamma^+$ to be a $\Delta^*$-tree, which is then equipped with a 
locally surjective graph quotient map $p: \Gamma^+ \rightarrow \Gamma$ and a colouring $\mc{L}^*$.  Given $v \in V\Gamma^+$, we can restrict $\mc{L}^*$ to a bijection
\[
\mc{L}^*_v: o\inv(v) \rightarrow \bigsqcup_{b \in o\inv(p(v))}X^*_b.
\]
An arc $a \in A\Gamma^+$ has a colour (when viewed as an arc in the $\Delta^*$-tree $\Gamma^+$), and this colour is an element of $X^*_{p(a)}$, which is in particular an orbit $Y_a$ of $G(p(v))^+$ in its original action on $X_{p(v)}$.  We can thus take the colour set $X^+_a$ of $a$ in $\Delta^+$ to be a set in bijection with $Y_a$, extending to a bijection $\theta_v$ from $X^+_v:= \bigsqcup_{a \in o\inv(v)}X^+_a$ to $X_{p(v)}$, and then let $G^+(v)$ be the group of permutations of $X^+_v$ such that $\theta_vG^+(v)\theta\inv_v = G(p(v))^+$.  As in Definition~\ref{defn:local_subaction_diagram}, we combine the bijections $\theta_v$ into a surjection
\[
\theta_p: \bigsqcup_{v \in V\Gamma^+}X^+_v \rightarrow \bigsqcup_{w \in V\Gamma}X_w.
\]
\end{defn}

It is now straightforward to verify the following.

\begin{lem}\label{lem:plus_subaction_diagram}
Let $\Delta$ be a local action diagram.  Then $\Delta^*$ is also a local action diagram, while $\Delta^+$ is a local subaction diagram of $\Delta$ as witnessed by the homomorphism $p$ and bijections $\theta_v$ defined in Definition~\ref{defn:plus_subaction_diagram}.
\end{lem}

We now give the interpretations of the reduced local action diagram and the plus local subaction diagram for the action of $\Univ(\Delta)$ on its $\Delta$-tree.

\begin{thm}\label{thm:plus_subaction_diagram}
Let $\Delta =  (\Gamma,(X_a),(G(v)))$ be a local action diagram.  Then there is a tree $T$, carrying the structures of both a $\Delta$-tree $\mathbf{T}$ and a $\Delta^+$-tree $\mathbf{T}^+$, such that $\Univ_{\mathbf{T}^+}(\Delta^+) = G^+$ for $G = \Univ_{\mathbf{T}}(\Delta)$, while $\Delta^*$ is the local action diagram of the action of $G/G^+$ on the tree $G^+ \backslash T$.  Moreover, the action of $G/G^+$ on $G^+ \backslash T$ has $\propP{}$; indeed, it acts freely on the arcs of $G^+ \backslash T$.
\end{thm}

\begin{proof}
Let $\mathbf{T}^+ = (T,\mc{L}^+,\pi^+)$ be a $\Delta^+$-tree.  Since $\Delta^+$ is a local subaction diagram of $\Delta$, we can apply Proposition~\ref{Prop:Subgroups_of_local_groups} to equip $T$ with the structure of a $\Delta$-tree $\mathbf{T} = (T,\mc{L},\pi)$, where $\mc{L} = \theta_p \circ \mc{L}^+$ and $\pi = p \circ \pi^+$, so that $H:= \Univ_{\mathbf{T}^+}(\Delta^+)$ is a subgroup of $G:= \Univ_{\mathbf{T}}(\Delta)$.

We now need to show that $H = G^+$.  It is enough to prove the following three statements:
\begin{enumerate}[(a)]
\item $H$ is generated by vertex stabilisers.
\item The vertex stabilisers of $H$ are generated by arc stabilisers.  Specifically, given $v \in VT$,
\[
H_v =  \grp{H_a \mid a \in o\inv(v)}.
\]
\item For all $a \in AT$ we have $G_a \le H$.
\end{enumerate}

Statement (a) follows from Corollary~\ref{cor:vertex_group}, since $\Gamma^+$ is a tree by construction.  Taking $v \in VT$, then the action of $H_v$ on $o\inv(v)$ is isomorphic as a permutation group to $G^+(\pi^+(v)) \cong G(\pi(v))^+$, which is generated by point stabilisers.  The point stabilisers of the action of $H_v$ on $o\inv(v)$ are the arc stabilisers for arcs $a \in o\inv(v)$.  Statement (b) now follows, since $H_v$ and $\grp{H_a \mid a \in o\inv(v)}$ have the same action on $o\inv(v)$ and both groups contain $H_a$ for $a \in o\inv(v)$. 

Now fix $a \in AT$ and consider $G_a$.  Write $v_0 = o(a)$; let $B_r$ be the ball of the radius $r$ around $v_0$ and let $G_r$, respectively $H_r$, be the pointwise stabiliser of $B_r$ in $G$, respectively $H$.  Since the action of $G_a$ on $o\inv(v_0)$ fixes a point, we see that it acts as a subgroup of $G(\pi(v))^+$; we deduce that $\sigma_{\mc{L},v_0}(H_a) = \sigma_{\mc{L},v_0}(G_a)$, so that $G_a = H_a G_1$.  Suppose that $G_a = H_aG_r$ for some $r \ge 1$, let $g \in G_r$ and let $v$ be a vertex at distance $r$ from $v_0$. There is a unique arc $a' \in o\inv(v) \cap B_r$. Since $\sigma_{\mc{L},v}(H_{a'}) = \sigma_{\mc{L},v}(G_{a'})$, there is an element $h_v \in H_{a'}$ such that $h_v$ and $g$ induce the same action on $o\inv(v)$.
By property $\propP{}$ we can take $h_v$ to lie in the pointwise stabiliser of $T_{a'}$. By taking a product of such elements as $v$ ranges over the sphere of radius $r$ around $v_0$, we obtain $h \in H_r \leq H_a$ such that $g \in hG_{r+1}$. Thus, $G_a = H_aG_{r+1}$. By induction we have $G_a = H_aG_r$ for all $r \ge 1$.  Since $H_a$ is closed, we conclude that $G_a = H_a$, proving (c).

We now consider the tree $G^+ \backslash T$; since $G^+ = \Univ_{\mathbf{T}^+}(\Delta^+)$, we can naturally identify $G^+ \backslash T$ with the graph $\Gamma^+$ of $\Delta^+$; since $G^+$ is normal in $G$ this identification yields an action $\alpha$ of $G$ on $\Gamma^+$.  By construction $\Gamma^+$ has the structure of a $\Delta^*$-tree $\mathbf{T}^*$ equipped with a quotient map $p: \Gamma^+ \rightarrow \Gamma$ and a colouring $\mc{L}^*$: we claim that $\alpha(G) \leq \Univ_{\mathbf{T}^*}(\Delta^*)$. 
Since $\pi = p \circ \pi^+$, we see that for each vertex or arc $e$ of $\Gamma^+$, then $Ge = p(e)$, so that $G$ acts as a subgroup of $\Aut_p(\Gamma^+)$ with the correct orbits on vertices and arcs.  For the local actions, via our identification of $G^+\backslash T$ with $\Gamma^+$ and $\mathbf{T}^*$, we see that for $v \in VT$, the colouring at the vertex $G^+v$ restricts to a bijection
\[
\mc{L}^*_{G^+v}: o\inv(G^+v) \rightarrow  \bigsqcup_{b \in o\inv(Gv))}X^*_b.
\]
Given $a \in o\inv(v)$, then $\mc{L}^*(G^+a) = X^+_{\pi^+(a)}$, which is the $G(\pi(v))^+$-orbit on $X_{\pi(a)}$ that contains $\mc{L}(a)$.  Consequently, given $g \in G$, then $\sigma_{\mc{L}^*,G^+v}(\alpha(g))$ is obtained by taking the permutation $\sigma_{\mc{L},v}(g)$, which acts on $X_{\pi(v)}$, and reading off its action on the set of $G(\pi(v))^+$-orbits on $X_{\pi(v)}$.  From this description we see that $\sigma_{\mc{L}^*,G^+v}(\alpha(G)) = G^*(\pi(v))$, and hence $\alpha(G) \leq \Univ_{\mathbf{T}^*}(\Delta^*)$ with $\Delta^*$ the local action diagram of $\alpha(G)$ on $\Gamma^+$. 

Finally, given an arc $a$ of $\Gamma^+$, we can write $a = G^+b$ for $b \in AT$, and then
\[
\alpha(g)a = a \Leftrightarrow gb \in G^+b \Leftrightarrow g \in G^+G_b = G^+;
\]
thus $\alpha$ has kernel $G^+$ and we have a free action of $G/G^+$ on $A\Gamma^+$. Moreover, $\alpha(G)$ can now be identified with the action of $G/G^+$ on the tree $G^+\backslash T$, so this latter action has local action diagram $\Delta^*$, and a free action on arcs is easily seen to be closed with property $\propP{}$.
\end{proof}

We note two results arising from Theorem~\ref{thm:plus_subaction_diagram} and its proof that apply to groups acting on trees with closed local actions, without assuming property $\propP{}$.  (We remind the reader that for a group $G$ acting on a tree, $G^+$ remains the subgroup of $G$ generated by arc stabilisers.)

\begin{cor}\label{cor:G+_vertex}
Let $T$ be a tree and let $G$ be a subgroup of $\Aut(T)$ with closed local actions.  Then for all $v \in VT$,
\[
(G^+)_v = \grp{G_a \mid a \in o\inv(v)}.
\]
\end{cor}

\begin{proof}
Clearly $(G^+)_v$ contains $\grp{G_a \mid a \in o\inv(v)}$.
If the action of $(G^+)_v$ on $o\inv(v)$ is generated by point stabilisers (that is, $\grp{G_a \mid a \in o\inv(v)}$ and $(G^+)_v$ induce the same permutation group on $o\inv(v)$), then for any $g \in (G^+)_v$ and any $b \in o\inv(v)$ there is an $h \in \grp{G_a \mid a \in o\inv(v)}$ such that $h^{-1}gb = b$, so $h^{-1}g \in G_b \leq \grp{G_a \mid a \in o\inv(v)}$, and from this it follows that $(G^+)_v$ and $\grp{G_a \mid a \in o\inv(v)}$ are equal.  Thus it suffices to show that the action of $(G^+)_v$ on $o\inv(v)$ is generated by point stabilisers.

Since $G$ has closed local actions, the same is true of $G^+$ by Lemma~\ref{lem:closed_local_actions}(iii).  Consequently $(G^+)^{\propP{}}$ has the same local actions as $G^+$, so it is enough to show that the action of $((G^+)^{\propP{}})_v$ on $o\inv(v)$ is generated by point stabilisers, for all vertices $v$.  Since $(G^{\propP{}})^+ = (G^+)^{\propP{}}$ by Proposition~\ref{propP_plusgroup}, without loss of generality we can now replace $G$ with $G^{\propP{}}$, and assume that $G$ is $\propP{}$-closed.  In other words, by Theorem~\ref{thm:G_to_U}, we can identify $G$ with $\Univ(\Delta)$ for some local action diagram $\Delta$.   The conclusion is now statement (b) from the proof of Theorem~\ref{thm:plus_subaction_diagram}, given that we showed $H = G^+$ in that context.
\end{proof}

\begin{cor}\label{cor:quotient_tree:G+}
Let $(T,G)$ be an action on a tree, with local action diagram $\Delta = (\Gamma,(X_a),(G(v)))$.  Then $(G^+ \backslash T,G/G^+)$ is a $\propP{}$-closed action (indeed, a free action on arcs) which depends only on $(\Gamma,(X_a),(G(v)))$; in particular it is unchanged if we replace $G$ with $G^{\propP{}}$.  The action $(G^+ \backslash T,G/G^+)$ admits the following two equivalent descriptions:
\begin{enumerate}[(i)]
\item It is the $\propP{}$-closed action admitting the reduced local action diagram $\Delta^*$ of $\Delta$ as given in Definition~\ref{defn:plus_subaction_diagram}.
\item It is the fundamental group of a graph of groups over the graph $\Gamma^i$, arising as the quotient by the action of $G$ on the inversion-free subdivision $T^i$ of $T$, with the following data.  Given $v \in V\Gamma^i$, if $v$ is the image of a vertex of $T$, then the vertex group is $G^*(v)$ as in (i) for the corresponding vertex of $\Gamma$; if instead $v$ is the centre of an edge of $T$ that is reversed by $G$, then the vertex group is $C_2$ in its natural action on two points.  The edge groups and associated embeddings for the graph of groups are all trivial.
\end{enumerate}
\end{cor}

\begin{proof}
Let $L = G^{\propP{}}$.  By Proposition~\ref{propP_plusgroup} we have $L^+ = (G^+)^{\propP{}}$.  In particular, $L^+$ and $G^+$ have the same orbits on arcs, so $L^+ \backslash T = G^+ \backslash T$.  Since $G$ and $L$ have the same orbits on arcs, we have $L = GL_a$ and in particular $L = GL^+$, so for every element of $L$, there is an element of $G$ with the same action on $L^+ \backslash T$. Recall from the definition of an associated local action diagram (Definition~\ref{def:AssocLocalActionDiagram}) that the groups $G(v)$ are closed. Thus we can also take $(\Gamma,(X_a),(\ol{G(v)})) = \Delta$ as the local action diagram of $L$ and we see, for all $v \in V\Gamma$, that
\[
\ol{G(v)}/\ol{G(v)}^+ = G(v)/G(v)^+
\]  
as permutation groups acting on the set of orbits of $G(v)^+$ on $X_v$, that is on $X_v/G(v)^+ = X_v/\ol{G(v)}^+$. So for the rest of the proof, it makes no difference if we replace $G$ with $L$, so we may assume that $G$ is $\propP{}$-closed.

The description (i) now follows from Theorem~\ref{thm:plus_subaction_diagram}.  In particular, in the action $(G^+ \backslash T,G/G^+)$, all arc stabilisers are trivial.  In this context, we can easily deduce the graph of groups description given in (ii).
\end{proof}

\subsection{Open subgroups containing a translation}\label{sec:open_hyperbolic}

We now show that given a nondiscrete $\propP{}$-closed group $G$ of automorphisms of a tree, if $g \in G$ is a translation, then there is a unique smallest open subgroup $\Res_G(g)$ of $G$ normalised by that element 
(the group $\Res_G(g)$ is an example of a discrete residual; see \cite{ReidFlat}, \cite{ReidDistal} for example).
Given two translations $g$ and $h$, whether or not $\Res_G(g) = \Res_G(h)$ depends on a certain class of subtrees determined by the local actions of $G$ (see Proposition~\ref{prop:quasiblock_subtree}). We introduce the following.

\begin{defn}
Given a permutation group $H$ acting on a set $X$, define a \defbold{quasiblock} of the action to be a nonempty subset $Y$ of $X$ with the property that the setwise stabiliser $H_Y$ of $Y$ contains the point stabiliser $H_y$ for every $y \in Y$.

Given a group $G$ acting on a tree $T$, say that $T'$ is a \defbold{quasiblock subtree} for the action of $G$ on $T$ if $T'$ is a subtree such that $AT'$ is a quasiblock for the action of $G$ on $AT$.
\end{defn}

\begin{lem}\label{lem:prim_quasiblock} If $H \leq \Sym(X)$ is primitive but not regular, and $Y \subseteq X$ is a quasiblock, then either $Y = X$ or $|Y| = 1$.
\end{lem}
\begin{proof} Suppose $H \leq \Sym(X)$ is primitive, with quasiblock $Y$ satisfying $|Y|>1$. For all $y \in Y$ we have $H_y \leq H_Y \leq H$, with $H_y$ maximal. Therefore either $H_y = H_Y$ for all $y \in Y$ or $H_Y = H$. The latter implies $Y$ is an orbit of $H$, and since $H$ is transitive on $X$ we have $Y = X$. Suppose then that $H_y = H_Y$ for all $y \in Y$. There are thus distinct $y,y'\in Y$ such that $H_y = H_Y = H_{y'}$. Pairs of elements of $X$ with equal point stabilisers thus form a nontrivial $H$-invariant equivalence relation on $X$. Since $H$ is primitive, any stabiliser $H_y$ must fix $X$ pointwise and is therefore trivial; that is, $H$ is regular on $X$.
\end{proof}

Notice that quasiblock subtrees are naturally constrained by the local actions of $G$: namely,  if $T'$ is a quasiblock subtree, then $o\inv(v) \cap AT'$ is a quasiblock of $G_v$ for all $v \in VT'$. For $g \in G$ we say that a subgroup $H \leq G$ is $\grp{g}$-invariant if $g^{-1}Hg = H$.

\begin{prop}\label{prop:quasiblock_subtree}
Let $T$ be a tree, let $G$ be a nondiscrete $\propP{}$-closed subgroup of $\Aut(T)$, let $g \in G$ be a translation and let $\Res_G(g)$ be the intersection of all open $\grp{g}$-invariant subgroups of $G$.
\begin{enumerate}[(i)]
\item There is a unique smallest $G$-quasiblock subtree $T'$ of $T$ containing the axis of $g$.
\item We have $\Res_G(g) = \grp{G_e \mid e \in AT'}$.  In particular, $\Res_G(g)$ is open and determined as a subgroup of $G$ by $T'$.
\item In the action of $O = \Res_G(g)\grp{g}$ on $T$, the smallest invariant subtree is $T'$, and $T'$ is spanned by the axes of translation of $O$.
\end{enumerate}
\end{prop}

For the proof, we use a lemma extracted from the proof of \cite[Proposition~2.7]{MollerVonk}; we include the proof for clarity.

\begin{lem}\label{lem:open_subgroup_with_translation}
Let $T$ be a tree, let $G$ be a closed subgroup of $\Aut(T)$, let $H$ be an open subgroup of $G$ acting with translation, and let $T'$ be the smallest invariant subtree for the action of $H$.  Then $\rist_G(T_e) \times \rist_G(T_{\ol{e}}) \le H$ for all $e \in AT'$.
\end{lem}

\begin{proof}
By Lemma~\ref{lem:minimal_invariant}, $e$ lies on the axis of some $h \in H$.  Then $T_e$ is comparable with $T_{he}$; without loss of generality $T_{he} \subseteq T_e$.  We then see that $\bigcap_{n \ge 0}T_{h^ne} = \emptyset$.  Since $H$ is open, there are $v_1,\dots,v_k \in VT$ such that $\bigcap^k_{i=1}G_{v_i} \le H$.  Now take $n$ large enough that $VT_{h^ne}$ is disjoint from $\{v_1,\dots,v_k\}$.  Then
\[
h^n\rist_G(T_e)h^{-n} = \rist_G(T_{h^ne}) \le \bigcap^k_{i=1}G_{v_i} \le H,
\]
so $\rist_G(T_e) \le H$.  Similarly, $\rist_G(T_{\ol{e}}) \le H$.
\end{proof}

\begin{proof}[Proof of Proposition~\ref{prop:quasiblock_subtree}]
Given a subtree $T^*$ of $T$, write $H(T^*) = \grp{G_a \mid a \in AT^*}$.  We can construct $T'$ recursively as follows: start with the axis $T_0$ of $g$, and thereafter, for $\alpha$ an ordinal we set $T_{\alpha+1}$ to be the union of all $H(T_{\alpha})$-images of $T_{\alpha}$, and for $\lambda$ a limit ordinal we set $T_{\lambda} = \bigcup_{\alpha < \lambda}T_{\alpha}$.  By transfinite recursion this process eventually terminates, 
yielding a tree $T_{\beta} = T_{\beta+1}$ that is a $G$-quasiblock subtree of $T$.  It is clear from the construction that $T' := T_{\beta}$ is then the unique smallest $G$-quasiblock subtree containing the axis of $g$.  This proves (i).

Let $O = \Res_G(g)\grp{g}$, let $P$ be an open $\grp{g}$-invariant subgroup of $G$ and let $e$ be an arc on the axis of $g$.   By Lemma~\ref{lem:open_subgroup_with_translation}, we have $G_e \le P\grp{g^n}$ for all $n > 0$, so 
\[
G_e \le \bigcap_{n > 0} P\grp{g^n} = P.
\]
Thus $G_e \le \Res_G(g)$; in particular, $\Res_G(g)$ is open, so $O$ is open.  In addition, we see that $\Res_G(g)$ is actually the smallest open normal subgroup of $O$, that is, $\Res_G(g) = \Res(O)$.

Since $O$ acts with translation, by Lemma~\ref{lem:minimal_invariant}, the union of the axes of translation of $O$ is a subtree $T''$ that is also the smallest invariant subtree for $O$.  Let $R = \grp{G_e \mid e \in AT''}$.  From the previous paragraph, we see that $R \le \Res(O)$.  On the other hand, $R$ is clearly open and normalised by $O$, and $\Res(O)$ is the smallest open normal subgroup of $O$, in fact $R = \Res(O)$ as claimed.

To finish the proof, we argue that $T'' = T'$.  The stabiliser of $T'$ in $G$ is an open subgroup (since it contains arc stabilisers) that contains $g$ (by the uniqueness of its construction from the axis of $g$).  In particular, it follows that $O$ stabilises $T'$, ensuring that $T'' \subseteq T'$.  On the other hand, since $R \le O$, we see that $T''$ is stabilised by each of its arc stabilisers in $G$, that is, $T''$ is a $G$-quasiblock subtree; since $T''$ also contains the axis of $g$, it follows that $T' \subseteq T''$, and hence $T' = T''$.  This completes the proof of (ii) and (iii).
\end{proof}

As an application of Proposition~\ref{prop:quasiblock_subtree}, we obtain a restriction on the closed subgroups of a $\propP{}$-closed group that can constructed from open subgroups.  To make this result precise, we recall a class of subgroups introduced in \cite{ReidDistal}.

\begin{defn}
Let $G$ be a topological group and let $H$ be a closed subgroup of $G$.  Then $H$ is a \defbold{RIO subgroup} if $H$ is a directed union of a family of subgroups $\mc{O}$, such that each $O \in \mc{O}$ is an open subgroup of $H$ and an intersection of open subgroups of $G$.
\end{defn}

When $G$ is a totally disconnected locally compact group, the class of RIO subgroups has several closure properties.  For example, every closed subnormal subgroup is RIO; an intersection of RIO subgroups is RIO; if $K$ is RIO in $H$ and $H$ is RIO in $G$, then $K$ is RIO in $G$; and if $G$ acts distally (for example, by isometries) on a Hausdorff topological space $X$ such that the map $g \mapsto gx$ is continuous for $x \in X$, then $G_x$ is a RIO subgroup of $G$.  (See \cite{ReidDistal} for details.)  Given the scope of the class of RIO subgroups, it is then striking that when $G$ is a $\propP{}$-closed group with compact stabilisers, every RIO subgroup of $G$ is either `small' (a directed union of compact subgroups, or contained in a vertex stabiliser), or it contains one of the `large' open subgroups $O = \Res_G(g)\grp{g}$ described by Proposition~\ref{prop:quasiblock_subtree}.

\begin{thm}\label{thm:RIO}
Let $T$ be a tree and let $G$ be a $\propP{}$-closed subgroup of $\Aut(T)$ with compact arc stabilisers.  Let $H$ be a RIO subgroup of $G$.  Then at least one of the following holds.
\begin{enumerate}[(i)]
\item Every compactly generated closed subgroup of $H$ is compact.
\item $H$ fixes exactly one vertex of $T$.
\item $H$ acts with translation and contains an arc stabiliser of $G$.  In particular, $H$ is open in $G$ and $\propP{}$-closed.
\end{enumerate}
\end{thm}

\begin{proof}
Note that since arc stabilisers are open, $G$ must be locally compact.
By \cite[Proposition~4.9]{ReidDistal}, it follows that every compactly generated open subgroup of $H$ is an intersection of open subgroups of $G$.

Let us first consider the case that $H$ is compactly generated.  If $H$ preserves an undirected edge (which includes the case when $H$ fixes two or more vertices), then $H$ is compact.  If $H$ fixes exactly one vertex, then (ii) holds.  Otherwise, it follows from Theorem~\ref{thm:types} and Corollary~\ref{cor:horocyclic} that $H$ contains a translation $g$ say.  Then $\Res_G(g) \le H$, since $H$ is an intersection of open subgroups; by Proposition~\ref{prop:quasiblock_subtree}, it follows that $H$ contains an arc stabiliser of $G$. Hence $H$ is open in $G$, and by Lemma~\ref{lem:arc_stabiliser:1closed}, $H$ is $\propP{}$-closed, so (iii) holds.

Now consider the general case.  If $h \in H$ is a translation, then $L = \grp{h,K}$ acts with translation, where $K$ is a compact open subgroup of $H$.  Now $L$ is a compactly generated open subgroup of $H$; by the previous paragraph, $L$ contains an arc stabiliser of $G$, hence also $H$ contains that arc stabiliser. Thus again, $H$ is open in $G$ and is $\propP{}$-closed so (iii) holds.  From now on we may assume $H$ acts without translation.  Then by Corollary~\ref{cor:horocyclic}, every compactly generated subgroup of $H$ has bounded action.  

Suppose (i) fails. Then $H$ has a compactly generated, noncompact, closed subgroup $M$. 
Let $C$ be a compact generating set for $M$ and let be $U$ a compact open subgroup of $H$. Then $\grp{C, U}$ is an open, compactly generated and noncompact subgroup of $H$. A slight adjustment to the proof of Lemma~\ref{lem:tdlc_union} now gives that $H$ is the union of a directed family $(K_i)_{i \in I}$ of noncompact compactly generated open subgroups. 
We have already established the theorem in the compactly generated case, and so we can apply it to each group $K_i$. Since each $K_i$ cannot satisfy (i) or (iii), the group $K_i$ fixes a unique vertex $v_i$; the uniqueness of $v_i$ ensures that $v_i = v_j$ whenever $K_i \le K_j$, so in fact $H$ fixes a unique vertex, proving (ii).
\end{proof}

\subsection{End stabilisers}\label{sec:local_ends}

In this subsection we define locally invariant ends (Definition~\ref{def:Locally_Invariant_End})
and develop tools for recognising them in local action diagrams (Proposition~\ref{prop:SpottingLocallyInvEnds} and its corollary). Using these, we prove  Theorem~\ref{thm:open_primitive} from the introduction.

Note that property $\propP{}$ is inherited by end stabilisers.  Moreover, the end stabiliser is open if and only if it contains arc stabilisers.

\begin{lem}
Let $T$ be a tree, let $G$ be a $\propP{}$-closed subgroup of $\Aut(T)$ and let $\xi$ be an end of $T$.  Then the end stabiliser $G_{\xi}$ is $\propP{}$-closed.
\end{lem}

\begin{proof}
Let $H = G_{\xi}$.  It is clear that $H$ is a closed subgroup of $G$, and hence of $\Aut(T)$.  Let $a \in AT$; by property $\propP{}$, we can write $G_a = \rist_G(T_a) \times \rist_G(T_{\ol{a}})$.  Given $a \in AT$, then $\xi$ is an end of $T_a$ or of $T_{\ol{a}}$, but not both; say $\xi$ is an end of $T_a$.  Then $\rist_G(T_{\ol{a}})$ fixes $a$ and $\xi$, that is, $ \rist_G(T_{\ol{a}}) \le H_a$, so 
\[
H_a = (\rist_G(T_a) \cap H_a) \times  \rist_G(T_{\ol{a}}) = \rist_H(T_a) \times \rist_H(T_{\ol{a}}).
\]
From this decomposition of the arc stabiliser, we conclude via Theorem~\ref{propertyP_oneclosure} that $H$ is $\propP{}$-closed.
\end{proof} 

\begin{prop}\label{prop:horocyclic_arc_stabiliser}
Let $T$ be a tree, let $G$ be a $\propP{}$-closed subgroup of $\Aut(T)$, and let $\xi$ be an end of $T$.  Then $G_{\xi}$ is open in $G$ if and only if $G_a \le G_{\xi}$ for some $a \in AT$. Moreover, if 
 $G_{\xi}$ is open, the arcs $a \in AT$ such that $G_a \le G_{\xi}$ form the arcs of a subtree $T'$, such that $\xi$ is an end of $T'$.
\end{prop}

\begin{proof}
If $G_{\xi}$ contains an arc stabiliser, then certainly $G_{\xi}$ is open.  So we may suppose for the rest of the proof that $G_{\xi}$ is open in $G$.  In other words, there exist $v_1,\dots,v_n \in VT$ such that $\bigcap^n_{i=1}G_{v_i} \le G_{\xi}$.

The set $X$ of arcs $a \in AT$ such that $G_a \le G_{\xi}$ is closed under edge reversal, so $X$ is the set of arcs of some subgraph $T'$ of $T$.  We claim that $T'$ is nonempty; more precisely we claim that, given a ray $r$ representing $\xi$, then the intersection of $T'$ with $r$ is a subray of $r$.  So let $r$ be a ray representing $\xi$, given by the sequence of arcs $(a_1,a_2,\dots)$, with all arcs pointing towards $\xi$.  We see that there exists $k$ such that the half-tree $T_{a_{k}}$ is disjoint from $\{v_1,\dots,v_n\}$, so $\rist_G(T_{a_{k}})$ fixes $v_1,\dots,v_n$.  By our hypothesis about $G_{\xi}$, it follows that $\rist_G(T_{a_{k}})$ fixes $\xi$.  Now let $k'$ be the least $k' \ge 1$ such that $\rist_G(T_{a_{k'}})$ fixes $\xi$.  Then for $k'' < k'$, $G_{a_{k''}}$ does not fix $\xi$, but for $k'' \ge k'$ we can see that $G_{a_{k''}}$ does fix $\xi$.  Indeed, $G_{a_{k''}} = \rist_G(T_{a_{k''}}) \times \rist_G(T_{\ol{a_{k''}}})$ by property $\propP{}$; we see that $\rist_G(T_{\ol{a_{k''}}})$ fixes $\xi$, since $\xi$ is not an end of $T_{\ol{a_{k''}}}$; and $T_{a_{k''}}$ is contained in $T_{a_{k'}}$ so $\rist_G(T_{a_{k''}}) \le \rist_G(T_{a_{k'}}) \le G_{\xi}$.

From the description of the intersection of $T'$ with every ray representing $\xi$, we conclude that $T'$ is a subtree of $T$ such that $\xi$ is an end of $T'$.
\end{proof}

\begin{defn} \label{def:Locally_Invariant_End} Let $T$ be a tree, let $G$ be a $\propP{}$-closed subgroup of $\Aut(T)$, and let $\xi$ be an end of $T$. We say that $\xi$ is \defbold{locally invariant} if $G_\xi$ is open.
\end{defn}

Using the fact that the end stabiliser contains arc stabilisers for arcs on a ray representing that end, we can describe locally invariant ends in the local action diagram.

\begin{defn}
Let $\Delta = (\Gamma,(X_a),(G(v)))$ be a local action diagram.  A \defbold{locally invariant ray quotient} consists of a sequence of arcs $(b_1,b_2,\dots)$, forming a path in $\Gamma$ (that is, $t(b_i) = o(b_{i+1})$), and points $p_i \in X_{b_i}$ and $q_i \in X_{\ol{b_i}}$ such that
\[
\forall i \ge 2: p_i \neq q_{i-1} \text{ and } G(o(b_i))_{p_i} \ge G(o(b_i))_{q_{i-1}}.
\]
Suppose we have locally invariant ray quotients $r = ((b_i),(p_i),(q_i))$ and $r' = ((b'_i),(p'_i),(q'_i))$.  We say $r$ and $r'$ are \defbold{ray-equivalent} if $b_i = b'_i$, and for each $i \ge 2$, there is $g_i \in G(o(b_i))$ such that $(g_ip_i,g_iq_{i-1}) = (p'_i,q'_{i-1})$; and \defbold{end-equivalent} if $r$ and $r'$ can be made ray-equivalent by shifting the indices (where we delete all entries with negative index).  A \defbold{locally invariant end quotient} is an end-equivalence class of locally invariant ray quotients.
\end{defn}

\begin{prop}\label{prop:SpottingLocallyInvEnds}
Let $T$ be a tree and let $G$ be a $\propP{}$-closed subgroup of $\Aut(T)$.  Then the $G$-orbits of locally invariant ends for $G$ are in one-to-one correspondence with the locally invariant end quotients of the local action diagram of $G$.
\end{prop}

\begin{proof}
Let $\Delta = (\Gamma,(X_a),(G(v)))$ be the local action diagram of $G$, with quotient map $\pi: T \rightarrow \Gamma$, and suppose that $G$ is the universal group given by a $\Delta$-colouring $\mc{L}$.

Let $\xi$ be a locally invariant end of $T$ for the action of $G$.  By Proposition~\ref{prop:horocyclic_arc_stabiliser}, there is some ray $(a_1,a_2,\dots)$ in $T$ representing $\xi$ such that $G_{a_i}$ fixes $\xi$ for all $i \ge 1$.  Letting $b_i = \pi(a_i)$, we have a sequence of arcs $(b_1,b_2,\dots)$, forming a path in $\Gamma$, and also points $p_i = \mc{L}(a_i) \in X_{b_i}$ and $q_i = \mc{L}(\ol{a_i}) \in X_{\ol{b_i}}$.  Given $i \ge 2$, the definition of the colouring ensures that $p_i \neq q_{i-1}$; the fact that $G_{a_{i-1}} = G_{\ol{a_{i-1}}}$ fixes $\xi$ ensures that $G_{\ol{a_{i-1}}}$ fixes the arc $a_i$, so in the local action, $G(o(b_i))_{q_{i-1}}$ fixes $p_i$.  Thus $r = ((b_i),(p_i),(q_i))$ is a locally invariant ray quotient.  Consider now how $r$ depends on the choices we have made (keeping the colouring fixed) and what happens if we move $\xi$ using an element of $G$.  Given $g \in G$, the end $g\xi$ is represented by the ray $(a'_1,a'_2,\dots)$ where $a'_i = ga_i$; we can then perform the same procedure as before to obtain a locally invariant ray quotient $r' = ((b'_i),(p'_i),(q'_i))$.  We have
\[
b'_i = \pi(ga_i) = \pi(a_i) = b_i,
\]
and the sequences $(p_i)$ and $(q_i)$ are changed according to the local action of $g$, that is, 
\[
p'_i = \mc{L}(ga_i) = \sigma_{\mc{L},o(a_i)}(g)p_i; \; \quad q'_i = \mc{L}(g\ol{a_i}) = \sigma_{\mc{L},o(\ol{a_i})}(g)q_i = \sigma_{\mc{L},o(a_{i+1})}(g)q_i.
\]
In particular, note that for $i \ge 2$, $p_i$ and $q_{i-1}$ are moved by the same element $g_i := \sigma_{\mc{L},o(a_i)}(g)$ of $G(o(b(i)))$.  So $r$ and $r'$ are ray-equivalent.  The choice of where to start the ray $(a_1,a_2,\dots)$ was arbitrary, but any two choices would be shift-tail equivalent, so regardless of the choices made, the end-equivalence class of $r'$ is uniquely determined by the orbit $G\xi$.  Thus each $G$-orbit of locally invariant ends produces a unique locally invariant end quotient; write $[r(\xi)]$ for the end-equivalence class of locally invariant ray quotients obtained from $\xi$.

Conversely, suppose that $r = ((b_i),(p_i),(q_i))$ is a locally invariant ray quotient of $\Delta$; we aim to produce a ray $(a_1,a_2,\dots)$ representing an end $\xi$ such that $\pi(a_i) = b_i$ for all $i$ and $r \in [r(\xi)]$.  Start with an arc $a_1$ such that $\mc{L}(a_1) = p_1$.  Once we have chosen the arc $a_i$, let $q'_i = \mc{L}(\ol{a_i})$.  Then $q'_i$ belongs to the same $G(o(b_{i+1}))$-orbit as $q_i$, that is, $q'_i = g_{i+1}q_i$ for some $g_{i+1} \in G(o(b_{i+1}))$.  There is then an arc $a_{i+1}$ with $o(a_{i+1}) = t(a_i)$ such that $\mc{L}(a_{i+1}) = g_{i+1}p_{i+1}$. Since $q_i \neq p_{i+1}$, the arcs $a_{i+1}$ and $\ol{a_i}$ have different colours and are therefore distinct, ensuring that $(a_1,a_2,\dots)$ does not backtrack.  Moreover, $g_{i+1}p_{i+1}$ is in the same $G(o(b_{i+1}))$-orbit as $p_{i+1}$, so $\pi(a_{i+1}) = b_{i+1}$.  Now let $i \ge 2$ and consider the action of the arc stabiliser $G_{a_{i-1}} = G_{\ol{a_{i-1}}}$ on $o\inv(o(a_{i}))$.  We see that $G_{a_{i-1}}$ has local action at $o(a_{i})$ given by $G(o(b_i)_{g_{i}q_{i-1}}) = g_iG(o(b_i))_{q_{i-1}}g\inv_i$.  From the definition of a locally invariant ray quotient, we see that $g_iG(o(b_i))_{q_{i-1}}g\inv_i$ also fixes $g_ip_i$, so $G_{a_{i-1}}$ fixes $a_i$.  Thus we have an ascending sequence of open subgroups
\[
G_{a_1} \le G_{a_2} \le \dots
\]
of $G$, from which we see that $G_{a_1}$ fixes the ray spanned by the arcs $(a_1,a_2,\dots)$.  Thus $G$ has a locally invariant end $\xi$ represented by this ray.  From the colours of the arcs $a_i$ and $\ol{a_i}$, we see that the locally invariant ray quotient obtained by applying the procedure in the previous paragraph is ray-equivalent to $r$; in particular, $r \in [r(\xi)]$.

So we have a surjective map map $G\xi \mapsto [r(\xi)]$ from $G$-orbits of locally invariant ends of the action on $T$ to locally invariant end quotients in $\Delta$.  It remains to check that $G\xi \mapsto [r(\xi)]$ is injective.  Consider two rays $(a_1,a_2,\dots)$ and $(a'_1,a'_2,\dots)$, representing ends fixed respectively by $G_{a_i}$ and by $G_{a'_i}$ for all $i$, giving rise to locally invariant ray quotients $r = ((b_i),(p_i),(q_i))$ and $r' = ((b'_i),(p'_i),(q'_i))$ respectively, such that $p_i = \mc{L}(a_i)$, $q_i = \mc{L}(\ol{a_i})$, and similarly for $(a'_1,a'_2,\dots)$ and $r'$.  Suppose that $r$ and $r'$ are end-equivalent.  After shifting indices on the rays, we may assume that $r$ and $r'$ are ray-equivalent; say $h_i \in G(o(b_i))$ is such that $(p'_i,q'_{i-1}) = (h_ip_i,h_iq_{i-1})$.  Note in particular that $b_i = \pi(a_i)$ and $b'_i = \pi(a'_i)$, so the fact that $r$ and $r'$ are equivalent implies that $\pi(a_i) = \pi(a'_i)$.  We use $r$ and $r'$ to produce rays representing locally invariant ends of $T$ as in the previous paragraph.  Notice that we can use the initial arc $a_1$ for the ray obtained from $r$ and $a'_1$ for the ray obtained from $r'$, and from there we can choose the permutations $g_i \in G(o(b_i))$ along the way to be trivial, with the result that the ray we produce from $r$ is just $(a_1,a_2,\dots)$, and the ray we produce from $r'$ is $(a'_1,a'_2,\dots)$.  In particular, we see that for $i \ge 2$,
\[
\mc{L}(\ol{a'_{i-1}}) = q'_{i-1} = h_iq_{i-1}; \quad \mc{L}(a'_i) = p'_i = h_ip_i.
\]
Given how the colouring is defined, we see that there is $k_i \in G$, with local action $h_i$ at $o(a_i)$, such that $k_i\ol{a_{i-1}} = \ol{a'_{i-1}}$ and $k_ia_i = a'_i$.  By repeatedly using the decomposition of arc stabilisers as a product of rigid stabilisers, there is then $l_j \in G$ such that $l_ja_i = a'_i$ for all $j \le i$, and then since $G$ is closed, there is $l \in G$ such that $la_i = a'_i$ for all $i$.  In particular, the ends represented by $(a_1,a_2,\dots)$ and $(a'_1,a'_2,\dots)$ are in the same $G$-orbit, showing that the map $G\xi \mapsto [r(\xi)]$ is injective.  This completes the proof that $G\xi \mapsto [r(\xi)]$ is a one-to-one correspondence between $G$-orbits of locally invariant ends of the action on $T$ and locally invariant end quotients in $\Delta$.
\end{proof}

\begin{cor}\label{cor:no_local_end}
Let $\Delta = (\Gamma,(X_a),(G(v)))$ be a local action diagram.  Suppose that every infinite path in $\Gamma$ passes through some $v \in VT$ such that $G(v)$ is closed and the point stabilisers of $G(v)$ are pairwise incomparable.  Then $\Univ(\Delta)$ has no locally invariant ends.
\end{cor}

We now have all the ingredients to generalise the theorems \cite[Theorem~A and Theorem~3.9]{CapDeM} of Caprace--De Medts, as stated in the introduction.

\begin{proof}[Proof of Theorem~\ref{thm:open_primitive}]
Note that, since $G$ is nondiscrete, its open subgroup $G^+$ is nontrivial.  Let $\Delta = (\Gamma,(X_a),(G(v)))$ be the local action diagram of $G$.

Assume (i), that is, every proper open subgroup of $G$ has bounded action on $T$ and point stabilisers in $G$  are pairwise incomparable. 
Now $G$ cannot be horocyclic or focal, since such groups fix an end and thus contain comparable point stabilisers. Examining Table~\ref{fig:types} we see that $G$, preserving no proper subtree of $T$, must act with translation.  By Lemma~\ref{lem:general_type}, $G^+$ has unbounded action, so $G = G^+$.  Since $G$ is generated by arc stabilisers, it is parity-preserving; in particular, $G$ acts without inversion, so every bounded subgroup of $G$ fixes a vertex.  In particular, given a proper open subgroup $H$ of $G$, then $H \le G_v$ for some $v \in VT$. On the other hand, given any proper subgroup $H$ of $G$, if $G_v \leq H < G$ for some $v \in VT$, then $H$ is open so $H \leq G_{v'}$ for some $v' \in VT$. Hence $G_v \leq G_{v'}$ which is impossible.
Since $G$ itself does not fix a vertex, the maximal proper open subgroups of $G$ are thus exactly the vertex stabilisers, and all vertex stabilisers are distinct (abstractly) maximal proper subgroups of $G$. This completes the proof that (i) implies (ii). Moreover, $G$ is generated by any two distinct cosets of a vertex stabiliser, so by Lemma~\ref{lem:finite_type}, $G$ has finitely many orbits on $VT \sqcup AT$.  The action of $G$ cannot be horocyclic or focal; since $G$ does not preserve any proper subtree, the only possibility is that the action of $G$ is geometrically dense.  Hence $G$ is simple by Theorem~\ref{thm:Tits}.

Assume (ii).  Since $G_v$ and $G_w$ are distinct maximal subgroups of $G$, we have $G = \grp{G_v,G_w}$.  Corollary~\ref{cor:vertex_group} ensures that $\Gamma$ is a tree, so $|V\Gamma| \ge 2$.  Since $G$ is generated by vertex stabilisers, it acts without inversion.  Let $a$ be the arc $(v,w)$.  From the fact that $G$ is generated by $G_{v}$ and $G_{w}$, we see that $\bigcup_{g \in G}\{ga,g\ol{a}\}$ is the set of arcs of a connected subgraph of $T$; since $G$ does not preserve any proper subtree, in fact $AT = \bigcup_{g \in G}\{ga,g\ol{a}\}$.  In particular, we see that $Gv$ and $Gw$ are the only $G$-orbits on $VT$, so $\{Gv,Gw\}$ is the natural bipartition of $VT$.  The fact that $G_v$ and $G_w$ are both maximal then ensures that $G$ acts primitively on both $Gv$ and $Gw$, so (iii) holds.  We also see by Theorem~\ref{thm:BassSerre} that $G = G_v \ast_{G_{(v,w)}} G_w$.

We now claim that (iii) and (iv) are equivalent.  Let $\Delta = (\Gamma,(X_a),(G(v)))$ be the local action diagram of $(T,G)$.  If either (iii) or (iv) holds, it is clear that $|V\Gamma|=2$ and that $|o\inv(v)|=1$ for all $v \in V\Gamma$; the only possibility is that $\Gamma$ consists of a single undirected edge with two distinct endpoints $v$ and $w$.    In particular, we deduce that $G$ is the universal group $\Univ(G(v),G(w))$ defined in \cite{SmithDuke}.  By \cite[Theorem~26]{SmithDuke}, $G$ acts primitively on both parts of the natural bipartition if and only if $G(v)$ and $G(w)$ are both primitive but not regular; this establishes the equivalence of (iii) and (iv).

Finally, suppose (iii) and (iv) hold and suppose that $H$ is an open subgroup of $G$ with unbounded action; we aim to show $G = H$.  
Recall Theorem~\ref{thm:G_to_U} and note that the groups $G(v)$, being primitive but not regular, have pairwise incomparable point stabilisers. By Corollary~\ref{cor:no_local_end}, for any end $\xi$ of $T$ the end stabiliser $G_\xi$ is not open, and thus $H \neq H_\xi = H \cap G_\xi$, in other words, $H$ does not fix any end of $T$. In particular, $H$ cannot be horocyclic, and hence $H$ acts with translation.   
Let $h \in H$ be a translation. Then by Proposition~\ref{prop:quasiblock_subtree} there is a smallest invariant subtree $T'$ for the action of $O = \Res_G(h)\grp{h}$ on $T$ such that $T'$ is spanned by the axes of the translations of $O$, so in particular $T'$ is leafless; and $T'$ is the unique smallest $G$-quasiblock subtree of $T$ containing the axis of $h$. We claim that $T' = T$. Indeed, for each vertex $v$ of $T'$, because $T'$ is leafless, there are at least two arcs in $o\inv(v)$ that lie in $T'$. By Lemma~\ref{lem:prim_quasiblock}, a quasiblock of $G(v)$ must be either all of $X_v$ or a single point, so it follows then that $T'$ contains all arcs in $o\inv(v)$. Our claim is established. Applying Proposition~\ref{prop:quasiblock_subtree}(ii) we conclude that $O \ge G^+$.

We have shown that $T$ is the smallest invariant subtree of $H$ and that $H \ge G^+$.  Now given $v \in V\Gamma$, since $G(v)$ is primitive but not regular, $G(v)$ is generated by point stabilisers, and hence $H$ contains all the vertex stabilisers of $G$. Since the $G$-orbits on $VT$ are the natural parts of the bipartition, $\Gamma = G \backslash T$ is a single undirected edge with distinct endpoints; by Corollary~\ref{cor:vertex_group}, $G$ is generated by vertex stabilisers, so $G = H$.  Since each group $G(v)$ is transitive, it follows that for any vertex $w$ of $T$, the stabiliser $G_w$ fixes no vertex in $VT\smallsetminus \{w\}$, and hence point stabilisers in $G$ are pairwise incomparable.  Thus (iv) implies (i), and the cycle of implications is complete.
\end{proof}

\section{Invariant structures} \label{InvariantStructures}

Let $G$ be a group acting on a tree $T$.  In this section, we describe how certain kinds of $G$-invariant structure in $T$ can be detected from the local action diagram of the action. 

In \S\ref{sec:invariants} we see that for $\propP{}$-closed groups one can precisely characterise geometrically dense actions according to the existence or absence of natural combinatorial features of local action diagrams. We call these features strongly confluent partial orientations or \scpos, and they arise from natural structures like ends and something we call a cotree. The definitions for these are contained in \S\ref{sec:invariants}.

In \S\ref{sec:Tits_revisited} we revisit Tits' Theorem with our now complete understanding of Tits' independence property $\propP{}$ and geometric density. We also prove Corollaries~\ref{cor:DirectlyReadingSimplicity} and \ref{cor:DirectlyReadingSimplicity:finite}
and Theorem~\ref{thm:UDelta_simple} from the introduction, and determine from the local action diagram precisely when $G^+$ is trivial. We give a useful characterisation of when the local action diagram is irreducible (Proposition~\ref{prop:irreducible_check}): it is irreducible if and only if it is not a focal cycle and has no horocyclic ends, no stray half-trees and no stray leaves  (these terms are defined in Definition~\ref{def:FocalCycleHoroEndStrayHalftreeStrayLeaf}).

We revisit the six types of action on a tree (Fixed vertex, Inversion, Lineal, Horocyclic, Focal and General) in \S\ref{TypesOfActionRevisited} and characterise them according to the local action diagram of the action.

\subsection{Invariant partial orientations}\label{sec:invariants}

In this subsection we define
strongly confluent partial orientations (\scpos) of graphs (Definition~\ref{def:scopo}) and then define \scpos of local action diagrams  (see Definition~\ref{def:scopoDelta}, which is a restatement of Definition~\ref{def:introScopoDelta} from the introduction).
We define cotrees of graphs  (Definition~\ref{def:cotreeGraph}) and cotrees and invariant ends of local action diagrams (Definition~\ref{def:CotreeDelta}). In Definition~\ref{def:scposTypes} we define three types (\ref{scposTypeI})--(\ref{scposTypeIII}) of \scpos and prove in Theorem~\ref{thm:strongly_confluent_attractor} that these types in fact form a classification of  \scpos. From this we deduce that all \scpos arise from cotrees and ends, and this allows us to then prove Theorem~\ref{thm:invariants} from the introduction which relates the invariant subtrees and fixed ends of actions on trees with the \scpos of the associated local action diagram.

Recall that a \defbold{partial orientation} of a graph $\Gamma$ is a subset $O$ of $A\Gamma$ such that for each $a \in A\Gamma$, if $a \in O$ then $\ol{a} \not\in O$. Every $G$-invariant partial orientation of $T$ gives rise to a partial orientation of $G \backslash T$, and conversely.  In particular, the local action diagram provides enough information to give a list of the $G$-invariant partial orientations of $T$.

\begin{lem}\label{lem:invariant_orientation}
Let $\Gamma$ be a graph, let $G \le \Aut(\Gamma)$, let $\Gamma' = G \backslash \Gamma$ and let $\pi = \pi_{(\Gamma,G)}$.  Then a subset $O$ of $A\Gamma$ is a $G$-invariant partial orientation of $\Gamma$ if and only if $O = \pi\inv(O')$ for some partial orientation $O'$ of $\Gamma'$.  Moreover, $O$ is a full orientation of $\Gamma$ if and only if $O'$ is a full orientation of $\Gamma'$.
\end{lem}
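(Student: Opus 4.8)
The plan is to exploit two elementary features of the quotient map $\pi = \pi_{(\Gamma,G)}$ on arcs: it intertwines edge reversal, so $\pi(\ol{a}) = \ol{\pi(a)}$ for all $a \in A\Gamma$ (this is exactly how $\tilde r$ was defined on $A\Gamma'$); and its fibres are precisely the $G$-orbits on arcs, so a subset $O \subseteq A\Gamma$ is $G$-invariant if and only if $O$ is a union of fibres, equivalently $O = \pi\inv(\pi(O))$. Everything then reduces to transporting the defining property ``not both $a$ and $\ol a$'' across $\pi$.

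For the ``if'' direction I would take $O = \pi\inv(O')$ with $O'$ a partial orientation of $\Gamma'$. Then $O$ is a union of fibres, hence $G$-invariant; and for any $a \in A\Gamma$ we have $a \in O \iff \pi(a) \in O'$ and $\ol a \in O \iff \ol{\pi(a)} \in O'$, so since $O'$ omits one of $\pi(a),\ol{\pi(a)}$, the set $O$ omits one of $a,\ol a$. For the ``only if'' direction, given a $G$-invariant partial orientation $O$, set $O' := \pi(O)$, so $O = \pi\inv(O')$ by $G$-invariance. Suppose $O'$ failed to be a partial orientation: then there are $a,b \in O$ with $\pi(b) = \ol{\pi(a)} = \pi(\ol a)$, so $\ol a$ lies in the $G$-orbit of $b \in O$, whence $\ol a \in O$ by $G$-invariance, contradicting $a \in O$.

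For the ``moreover'' clause I would use the correspondence just established together with surjectivity of $\pi$ on arcs: $O$ contains exactly one of $a,\ol a$ for every $a \in A\Gamma$ iff $O'$ contains exactly one of $a',\ol{a'}$ for every $a' \in A\Gamma'$. One implication lifts an arbitrary arc of $\Gamma'$ to an arc of $\Gamma$ and pushes the ``exactly one'' condition down through $\pi$; the other takes an arbitrary arc of $\Gamma$ and pushes its condition down to the image arc. The ``not both'' half is already guaranteed by the partial-orientation statement, so only the ``at least one'' half needs checking, and that is immediate from $a \in O \iff \pi(a) \in O'$.

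I do not anticipate a real obstacle here; each step is a one-line verification. The only point deserving a moment's care is the behaviour of arcs $a$ with $\pi(a) = \pi(\ol a)$ (which can happen when $G$ inverts an edge, even if $a \ne \ol a$ in $\Gamma$): in every such case neither $a$, $\ol a$, nor $\pi(a)$ can lie in a partial orientation, and the arguments above never need to single these arcs out, so the reasoning goes through uniformly.
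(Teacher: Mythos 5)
Your proposal is correct and follows essentially the same route as the paper's proof: both arguments rest on the two facts that $\pi$ intertwines edge reversal ($\pi(\ol a)=\ol{\pi(a)}$) and that $G$-invariant sets of arcs are exactly the unions of $\pi$-fibres, and then transport the ``not both'' and ``at least one'' conditions across $\pi$ in the same way. Your closing remark about arcs with $\pi(a)=\pi(\ol a)$ is a correct observation that the paper leaves implicit.
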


\begin{proof}
Suppose $O$ is a $G$-invariant partial orientation of $\Gamma$.  Since $O$ consists of arcs and is $G$-invariant, we have $O = \pi\inv(O')$ for some subset $O'$ of $A\Gamma$.  Suppose $O'$ is not a partial orientation, that is, there is $a \in O'$ such that also $\overline{a} \in O'$.  Let $b \in \pi\inv(a)$.  Then
\[
\pi(\overline{b}) = \overline{\pi(b)} = \overline{a} \in O',
\]
so $\overline{b} \in O$ contradicting the assumption that $O$ is a partial orientation.  Thus every $G$-invariant partial orientation $O$ of $\Gamma$ arises as $\pi\inv(O')$ where $O'$ is a partial orientation of $\Gamma'$.

Conversely, suppose $O'$ is a partial orientation of $\Gamma'$ and let $O = \pi\inv(O')$.  Then certainly $O$ is a $G$-invariant set of arcs of $\Gamma$; moreover, given $a \in O$, then $\pi(a) \in O'$, and hence
\[
\pi(\overline{a}) = \overline{\pi(a)} \not\in O',
\]
so $\overline{a} \not\in O$.  Thus $O$ is a partial orientation of $\Gamma$.

If $O'$ is an orientation of $\Gamma'$, then for all $a \in A\Gamma$, either $\pi(a) \in O'$, in which case $a \in O$, or else $\pi(\overline{a}) = \overline{\pi(a)} \in O'$, in which case $\overline{a} \in O$; thus in this case, $O$ is an orientation of $\Gamma$.  Conversely if $O'$ is not an orientation of $\Gamma'$, say $O' \cap \{a,\overline{a}\} = \emptyset$ for $a \in A\Gamma'$, then for each $b \in \pi\inv(a)$, neither $b$ nor its reverse is contained in $O$, so $O$ is not an orientation of $\Gamma$.
\end{proof}

More interesting is to determine, given a partial orientation $O$ of the local action diagram, what kind of invariant structure is being described in the tree.  Given Theorem~\ref{thm:Tits}, partial orientations of $T$ that determine subtrees or ends are of particular interest.  Our goal in the rest of this subsection is to use partial orientations to characterise the existence of invariant subtrees or ends in terms of the local action diagram.

\begin{defn} \label{def:scopo}
Say that a partial orientation $O$ of a graph $\Gamma$ is \defbold{confluent} if for every vertex $v \in V\Gamma$, we have $|o\inv(v) \cap O| \le 1$; that is, $O$ contains at most one arc originating at each vertex.  A \defbold{strongly confluent partial orientation} (\scpo) of the graph $\Gamma$ is a confluent partial orientation such that in addition, for all $v \in V\Gamma$, we have
\[
|o\inv(v) \cap O| = 1 \quad \Rightarrow \quad \forall a \in o\inv(v), \, |\{a,\ol{a}\} \cap O| = 1.
\]
In words, a \scpo $O$ is a partial orientation that satisfies the following for all vertices $v$: if $O$ includes any arc originating at $v$, then
$O$ contains the reverse of all other arcs originating at $v$.
\end{defn}

Since the quotient map $\pi_{(T,G)}$ is locally surjective and the strongly confluent property is defined using local information, we can easily identify the $G$-invariant \scpos of the tree from the local action diagram.

\begin{lem}\label{lem:strongly_confluent_preimage}
Let $T$ be a tree, let $G \le \Aut(T)$, let $\Delta = (\Gamma,(X_a),(G(v)))$ be the associated local action diagram and let $\pi = \pi_{(T,G)}$.  Let $O$ be a partial orientation of $\Gamma$.  Then the preimage $\pi\inv(O)$ is confluent (resp. strongly confluent) on $T$ if and only if $O$ is confluent (resp. strongly confluent) on $\Gamma$ and $|X_a|=1$ for all $a \in O$. In particular, the $G$-invariant \scpos of $T$ are precisely the preimages $\pi\inv(O)$ of those \scpos $O$ of $\Gamma$ that satisfy $|X_{a}| = 1$ for all $a \in O$.
\end{lem}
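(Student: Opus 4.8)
The plan is to reduce both equivalences to a purely local comparison, at a vertex $v$ of $T$ and its image $w = \pi(v)$ in $\Gamma$, of the relevant intersection counts. First I note that since $O$ is a partial orientation of $\Gamma = G\backslash T$, the set $\pi\inv(O)$ is a (even $G$-invariant) partial orientation of $T$ by Lemma~\ref{lem:invariant_orientation} applied to the action of $G$ on $T$, so confluence and strong confluence make sense for it. The key input is that $T$ carries a $\Delta$-tree structure whose quotient map is $\pi$ (this is exactly how the associated local action diagram is set up), so that for each $v \in VT$ with $w = \pi(v)$ and each $a \in o\inv(w)$, the fibre $\{b \in o\inv(v) \mid \pi(b) = a\}$ has precisely $|X_a|$ elements; combined with Lemma~\ref{lem:locally_surjective}, which says $\pi$ maps $o\inv(v)$ onto $o\inv(w)$. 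From this I record two facts valid for every $v \in VT$ with $w = \pi(v)$: (a) $|o\inv(v) \cap \pi\inv(O)| = \sum_{a \in o\inv(w)\cap O}|X_a|$, obtained by partitioning $o\inv(v)\cap\pi\inv(O) = \{b \in o\inv(v) \mid \pi(b) \in O\}$ according to the value of $\pi(b) \in o\inv(w)\cap O$; and (b) for each $b \in o\inv(v)$ one has $|\{b,\ol{b}\}\cap\pi\inv(O)| = |\{\pi(b),\ol{\pi(b)}\}\cap O|$ (the self-reverse case $\pi(b) = \ol{\pi(b)}$ being trivial, as both sides vanish since a partial orientation contains no self-reverse arc), and moreover as $b$ ranges over $o\inv(v)$ its image $\pi(b)$ ranges over all of $o\inv(w)$. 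Since $\pi$ is surjective on vertices, a statement quantified over all $v \in VT$ that depends on $v$ only through $w = \pi(v)$ is equivalent to the same statement quantified over all $w \in V\Gamma$.

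For the confluent case: by definition and fact~(a), $\pi\inv(O)$ is confluent on $T$ if and only if $\sum_{a \in o\inv(w)\cap O}|X_a| \le 1$ for every $w \in V\Gamma$. As each $|X_a|$ is a positive integer, this sum is $\le 1$ precisely when $o\inv(w)\cap O$ is empty or a single arc $a$ with $|X_a| = 1$. Requiring this for all $w$ is equivalent to ``$O$ is confluent on $\Gamma$ and $|X_a| = 1$ for every $a \in O$'': the forward implication holds because any $a \in O$ is witnessed at $w = o(a)$, and the reverse holds because confluence of $O$ together with $|X_a| = 1$ for $a \in O$ makes every such sum $\le 1$.

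For the strongly confluent case I combine the previous paragraph with the extra clause in the definition. Suppose first that $O$ is strongly confluent on $\Gamma$ and $|X_a| = 1$ for all $a \in O$; then $\pi\inv(O)$ is confluent on $T$ by the confluent case, and fact~(a) now reads $|o\inv(v)\cap\pi\inv(O)| = |o\inv(w)\cap O|$. Hence if $v \in VT$ satisfies $|o\inv(v)\cap\pi\inv(O)| = 1$ then $|o\inv(w)\cap O| = 1$, so the extra clause for $O$ at $w$ gives $|\{a,\ol{a}\}\cap O| = 1$ for all $a \in o\inv(w)$, and fact~(b) then yields $|\{b,\ol{b}\}\cap\pi\inv(O)| = 1$ for all $b \in o\inv(v)$ — the extra clause for $\pi\inv(O)$ at $v$. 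Thus $\pi\inv(O)$ is strongly confluent. Conversely, if $\pi\inv(O)$ is strongly confluent on $T$ then it is confluent, so by the confluent case $O$ is confluent on $\Gamma$ and $|X_a| = 1$ for all $a \in O$; running the same translation backwards — using fact~(a) in the form $|o\inv(v)\cap\pi\inv(O)| = |o\inv(w)\cap O|$, the surjectivity of $\pi$ on vertices, and the surjectivity of $\pi\colon o\inv(v)\to o\inv(w)$ in fact~(b) — shows $O$ satisfies the extra clause at every $w$, i.e. $O$ is strongly confluent.

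I do not expect a genuine obstacle here; the content is bookkeeping. The points needing care are extracting the fibre-size count $|X_a|$ from the $\Delta$-tree structure, keeping the quantifier over vertices of $T$ versus vertices of $\Gamma$ aligned (which is what forces the appeal to surjectivity of $\pi$, including on out-neighbourhoods), and dispatching the edge-reversal corner case $\pi(b) = \ol{\pi(b)}$ in fact~(b).
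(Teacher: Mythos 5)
Your proposal is correct and follows essentially the same route as the paper: the confluent case via the fibre-count identity $|o\inv(v)\cap\pi\inv(O)|=\sum_{a\in o\inv(\pi(v))\cap O}|X_a|$, and the strongly confluent case by transferring the full-orientation condition at a vertex across $\pi$ using local surjectivity (Lemma~\ref{lem:locally_surjective}). Your fact~(b), including the self-reverse corner case, just makes explicit the step the paper compresses into ``since $\pi$ is locally surjective, this is equivalent to the condition that $\pi\inv(O)$ induces a full orientation of the edges of $T$ incident with $v$''.
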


\begin{proof}
Let $v \in VT$.  We can calculate the size of $o\inv(v) \cap \pi\inv(O)$ as follows:
\[
|o\inv(v) \cap \pi\inv(O)| =\sum \{|X_a| : a \in o\inv(\pi(v)) \cap O\}.
\]
In particular, we see that $|o\inv(v) \cap \pi\inv(O)| \le 1$ if and only if $|o\inv(\pi(v)) \cap O| \le 1$ and $|X_a|=1$ for all $a \in o\inv(\pi(v)) \cap O$.  This establishes that $\pi\inv(O)$ is confluent if and only if $O$ is confluent and $|X_a|=1$ for all $a \in O$.

Now suppose $O$ and $\pi\inv(O)$ are both confluent and that $|X_a|=1$ for all $a \in O$.  We see that
\[
o\inv(v) \cap \pi\inv(O) \neq \emptyset \Leftrightarrow o\inv(\pi(v)) \cap O \neq \emptyset.
\]
If $o\inv(v) \cap \pi\inv(O)$ is empty, we do not need to check the strong confluence condition at $v$ or $\pi(v)$, so let us assume that $o\inv(v) \cap \pi\inv(O)$ is nonempty.  Then for $O$ to be strongly confluent, it must induce a full orientation of the edges of $\Gamma$ incident with $\pi(v)$.   In fact, since $\pi$ is locally surjective, this is equivalent to the condition that $\pi\inv(O)$ induces a full orientation of the edges of $T$ incident with $v$.  Thus $O$ is strongly confluent if and only if $\pi\inv(O)$ is strongly confluent.

It now follows from Lemma~\ref{lem:invariant_orientation} that
the $G$-invariant \scpos of $T$ are precisely the preimages $\pi\inv(O)$ of those \scpos $O$ of $\Gamma$ that satisfy $|X_{a}| = 1$ for all $a \in O$.
\end{proof}

The above result motivates the following definition, which we restate from the introduction (Definition~\ref{def:introScopoDelta}).

\begin{defn} \label{def:scopoDelta}
Given a local action diagram $\Delta = (\Gamma,(X_a),(G(v)))$, we define a \defbold{(strongly) confluent partial orientation} $O$ of $\Delta$ to be a (strongly) confluent partial orientation of $\Gamma$ such that $|X_a|=1$ for all $a \in O$.  For $G \leq \Aut(T)$ with associated local action diagram $\Delta$, the $G$-invariant \scpos of $T$ are thus precisely preimages $\pi\inv(O)$ of \scpos $O$ of $\Delta$.
\end{defn}

As we shall see, \scpos of a graph only occur in a few special forms, and in the tree case they correspond exactly to subtrees and ends.
These special forms arise from a natural combinatorial feature we call a \defbold{cotree}. We first define the cotree of a graph (Definition~\ref{def:cotreeGraph}), and use this to define the cotree of a local action diagram (Definition~\ref{def:CotreeDelta}).

\begin{defn} \label{def:cotreeGraph}
Given a connected graph $\Gamma$, a directed path $(v_0,\dots,v_n)$ of length $n \ge 2$ is  \defbold{backtracking} if $v_i = v_{i+2}$ for some $0 \leq i \leq n-2$; directed paths of length $n = 0, 1$ are always non-backtracking. For an induced subgraph $\Gamma'$ of $\Gamma$, a \defbold{projecting path} from $v \in V\Gamma$ to $\Gamma'$ is a directed non-backtracking path $(v_0,\dots,v_n)$ of some finite length $n \ge 0$, such that $v = v_0$ and such that $v_n \in V\Gamma'$ and $v_i \not\in V\Gamma'$ for $i < n$.  We say a nonempty induced subgraph $\Gamma'$ is a \defbold{cotree} of $\Gamma$ if for every $v \in V\Gamma \smallsetminus V\Gamma'$ there is exactly one projecting path to $\Gamma'$, including the choice of arcs (that is, in the projecting path $(v_0,\dots,v_n)$ we require there to be only one arc of $\Gamma$ from $v_i$ to $v_{i+1}$ for $0 \le i < n$).  Note that, because there can be only one such path, a cotree of a connected graph is connected.

A \defbold{cycle graph} is a finite connected graph in which all vertices have degree $2$.  As conventions can differ here, we emphasise that the cycle graph of order $1$ consists of a vertex with a loop, but edge-reversal is nontrivial on the loop; the cycle graph of order $2$ consists of two vertices with two edges between them. A \defbold{cyclic orientation} of a cycle graph $\Gamma$ is an orientation that includes exactly one element of $o\inv(v)$ for each $v \in V\Gamma$.  One sees that each cycle graph admits two cyclic orientations, both of which are \scpos.  We say a graph is \defbold{acyclic} if it has no cycle subgraphs.  In particular, trees are precisely the orientable acyclic connected graphs.
\end{defn}

In any connected graph other than a tree, there is a \defbold{smallest cotree} described as $\Gamma'$ in the following lemma (which could be the whole graph), and cotrees are connected induced subgraphs that contain the smallest cotree (which is clearly unique). 

\begin{lem}\label{lem:minimal_cotree} 
Let $\Gamma$ be a connected graph.  Let $F$ be the set of vertices $v \in V\Gamma$ such that $v$ belongs to a cycle subgraph or $o\inv(v)$ contains a loop.  Let $\Gamma'$ be the subgraph induced by the union of all simple paths (including empty paths) with endpoints in $F$.  Then $\Gamma'$ is empty if and only if $\Gamma$ is a tree; otherwise, $\Gamma'$ is connected.  In either case, given a nonempty connected induced subgraph $\Gamma''$ of $\Gamma$, then $\Gamma''$ is a cotree if and only if $\Gamma' \subseteq \Gamma''$.
\end{lem}

\begin{proof}
We see that $\Gamma'$ is empty if and only if $\Gamma$ has no loops and no cycle subgraphs.  Since $\Gamma$ is connected, this is exactly the case when $\Gamma$ is a tree.  Now suppose $\Gamma'$ is nonempty.  Given a simple path $p = (v_0,\dots,v_n)$ in $\Gamma$ such that $v_0,v_n \in V\Gamma'$, we see that $v_0$ and $v_n$ lie on simple paths with endpoints $a_0,z_0 \in F$, respectively $a_n,z_n \in F$, and then $p$ is contained in some simple path whose endpoints are a subset of $\{a_0,z_0,a_n,z_n\}$, so $p$ is contained in $\Gamma'$.  In particular, $\Gamma'$ is connected.

Let $\Gamma''$ be a nonempty connected induced subgraph of $\Gamma$. Suppose first that $\Gamma''$ is a cotree of $\Gamma$. From the uniqueness of projecting paths, we see that $\Gamma''$ must contain all loops and cycle subgraphs of $\Gamma$, so $F \subseteq V\Gamma''$.  Moreover, given $v,w \in V\Gamma''$, a simple path $p$ from $v$ to $w$, and a vertex $x$ on this path, then $x$ cannot have a unique projecting path unless $x \in V\Gamma''$.  Since $\Gamma''$ is an induced subgraph, we conclude that $\Gamma''$ contains $\Gamma'$.

Conversely, let $\Gamma''$ be a nonempty connected induced subgraph of $\Gamma$ that contains $\Gamma'$, and let $v \in V\Gamma \smallsetminus V\Gamma''$.  Then $v$ certainly has a projecting path to $\Gamma''$; suppose for a contradiction that it has two distinct projecting paths $p = (v_0,\dots,v_m)$ and $p' = (v'_0,\dots,v'_n)$, and suppose that $m$ is the minimum length for which this occurs.  Since $\Gamma''$ contains all cycle subgraphs and $m$ is minimal, we see that there are no multiple edges between $v_i$ and $v_{i+1}$ for $0 \le i < m$ or between $v'_i$ and $v'_{i+1}$ for $0 \le i < n$.   We also see that $p$ is simple and there are no vertices other than endpoints shared by $p$ and $p'$.  We can thus replace $p'$ with a simple path that is still distinct from $p$.  Now we obtain a cycle graph $C$ that is the union of the undirected paths specified by $p$ and $p'$, with a shortest path in $\Gamma''$ from $v_m$ to $v'_n$.  But then $C \subseteq \Gamma' \subseteq \Gamma''$, a contradiction.  From this contradiction we conclude that $\Gamma''$ is a cotree.
\end{proof}

In particular, any nonempty proper subtree of a tree is a cotree.

\begin{defn} \label{def:scposTypes}
We now define three kinds of \scpos of the connected graph $\Gamma$. Each type in  the definition is a different type of \scpo, and each \scpo arises from a natural combinatorial feature of the graph $\Gamma$. To highlight this, for each type we denote the combinatorial feature by $z$ and the \scpo associated to $z$ by $O_{z}$ or $O^+_{z}$.
\begin{enumerate}[(a)]
\item \label{scposTypeI}
	Given a cotree $z$ of $\Gamma$, the associated partial orientation $O_{z}$ consists of all arcs $a$ such that $o(a) \not\in Vz$ and $a$ lies on the projecting path from $o(a)$ to $z$.  (In particular, $O_\Gamma = \emptyset$.)
\item \label{scposTypeII}
	Suppose $z$ is a cycle graph equipped with one of its two cyclic orientations, such that $z$ occurs as a cotree of $\Gamma$.  Then from part (\ref{scposTypeI}) we have a partial orientation $O_{z}$. For this type we define 
the associated partial orientation $O^+_{z}$ to be  the union of $O_{z}$ with the cyclic orientation of $z$.
\item \label{scposTypeIII}
	If $\Gamma$ is a tree and we are given an end $z$ of $\Gamma$, then for each arc $a \in A\Gamma$, exactly one of $a$ and $\overline{a}$ is directed towards $z$, 
    that is, it belongs to a ray in the equivalence class $z$.  The set $O_{z}$ is then defined to be those arcs in $A\Gamma$ that are directed towards $z$, and is thus an orientation of $\Gamma$. For this type $O_z$ is the associated partial orientation. 
\end{enumerate}
Here are some observations on these partial orientations:
\begin{enumerate}[(i)]
\item \label{scposTypeIObs}
	The associated partial orientations of type (\ref{scposTypeI}), (\ref{scposTypeII}) and (\ref{scposTypeIII}) are all strongly confluent.
\item \label{scposTypeIIObs}
	The partial orientations of types (\ref{scposTypeII}) and (\ref{scposTypeIII}) are in fact full orientations of $\Gamma$; a partial orientation of type (\ref{scposTypeI}) is full if and only if $z$ consists of a single vertex with no edges.
\item \label{scposTypeIIIObs}
	If $z$ is a cotree or end of $\Gamma$ and $G$ is a group of automorphisms of $\Gamma$, then $z$ is $G$-invariant if and only if $O_z$ is $G$-invariant.
\end{enumerate}
\end{defn}

\begin{defn} \label{def:CotreeDelta}
In light of Lemma~\ref{lem:strongly_confluent_preimage}, given a local action diagram $\Delta = (\Gamma,(X_a),(G(v)))$ we define a \defbold{cotree of $\Delta$} to be a cotree $\Gamma'$ of $\Gamma$ such that $|X_a|=1$ for all $a \in O_{\Gamma'}$, and an \defbold{invariant end of $\Delta$}, in the case that $\Gamma$ is a tree, to be an end $\xi$ of $\Gamma$ such that $|X_a|=1$ for all $a \in O_{\xi}$.  In each case we ensure that $O_{\Gamma'}$, respectively $O_\xi$ is a \scpo of $\Delta$.
\end{defn}

\begin{rem} \label{rem:SmallestCotreeOfDelta}
Suppose $\Delta = (\Gamma, (X_a), (G(v)))$ is the local action diagram for $G \leq \Aut(T)$. If there is a smallest invariant subtree $T'$ such that $|VT'| \geq 3$ then the unique smallest cotree $\Gamma'$ of the graph $\Gamma$ described in Lemma~\ref{lem:minimal_cotree}  is in fact a cotree of the local action diagram $\Delta$  and moreover $\pi(T') = \Gamma'$. In this situation we call $\Gamma'$ the (unique) \defbold{smallest cotree of $\Delta$}.

Indeed, given such a subtree $T'$ we have by Lemma~\ref{lem:minimal_invariant} (\ref{lem:minimal_invariant:translation}) that $G$ acts with translation. Now translations create loops or cycle graphs in the quotient graph $\Gamma$, so $\Gamma$ is not a tree. Thus, there exists a smallest cotree $\Gamma'$ of $\Gamma$ whose structure is given by Lemma~\ref{lem:minimal_cotree}. Furthermore, $T'$ is the union of the axes of translation of $G$ and so we see from the description of the structure of $\Gamma'$ that $\pi(T') = \Gamma'$.
Consider an arc $a$ in the (graph) \scpo $O_{\Gamma'}$ (which is of type (\ref{scposTypeI})) and fix some representative vertex $w^* \in VT$ such that $\pi(w^*) = o(a) \not \in V\Gamma'$. 
So $w^*$ does not lie in $T'$ and since $T'$ is a subtree there is a unique arc in $o\inv(w^*)$ pointing towards $T'$. Since $G$ leaves $T'$ invariant, this unique arc cannot be mapped to another arc in $o\inv(w^*)$ by the action of $G$. Hence (see Definition~\ref{def:AssocLocalActionDiagram})
$X_a = \{b \in o\inv(w^*) \mid \pi(b) = a\}$ has cardinality one. Thus $\pi(T')$ is a cotree of $\Delta$ (not just of $\Gamma$).
\end{rem}

Our next goal is to show that the types (\ref{scposTypeI})--(\ref{scposTypeIII}) actually account for all \scpos of connected graphs, so in particular, in the case of trees they correspond to subtrees and ends.

A confluent partial orientation $O$ of a graph $\Gamma$ defines a map $f_O$ on $V\Gamma$, as follows: if $o\inv(v) \cap O = \{a\}$ we set $f_O(v) = t(a)$, and if $o\inv(v) \cap O = \emptyset$ we set $f_O(v) = v$.  The \defbold{attractor} $K(O)$ of $O$ is then defined to consist of the following:
\begin{enumerate}[(i)]
\item All vertices of $\Gamma$ belonging to periodic orbits of $f_O$;
\item All ends of $\Gamma$ defined by an aperiodic orbit $(v,f_O(v),f^2_O(v),\dots)$.
\end{enumerate}
Thus each $v \in V\Gamma$ defines a nonempty finite subset $z_O(v)$ of the attractor: if $(v,f_O(v),f^2_O(v),\dots)$ is eventually periodic then $z_O(v)$ is the associated periodic orbit, whereas if $(v,f_O(v),f^2_O(v),\dots)$ is aperiodic then $z_O(v)$ is the associated end.  We then have $K(O) = \bigcup_{v \in V\Gamma}z_O(v)$.

Attractors of \scpos are of a special form, which allows us to recognise the types (\ref{scposTypeI})--(\ref{scposTypeIII}).

\begin{thm}\label{thm:strongly_confluent_attractor}
Let $\Gamma$ be a connected graph, let $O$ be a \scpo on $\Gamma$ and let $K$ be the attractor of $O$.  Then exactly one of the following occurs:
\begin{enumerate}[(a)]
\item \label{thm:strongly_confluent_attractor:item:cotree}
There is a cotree $\Gamma'$ of $\Gamma$ such that $V\Gamma' = K$ and $O = O_{\Gamma'}$.
In this case, $V\Gamma' = \{v \in V\Gamma : o\inv(v) \cap O = \emptyset\}$. 
\item \label{thm:strongly_confluent_attractor:item:cycle}
	There is a cotree $\Gamma'$ of $\Gamma$ forming a cycle graph such that $V\Gamma' = K$ and $O = O^+_{\Gamma'}$ for one of the cyclic orientations of $\Gamma'$. 
\item \label{thm:strongly_confluent_attractor:item:end}
There is an end $\xi$ of $\Gamma$ such that $K = \{\xi\}$, $\Gamma$ is a tree and $O = O_{\xi}$.
\end{enumerate}
\end{thm}

\begin{rem} \label{rem:ScopoThmWorksForLADS}
Notice that the statement of Theorem~\ref{thm:strongly_confluent_attractor} also holds if the graph $\Gamma$ is replaced with a local action diagram $\Delta = (\Gamma, (X_a), (G(v)))$ and the end $\xi$ in case (\ref{thm:strongly_confluent_attractor:item:end}) is replaced with an invariant end $\xi$.
Indeed, if $O$ is a \scpo on $\Delta$ then $|X_a| = 1$ for all $a \in O$. So, in cases (\ref{thm:strongly_confluent_attractor:item:cotree}) and (\ref{thm:strongly_confluent_attractor:item:cycle}) of the theorem, the cotree $\Gamma'$ of $\Gamma$ has the additional structure required of a cotree of $\Delta$, and in case (\ref{thm:strongly_confluent_attractor:item:end}) the end $\xi$ of $\Gamma$ has the additional structure of an invariant end of $\Delta$.
\end{rem}

Most of the proof of Theorem~\ref{thm:strongly_confluent_attractor} will consist of the next two lemmas.

\begin{lem}\label{lem:subgraph_confluent}
Let $\Gamma$ be a graph and let $O$ be a \scpo of $\Gamma$.
\begin{enumerate}[(i)]
\item If $\Gamma'$ is a subgraph of $\Gamma$, then $O \cap A\Gamma'$ is a \scpo of $\Gamma'$.
\item If $\Gamma$ is a cycle graph, then $O$ is either empty or it is one of the two  cyclic orientations of $\Gamma$.
\end{enumerate}
\end{lem}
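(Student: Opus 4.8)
The plan is to verify both parts directly against the definition of a \scpo, the only real work being in part (ii). For \emph{part (i)}, recall that a subgraph $\Gamma'$ of $\Gamma$ is closed under edge reversal, with $o\inv_{\Gamma'}(v) = o\inv_{\Gamma}(v) \cap A\Gamma'$ for each $v \in V\Gamma'$. Put $O' := O \cap A\Gamma'$. I would first note that $O'$ is a partial orientation of $\Gamma'$: for $a \in A\Gamma'$ we also have $\ol{a} \in A\Gamma'$, and $O$ contains at most one of $a,\ol{a}$, hence so does $O'$. Confluence of $O'$ is immediate from the inclusion $o\inv_{\Gamma'}(v) \cap O' \subseteq o\inv_{\Gamma}(v) \cap O$, whose right-hand side has size at most $1$. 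For strong confluence, suppose $|o\inv_{\Gamma'}(v) \cap O'| = 1$ for some $v \in V\Gamma'$; then $|o\inv_{\Gamma}(v) \cap O| \ge 1$, so strong confluence of $O$ in $\Gamma$ yields $|\{a,\ol{a}\} \cap O| = 1$ for \emph{every} $a \in o\inv_{\Gamma}(v)$, in particular for every $a \in o\inv_{\Gamma'}(v)$; since the whole edge $\{a,\ol{a}\}$ lies in $A\Gamma'$, intersecting with $A\Gamma'$ changes nothing, so $|\{a,\ol{a}\} \cap O'| = 1$. This is routine and I foresee no obstacle here.

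For \emph{part (ii)}, assume $\Gamma$ is a cycle graph and $O \neq \emptyset$; the goal is to show $O$ is one of the two cyclic orientations. I would begin by choosing any $a \in O$; then $o\inv(o(a)) \cap O = \{a\}$ by confluence, so strong confluence forces $O$ to fully orient both edges incident with $o(a)$. Next I set up coordinates on the cycle: label the vertices $v_0,\dots,v_{n-1}$ and choose an arc $e_i$ from $v_i$ to $v_{i+1}$ for each $i$ (indices mod $n$) so that $e_0 = a$; then $o\inv(v_i) = \{e_i, \ol{e_{i-1}}\}$ for all $i$, and each edge of $\Gamma$ is $\{e_i,\ol{e_i}\}$ for a unique $i$. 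The engine of the argument is the implication: if $e_i \in O$, then confluence at $v_i$ gives $\ol{e_{i-1}} \notin O$, and strong confluence at $v_i$ forces $O$ to orient the edge $\{e_{i-1},\ol{e_{i-1}}\}$, hence $e_{i-1} \in O$. Starting from $e_0 \in O$ and iterating around the cycle gives $e_i \in O$ for every $i$ and, since $O$ is a partial orientation, $\ol{e_i} \notin O$ for every $i$; thus $O = \{e_0,\dots,e_{n-1}\}$, a cyclic orientation. The degenerate orders are covered automatically: for order $1$ there is a single loop $e_0 \neq \ol{e_0}$, and for order $2$ a pair of parallel edges.

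The step I expect to need the most care — the ``main obstacle'', such as it is — is the direction bookkeeping at the start of part (ii): one must observe that every arc of a cycle graph occurs as $e_i$ for a suitable cyclic labelling, so taking $e_0 = a$ is genuinely no loss of generality; had we instead picked the arc pointing the other way around the cycle, the identical propagation would simply produce the opposite cyclic orientation. Everything else is a direct unwinding of the definition of a strongly confluent partial orientation.
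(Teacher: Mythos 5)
Your proof is correct and follows essentially the same route as the paper: part (i) is the same direct verification that confluence and strong confluence pass to subgraphs, and part (ii) is the same propagation argument, where strong confluence at $o(a)$ forces the unique arc of $t\inv(o(a))\smallsetminus\{\ol{a}\}$ into $O$ and one iterates around the cycle (the paper phrases this via a successor map $s$ and finiteness rather than explicit indices, but the content is identical).
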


\begin{proof}
(i) It is clear that any subset of $O$ is a confluent partial orientation.  We also see that
\[
o\inv_{\Gamma'}(v) \cap O = \{a\} \Rightarrow o\inv_{\Gamma}(v) \cap O = \{a\} \Rightarrow  t\inv_{\Gamma}(v) \subseteq O \cup \{\ol{a}\} \Rightarrow t\inv_{\Gamma'}(v) \subseteq O \cup \{\ol{a}\},
\]
which ensures that $O \cap A\Gamma'$ is strongly confluent on $\Gamma'$.

(ii) It is easy to see that the two cyclic orientations of $\Gamma$ are strongly confluent.
Conversely, suppose that $O$ is nonempty, that is, there exists $a \in O$.  Then the strong confluence condition means that we must also have $s(a) \in O$, where $s(a)$ is the unique element of $t\inv(o(a)) \smallsetminus \{\ol{a}\}$.  We then have $s^n(a) \in O$ for all $n \ge 0$, and since $\Gamma$ is finite, eventually the sequence repeats; without loss of generality, $s^k(a) = a$.  The sequence of arcs $a,s^{k-1}(a),s^{k-2}(a),\dots,s(a)$ then defines a directed path from $o(a)$ to $o(a)$ without backtracking; since $\Gamma$ is a cycle graph, we conclude that $O = \{a,s^{k-1}(a),s^{k-2}(a),\dots,s(a)\}$ and that $O$ is a cyclic orientation of $\Gamma$.
\end{proof}

\begin{lem}\label{lem:singleton_attractor}
Let $\Gamma$ be a connected graph and let $O$ be a \scpo of $\Gamma$.  
\begin{enumerate}[(i)]
\item The attractor $K(O)$ contains the vertices of every cycle subgraph of $\Gamma$ and the endpoint of every nonorientable loop of $\Gamma$.
\item For every simple path $(v_0,\dots,v_n)$ in $\Gamma$ such that
\[
O \cap \bigcup^n_{i=0} o\inv(v_i) \neq \emptyset,
\]
we have $f_O(v_0) = v_1$ or $f_O(v_n) = v_{n-1}$ (or both).  Consequently, if $(v_0,\dots,v_n)$ is a simple path such that $\{v_0,v_n\} \subseteq K$, then also $v_1,\dots,v_{n-1} \in K$.
\item Suppose there exist $v,w \in V\Gamma$ such that $z_O(v) \neq z_O(w)$.  Then $K(O)$ consists exactly of those $v \in V\Gamma$ such that $o\inv(v) \cap O = \emptyset$.
\end{enumerate}
\end{lem}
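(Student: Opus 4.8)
The plan is to reduce both parts to a structural description of the self-map $f_O$ of $V\Gamma$. The key preliminary observation, which I would record up front, is: if $o\inv(v)\cap O\neq\emptyset$ then by confluence this set is a singleton $\{a\}$, and by strong confluence every edge incident with $v$ is oriented by $O$ with $a$ the unique arc of $O$ issuing from $v$, so $f_O(v)=t(a)$. Consequently $f_O(v)=v$ happens exactly when $o\inv(v)\cap O=\emptyset$ or when $v$ carries an orientable loop lying in $O$; and more generally, a periodic $f_O$-orbit on which some vertex has nonempty $O$-out-set is obtained by iterating $f_O$ around a closed walk that does not backtrack (since $O$ contains no arc together with its reverse), hence is exactly the vertex set of a cycle subgraph $C$ of $\Gamma$ carrying one of its two cyclic orientations inside $O$; conversely, every such cycle subgraph is a periodic $f_O$-orbit of this kind.

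For part (i): a non-orientable loop $a$ at $v$ satisfies $a=\ol a$, so a partial orientation cannot contain $a$; then strong confluence forces $o\inv(v)\cap O=\emptyset$, so $f_O(v)=v$ and $v\in K(O)$. For a cycle subgraph $C$, Lemma~\ref{lem:subgraph_confluent} says $O\cap AC$ is either empty or a cyclic orientation of $C$; if it is empty then no vertex of $C$ can have nonempty $O$-out-set (otherwise strong confluence would orient the two $C$-edges at that vertex, putting an arc of $C$ into $O$), so each vertex of $C$ is $f_O$-fixed; if it is a cyclic orientation then $f_O$ rotates $C$; either way $VC\subseteq K(O)$.

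For part (ii): the inclusion $\{v:o\inv(v)\cap O=\emptyset\}\subseteq K(O)$ is immediate since such $v$ are $f_O$-fixed. For the reverse inclusion I would argue contrapositively: suppose $K(O)$ contains an element $\kappa$ that is not a vertex with empty $O$-out-set, so $\kappa$ is either a vertex lying on a cycle subgraph $C\subseteq\Gamma$ with cyclic orientation in $O$ (by the preliminary observation), or an end represented by an aperiodic orbit $x_0\to x_1\to\cdots$ with vertex set $R$; I claim $z_O$ is then constant, contradicting the hypothesis. Given any $y\in V\Gamma$, take a shortest path $y=p_0,\dots,p_m$ to $VC$ (resp.\ to $R$), with $e_i$ the arc from $p_{i-1}$ to $p_i$; it is simple and only $p_m$ lies in $VC$ (resp.\ $R$). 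The vertex $p_m$ has nonempty $O$-out-set whose unique out-arc terminates in $VC$ (resp.\ further along $R$), hence not at $p_{m-1}$, so $\ol{e_m}\notin O$; strong confluence at $p_m$ then gives $e_m\in O$, i.e.\ $f_O(p_{m-1})=p_m$. Now $p_{m-1}$ has nonempty $O$-out-set $\{e_m\}$, and simplicity of the path ($p_{m-2}\neq p_m$) shows the back-arc $\ol{e_{m-1}}$ is not $e_m$, whence $\ol{e_{m-1}}\notin O$ and $e_{m-1}\in O$; iterating, $f_O(p_{i-1})=p_i$ for all $i$, so the $f_O$-orbit of $y$ flows into $C$ (resp.\ onto $R$) and $z_O(y)=VC$ (resp.\ $\{\kappa\}$). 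As $y$ was arbitrary, $z_O$ is constant, giving the required contradiction; therefore $K(O)=\{v\in V\Gamma:o\inv(v)\cap O=\emptyset\}$.

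The main obstacle will be the bookkeeping in the shortest-path induction of part (ii): at each vertex of the path one must track exactly which single arc lies in $O$ and verify it is never the arc pointing back toward $y$ (this is where simplicity of the path is used), and one must make sure the degenerate configurations---orientable loops viewed as order-$1$ cycle graphs, length-one paths, and order-$2$ cycle graphs with parallel edges---are absorbed uniformly rather than creating special cases. Unifying the ``extra vertex'' and ``extra end'' possibilities for $\kappa$ is routine once the cycle case is set up.
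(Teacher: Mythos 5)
Your proof is correct. Part (i) is essentially the paper's argument: a non-orientable loop cannot lie in a partial orientation, so strong confluence forces an empty out-set at its endpoint, and cycle subgraphs are handled via Lemma~\ref{lem:subgraph_confluent}, splitting into the empty-restriction case (every cycle vertex is $f_O$-fixed) and the cyclic-orientation case (the cycle is a periodic orbit). For part (ii), however, you take a genuinely different route. The paper fixes $v,w$ with $z_O(v)\neq z_O(w)$, chooses $i,j$ minimizing $d(f_O^i(v),f_O^j(w))$, and propagates strong confluence along the minimal path between these two points to show that a nonempty out-set at either endpoint would strictly decrease the distance; this forces both limit points to be $f_O$-fixed vertices with empty out-set, and repeating the argument against an arbitrary third vertex finishes the claim. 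You instead argue contrapositively: any attractor element that is not a fixed vertex with empty out-set must be (by your preliminary classification of periodic orbits) a cyclically oriented cycle subgraph or the end of an aperiodic orbit, and you propagate strong confluence backward along a shortest path from an arbitrary vertex to that cycle or ray to show that it attracts every orbit, so $z_O$ is constant --- contradicting the hypothesis. The underlying mechanism (strong confluence pushing arcs of $O$ backward along a simple path, with simplicity ruling out the back-arc at each step) is identical in both proofs; the difference is in the framing. Your version isolates the ``unique global attractor'' phenomenon directly, which is really the content the paper only assembles afterwards in the proof of Theorem~\ref{thm:strongly_confluent_attractor}, at the cost of carrying the case analysis for order-$1$ and order-$2$ cycles and for ends; the paper's minimal-distance trick is shorter for the statement as given because it never needs to classify the periodic orbits at this stage.
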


\begin{proof}
(i) Let $v$ be the endpoint of a loop, that is, there is $a \in A\Gamma$ such that $o(a) = t(a) = v$.  If no arc of $O$ originates at $v$, then $f_O(v) = v$.  Otherwise we see that $O$ must contain one of $a$ or $\ol{a}$; we thus end up with an arc in $O$ originating at $v$ that also terminates at $v$, so $f_O(v) = v$.  In either case, we see that $v \in K(O)$.

Let $\Gamma'$ be a cycle subgraph of $\Gamma$ of order $\ge 2$.  By Lemma~\ref{lem:subgraph_confluent}, the restriction $O' := O \cap A\Gamma'$ is either empty or one of the two cyclic orientations of $\Gamma'$.  If $O'$ is a cyclic orientation of $\Gamma'$, we immediately see that $\Gamma'$ is a periodic orbit of $f_O$, so $V\Gamma' \subseteq K(O)$.  If instead $O'$ is empty, then for each $v \in V\Gamma'$, $O$ is missing at least two of the arcs of $\Gamma$ that terminate at $v$, and hence $O$ is disjoint from $o\inv_{\Gamma}(v)$; this means $f_O(v) = v$, so $v \in K(O)$.

(ii)
We consider a simple path $(v_0,\dots,v_n)$ and $a\in o\inv(v_i) \cap O$ for some $0 \le i \le n$.  If $a$ is an arc from $v_0$ to $v_1$, we have $f_O(v_0) = v_1$, and similarly if $a$ is an arc from $v_n$ to $v_{n-1}$ we have $f_O(v_{n-1})$; so let us assume neither is the case.  Then $v_i$ has at least one neighbour other than $t(a)$ in the path; by symmetry we may assume this neighbour is $v_{i-1}$.  Then by strong confluence, $O$ must include all arcs from $v_{i-1}$ to $v_i$, then all arcs from $v_{i-2}$ to $v_{i-1}$, and so on, up to the arcs from $v_0$ to $v_1$; hence $f_O(v_0) = v_1$.

We now claim that if $(v_0,\dots,v_n)$ is a simple path whose endpoints are in $K$, then all vertices of the path are in $K$.  Suppose not, and suppose $(v_0,\dots,v_n)$ is a counterexample of minimal length.  Then $n \ge 2$ and $K \cap \{v_0,\dots,v_n\} = \{v_0,v_n\}$.  Since $f_O$ cannot leave its attractor, we have $f_O(v_0) \neq v_1$ and $f_O(v_n) \neq v_{n-1}$.  But then by the previous paragraph, $O$ is disjoint from $o\inv(v_i)$ for all $i$, which means that $f_O(v_i) = v_i$ and hence $v_i \in K$ for all $i$, a contradiction.

(iii)
Let $v_i = f^i_O(v)$ and let $w_i = f^i_O(w)$.  Choose $i,j \in \mathbb{N} \times \mathbb{N}$ in such a way that the distance $d(v_i,w_j)$ is minimised; note that $v_i \neq w_j$, so $d(v_i,w_j) > 0$.  Let $p$ be a path of minimal length from $v_i$ to $w_j$.  Then by the minimality of $d(v_i,w_j)$, neither $f_O(v_i)$ nor $f_O(w_j)$ is an interior vertex of the path.  By part (ii) it follows that $O$ is disjoint from $o\inv(v_i)$ and $o\inv(w_j)$.  In particular, $v_i$ and $w_j$ are both fixed by $f_O$; hence $z_O(v) = \{v_i\}$ and $z_O(w) = \{w_j\}$.  We then see by the same argument that given any $w' \in V\Gamma$, then $z_O(w') = \{w''\}$ for some $w''$ such that $o\inv(w'') \cap O = \emptyset$.  Conversely, if $w''$ is any vertex of $\Gamma$ such that $o\inv(w'') \cap O = \emptyset$, then $w''$ is fixed by $f_O$, so $w'' \in K(O)$. 
\end{proof}

\begin{proof}[Proof of Theorem~\ref{thm:strongly_confluent_attractor}]
Suppose that $K$ contains a vertex of $\Gamma$.  Then by Lemma~\ref{lem:singleton_attractor}(iii), we see that $K$ consists solely of vertices of $\Gamma$, and by parts (i) and (ii) of Lemma~\ref{lem:singleton_attractor}, together with Lemma~\ref{lem:minimal_cotree}, we see that $K$ is the set of vertices of a cotree $\Gamma'$ of $\Gamma$.  There are then two possibilities.  If $K$ consists of those $v \in V\Gamma$ such that $o\inv(v) \cap O = \emptyset$, then case (a) holds.  Otherwise, by Lemma~\ref{lem:singleton_attractor}(iii), $\Gamma'$ is a cycle graph and $O \cap A\Gamma'$ is a cyclic orientation of $\Gamma'$, and we see that case (\ref{thm:strongly_confluent_attractor:item:cycle}) holds.

The remaining possibility is that $K$ does not contain any vertex of $\Gamma$.  Then by Lemma~\ref{lem:singleton_attractor}(i), $\Gamma$ is a tree; by Lemma~\ref{lem:singleton_attractor}(iii), we have $K = \{\xi\}$ for a unique end $\xi$ of $\Gamma$.  It is then clear that case (\ref{thm:strongly_confluent_attractor:item:end}) holds.
\end{proof}

Theorem~\ref{thm:invariants} is a summary of the precise correspondence described in this chapter. For completeness, we give an explicit proof of the theorem below.

\begin{proof}[Proof of Theorem~\ref{thm:invariants}] Let $G$ be a group acting on a tree $T$, with associated local action diagram $\Delta = (\Gamma,(X_a),(G(v)))$. 
Let $z$ be an end of $T$ and let $z'$ be a nonempty proper subtree of $T$. Note that $z'$ is a cotree of $T$. Thus, using Definition~\ref{def:scposTypes}, we have \scpos $O_z$ of type (\ref{scposTypeIII}) and $O_{z'}$ of type (\ref{scposTypeI}).

If $z$ (resp.~$z'$) is $G$-invariant, then $O_z$ (resp.~$O_{z'}$) is $G$-invariant by observation (\ref{scposTypeIIIObs}) of Definition~\ref{def:scposTypes}.
As noted above, the $G$-invariant \scpos of $T$ are precisely preimages under $\pi$ of \scpos of the local action diagram $\Delta$. Clearly, if $z' = T$ then $O_{z'}$ is empty, and is the preimage under $\pi$  of the empty \scpo of $\Delta$.

On the other hand, given a \scpo $O$ of the local action diagram $\Delta$, the preimage $\pi\inv(O)$ is a $G$-invariant \scpo of $T$.  Because $T$ is a tree, Theorem~\ref{thm:strongly_confluent_attractor} guarantees that the $G$-invariant \scpos $\pi\inv(O)$ of $T$ are of type (\ref{scposTypeI}) or (\ref{scposTypeIII}). That is, $\pi\inv(O) = O_z$, where $z$ is either a cotree or an end of $T$. A cotree is a connected induced subgraph, so $z$ is either a subtree or an end of $T$. Since $O_z$ is $G$-invariant, by observation (\ref{scposTypeIIIObs}) of Definition~\ref{def:scposTypes} we have that $z$ is $G$-invariant.
\end{proof}

\subsection{Tits' theorem revisited}\label{sec:Tits_revisited}
\label{Section:TitsRevisited}
 
Theorem~\ref{thm:strongly_confluent_attractor} immediately provides a characterisation of geometrically dense actions in terms of the local action diagram $(\Gamma,(X_a),(G(v)))$: specifically, it should be \defbold{irreducible}, meaning that the only \scpo of $\Delta$ is the empty one.  (In fact, we only need to know $\Gamma$ and the colour sets $X_a$; the additional information provided by the groups $G(v)$ is not needed.)
In this subsection we explore the consequences of this observation. In Definition~\ref{def:FocalCycleHoroEndStrayHalftreeStrayLeaf} we define the four combinatorial features of a local action diagram (focal cycle, horocyclic end, stray half-tree and stray leaf) whose existence precisely characterises non-irreducible local action diagrams (see Proposition~\ref{prop:irreducible_check}). Together, the contents of this subsection give our new `local' version of Tits' theorem: Corollary~\ref{cor:DirectlyReadingSimplicity}. Tits' theorem leaves open the possibility that $G^+$ might be trivial, and we determine from the local action diagram precisely when $G^+$ is trivial in Lemma~\ref{lem:free_diagram}.

\begin{cor}\label{cor:geometrically_dense}
Let $T$ be a tree, let $G \le \Aut(T)$ and let $\Delta = (\Gamma,(X_a),(G(v)))$ be the associated local action diagram.  Then $G$ is geometrically dense if and only if $\Delta$ is irreducible.
\end{cor}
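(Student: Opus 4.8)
The plan is to prove the two conditions equivalent by threading a chain of biconditionals through the notion of a nonempty \scpo: first ``upstairs'' on the tree $T$, and then ``downstairs'' on the diagram $\Delta$. Throughout, write $\pi = \pi_{(T,G)}$, so that $\Gamma = G\backslash T$ and $T$ carries a $\Delta$-tree structure over $\pi$. By definition $G$ is geometrically dense precisely when $T$ has no $G$-invariant proper subtree and no $G$-invariant end, so it suffices to show: \emph{$T$ has a $G$-invariant proper subtree or a $G$-invariant end if and only if $\Delta$ has a nonempty \scpo.}

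First I would establish the upstairs equivalence: $T$ has a $G$-invariant proper subtree or $G$-invariant end $\iff$ $T$ has a nonempty $G$-invariant \scpo. For the forward direction, note that a connected induced subgraph $S$ of a tree is a cotree of that tree (collapsing $S$ to a point produces a tree), so if $S \subsetneq T$ is a $G$-invariant subtree then $O_S$ is defined; it is strongly confluent (type (a)), it is nonempty because $S$ is proper, and by observations (i) and (iii) in the preceding discussion it is $G$-invariant because $S$ is. Similarly, if $\xi$ is a $G$-invariant end of $T$, then $O_\xi$ (type (c)) is a nonempty $G$-invariant \scpo. For the converse, let $O$ be a nonempty $G$-invariant \scpo of $T$ and apply Theorem~\ref{thm:strongly_confluent_attractor}; since $T$ is a tree it has no cycle subgraphs, so case (b) cannot occur. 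As $O$ is $G$-invariant, the induced map $f_O$ is $G$-equivariant, hence the attractor $K(O)$ is $G$-invariant. In case (a), $K(O)$ is the vertex set of a cotree $\Gamma'$ of $T$, i.e. a $G$-invariant subtree, and it is proper because $O \neq \emptyset$ forces $\Gamma' \neq T$. In case (c), $K(O) = \{\xi\}$ for a $G$-invariant end $\xi$. Either way we obtain the required invariant structure.

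Then I would descend to $\Delta$. By Lemma~\ref{lem:invariant_orientation}, applied with the graph $T$ and the subgroup $G \le \Aut(T)$, the $G$-invariant partial orientations of $T$ are exactly the preimages $\pi\inv(O')$ of partial orientations $O'$ of $\Gamma$, and since $\pi$ is surjective on arcs, $\pi\inv(O')$ is nonempty if and only if $O'$ is. By Lemma~\ref{lem:strongly_confluent_preimage}, $\pi\inv(O')$ is strongly confluent on $T$ if and only if $O'$ is strongly confluent on $\Gamma$ with $|X_a| = 1$ for every $a \in O'$, i.e. if and only if $O'$ is a \scpo of $\Delta$. Combining these two reductions, $T$ has a nonempty $G$-invariant \scpo if and only if $\Delta$ has a nonempty \scpo, which is exactly the failure of irreducibility. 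Stringing the biconditionals together proves the corollary; observe that only $\Gamma$ and the colour sets $X_a$ are used, never the groups $G(v)$, in agreement with the parenthetical remark.

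As for the main obstacle: there is no deep difficulty here, which is why this is phrased as a corollary, but the step needing the most care is the converse half of the upstairs equivalence — correctly transporting $G$-invariance through the attractor construction of Theorem~\ref{thm:strongly_confluent_attractor} and verifying that the subtree produced in case (a) is genuinely \emph{proper} — together with the small geometric point that every connected induced subgraph of a tree is a cotree, which is what makes $O_S$ well-defined in the forward direction.
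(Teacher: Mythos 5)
Your proof is correct and follows exactly the route the paper intends: the paper states the corollary as an immediate consequence of Theorem~\ref{thm:strongly_confluent_attractor} together with Lemmas~\ref{lem:invariant_orientation} and \ref{lem:strongly_confluent_preimage} and the observations on the orientations $O_z$, which is precisely the chain of biconditionals you assemble. The details you supply (every subtree of a tree is a cotree, case (b) is excluded for trees, $G$-equivariance of $f_O$, and properness of the attractor subtree when $O\neq\emptyset$) are the right ones and are handled correctly.
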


\begin{proof}
Suppose $G$ is not geometrically dense. Then either $G$ fixes an end or it leaves a proper nonempty subtree of $T$ invariant. Recalling that such a subtree of $T$ is a cotree, we see that in either case, using parts (\ref{scposTypeI}), (\ref{scposTypeIII}) and (\ref{scposTypeIIIObs}) of Definition~\ref{def:scposTypes}, there is a \scpo $O_{z}$ of $T$ that is $G$-invariant. Hence by Lemma~\ref{lem:strongly_confluent_preimage}, $\Delta$ is not irreducible.

On the other hand, if $\Delta$ is not irreducible, then $\Delta$ has a nonempty \scpo $O$. By Lemma~\ref{lem:strongly_confluent_preimage}, $\pi\inv(O)$ is a $G$-invariant \scpo of $T$. By Theorem~\ref{thm:strongly_confluent_attractor}, there is an end or cotree $z$ of $T$ such that $\pi\inv(O) = O_{z}$, and by observation (\ref{scposTypeIIIObs}) of Definition~\ref{def:scposTypes} we have that $z$ is $G$-invariant. Thus, $G$ is not geometrically dense on $T$.
\end{proof}

We can now derive Theorem~\ref{thm:UDelta_simple}.

\begin{proof}[Proof of Theorem~\ref{thm:UDelta_simple}]
Let us first consider the case that the action of $G$ on $T$ is not parity-preserving.  In that case, the parity-preserving subgroup $H$ of $G$ is an open normal subgroup of index $2$.  If $G$ is cyclic of order $2$, we see that $G$ is generated by a single edge inversion and hence the stabiliser of any vertex is trivial, so (i) is false; otherwise $H$ is nontrivial, so (i) is false because $G$ is not simple.  Given a parity-reversing $g \in G$, then $g$ is parity-reversing on any $G$-invariant subtree $T'$ of $T$, meaning that the quotient graph of the action is not bipartite, and in particular is not a tree; thus (ii) is also false. Hence, for the rest of the proof we may assume that $G$ is parity-preserving, and in particular inversion-free.

Suppose that (i) holds. Since $G$ is simple, a lineal or focal action of $G$ is ruled out by Lemma~\ref{lem:Busemann:bis}, and a bounded or horocyclic action is ruled out since $G$ acts with translation.  Thus the action is of general type, so there is a unique smallest $G$-invariant subtree $T'$, which has infinite diameter, and the action of $G$ on $T'$ is geometrically dense.  In particular, $G$ acts nontrivially, hence faithfully on $T'$.  The action of $G$ on $T'$ is $\propP{}$-closed by Lemma~\ref{lem:Pclosed_subtree}.  From now on we focus on the action $(T',G)$ and define subgroups of $G$ relative to this action. Let $\Delta = (\Gamma,(X_a),(G(v)))$ be the local action diagram of the faithful and $\propP{}$-closed action $(T',G)$, then $G = \Univ(\Delta)$ by Theorem~\ref{thm:G_to_U}.  Since the action is geometrically dense, $\Delta$ is irreducible by Corollary~\ref{cor:geometrically_dense}.  Our hypotheses ensure that all arc stabilisers in $G$ are nontrivial.  In particular, $G^+$ is nontrivial; since $G$ is simple it follows that $G = G^+$. For each $v \in VT'$, the vertex stabiliser $G_v$ has closed action $G(\pi_{(T',G)}(v))$ on $o_{T'}\inv(v)$, by Lemma~\ref{lem:closed_local_actions} and the fact that $G$ is $\propP{}$-closed.  By Theorem~\ref{thm:plus_subaction_diagram} and the description of $\Delta^+$ in Definition~\ref{defn:plus_subaction_diagram}, it follows from $G = G^+$ that $G(v)$ is generated by point stabilisers. We also see that $G$ is generated by vertex stabilisers, so $\Gamma$ is a tree by Corollary~\ref{cor:vertex_group}.  If all local actions $G(v)$ in $\Delta$ were trivial, then any vertex stabiliser in $(T', G)$, and thus in $(T,G)$, would be trivial --- a contradiction.  Thus (i) implies (ii).

For the remainder of the proof we suppose that (ii) holds.  The action of $G$ on $T'$ is faithful by assumption, and closed because $(T, G)$ is strongly closed; $(T', G)$ is $\propP{}$-closed by Lemma~\ref{lem:Pclosed_subtree}; and it is geometrically dense by Corollary~\ref{cor:geometrically_dense}.  The fact that $G$ has a geometrically dense action on an infinite subtree ensures that the original action is not bounded or horocyclic, so $G$  acts on $T$ with translation; moreover (as noted in Section~\ref{sec:action_types}) this geometrically dense action on $T'$ ensures that $T'$ must be leafless, and hence unbounded.
Let $v \in V\Gamma$ be such that $G(v) \neq \triv$.  Since we are assuming that $G(v)$ is generated by point stabilisers, there is a nontrivial point stabiliser in $G(v)$, which implies that there is a nontrivial arc stabiliser $G_a$ for some $a \in AT'$.  By property~$\propP{}$, the pointwise stabiliser of one of the half-trees of $T'$ defined by $a$, say $T'_a$, acts nontrivially on $T'$.  
Since $G$ has geometrically dense action on $T'$ we know by Lemma~\ref{lem:geom_dense} that for every $b \in AT'$, the half-tree $T'_b$ is contained in a $G$-translate of $T'_a$, so in fact the pointwise stabiliser of $T'_b$ also acts nontrivially on $T'$.  Applying property~$\propP{}$ again, this time to the action on $T$, we find that the pointwise stabiliser of $T_b$ is nontrivial; in turn, since $T'$ is unbounded, any finite set of vertices of $T$ is contained in $T_b$ for some $b \in AT'$.  Hence, there is no finite set of vertices of $T$ whose pointwise stabiliser is trivial.

Define $G^+$ with respect to the closed and faithful action $(T',G)$.  We have seen that arc stabilisers in $(T', G)$ are nontrivial, so $G^+$ is nontrivial and $G^+$ is simple by Theorem~\ref{thm:Tits}.  
The action $(T', G)$ is $\propP{}$-closed, so $G = \Univ(\Delta)$.
The fact that $G(v)$ is closed and generated by point stabilisers for every $v \in V\Gamma$ implies, by Theorem~\ref{thm:plus_subaction_diagram}, that $G^+$ and $G$ have precisely the same local action at $v$, for all vertices $v \in VT'$. Thus, for any arc $a \in VT'$ lying in $o\inv(v)$ 
we have that for all $g \in G_v$ there is $g' \in (G^+)_v$ such that $ga = g'a$; hence $g \in (G^+)_v$.
From this it follows that $G_v = (G^+)_v$. Therefore
\[
R := \langle G_v \mid v \in VT' \rangle = \langle (G^+)_v \mid v \in VT' \rangle \leq G^+ \leq R,
\]
so in particular $R$ is simple.
Now $(T', G)$ acts without inversion because $(T, G)$ is inversion-free, and so by Theorem~\ref{thm:BassSerreSimplicity} we have $G = R$ because $\Gamma = G \backslash T'$ is a tree. Thus $G$ is simple.

Finally we note that in (ii), since $G$ is a $\propP{}$-closed subgroup of $\Aut(T')$ with local action diagram $\Delta$, we have $G = \Univ(\Delta)$ by Theorem~\ref{thm:G_to_U}.
\end{proof}

The next proposition gives an alternative characterisation of when the local action diagram $\Delta = (\Gamma,(X_a),(G(v)))$ is irreducible. 

\begin{defn} \label{def:FocalCycleHoroEndStrayHalftreeStrayLeaf}
We say $\Delta$ is a \defbold{focal cycle} if $\Gamma$ is a cycle graph admitting a cyclic orientation $O$, such that $|X_a|=1$ for all $a \in O$.  For each end $\xi$ of $\Gamma$, say $\xi$ is a \defbold{horocyclic end} of $\Delta$ if $\Gamma$ is a tree and $|X_a|=1$ for every $a \in O_{\xi}$.  Recall that for $a \in A\Gamma$ the graph $\Gamma \smallsetminus \{a\}$ is obtained from $\Gamma$ by removing arcs $a, \overline{a}$. If $\Gamma \smallsetminus \{a\}$ is not connected, and $\Gamma_a$ is the connected component of $\Gamma \smallsetminus \{a\}$ containing $t(a)$, we say that $\Gamma_a$ is a \defbold{stray half-tree} of $\Delta$ if $\Gamma_a$ is a tree containing no leaves of $\Gamma$, and in $\Gamma_a$, we have $|X_{b}|=1$ for all arcs $b$ oriented towards $t(a)$. A \defbold{stray leaf} of $\Delta$ is a leaf $v$ such that $|X_v|=1$.
\end{defn}

\begin{prop}\label{prop:irreducible_check}
Let $\Delta = (\Gamma,(X_a),(G(v)))$ be a local action diagram.  Then $\Delta$ is irreducible if and only if $\Delta$ is not a focal cycle and has no horocyclic ends, no stray half-trees and no stray leaves.
\end{prop}

\begin{proof}
Firstly, we see that $\Delta$ is a focal cycle if and only if there is \scpo of type (\ref{scposTypeII}) forming an orientation of $\Gamma$.  
Indeed, if $\Delta$ is a focal cycle then the cyclic orientation of $\Gamma$ gives rise to a \scpo of type (\ref{scposTypeII}), as described in Definition~\ref{def:scposTypes}. On the other hand, suppose $\Delta$ admits a \scpo $O$ of type (\ref{scposTypeII}) that forms an orientation of $\Gamma$. By Theorem~\ref{thm:strongly_confluent_attractor} (\ref{thm:strongly_confluent_attractor:item:cycle}) there is a cotree $\Gamma'$ of $\Gamma$ that forms a cycle graph and $O = O_{\Gamma'}^+$ for one of the cyclic orientations for $\Gamma'$. Since $O$ is in fact an orientation of $\Gamma$ (not just a partial orientation) it follows immediately that $\Gamma$ must be a cycle graph.

Secondly, we note that the horocyclic ends correspond to the \scpos of $\Delta$ of type (\ref{scposTypeIII}).  
From these two observations we may thus, henceforth, assume $\Delta$ is not a focal cycle and has no horocyclic ends.  In this case, we see from Theorem~\ref{thm:strongly_confluent_attractor}, that $\Delta$ is irreducible if and only if there is no nonempty \scpo of type (\ref{scposTypeI}) or (\ref{scposTypeII}); in turn, by Lemma~\ref{lem:strongly_confluent_preimage}, no such \scpo exists if and only if $\Delta$ has no proper cotree (see Definition~\ref{def:scposTypes}).

To establish our proposition, we need only prove that $\Delta$ has no proper cotree if and only if it has no stray leaves and no stray half-tree.  Given a leaf $v$ of $\Gamma$, if $|X_v|=1$, then $\Gamma \smallsetminus \{v\}$ is a proper cotree of $\Delta$, whereas if $|X_v| > 1$ we see that $v$ must be included in every cotree of $\Delta$.  If $\Gamma_a$ is a stray half-tree defined by the arc $a$, then $\Gamma \smallsetminus V\Gamma_a$ is a proper cotree of $\Delta$.

Now we assume there are no stray half-trees or stray leaves, and suppose for a contradiction that $\Gamma'$ is a proper cotree of $\Delta$. 
Then:
	(i) the partial orientation $O_{\Gamma'}$ consists of all arcs $b$ such that $o(b) \not \in V\Gamma'$ and $b$ lies on the projecting path from $o(b)$ to $\Gamma'$;
	(ii) $|X_b| = 1$ for all arcs $b \in O_{\Gamma'}$; and 
	(iii) there is an arc $a \in A\Gamma$ such that $o(a) \in V\Gamma'$ and $t(a) \not\in V\Gamma'$ and $\Gamma \smallsetminus \{a\}$ is not connected. 
Let $\Gamma_a$ be the connected component of $\Gamma \smallsetminus \{a\}$ containing $t(a)$. Then we see that $\Gamma_a$ is disjoint from $\Gamma'$ and is hence a tree.  Since there are no stray leaves, we see that $\Gamma_a$ has no leaves of $\Gamma$; thus $\Gamma_a$ has no leaves except possibly $t(a)$, and $t(a)$ is not a leaf of $\Gamma$, so $|V\Gamma_a| \ge 2$.   We also see that every arc $b$ of $\Gamma_a$ oriented towards $t(a)$ belongs to a projecting path, so $|X_b|=1$.  But then $\Gamma_a$ is a stray half-tree, a contradiction.  This contradiction shows that $\Gamma' = \Gamma$ and hence that $\Delta$ is irreducible. 
\end{proof}

Corollary~\ref{cor:DirectlyReadingSimplicity} now follows immediately from Corollary~\ref{cor:geometrically_dense} and Proposition~\ref{prop:irreducible_check} together with Tits' Theorem (Theorem~\ref{thm:Tits}). The proof of Corollary~\ref{cor:DirectlyReadingSimplicity:finite} is also straightforward.

\begin{proof}[Proof of Corollary~\ref{cor:DirectlyReadingSimplicity:finite}]
By hypothesis, the underlying graph $\Gamma$ of the local action diagram $\Delta$ is finite and not a cycle graph.  We immediately see that $\Delta$ is not a focal cycle.  Moreover, since $\Gamma$ is finite, it has no ends and contains no leafless subtree, so $\Delta$ cannot have any horocyclic end or stray half-tree.  Following Proposition~\ref{prop:irreducible_check}, the sole remaining obstacle to the irreducibility of $\Delta$ is if it has a stray leaf, which is exactly the case that $\Gamma$ has a leaf such that $|X_v|=1$. Thus $\Delta$ is irreducible if and only if $|X_v|>1$ for every leaf of $\Gamma$.  The remaining conclusion follows by Tits' theorem.
\end{proof}

As stated, Theorem~\ref{thm:Tits} leaves open the question of whether or not the group $G^+$ generated by the arc stabilisers is simple or trivial.  In fact, this distinction is easy to detect in the local action diagram.

Say that a local action diagram $\Delta = (\Gamma,(X_v),(G(v)))$ is \defbold{free} if for all $v \in V\Gamma$, $G(v)$ acts freely on $X_v$.

\begin{lem}\label{lem:free_diagram}
Let $T$ be a tree, let $G \le \Aut(T)$ and let $\Delta = (\Gamma,(X_v),(G(v)))$ be the associated local action diagram.  Then $G^+$ is trivial if and only if $\Delta$ is free.
\end{lem}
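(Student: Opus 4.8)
The plan is to relate $G^+$ acting freely on arcs to the local actions $G(v)$ acting freely on their colour sets, using the description of $G^+$ in terms of half-tree fixators together with Proposition~\ref{prop:semidirect}. Recall that $G^+$ is generated by arc stabilizers; since it preserves the bipartition of $T$ it acts without reversal, and by Proposition~\ref{propP_plusgroup} we may freely replace $G$ by its $\propP{}$-closure, so assume $G$ is $\propP{}$-closed; then $G^+ = G^{++}$ is generated by the pointwise stabilizers of half-trees. The key elementary observation is: $G^+$ is trivial if and only if every half-tree fixator $G_{T_a}$ is trivial (one direction is immediate; the other because $G^+$ is generated by these), and by property~$\propP{}$ we have $G_e = G_{T_a} \times G_{T_{\ol a}}$ for each edge $e = \{a,\ol a\}$, so $G^+$ is trivial iff every arc stabilizer $G_a$ is trivial.

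First I would handle the easy direction. Suppose $G^+$ is trivial, so every arc stabilizer $G_a$ is trivial. Fix $v \in V\Gamma$ and let $v^* \in \pi\inv(v)$ be the chosen vertex; then $G(v)$ is the permutation group induced on $o\inv(v^*)$ by $\ol{G_{v^*}}$. If $1 \neq g \in G(v)$ fixes some colour $c \in X_v$, then (approximating $g$ in $\ol{G_{v^*}}$, which we may by closedness, and using that fixing an arc is an open condition) there is $h \in G_{v^*}$ with the same action as $g$ near that arc; in particular $h$ fixes the arc $b \in o\inv(v^*)$ corresponding to $c$, so $h \in G_b$, forcing $h = 1$ and hence the local action of $h$ — which agrees with $g$ on a neighbourhood of $b$, so in particular $g(c) = c$ is consistent but $g \neq 1$ means $g$ moves some other colour. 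I need to be a little careful here: what I actually want is that $G(v)$ has trivial point stabilizers, which follows because if $g \in G(v)$ fixes $c$, then by the density of $\sigma_{\mc L, v^*}(G_{v^*})$ in $G(v)$ and the fact that $\{$fixes arc $b\}$ is open, for any finite $F \ni b$ there is $h_F \in G_{v^*}$ with $h_F$ fixing $b$ (so $h_F \in G_b = \triv$) agreeing with $g$ on $F$; thus $g$ agrees with $1$ on every finite set, so $g = 1$. Hence $G(v)$ acts freely on $X_v$, i.e.\ $\Delta$ is free.

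Conversely, suppose $\Delta$ is free; I want $G^+ = \triv$, equivalently every arc stabilizer $G_a$ is trivial. Take $a \in AT$ with $v = o(a)$, and let $g \in G_a$; I will show $g$ fixes $B_r(v)$ pointwise for every $r$ by induction, hence $g = 1$. The base case $r=0$ is clear. For the inductive step, suppose $g$ fixes $B_r(v)$ pointwise; fix $w \in S_r(v)$. Then $g \in G_w$, and the local action $\sigma_{\mc L, w}(g) \in G(\pi(w))$ fixes the colour of the arc in $o\inv(w)$ pointing back towards $v$ (since $g$ fixes that arc). Because $\Delta$ is free, $G(\pi(w))$ acts freely on $X_{\pi(w)}$, so $\sigma_{\mc L, w}(g) = 1$; thus $g$ fixes every arc in $o\inv(w)$, hence fixes $S_{r+1}(v) \cap B_{r+1}(w)$ pointwise. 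Ranging over $w \in S_r(v)$ gives that $g$ fixes $B_{r+1}(v)$. Therefore $G_a = \triv$ for all $a$, so $G^+$, being generated by arc stabilizers, is trivial.

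The main obstacle is the easy-looking forward direction: carefully justifying the passage between the "global" arc stabilizers $G_a$ in $T$ and the point stabilizers in the "local" group $G(v) = \sigma_{\mc L, v^*}(\ol{G_{v^*}})$, since $G(v)$ is a closure and an element of $G(v)$ need not itself be induced by a single element of $G_{v^*}$. This is resolved precisely by Proposition~\ref{prop:semidirect}: the subgroup $C_1 \le G_{v^*}$ maps permutationally isomorphically onto $G(v)$ acting on $o\inv(v^*) \cong X_v$, so a point stabilizer in $G(v)$ lifts to an honest element of $G_{v^*}$ fixing an arc, i.e.\ an element of some $G_b$; once one knows every such $G_b$ is trivial, freeness of $G(v)$ follows directly, and conversely the inductive argument above needs only that each $G(v)$ is free. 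Using Proposition~\ref{prop:semidirect} in place of the density argument makes both directions clean.
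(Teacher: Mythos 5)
Your proof is correct and follows essentially the same route as the paper's: the forward direction identifies point stabilizers of $G(v)$ with (closures of) arc stabilizers, and the converse propagates freeness of the local actions outward from a fixed arc --- your induction on balls is just a reorganization of the paper's minimal-counterexample argument. The opening reduction to the $\propP{}$-closed case and the detour through half-tree fixators are superfluous, since ``$G^+$ trivial iff every arc stabilizer is trivial'' is immediate from the definition of $G^+$ as the subgroup generated by arc stabilizers, and neither direction of the main argument actually uses property~$\propP{}$.
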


\begin{proof}
Let $\pi = \pi_{(T,G)}$.  Suppose that $G^+$ is trivial.  Then for all $a \in A\Gamma$, $G_a$ is trivial.  Let $v \in VT$.  Then the action of $G_v$ on $o\inv(v)$ is free, since the set of stabilisers of this action is exactly $\{G_a \mid a \in o\inv(v)\}$.  Thus $G(\pi(v))$ acts freely on $X_{\pi(v)}$.

Conversely, suppose that for all $v \in V\Gamma$, $G(v)$ acts freely on $X_v$.  Let $a \in AT$ and let $g \in G_a$.  Suppose $g \neq 1$: then there is some arc $b \not\in \{a,\overline{a}\}$ such that $b$ is directed away from $a$, $g$ fixes $w = o(b)$, but $g$ does not fix $b$.  Then the action of $G_w$ on $o\inv(w)$ corresponds as a permutation group to the action of $G(\pi(w))$ on $X_{\pi(w)}$; that is, the action is free modulo the kernel.  If $w \in \{o(a),t(a)\}$, then clearly $g$ fixes an element of $o\inv(w)$; otherwise, $g$ fixes the unique arc in $o\inv(w)$ in the direction of $\{o(a),t(a)\}$.  Thus $g$ is an element of $G_w$ fixing some element of $o\inv(w)$; since the action of $G_w$ on $o\inv(w)$ is free modulo kernel, we conclude that $gb = b$, contradicting the choice of $b$.  Thus in fact $g=1$, showing that $G_a$ is trivial.  Since this holds for all $a \in AT$, we conclude that $G^+$ is trivial.
\end{proof}

\subsection{Types of action on a tree revisited} \label{TypesOfActionRevisited}

Recall the possible types of action of a group $G$ acting on a tree $T$ given in Theorem~\ref{thm:types}: fixed vertex, inversion, lineal, horocyclic, focal and general. In this subsection we describe how one can recognise these types directly from the local action diagram $\Delta = (\Gamma,(X_a),(G(v)))$, and thus develop a detailed analysis of the case when the action is not geometrically dense.
We make frequent references to correspondences between the action and $\Delta$. These correspondences are built from the following relationships. 
\begin{enumerate}[(i)]
\item
	A correspondence between $T$ and $\Gamma$ via the projection $\pi$.
\item
	A correspondence between the $G$-invariant \scpos of $T$ and the \scpos of $\Delta$ arising from  Lemma~\ref{lem:strongly_confluent_preimage} (and stated explicitly in Definition~\ref{def:scopoDelta}): {\it the $G$-invariant \scpos of $T$ are precisely the preimages $\pi\inv(O)$ of \scpos $O$ of $\Delta$}.
\item
	A correspondence between cotrees or ends and their associated \scpos arising from Definitions~\ref{def:scposTypes} and \ref{def:CotreeDelta} and Theorem~\ref{thm:strongly_confluent_attractor}.
\end{enumerate}

\begin{thm} \label{thm:TypesOfActionRevisited}
Suppose $G$ is a group acting on a tree $T$ and let $\Delta = (\Gamma,(X_a),(G(v)))$ be its local action diagram. The following statements characterise $\Delta$ for the types of action given in Theorem~\ref{thm:types}.
\begin{description}
\item[(Fixed vertex)]
	If $G$ fixes a vertex of $T$, then $\Gamma$ is a tree.  If $\Gamma$ is a tree, the fixed vertices of $G$ on $T$ (if there are any) correspond to cotrees $\Gamma'$ of $\Delta$ consisting of a single vertex of $\Gamma$ with no edges.
\item[(Inversion)]
	$G$ preserves a unique undirected edge (and includes a reversal of that edge) if and only if there is a cotree $\Gamma'$ of $\Delta$ consisting of a single vertex and a single nonorientable loop $a$ with $|X_a|=1$.
\item[(Lineal)]
	$G$ fixes exactly two ends of $T$ and translates the axis between them if and only if there is a cotree $\Gamma'$ of $\Delta$ that is a cycle graph, such that additionally $|X_a|=1$ for all $a \in A\Gamma'$.
\item[(Horocyclic)]
	If $G$ does not fix any vertices in $T$, then $G$ fixes a unique end of $T$ while not including any translations exactly in following situation: $\Gamma$ is a tree, and there is a unique horocyclic end of $\Delta$.
\item[(Focal)]
	The following characterises the situation where $G$ fixes a unique end of $T$ and includes a translation towards this end: There is a cotree $\Gamma'$ of $\Delta$ that is a cycle graph, and a cyclic orientation $O'$ of $\Gamma'$, such that $|X_a|=1$ for $a \in O'$, but $|X_a| \ge 2$ for some $a \in A\Gamma' \smallsetminus O'$. In particular, the case where $G$ has focal action and no proper invariant subtree corresponds exactly to the case where $\Delta$ is a focal cycle such that the colour set of some arc is nontrivial.
\item[(General type)]
	In the remaining case, the unique minimal subtree $T'$ on which $G$ acts geometrically densely corresponds to the unique smallest cotree $\Gamma'$ of $\Delta$, where $\Gamma'$ is not of the special form indicating a bounded (i.e.~fixed vertex or inversion type), lineal or focal action, and $\Delta$ does not have any horocyclic ends.  The action of $G$ on $T$ is geometrically dense if and only if $\Gamma' = \Gamma$.
\end{description}
In the inversion, lineal and focal cases, the cotree $\Gamma'$ is also the unique smallest cotree of $\Delta$.
\end{thm}

We prove the theorem below, examining each case in turn. Before doing so we establish the following correspondence. 

\begin{lem} \label{lem:TypeACorrespondence} There is a correspondence between the $G$-invariant subtrees of $T$, the $G$-invariant \scpos of $T$ of type (\ref{scposTypeI}), and the \scpos of $\Delta$ of type (\ref{scposTypeI}). This correspondence holds in both directions.
\end{lem}

\begin{proof}
Suppose $T'$ is a $G$-invariant subtree of $T$. Being a subtree, it is a cotree, so $T'$ is a $G$-invariant cotree of $T$. From Definition~\ref{def:scposTypes} and its observations (\ref{scposTypeIObs})--(\ref{scposTypeIIIObs})
we have the corresponding $G$-invariant \scpo $O_{T'}$ of type (\ref{scposTypeI}). On the other hand, if we are given a $G$-invariant \scpo of $T$ of type (\ref{scposTypeI}), then by Theorem~\ref{thm:strongly_confluent_attractor} there is a cotree $T'$ of $T$ such that the \scpo is equal to $O_{T'}$, and by observation (\ref{scposTypeIIIObs}) of Definition~\ref{def:scposTypes} this cotree $T'$ is $G$-invariant. Cotrees of connected graphs are connected (see Definition~\ref{def:cotreeGraph}), so $T'$ is a connected nonempty induced subgraph of $T$ and is therefore a subtree. Thus, we have our correspondence between the $G$-invariant subtrees of $T$ and the $G$-invariant type (\ref{scposTypeI}) \scpos of $T$.

We now invoke Lemma~\ref{lem:strongly_confluent_preimage} and deduce that there is \scpo $O$ of $\Delta$ corresponding to $O_{T'}$ via $O_{T'} = \pi\inv(O)$. By Lemma~\ref{lem:invariant_orientation}, $O_{T'}$ is a full orientation if and only if $O$ is a full orientation. By observation~(\ref{scposTypeIIObs}) of Definition~\ref{def:scposTypes}, the type (\ref{scposTypeI}) \scpo $O_{T'}$ is a full orientation if and only if $T'$ consists of a single vertex with no edges. Thus if $T'$ is not a single vertex with no edges then $O$ is not a full orientation so (again by observation~(\ref{scposTypeIIObs})) it must be a \scpo of type (\ref{scposTypeI}). If $T'$ is a single vertex with no edges then the action of $G$ on $T$ is of fixed vertex type, and $G$ must preserve setwise all spheres around the fixed vertex. This means the quotient graph $\Gamma$ is a tree, so there are no \scpos in $\Delta$ of type (\ref{scposTypeII}). 
Since $\pi\inv(O)$ is a \scpo of $T$ of type (\ref{scposTypeI}), one sees that $O$ cannot contain a ray of arcs directed towards an end, so $O$ is not of type (\ref{scposTypeIII}).
Hence $O$ is a \scpo of $\Delta$ of type (\ref{scposTypeI}).

Conversely, suppose we are given a \scpo $O$ of $\Delta$ of type (\ref{scposTypeI}). By Theorem~\ref{thm:strongly_confluent_attractor} there is a cotree $z$ of $\Delta$ such that $O = O_z$, and by Lemma~\ref{lem:strongly_confluent_preimage} we have a $G$-invariant \scpo $\pi\inv(O_z)$ of $T$.
By inspection it is again clear that $\pi\inv(O_z)$ cannot have the structure of a type (\ref{scposTypeIII}) \scpo of $T$, and since $T$ is a tree it cannot be of type (\ref{scposTypeII}). Thus $\pi\inv(O_z)$ is a $G$-invariant \scpo of type (\ref{scposTypeI}).
\end{proof}

Now we prove Theorem~\ref{thm:TypesOfActionRevisited} by considering each type in turn.

\begin{description}
\item[(Fixed vertex)] 
If $G$ fixes a vertex of $T$ then $G$ must preserve setwise all spheres around this fixed vertex. Thus the quotient graph $\Gamma = G \backslash T$ is a tree. If $\Gamma$ is a tree and $G$ fixes a vertex of $T$, then this vertex (with no edges) is a $G$-invariant subtree $T'$ of $T$, which by Lemma~\ref{lem:TypeACorrespondence} corresponds to a \scpo $O$ of $\Delta$ of type (\ref{scposTypeI}). Examining this correspondence, we have the associated \scpo $O_{T'}$ of $T$ with $O_{T'} = \pi\inv(O)$ for some \scpo $O$ of $\Delta$ of type (\ref{scposTypeI}). Thus by Theorem~\ref{thm:strongly_confluent_attractor} (and Remark~\ref{rem:ScopoThmWorksForLADS}) there is a cotree $\Gamma'$ of $\Delta$ such that $O = O_{\Gamma'}$. Now $O_{T'}$ is a full orientation of $T$ by observation~(\ref{scposTypeIIObs}) and so by Lemma~\ref{lem:invariant_orientation} the \scpo $O_{\Gamma'}$ must be a full orientation of $\Gamma$. By observation~(\ref{scposTypeIIObs}) again we have that $\Gamma'$ must consist of a single vertex of $\Gamma$ with no edges.\\

\item[(Inversion)] 
Assume that $G$ preserves a unique undirected edge of $T$ and reverses it, and let $T'$ be the $G$-invariant subtree of $T$ consisting of this edge and its two end vertices. Write $\Gamma' := \pi(T')$, and note that $\Gamma'$ consists of a vertex $w$ with a nonorientable loop $a$ such that $o(a) = t(a) = w$.
The action of $G$ on $T$ must preserve setwise all spheres around the invariant edge, so one can easily verify that $\Gamma'$ is a cotree of 
$\Delta$.
It is also clear that every cotree of $\Delta$ must include the loop $a$, so $\Gamma'$ is the smallest cotree. 
It remains then to show that the loop $a$ is \defbold{monochromatic} (that is, $|X_a| = 1$).
Using the notation of associated local action diagrams (Definition~\ref{def:AssocLocalActionDiagram}), $w = o(a)$ so we choose $w^* \in \pi\inv(w) \subseteq VT$ and see that $w^*$ has to be one of the two end vertices of the preserved edge. Then $X_a = \{b \in o\inv(w^*) \mid \pi(b) = a\}$, but $a$ represents the $G$-orbit  that consists of only the unique edge of $T$ invariant under $G$. Hence $|X_a|=1$.

On the other hand, suppose $G \leq \Aut(T)$ is any action and there is a cotree $\Gamma'$ of $\Delta$ consisting of a single vertex $w$ and a single nonorientable loop $a$ with $X_a = \{c\}$ for some colour $c$. We first construct the $\Delta$-tree $\mathbf{T}$ 
according to Definition~\ref{def:deltaTree} and the proof of Lemma~\ref{lem:tree_isomorphism}, starting from a vertex $w^* \in VT$ such that $\pi(w^*) = w$. Notice that there is precisely one arc $b \in o\inv(w^*)$ that projects onto $a$, and in the $\Delta$-tree $\mathbf{T}$ the arc $b$ is coloured $c$. This arc $b$ takes us to another vertex $w^{**} := t(b) \in VT$ and thus $\pi(w^{**}) = w$. There is now no choice for the colour of the arc $\overline{b}$ from $w^{**}$ to $w^*$: it must be coloured $c$ also. Adding coloured arcs to form $\mathbf{T}$ building out from $w^*$ and $w^{**}$ we see that $\Gamma'$ being a cotree of $\Delta$ forces upon us an arc colouring $\mc{L}$ such that no additional arcs are given the colour $c$. Thus the action of the universal group $\Univ(\Delta)$ (see Definition~\ref{def:THEUniversalGp}) on $\mathbf{T}$ must fix the edge $\{w^*, w^{**}\}$ setwise while also admitting a reversal of the edge. Since the action of $\Univ(\Delta)$ is just the action of the $\propP{}$-closure of $G$ on $T$ (see Theorem~\ref{thm:G_to_U}), we see that $G$ also preserves an undirected edge and includes a reversal of that edge. Of course, since $T$ is a tree, $G$ cannot simultaneously preserve and reverse two distinct edges in $T$.\\

\item[(Lineal)]
Suppose that $G$ fixes exactly two ends and translates the axis $L$ between them, so $L$ is a $G$-invariant subtree of $T$. In the quotient graph $\Gamma = G \backslash T$ the image of $L$ is a cycle graph we denote by $\Gamma'$. Because $G$ fixes the two ends of $L$, it does not invert any arc in $L$, so for all $a \in A\Gamma$ we have $a \neq \overline{a}$. Since $T$ is a tree, there is a closest point projection map from $VT$ to $VL$ (see Definition~\ref{Def:TitsP}), and the distance of a vertex $v\in VT$ from $L$ is the distance from $v$ to its closest point projection on $L$. The action of $G$ on $T$ must preserve this distance from $L$, and from this it is easy to check that $\Gamma'$ is  the unique smallest cotree of $\Delta$. 
Now fix $a \in A\Gamma'$ and set $w := o(a)$. We will show that $a$ is monochromatic.
We can choose a representative $w^* \in \pi\inv(w)$ with $w^* \in VL$. Since $L$ is $G$-invariant, only arcs in $L \cap o\inv(w^*)$ can lie in $X_a = \{b\in o\inv(w^*) \mid \pi(b) = a\}$ and $X_{\overline{a}}$. Both $X_a$ and $X_{\overline{a}}$ are nonempty and $|L \cap o\inv(w^*)| = 2$, therefore $|X_a|=1 = |X_{\overline{a}}|$.

Conversely, suppose now that $G \leq \Aut(T)$ is any action and that there is a cotree $\Gamma'$ of $\Delta$ that is a cycle graph, and a cyclic orientation $O'^+$ of $\Gamma'$, such that $|X_a|=1$ for $a \in O'^+$.  We claim that in this case, there is an end $\xi^+$ of $T$ that is fixed by $G$, and $G$ includes a translation towards the end $\xi^+$.

If we construct the associated $\Delta$-tree $\mathbf{T}$ as we did in the inversion case, we see that the preimage $\pi\inv(O'^+)$ determines a unique end $\xi^+$ of $T$ such that all arcs in $\pi\inv(O'^+)$ are directed towards $\xi^+$.
We can now clearly see how the associated \scpos of $\Delta$ and $T$ are related. 
Following Definition~\ref{def:CotreeDelta}, we have a type (\ref{scposTypeII}) \scpo $O_{\Gamma'}^+$ of $\Delta$ associated with the cycle graph $\Gamma'$ and its orientation $O'^+$. 
Now $O_{\Gamma'}^+$ includes the cyclic orientation $O'^+$, so by inspection the preimage $\pi\inv(O_{\Gamma'}^+)$ consists of all arcs in $T$ directed towards $\xi^+$. By Lemma~\ref{lem:strongly_confluent_preimage} and Theorem~\ref{thm:strongly_confluent_attractor}  we see that (i) the preimage $\pi\inv(O_{\Gamma'}^+)$ is a $G$-invariant \scpo in $T$ and (ii) $\pi\inv(O_{\Gamma'}^+)$ is the associated \scpo $O_{\xi^+}$.
Observation (\ref{scposTypeIIIObs}) of Definition~\ref{def:scposTypes} implies that $\xi^+$ is $G$-invariant. Because a directed ray in the end $\xi^+$ projects onto a directed finite cycle in the quotient graph $G \backslash T$, it is clear that $G$ admits a translation towards  $\xi^+$.

Now suppose that, in addition, the reverse cyclic orientation $O'^-$ of our cotree $\Gamma'$ satisfies $|X_a| = 1$ for all $a \in O'^-$. Then our claim tells us there is another end $\xi^-$ of $T$ that is $G$-invariant and $G$ includes a translation towards $\xi^-$. Since the directed rays in $T$ arising from $O'^+$ and $O'^-$ do not lie in the same end, we see that $\xi^-$ and $\xi^+$ are distinct.
Thus, we have a $G$-invariant axis $L$ between $\xi^-, \xi^+$ that projects to a finite cycle in the quotient graph $G \backslash T$, and there is a translation in $G$ along $L$.

Finally, we note that three distinct ends of $T$ determine a unique vertex of $T$. Therefore, if there were three distinct $G$-invariant ends then $G$ would fix a vertex. Because $G$ contains a translation, it does not fix any vertex. \\

\item[(Horocyclic)]
Suppose $G$ fixes a unique end $\xi$ of $T$ but does not include any translations. Then $G$ preserves every horoball around $\xi$ and it is easy to see that the quotient graph $\Gamma$ is a tree and $\xi$ corresponds to an end $z$ of $\Gamma$. This gives us a \scpo $O_z$ of $\Gamma$ of type (\ref{scposTypeIII}) consisting of all arcs in $A\Gamma$ directed towards $z$. If $a \in O_z$ then we set $w:=o(a)$ and choose  representatives $w^* \in \pi\inv(w)$ and $b \in o\inv(w^*)$ such that $\pi(b) = a$. 
Now $a$ is directed towards $z$, therefore $b$ is directed towards $\xi$. 
Because the action of $G$ on $T$ fixes $\xi$, no element of $G$ can move $b$ to another arc in $o\inv(w^*)$ and so $|X_a| = 1$. Thus $z$ is a horocyclic end of $\Delta$. To see why the horocyclic end is unique, suppose $z, z'$ are distinct horocyclic ends. We find that the line $L$ from $z$ to $z'$ is a cotree of $\Delta$ such that $|X_a|=1$ for all $a \in AL$, and hence $G$ fixes pointwise the preimage of $L$ in $T$.

Now let us instead suppose that $\Gamma$ is a tree with a unique horocyclic end $z$ of $\Delta$. Let $O_z$ be the \scpo of $\Delta$ arising from this end.
By Lemma~\ref{lem:strongly_confluent_preimage} the preimage $O = \pi\inv(O_{z})$ is a $G$-invariant \scpo in $T$. 
Because $|X_a|=1$ for all arcs in $\Gamma$ directed towards $z$, we see that $O$ consists of all arcs in $T$ directed towards some end $\xi$ of $T$.
By Theorem~\ref{thm:strongly_confluent_attractor}, we see that $O$ is of type (\ref{scposTypeIII}) with $O = O_\xi$. By observation (\ref{scposTypeIIIObs}) of Definition~\ref{def:scposTypes} the end $\xi$ is invariant under $G$. Because $\xi$ corresponds to an end of $\Delta$ rather than a cycle, there is no translation in $G$ fixing $\xi$. If there were two such ends then $G$ would fix pointwise the axis between them.\\

\item[(Focal)] 
Suppose that $G$ fixes a unique end $\xi$ of $T$ and includes a translation towards this end. Then $O_\xi$ is a $G$-invariant \scpo of $T$ (by observation (\ref{scposTypeIIIObs}) of Definition~\ref{def:scposTypes}) and so by Lemma~\ref{lem:strongly_confluent_preimage} the \scpo $O_\xi$ is the preimage $\pi\inv(O)$ of some \scpo $O$ of $\Delta$. Now $O_\xi$ consists of all arcs in $T$ directed towards $\xi$ and $\xi$ is fixed by $G$.
Also $G$ includes a translation towards $\xi$, and the periodic orbits of this translation guarantee: (i) the existence of a cycle graph $\Gamma'$ in the projection $\Gamma$ and (ii) one of the cyclic orientations of $\Gamma'$ must lie in $\pi(O_\xi) = O$. We thus have that some $v \in V\Gamma'$ lies in the attractor $K(O)$ and $o\inv(v) \cap O \neq \emptyset$. By Theorem~\ref{thm:strongly_confluent_attractor} (and Remark~\ref{rem:ScopoThmWorksForLADS}) we have that $O$ is a \scpo of type
(\ref{thm:strongly_confluent_attractor:item:cycle}) and $\Gamma'$ is a cotree of $\Delta$, with $O = O_{\Gamma'}^+$ for one of the cyclic orientations of $\Gamma'$.  
 It is clear that every cotree of $\Delta$ must include $\Gamma'$, so $\Gamma'$ is the unique smallest cotree of $\Delta$. 
If $|X_a|=1$ for all $a \in A\Gamma'$ then we would have the lineal case which is impossible (because $G$ fixes only one end of $T$). We have already established that all arcs in $O$ are monochromatic, therefore there must be a non-monochromatic arc in the cyclic orientation of $\Gamma'$ that is not in $O$.

Suppose now that $G \leq \Aut(T)$ is any action with a cotree $\Gamma'$ of $\Delta$ that is a cycle graph, and a cyclic orientation $O'$ of $\Gamma'$, such that $|X_a|=1$ for all $a \in O'$, but $|X_a| \ge 2$ for some $a \in A\Gamma' \smallsetminus O'$.  
 We see that $\Gamma'$ must be the unique smallest cotree of $\Delta$. 
By our claim in the lineal case, there is an end $\xi$ of $T$ that is fixed by $G$, and $G$ includes a translation towards the end $\xi$. Because $G$ includes a translation, it fixes no vertex and leaves no edge invariant and so by Theorem~\ref{thm:types} the action must be lineal or focal. However the fact that $|X_a| \ge 2$ for some $a \in A\Gamma'$ contradicts what has been established in the lineal case, so the action must be focal.\\ 

\item[(General type)] The remaining case arises precisely when $\Delta$ is not of the special form indicating a bounded (i.e.~fixed vertex or inversion type), lineal, horocyclic or focal action.
Theorem~\ref{thm:types} tells us that general type arises precisely when the other types do not occur. Suppose the action of $G$ on $T$ is of general type. Then $G$ acts with translation and by Lemma~\ref{lem:minimal_invariant}, there is a smallest invariant subtree $T'$ that is a line or an infinitely ended tree and $T'$ arises as the union of the axes of translation
of $G$, with $G$ acting geometrically densely on $T'$. As noted in Remark~\ref{rem:SmallestCotreeOfDelta} there is a unique smallest cotree $\Gamma'$ of $\Delta$ and $\pi(T') = \Gamma'$.
This concludes the proof of Theorem~\ref{thm:TypesOfActionRevisited}.
\end{description}

In the following remark we summarise our analysis and give some consequences that are easily verified.

\begin{rem}
 In all cases except when $G$ fixes a vertex, there are at most two invariant ends, and the invariant ends are easily identified.  If $G$ fixes a vertex $v$ of $T$, then the invariant ends correspond to rays starting at $v$ that are fixed pointwise by $G$, so they are accounted for by invariant subtrees.  We also note that for lineal and focal actions, there is a unique smallest invariant subtree $T'$ spanned by all axes of translation of $G$; the only distinction from general type is the existence of one or two fixed ends of this subtree.

If $\Gamma$ is a tree, then the action has a fixed vertex, or is horocyclic or of general type; in particular, by Corollary~\ref{cor:vertex_group}, if $G$ is generated by vertex stabilisers then the action must be of one of these types.  If $\Gamma$ is not a tree, then the possibilities are: inversion (in which case $\Gamma$ is a tree plus a single nonorientable loop), lineal, focal and general type.

We also have the following correspondence between types of \scpo given by $O \mapsto \pi_{(T,G)}(O)$:
\begin{enumerate}[(i)]
\item Invariant subtrees of $T$, or equivalently, invariant \scpos of $T$ of type (\ref{scposTypeI}), correspond to \scpos of $\Delta$ of type (\ref{scposTypeI}). (This was shown in Lemma~\ref{lem:TypeACorrespondence}.) 
\item Inversion and general type actions do not have invariant ends.  Otherwise, there are two kinds of invariant end to consider:
\begin{enumerate}[(1)]
\item If $\Gamma$ is a tree, the action could have a fixed vertex or be horocyclic, without translation.  In this case invariant ends of $T$ correspond to \scpos of $\Delta$ of type (\ref{scposTypeIII}).
\item If $\Gamma$ is not a tree, the action could be lineal or focal with a translation towards an invariant end.  In this case invariant ends of $T$ correspond to \scpos of $\Delta$ of type (\ref{scposTypeII}), with the order of the associated cycle graph in $\Gamma$ corresponding to the minimal translation length of a translation towards the fixed end.
\end{enumerate}
\end{enumerate}
\end{rem}

\section{The group topology}\label{sec:topology}

Our definitions ensure that whenever $G \le \Aut(T)$ is such that $G = G^{\propP{}}$, then $G$ is a closed subgroup of $\Aut(T)$ in the permutation topology.  In particular, it follows that $G$ is a non-Archimedean topological group in its own right.

There are natural characterisations of when $G^{\propP{}}$ is Polish (see \S\ref{PolishPropPGroups}) or locally compact (see \S\ref{LocallyCompactPClosedGroups}) as a subgroup of $\Aut(T)$. In \S\ref{sec:topology:proofs} we prove two topological results from the introduction: Theorem~\ref{thm:comp_gen+geom_dense} and Corollary~\ref{cor:comp_gen+simple}.

\subsection{Polish $\propP{}$-closed groups} \label{PolishPropPGroups}

Here is a characterisation of Polish $\propP{}$-closed groups.  For clarity, we note that the word `countable' here is understood to allow finite sets as well as countably infinite sets.

Let $X$ be a set, let $G$ act by permutations on $X$ and let $Y \subseteq X$ be $G$-invariant.  We say that the action of $G$ on $Y$ is \defbold{strongly faithful (relative to $X$)} if for all $x \in X$, there exists a finite subset $\{y_1,\dots,y_n\}$ of $Y$ such that $\bigcap^n_{i =  1}G_{y_i}$ fixes $x$.

\begin{lem}\label{lem:Polish_permutation}
Let $X$ be a set and let $G$ be a closed subgroup of $\Sym(X)$.  Then $G$ is Polish if and only if there is a countable subset $Y$ of $X$ on which $G$ acts strongly faithfully.  Moreover, if $Y$ is a countable strongly faithful set for $G$, then the induced homomorphism $\theta: G \rightarrow \Sym(Y)$ is a closed topological embedding.
\end{lem}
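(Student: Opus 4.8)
The plan is to treat the two implications separately; the ``if'' direction carries all the content and will also yield the ``moreover'' clause. For a finite subset $F \subseteq X$ write $G_{(F)} := \bigcap_{x \in F} G_x$; these are open subgroups of $G$, and as $F$ ranges over finite subsets of $X$ they form a basis of neighbourhoods of $1$ in the permutation topology.

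First I would dispatch the ``only if'' direction. If $G$ is Polish it is metrizable, hence first countable, so $1$ has a countable base of neighbourhoods $\{U_n\}$; choosing for each $n$ a finite $F_n \subseteq X$ with $G_{(F_n)} \subseteq U_n$ produces a countable base of neighbourhoods $\{G_{(F_n)}\}$ at $1$. Put $Y := \bigcup_n F_n$, a countable subset of $X$. For any $x \in X$ the stabilizer $G_x$ is open, so it contains some $G_{(F_n)} = \bigcap_{y \in F_n} G_y$ with $F_n \subseteq Y$ finite, which is exactly the strong faithfulness condition at $x$.

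For the ``if'' direction and the ``moreover'' clause, suppose $Y \subseteq X$ is countable and strongly faithful, and consider the restriction homomorphism $\theta : G \to \Sym(Y)$. I would check the routine points first: $\theta$ is continuous since the preimage of the pointwise stabilizer of a finite $F \subseteq Y$ in $\Sym(Y)$ is $G_{(F)}$; $\theta$ is injective since an element fixing $Y$ pointwise fixes every $x \in X$ by strong faithfulness; and $\theta$ is open onto its image, because given finite $F \subseteq X$, strong faithfulness yields a finite $F' \subseteq Y$ with $G_{(F')} \le G_{(F)}$, and then $\theta(G_{(F')})$ equals the intersection of $\theta(G)$ with the pointwise stabilizer of $F'$ in $\Sym(Y)$, hence is open in $\theta(G)$, so the larger subgroup $\theta(G_{(F)})$ is open in $\theta(G)$ as well. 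Hence $\theta$ is a topological embedding. The one substantial step is to show that $\theta(G)$ is closed in $\Sym(Y)$; granting this, $\theta(G)$ is a closed subgroup of the Polish group $\Sym(Y)$ (using that $Y$ is countable), hence Polish, so $G \cong \theta(G)$ is Polish, and $\theta$ is a closed topological embedding as claimed.

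To prove $\theta(G)$ closed I would argue sequentially, which is legitimate since $\Sym(Y)$ is metrizable: given $g_n \in G$ with $\theta(g_n) \to \sigma$, I want $g \in G$ with $\theta(g) = \sigma$. For each $x \in X$ fix a finite $F_x \subseteq Y$ with $\bigcap_{y \in F_x} G_y \le G_x$; since $\theta(g_n)$ is eventually constant on $F_x$, the products $g_m\inv g_n$ eventually lie in $\bigcap_{y \in F_x} G_y \le G_x$, so $(g_n x)_n$ is eventually constant, and I let $g(x)$ be its eventual value. Running the same argument with $\theta(g_n\inv) = \theta(g_n)\inv \to \sigma\inv$ gives a map $h : X \to X$; a direct check shows $g$ and $h$ are mutually inverse, so $g \in \Sym(X)$ and $g_n \to g$ in the permutation topology on $\Sym(X)$. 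Since $G$ is closed, $g \in G$, and $g(y) = \lim_n g_n(y) = \sigma(y)$ for $y \in Y$, so $\theta(g) = \sigma$. I expect this last step — lifting a convergent sequence of $Y$-restrictions back to $G$ — to be the only real obstacle; the point that makes it work is that strong faithfulness lets one reconstruct the whole action on $X$ pointwise from the action on $Y$, after which closedness of $G$ in $\Sym(X)$ delivers $g \in G$.
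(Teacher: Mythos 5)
Your proof is correct and follows essentially the same route as the paper: metrizability/first countability yields the countable strongly faithful set $Y$ in one direction, and in the other direction strong faithfulness makes the restriction map $\theta$ an injective, continuous homomorphism that is open onto its image. The one place you go beyond the paper is in explicitly verifying that $\theta(G)$ is closed in $\Sym(Y)$ by reconstructing the limit permutation on all of $X$ from its restriction to $Y$ and then invoking the hypothesis that $G$ is closed in $\Sym(X)$; the paper only checks the net criterion $\theta(g_i)\to 1 \Rightarrow g_i\to 1$ and then asserts the closed embedding, so your argument supplies a step (and makes visible where the closedness hypothesis on $G$ is actually used) that the paper leaves implicit.
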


\begin{proof}We note first that $\Sym(X)$ is a non-Archimedean topological group, that is, it has a base of neighbourhoods of the identity consisting of open subgroups.  It follows that any subgroup of $\Sym(X)$ with the subspace topology is also non-Archimedean.

Suppose $Y = \{y_0,y_1,\dots\}$ is a countable strongly faithful set for $G$ and let $\theta: G \rightarrow \Sym(Y)$ be the natural homomorphism.  Then $\theta$ is clearly injective and continuous. To show that $\theta$ is a closed map (i.e.~the image of every closed set is closed), we need to show that given a net $(g_i)_{i \in I}$ in $G$ such that $\theta(g_i)$ converges to the identity, then $(g_i)$ converges to the identity.  Indeed, since $\theta(g_i) \rightarrow 1$ as $i \rightarrow \infty$, we see that for all $n \ge 0$, there exists $i_n$ such that $g_i$ fixes $y_0,\dots,y_n$ for all $i > i_n$, so $g_i \in G_n := \bigcap^n_{i =  0}G_{y_i}$. Since the groups $G_n$ are a base of neighbourhoods of the identity, it follows that $g_i \rightarrow 1$ as $i \rightarrow \infty$.  Thus $\theta$ is a closed topological embedding as claimed.

In particular, we see from the previous paragraph that if a countable strongly faithful set $Y$ exists for $G$, then $G$ is isomorphic to a closed subgroup of the Polish group $\Sym(Y)$; thus $G$ is Polish.

Conversely, suppose that $G$ is Polish.  In particular, $G$ is separable, so all open subgroups of $G$ have countable index; thus $G$ has countable orbits on $X$.  Since $G$ is non-Archimedean and metrisable, there is a countable set of open subgroups of $G$ forming a base of neighbourhoods of the identity; recalling the standard base of topology for $\Sym(X)$, in fact there is a sequence $(y_i)_{i \ge 0}$ of points such that $\{G_n \mid n \ge 0\}$ is a base of neighbourhoods of the identity, where
\[
G_n = \bigcap^n_{i =  0}G_{y_i}.
\]
Given $x \in X$, we then see that $G_n \le G_x$ for some $n$, showing that the action of $G$ on $Y$ is strongly faithful, where $Y = \{gy_i \mid g \in G, i \ge 0\}$.
\end{proof}

\begin{prop}\label{prop:Polish}
Let $T$ be a tree and let $G \le \Aut(T)$ be $\propP{}$-closed.  Then the following are equivalent.
\begin{enumerate}[(i)]
\item The permutation topology on $G$ is Polish.
\item There is a countable $G$-invariant subtree $T'$ of $T$ on which $G$ acts strongly faithfully relative to $T$.
\item There is a countable $G$-invariant subtree $T'$ of $T$ on which $G$ acts faithfully and such that for each $v \in VT'$, the action of $G_v$ on $o\inv_{T'}(v)$ is strongly faithful relative to $o\inv_T(v)$.
\item The local action diagram $\Delta = (\Gamma,(X_a),(G(v)))$ for $(T,G)$ is such that all the colour sets $X_a$ are countable and the permutation groups $G(v)$ are Polish, and there exists a countable cotree $\Gamma'$ of $\Delta$ such that $G(v) = \triv$ whenever $v \in V\Gamma \smallsetminus V\Gamma'$.
\end{enumerate}
\end{prop}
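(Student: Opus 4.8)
The plan is to establish the chain (i) $\Leftrightarrow$ (ii) together with (iii) $\Rightarrow$ (ii), (iii) $\Rightarrow$ (iv), (iv) $\Rightarrow$ (ii) and (ii) $\Rightarrow$ (iii); only the last of these is substantial, the rest being routine consequences of Lemma~\ref{lem:Polish_permutation} and the fact that $G$ equals $U(\mathbf{T},(G(v)))$ and realises arbitrary local actions (Theorems~\ref{thm:diag_to_U} and~\ref{thm:G_to_U}). For (i) $\Leftrightarrow$ (ii): since $G$ is closed in $\Sym(VT)$, Lemma~\ref{lem:Polish_permutation} (with $X = VT$) says $G$ is Polish precisely when it acts strongly faithfully on some countable $Y \subseteq VT$. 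Thus (ii) $\Rightarrow$ (i) is immediate with $Y = VT'$, and for (i) $\Rightarrow$ (ii) the Lemma produces such a $Y$; as $G$ then acts faithfully on $Y$, its ``moreover'' clause embeds $G$ as a closed subgroup of the separable group $\Sym(Y)$, so $G$ has countable orbits on $VT$, whence $GY$ is countable and the subtree $T'$ it spans is a countable $G$-invariant subtree on which $G$ is strongly faithful (this being inherited from $Y \subseteq VT'$).

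For (iii) $\Rightarrow$ (ii) I claim $G$ acts strongly faithfully on the same subtree $T'$: given $x \in VT$, induct on $d(x, VT')$, taking $\{x\}$ when $x \in VT'$; otherwise set $v_0 = \pi_{T'}(x)$, let $u_1$ be the next vertex on the geodesic from $v_0$ to $x$, put $a = (v_0,u_1)$, and use local strong faithfulness at $v_0$ to get a finite $S \subseteq o\inv_{T'}(v_0)$ with $\bigcap_{b\in S}(G_{v_0})_b$ fixing $a$; then $F := \{v_0\}\cup\{t(b)\mid b\in S\}\subseteq VT'$ has $H := \bigcap_{y\in F}G_y$ fixing $v_0$ and $u_1$, and since $T'$ is convex with $v_0 = \pi_{T'}(u_1)$ the half-tree $T_{\ol{a}}$ on the $v_0$-side contains $VT'$, so by property~$\propP{}$ the second factor of $G_{(a)} = \Fix_G(T_a)\times\Fix_G(T_{\ol{a}})$ lies in the kernel of $G\to\Aut(T')$ and hence is trivial, giving $H \le \Fix_G(T_a)$, which fixes $x \in VT_a$. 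For (iii) $\Rightarrow$ (iv): by (iii) $\Rightarrow$ (ii) $\Rightarrow$ (i), $G$ is Polish, so the colour sets are countable, and local strong faithfulness says each $G(v)$ acts strongly faithfully on the countable set $\mc{L}(o\inv_{T'}(v))$, hence is Polish by Lemma~\ref{lem:Polish_permutation}; taking $\Gamma' = \pi(T')$, which is a countable cotree of $\Delta$ by the correspondence of Theorem~\ref{thm:strongly_confluent_attractor}, one has $G(v) = \triv$ for $v\notin V\Gamma'$, since otherwise the universal-group structure yields a nontrivial $g\in G$ with nontrivial local action at some $u\in\pi\inv(v)\setminus VT'$ and trivial local action elsewhere, and such a $g$ fixes the singleton-coloured arc from $u$ toward $T'$, hence fixes $T'$ pointwise, contradicting faithfulness. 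For (iv) $\Rightarrow$ (ii): with $\Gamma'$ as in (iv), put $T' = \pi\inv(\Gamma')$ (a countable $G$-invariant subtree) and, for each $v\in V\Gamma'$, fix a countable strongly faithful set $Y_v\subseteq X_v$ for the Polish group $G(v)$; then the $\pi$-saturation $W$ of $VT'\cup\{t(b)\mid v\in VT',\ \mc{L}(b)\in Y_{\pi(v)}\}$ is countable, and the induction above shows $G$ is strongly faithful on $W$: at $v_0 = \pi_{T'}(x)$ strong faithfulness of $G(\pi(v_0))$ on $Y_{\pi(v_0)}$ pins the arc toward $x$, and then, since $G(w) = \triv$ for every $w = \pi(u_i)\notin V\Gamma'$ with $i\ge 1$, fixing $u_1$ forces inductively fixing $u_2,\dots,u_n = x$.

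For (ii) $\Rightarrow$ (iii): by (i) $\Leftrightarrow$ (ii), $G$ is Polish, so (arguing as above, using that $G$ is $\propP{}$-closed to match the quotient and subspace topologies) each $G(v)$ is Polish, hence has a countable strongly faithful set $Y_{\pi(v)}\subseteq X_v$. Two countability facts must be deduced from Polishness of $G$: the cycle-containing core of $\Gamma$ (present in every cotree, when $\Gamma$ is not a tree) is countable, since its preimage is the minimal $G$-invariant subtree, sitting inside the countable subtree of (ii); and $Z := \{v\in V\Gamma\mid G(v)\neq\triv\}$ is countable, for otherwise the closure of the subgroup generated by well-chosen elements $g_v$ $(v\in Z)$ — each with nontrivial local action at a single vertex and trivial local action elsewhere — would not be second countable, contradicting that it is a closed, hence Polish, subgroup of $G$. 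Granting these, build a countable cotree $\Gamma'$ of $\Delta$ starting from the convex hull of $\pi(T')\cup Z\cup(\text{core})$ and iterating: for each included vertex, adjoin countably many more arcs of $\Gamma$ (with reversals and endpoints) so that in the limit the colours of its arcs staying inside $\Gamma'$ contain $Y_{\pi(v)}$. Then $T' := \pi\inv(\Gamma')$ is a countable $G$-invariant subtree with $G(v)=\triv$ outside $\Gamma'\supseteq Z$, the local strong faithfulness at each $v\in VT'$ holds since $\mc{L}(o\inv_{T'}(v))\supseteq Y_{\pi(v)}$, and $G$ acts faithfully on $T'$ because any $g\neq 1$ fixing $VT'$ would (by the argument of (iv) $\Rightarrow$ (ii), the boundary colour sets now being faithful for the relevant $G(v)$) have trivial local action everywhere, so be trivial.

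The main obstacle is exactly this last implication, and within it the two countability facts on which it rests — that the cyclic core of $\Gamma$ and the set $Z$ of vertices carrying a nontrivial local action are both countable — which is where the hypothesis that $G$ is Polish does the essential work; once these are in hand, the required subtree is obtained by a routine countable enlargement absorbing, at each boundary vertex, enough colours to make the local action of $G$ there both faithful and strongly faithful, and everything else follows mechanically from Lemma~\ref{lem:Polish_permutation} and the description $G = U(\mathbf{T},(G(v)))$.
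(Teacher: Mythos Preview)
Your scheme of implications is sound in outline, but you have misjudged where the difficulty lies and in doing so introduced a real gap into your argument for (ii) $\Rightarrow$ (iii).

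The paper proves the clean cycle (i) $\Rightarrow$ (ii) $\Rightarrow$ (iii) $\Rightarrow$ (iv) $\Rightarrow$ (i), and the step (ii) $\Rightarrow$ (iii) --- which you flag as ``the main obstacle'' --- is handled there in a few lines, keeping the \emph{same} subtree $T'$. Given $v \in VT'$ and $a \in o\inv_T(v) \smallsetminus AT'$, strong faithfulness on $T'$ gives $w_1,\dots,w_n \in VT'$ pinning $t(a)$; if $g \in G_v$ fixes the first arcs $a_1,\dots,a_n \in o\inv_{T'}(v)$ on the paths to the $w_i$, then property $\propP{}$ lets one modify $g$ to $g'$ fixing each half-tree $T_{a_i}$ pointwise while agreeing with $g$ outside $\bigcup_i T_{a_i}$. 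Now $g'$ fixes each $w_i$, hence $t(a)$; but $t(a)$ lies outside $\bigcup_i T_{a_i}$ (since $a \notin AT'$), so $g$ fixes $t(a)$ too. That is all: no new subtree, no cotree construction, no countability facts about $\Gamma$.

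Your route instead discards the given $T'$ and tries to manufacture a new one from a countable cotree of $\Delta$, which forces you to establish that $Z = \{v \in V\Gamma \mid G(v) \neq \triv\}$ is countable. Your argument for this is where the gap is: you assert that for each $v \in Z$ one can find $g_v \in G$ with nontrivial local action at a single vertex of $\pi\inv(v)$ and trivial local action elsewhere. No such $g_v$ need exist. If $G(v)$ acts freely on $X_v$, a nontrivial element fixes no arc and property $\propP{}$ gives no decomposition at that vertex; even when $G(v)$ has nontrivial point stabilizers, property $\propP{}$ only trivialises on one half-tree, not on the complement of a single vertex. (The same overreach appears in your (iii) $\Rightarrow$ (iv), though there the singleton colour of the arc toward $T'$ saves you: you only need a nontrivial $g$ fixing $T'$, not one with trivial local action elsewhere.) The conclusion that $Z$ is countable is in fact correct --- faithfulness of $G$ on the original $T'$, plus the singleton-colour property of arcs toward $T'$, forces $G(v)=\triv$ for $v \notin V\pi(T')$, so $Z \subseteq V\pi(T')$ --- but once you see this you are essentially back at the paper's direct argument, and the entire detour through rebuilding $\Gamma'$ and iterating to absorb the $Y_{\pi(v)}$ is superfluous.
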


\begin{proof}
Suppose (i) holds.  Then by Lemma~\ref{lem:Polish_permutation}, there is a countable set $Y$ of vertices on which $G$ acts strongly faithfully relative to $VT$.  We then see that the unique smallest subtree $T'$ of $T$ containing $Y$ is countable; by the construction, $T'$ is $G$-invariant and $G$ acts strongly faithfully on $VT'$ relative to $VT$.  Thus (i) implies (ii).

Suppose (ii) holds; let $v \in VT'$ and let $a \in o\inv(v)$ such that $a \not\in AT'$.  Since $G$ acts strongly faithfully on $T'$, there are vertices $w_1,\dots,w_n$ in $T'$ such that any element of $G$ that fixes $w_1,\dots,w_n$ also fixes $t(a)$.  For each $w_i$, let $a_i$ be the first arc on the shortest directed path from $v$ to $w_i$, and let $g \in G_v$ be such that $g$ fixes $a_i$ for $1 \le i \le n$.  Using property $\propP{}$ and induction on $n$, we see that there is $g' \in G$ such that $g'$ fixes each of the half-trees $T_{a_i}$ pointwise, but has the same action as $g$ on the vertices outside of $\bigcup^n_{i=1}T_{a_i}$.  In particular, $g'$ fixes $w_1,\dots,w_n$, so $g'$ also fixes $t(a)$; since $a \not\in AT'$, we see that $t(a)$ is not contained in $\bigcup^n_{i=1}T_{a_i}$, from which it follows that $g$ also fixes $t(a)$, and hence $g$ fixes $a$.  This proves that relative to the action on $o\inv_T(v)$, the action of $G_v$ on $o\inv_{T'}(v)$ is strongly faithful.  Thus (ii) implies (iii).

Suppose (iii) holds.  Then every vertex stabiliser in $G$ also has countable orbits on $T$, ensuring that the colour sets $X_a$ are countable.  The countable $G$-invariant subtree $T'$ gives rise to a countable cotree $\Gamma'$.  Consider $v_0 \in VT \smallsetminus VT'$.  Then there is a unique shortest path $(v_0,\dots,v_n)$ from $v_0$ to $T'$; in particular, $G_{v_0}$ fixes the arc $a$ from $v_0$ to $v_1$.  Then $T'$ is contained in the half-tree $T_{a}$, so $G_a$ acts faithfully on $T_{a}$.  By property $\propP{}$, it follows that the action of $G_a$ on $T_{\ol{a}}$ is trivial; in particular, since $G_{v_0}$ also fixes $a$, the action of $G_{v_0}$ on $T_{\ol{a}}$ is trivial.  Since $T_{\ol{a}}$ contains all neighbours of $v_0$ other than $v_1$, we conclude that $G_{v_0}$ fixes $o\inv(v_0)$ pointwise.  Thus $G(\pi(v_0)) = \triv$ and we see that $|X_a| = 1$ whenever $a \in o\inv(\pi(v_0))$.  Now consider $v \in VT'$; we see that the condition that the action of $G_v$ on $o\inv_{T'}(v)$ is strongly faithful relative to $o\inv_T(v)$ translates exactly to the condition that for the action of $G(\pi(v))$ on $X_{\pi(v)}$, we have a strongly faithful action on the subset 
\[
Y = \bigsqcup_{a \in o\inv_{\Gamma'}(\pi(v))} X_a
\]
of $X$.  We observe that $Y$ is countable; $G(\pi(v))$ is closed by definition, and hence Polish by Lemma~\ref{lem:Polish_permutation}; in particular, $G(\pi(v))$ has countable orbits, so $|X_a| \le \aleph_0$ for all $a \in o\inv(\pi(v))$.  This completes the proof that (iii) implies (iv).

Suppose (iv) holds.  Consider the construction of a $\Delta$-tree $T$ starting from a root vertex $()$ mapping to a base vertex $v_0 \in V\Gamma'$.  The cotree $\Gamma'$ then gives rises to a $G$-invariant subtree $T'$ of $T$; the fact that both $\Gamma'$ and the colour sets $X_a$ are countable ensures that $T'$ is countable.  For each $v \in VT'$, we add a countable union of $G_v$-orbits of neighbours of $v$ to $T'$ to produce a new tree $T''$, such that the action of $G_v$ on $o\inv_{T''}(v)$ is strongly faithful relative to $o\inv_T(v)$; this is possible by Lemma~\ref{lem:Polish_permutation} since $G(\pi(v))$ is Polish, and we see that it can be done in such a way that $T''$ is $G$-invariant.  Now let $v_0 \in VT \smallsetminus VT''$ and let $(v_0,\dots,v_n)$ be the shortest path from $v_0$ to $T'$.  Let $a$ be the arc from $v_n$ to $v_{n-1}$.  Our choice of $T''$ ensures that there are arcs $a_1,\dots,a_m \in o\inv_{T''}(v_n)$ such that given $g \in G_{v_n}$ that fixes $a_1,\dots,a_m$, or equivalently, given $g \in G$ that fixes $t(a_1),\dots,t(a_m),v_n$, then $g$ fixes $a$, and hence $g$ fixes $v_{n-1}$.  In turn, using the fact that the local actions for vertices outside of $\Gamma'$ are trivial, we see that $G_{v_{n-1}}$ fixes each of the vertices $v_0,\dots,v_{n-2}$.  We have thus obtained a finite subset $\{t(a_1),\dots,t(a_m),v_n\}$ of $VT''$ whose pointwise stabiliser also fixes $v_0$.  Since $v_0 \in VT \smallsetminus VT''$ was arbitrary, we conclude that $G$ acts strongly faithfully on $T''$.  It then follows by Lemma~\ref{lem:Polish_permutation} that $G$ is Polish.  Thus (iv) implies (i) and the cycle of implications is complete.
\end{proof}

\subsection{Locally compact $\propP{}$-closed groups} \label{LocallyCompactPClosedGroups}

Here is the characterisation of local compactness of the $\propP{}$-closure.

\begin{prop}\label{prop:locally_compact}
Let $T$ be a tree and let $G \le \Aut(T)$.  Suppose that there is a unique minimal $G$-invariant subtree $T'$, such that $|VT'|\ge 3$.  Then the following are equivalent.
\begin{enumerate}[(i)]
\item The $\propP{}$-closure of $G$ is locally compact.
\item For all $a \in AT'$, the stabiliser of $a$ in the $\propP{}$-closure of $G$ is compact.
\item Let $\Delta = (\Gamma,(X_a),(G(v)))$ be the local action diagram for $(T,G)$ and let $\Gamma'$ be the unique smallest cotree of $\Delta$.  Then for all $a \in A\Gamma$ such that $\ol{a} \not\in O_{\Gamma'}$, every $G(o(a))$-stabiliser of every point in $X_a$ has finite orbits on $X_{v}$.
\end{enumerate}
\end{prop}

\begin{proof}
Lemma~\ref{lem:minimal_invariant} implies that $T'$  is the union of the axes of translation of $G$.   Let $\Gamma = G \backslash T$ and let $\pi: T \rightarrow \Gamma$ be the quotient map.

Suppose that (i) holds.  Then there is some finite set $B$ of vertices of $T$, such that the pointwise stabiliser $H$ of $B$ in $G^{\propP{}}$ is compact.  Let $a \in AT'$. Then $a$ belongs to the axis of some translation $h \in G$; we can choose $h$ so that $a$ is oriented towards the repelling end of $h$.  Then the half-trees $T_{h^na}$ form an increasing family whose union is $T$; thus there is some $n$ such that $T_{h^na}$ contains $B$ and $t(h^na) \not\in B$.

Set $g = h^n$ and consider the stabiliser of $ga$; we have
\[
G^{\propP{}}_{ga} = \rist_G(T_{ga}) \times \rist_G(T_{\ol{ga}}).
\]
Our choice of $g$ ensures that $B \subseteq T_{ga}$; thus $\rist_G(T_{\ol{ga}}) \le H$, so $\rist_G(T_{\ol{ga}})$ is compact.  In particular, $G^{\propP{}}_{ga}$ has finite orbits on $T_{\ol{ga}}$.  After conjugating by $g\inv$ we see that $G^{\propP{}}_a$ has finite orbits on the half-tree $T_{\ol{a}}$.  A similar argument using $\ol{a}$ in place of $a$ shows that $G^{\propP{}}_a = G^{\propP{}}_{\ol{a}}$ also has finite orbits on the complementary half-tree $T_{a}$, so $a$ satisfies (ii).  Thus (i) implies (ii).  It is immediately clear that (ii) implies (i), so (i) and (ii) are equivalent.

Suppose (ii) holds; note that the unique smallest cotree $\Gamma'$ of the local action diagram is the image of $T'$.  Let $v \in VT$ and $a \in o\inv(v)$ such that $\ol{\pi(a)} \not\in O_{\Gamma'}$.  If $\pi(a) \not\in O_{\Gamma'}$, then $\pi(a) \in A\Gamma'$ and it is clear from (ii) that $G_a$ has finite orbits on $VT$.  Otherwise, the fact that $\pi(a) \in O_{\Gamma'}$ ensures that $a$ points towards $T'$.   By property $\propP{}$ the action of $G^{\propP{}}_a$ on $T_{\ol{a}}$ is then the same as the action of the pointwise fixator of $T_a$ on $T_{\ol{a}}$; since $T' \subseteq T_a$, it follows from (ii) that $G_a$ has finite orbits on $T_{\ol{a}}$, and in particular on $o\inv(v)$.  It follows that in the local action diagram, the action of $G(\pi(v))$ on $X_v$ is such that every point stabiliser has finite orbits.  Thus (ii) implies (iii).

Suppose (iii) holds.  We can regard $G^{\propP{}}$ as the universal group of $\Delta$ and we see that $\Gamma' = \pi(T')$.

Consider a path $(v_0,\dots,v_n)$ in $VT$; let $a_i$ be the arc from $v_{i-1}$ to $v_{i}$ and suppose that $a_1 \in AT'$.  We then see that each arc $a_i$ is either contained in $T'$ or points away from it, so for all $i$ we have $\pi(a_i)\not\in O_{\Gamma'}$.  Thus for each $i \ge 0$, the stabiliser in $G(v_{i})$ of any point in $X_{\ol{\pi(a_i)}}$ has finite orbits on $X_{\pi(v_i)}$.  Translating this information back to the tree: in the action of the stabiliser in $G^{\propP{}}$ of $\ol{a_i}$, or equivalently of $a_i$, the orbit of $v_{i+1}$ is finite.  We conclude that in the action of $G^{\propP{}}_{a_1}$, the orbit of $v_n$ is finite.  Given the freedom of choice of the path $(v_0,\dots,v_n)$, we conclude that $H = G^{\propP{}}_{a_1}$ has finite orbits on the half-tree $T_{a_1}$; by replacing $a_1$ with $\ol{a_1}$, a similar argument shows that $H$ has finite orbits on the complementary half-tree $T_{\ol{a_1}}$.  Since $H$ is also closed in $\Aut(T)$, it is compact.  Moreover, $a_1$ can be chosen to be any arc of $AT'$; thus every stabiliser in $G^{\propP{}}$ of an arc of $T'$ is compact.  Thus (iii) implies (ii), completing the proof that all three statements are equivalent.
\end{proof}

\begin{cor} \label{cor:StronglyClosedIsNoBigDeal}
Let $T$ be a tree and let $G$ be a closed subgroup of $\Aut(T)$ that acts with translation and let $G^{\propP{}}$ be the $\propP{}$-closure of $G$ acting on $T$.  Suppose that $G^{\propP{}}$ is locally compact, and let $T'$ be a $G$-invariant subtree of $T$.  Then the kernel of the action of $G$ on $T'$ is compact, and $G$ and $G^{\propP{}}$ are both strongly closed.
\end{cor}

\begin{proof}
Since $G$ acts with translation we are in the situation of Proposition~\ref{prop:locally_compact}: there is a unique $G$-invariant subtree $T''$, such that $|VT''|\ge 3$.  In particular, $T'' \subseteq T'$. Since $G^{\propP{}}$ is locally compact, it follows from Proposition~\ref{prop:locally_compact} that $G^{\propP{}}$ has a compact arc stabiliser, for some arc $a \in AT'' \subseteq AT$; since $G$ is closed, the subgroup $G_a = G \cap G^{\propP{}}_a$ is also compact. Thus by Lemma~\ref{lem:StronglyClosedFromCompactArcs}  both $G$ and $G^{\propP{}}$ are strongly closed as subgroups of $\Aut(T)$.

Let $K$ be the kernel of the action of $G$ on $T'$.  Then $K$ is a closed subgroup of $G_a$, so it is compact.
\end{proof}

When $G$ is locally compact, it is natural to ask if $G$ is compactly generated.  As in Proposition~\ref{prop:locally_compact}, for simplicity we will avoid the case when $G$ fixes a vertex or preserves an undirected edge.  Excluding these degenerate cases, compact generation of $G$ is easily seen to be equivalent to compact generation of $G^{\propP{}}$, so the question of whether or not $G$ is compactly generated can be reduced to the local action diagram.

\begin{prop}\label{prop:compact_generation}
Let $T$ be a tree and let $G \le \Aut(T)$ be closed with unbounded action.  Let $\Delta = (\Gamma,(X_a),(G(v)))$ be the local action diagram.  Suppose that there exists $a \in AT$ for which $G_a$ is compact.  Then $G$ and $G^{\propP{}}$ are locally compact, and the following are equivalent.
\begin{enumerate}[(i)]
\item $G$ is compactly generated;
\item $G^{\propP{}}$ is compactly generated;
\item there is a unique smallest $G$-invariant subtree $T'$ such that $G$ has finitely many orbits on $VT' \sqcup AT'$ and $G_v$ is compactly generated for each $v \in VT'$;
\item there is a unique smallest cotree $\Gamma'$ of $\Delta$ such that $\Gamma'$ is finite and $G(v)$ is compactly generated for each $v \in V\Gamma'$.
\end{enumerate}
\end{prop}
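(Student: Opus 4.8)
The plan is to proceed in three stages: a topological reduction from $G$ to $G^{\propP{}}$, a geometric reduction from $T$ to the minimal invariant subtree, and a structural analysis of compact generation on the resulting local action diagram.

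\emph{Topological reduction and $(i)\Leftrightarrow(ii)$.} An arc stabilizer is the intersection of two vertex stabilizers, hence open in $\Aut(T)$, so $G_a$ is a compact open subgroup of $G$ and $G$ is a \tdlc group. As $G$ and $G^{\propP{}}$ have the same orbits on arcs, $G^{\propP{}}=G\cdot G^{\propP{}}_a$, so local compactness of $G^{\propP{}}$ and the equivalence $(i)\Leftrightarrow(ii)$ both follow once I show $G^{\propP{}}_a$ is compact. I would obtain this by combining the hypothesis with Lemma~\ref{lem:point_towards}: moving $a$ under $G$ so that it points towards arbitrary bounded regions, and using property~$\propP{}$ of $G^{\propP{}}$ to split an arc stabilizer as a direct product of two half-tree fixators, compactness of the single stabilizer $G_a$ forces the local action diagram to satisfy condition~(iii) of Proposition~\ref{prop:locally_compact}, whence that proposition gives that $G^{\propP{}}$ is locally compact with all arc stabilizers compact. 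Granting this, $G^{\propP{}}$ is the product of $G$ with a compact set, and writing a compactly generated \tdlc group as $UF$ with $U$ compact open and $F$ finitely generated (as recalled in the proof of Lemma~\ref{lem:fixed_point}) gives $(i)\Leftrightarrow(ii)$. From now on I assume $G=G^{\propP{}}$.

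\emph{The degenerate case and the minimal subtree.} Since the action is unbounded it is lineal, focal, horocyclic or of general type. In the horocyclic case $G$ is an increasing union of the proper open vertex stabilizers along the fixed ray, so $G$ is not compactly generated; moreover there is then no minimal $G$-invariant subtree and (by the dictionary of Section~\ref{sec:Tits_revisited}) no finite cotree of $\Delta$, so $(i)$--$(iv)$ all fail and there is nothing to prove. Otherwise there is a unique minimal $G$-invariant subtree $T'$, which is leafless and has $|VT'|\ge 3$; its image in $\Gamma$ is the unique smallest cotree $\Gamma'$ of $\Delta$ (corresponding to a type-(a) \scpo), and by Lemma~\ref{lem:Pclosed_subtree} the action $(T',G)$ is again $\propP{}$-closed. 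This identifies the subtree appearing in $(iii)$ with the one appearing in $(iv)$, and reduces the problem to relating compact generation of $G$ to finiteness of $\Gamma'$ together with compact generation of the relevant local actions.

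\emph{The equivalence $(ii)\Leftrightarrow(iii)\Leftrightarrow(iv)$.} For $(iii)\Rightarrow(ii)$: having finitely many $G$-orbits on $VT'\sqcup AT'$, choose a lift of a maximal subtree of $(\Gamma')^i$; by the Bass--Serre structure theorem (Theorem~\ref{thm:BassSerre}) $G$ is generated by the finitely many vertex groups on this lift together with finitely many stable letters, so compact generation of those vertex groups yields compact generation of $G$. For $(ii)\Rightarrow(iii)$: a compactly generated \tdlc group acting on a tree with no global fixed vertex or end acts cocompactly on its minimal invariant subtree (apply the $UF$ decomposition to a compact generating set, with Lemma~\ref{lem:fixed_point}), giving finitely many orbits on $VT'\sqcup AT'$; and in a group with property~$\propP{}$ acting cocompactly, a vertex stabilizer is generated by an arc stabilizer — compact by the first stage — together with finitely many further elements, hence is compactly generated. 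The equivalence $(iii)\Leftrightarrow(iv)$ is then a translation into the language of the local action diagram: finitely many $G$-orbits on $VT'\sqcup AT'$ is equivalent to finiteness of $\Gamma'$, and compact generation of a vertex stabilizer is analysed using the semidirect decomposition $G_v=G_r\rtimes C_r$ of Proposition~\ref{prop:semidirect}, together with the fact that the pointwise stabilizer of a ball in a $\propP{}$-closed group is an unrestricted direct product over branches, to match it with the condition that the groups $G(v)$ be compactly generated.

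\emph{Main obstacle.} The crux is the analysis of when a vertex stabilizer of a $\propP{}$-closed group is compactly generated. Such a stabilizer is an iterated extension by the finite-ball permutation groups $C_r$, and its ball-stabilizers are genuine infinite direct products over the (possibly infinitely many) neighbouring branches, so compact generation is a delicate finiteness condition; carefully matching it to ``$\Gamma'$ finite and each relevant $G(v)$ compactly generated'' — keeping track of branches that leave the minimal subtree and of infinite colour sets — is where the real work lies. A secondary difficulty is justifying, in the first stage, that compactness of a single arc stabilizer propagates via Lemma~\ref{lem:point_towards} and property~$\propP{}$ to condition~(iii) of Proposition~\ref{prop:locally_compact}.
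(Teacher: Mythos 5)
Your overall architecture (reduce from $G$ to $G^{\propP{}}$, split off the horocyclic case, pass to the minimal subtree, then translate to the local action diagram) matches the paper, and your handling of the horocyclic case, of $(iii)\Rightarrow(i)$, of cocompactness on $T'$, and of $(iii)\Leftrightarrow(iv)$ via the compact kernel of the local action map are all sound. But there is a genuine gap at the heart of $(ii)\Rightarrow(iii)$: you assert that ``in a group with property $\propP{}$ acting cocompactly, a vertex stabilizer is generated by an arc stabilizer together with finitely many further elements.'' As a consequence of cocompactness alone this is false. Take a countable group $H$ that is not finitely generated (e.g.\ $\bigoplus_{\Nb} C_2$) acting regularly on itself, and form the local action diagram with one vertex and $G(v)=H$: the resulting $U(\Delta)$ is vertex-transitive with one arc orbit and trivial (hence compact) arc stabilizers, yet its vertex stabilizers are isomorphic to $H$ and not compactly generated. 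So compact generation of $G_v$ genuinely uses compact generation of $G$, and that implication is the hard content of the proposition. The paper proves it by writing $G_v$ as a directed union $\bigcup_i H_i$ of compactly generated open subgroups, each containing the open subgroup generated by the images of the (compact) edge groups, and then invoking the normal form theorem for graphs of groups (Theorem~\ref{thm:NormalForm}) to show that compact generation of $G$ forces $G_v = H_i$ for some $i$ (alternatively it cites Castellano). Your ``main obstacle'' paragraph instead points at the internal structure of $G_v$ via Proposition~\ref{prop:semidirect} and products over branches; that analysis never brings the global hypothesis on $G$ to bear and will not close this step.

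A secondary problem sits in your first stage: you propose to deduce compactness of $G^{\propP{}}_a$ from condition~(iii) of Proposition~\ref{prop:locally_compact}, but that proposition presupposes a unique minimal $G$-invariant subtree with at least three vertices, which fails precisely in the horocyclic case --- yet the conclusion that $G^{\propP{}}$ is locally compact, and the equivalence $(i)\Leftrightarrow(ii)$, must be established there as well. The paper avoids this by arguing directly: a compact subgroup of $\Aut(T)$ has finite orbits on $VT$, and $G^{\propP{}}_a$ is closed with the same orbits as $G_a$, hence compact. You should replace the detour through Proposition~\ref{prop:locally_compact} with this direct argument (or restrict the detour to the non-horocyclic cases and handle the horocyclic case separately).
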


\begin{proof}
Since $G_a$ is open in $G$, we see that $G$ is locally compact.  We also see that $G_a$ has finite orbits on $VT$; since $G^{\propP{}}_a$ is closed and has the same orbits it follows that $G^{\propP{}}_a$ is compact, and hence $G^{\propP{}}$ is locally compact.  Since $G^{\propP{}}$ has the same orbits on $AT$ as $G$ does, we have $G^{\propP{}} = GG^{\propP{}}_a$, so $G$ is cocompact in $G^{\propP{}}$.  Thus $G$ is compactly generated if and only if $G^{\propP{}}$ is compactly generated, that is, (i) and (ii) are equivalent.

Suppose that the action is horocyclic with unique fixed end $\xi$.  We see that $G$ preserves every horoball around $\xi$; in particular, there is no minimal $G$-invariant subtree, so (iii) is false.  For the same reason every cotree of $\Delta$ is infinite, so (iv) is false. We also see that (i) and (ii) are false by Corollary~\ref{cor:horocyclic}.  So if the action is horocyclic then (i)--(iv) are all false, which is consistent with them being equivalent.

For the remainder of the proof we may suppose that the action is not horocyclic.  It follows that $G$ acts with translation, and hence there is a unique smallest $G$-invariant subtree $T'$ of $T$, which is the union of the axes of translations in $T$. 

Suppose (i) holds.  Since $G$ is compactly generated, it is generated by finitely many cosets of any vertex stabiliser; hence by Lemma~\ref{lem:finite_type}, $G$ has finitely many orbits on $VT' \sqcup AT'$.  The rest of the proof that (i) implies (iii) is \cite[Proposition~4.1]{Castellano}, however for clarity we give a more elementary proof using Bass--Serre theory.

Write $\Gamma^i$ for the quotient graph of the action of $G$ on the inversion-free subdivision of $T'$.  Since $G^{\propP{}}$ is locally compact, Proposition~\ref{prop:locally_compact} implies that $G^{\propP{}}_e$ is compact for all $e \in AT'$, and hence $G$ acts on $T'$ with compact kernel; let $G^*$ be the group defined by this action.  Recall the decomposition of $G^*$ given by Theorem~\ref{thm:BassSerre}:
\[
\frac{F(E) \ast \Asterisk_{v \in VT^*}G^*_v}{\langle \langle s_a\tau_a(g)s_{\ol{a}}\tau_{\overline{a}}(g)^{-1} \; (a \in E, g \in G_a), \; s_as_{\ol{a}}  \; (a \in E), \; s_a \; (a \in AT^*) \rangle \rangle}.
\]
where now $T^*$ is a lift in $T'$ of a maximal subtree of $\Gamma^i$ and $E$ is a lift of the arcs of $\Gamma^i$ such that every arc in $E$ is incident with $T^*$.  We remark that this decomposition is also well-behaved with respect to the topology of $G^*$, and hence of $G$, since the factors are amalgamated along open subgroups.

We claim that $G_v$ is compactly generated for all $v \in VT'$; it is enough to show that $G^*_v$ is compactly generated for each $v \in VT^*$.  Fix $v \in VT^*$.  We have a compactly generated open subgroup $H_0$ of $G^*_v$ generated by $\tau_a(G^*_a)$ for all $a \in E$ such that $t(\pi_{(T',G)}(a)) = \pi_{(T',G)}(v)$.  In particular, in the expression for $G^*$, every element of $G^*_v$ that is amalgamated with other vertex groups is contained in $H_0$.

Now use Lemma~\ref{lem:tdlc_union} to write $G^*_v$ as a directed union $\bigcup_{i \in I}H_i$, where each $H_i$ is compactly generated and $0$ is the least element of $I$, and recall the normal form theorem for graphs of groups (Theorem~\ref{thm:NormalForm}).  Let $K_i$ be the set of elements of $G^*$ expressible as a reduced word (including the empty word), such that all letters taken from $G^*_v$ belong to $H_i$.  Then $G^* = \bigcup_{i \in I}K_i$; since $G^*$ is compactly generated and $H_0$ is open in $G^*$, in fact $G^* = K_i$ for some $i$.  In particular, every $g \in G^*_v$ is expressible as a reduced word using $F(E)$, $H_i$ and $G^*_{v'}$ for vertices $v' \in VT^*$ other than $v$.  Given the reduction rules for words in a graph of groups, we conclude that $g \in H_i$.  Thus $G^*_v = H_i$, showing that $G^*_v$ is compactly generated as required.  This completes the proof that (i) implies (iii).

Suppose now that (iii) holds.  Then $G^*_v$ is also compactly generated for each vertex $v$ of the inversion-free subdivision of $T'$, and hence we see from the free product decomposition that $G^*$ is compactly generated.  Since $G$ acts on $T'$ with compact kernel it follows that $G$ is also compactly generated.  Thus (iii) implies (i) and hence (i), (ii) and (iii) are equivalent.

For (iv), note that $\Gamma'$ is finite if and only if $G$ has finitely many orbits on $VT' \sqcup AT'$.  Since $G$ is locally compact, we know by Proposition~\ref{prop:locally_compact} that $G^{\propP{}}$ has compact arc stabilisers for all arcs $AT'$; since $G$ is closed the groups $G_a = G \cap G^{\propP{}}_a$ are compact for all $a \in AT'$.  Using Lemma~\ref{lem:StronglyClosedFromCompactArcs} we deduce that, given $v \in V\Gamma'$ and the chosen representative $v^*$ of $\pi\inv_{(T',G)}(v)$, the local action map $\theta: G_{v^*} \rightarrow G(v)$ is a continuous closed surjective map with compact kernel.  It follows that $G_{v^*}$ is compactly generated if and only if $G(v)$ is compactly generated, and in particular, (iii) and (iv) are equivalent.  This completes the proof that (i)--(iv) are equivalent.
\end{proof}

\subsection{Proofs of theorems from the introduction}\label{sec:topology:proofs}

We can now deduce Theorem~\ref{thm:comp_gen+geom_dense} and Corollary~\ref{cor:comp_gen+simple} as special cases of the previous propositions.

For the convenience of the reader, we restate the theorems below.

\renewcommand{\therestatedthm}{\ref{thm:comp_gen+geom_dense}}
\begin{restatedthm}
Let $\Delta = (\Gamma,(X_a),(G(v)))$ be a local action diagram.  Then the following are equivalent:
\begin{enumerate}[(i)]
\item $\Univ(\Delta)$ is compactly generated, locally compact and has geometrically dense action on its associated tree;
\item $\Delta$ is irreducible; $\Gamma$ is finite; and each of the groups $G(v)$ is compactly generated and subdegree-finite.
\end{enumerate}
Moreover, if (i) and (ii) hold, then $\Univ(\Delta)$ is Polish, acting on a countable tree, and all arc stabilisers of $\Univ(\Delta)$ are compact.
\end{restatedthm}

\begin{proof}[Proof of Theorem~\ref{thm:comp_gen+geom_dense}]
Let $T$ be the defining tree of $\Univ(\Delta)$.

Suppose (i) holds.  Then by Proposition~\ref{prop:locally_compact}, since $\Univ(\Delta)$ is locally compact and the action is geometrically dense, all arc stabilisers of $\Univ(\Delta)$ acting on $T$ are compact.  By Proposition~\ref{prop:compact_generation} and the fact that $\Univ(\Delta)$ is compactly generated, we see that $\Gamma$ is finite and $G(v)$ is compactly generated for all $v \in V\Gamma$.  The fact that arc stabilisers are compact implies that each of the groups $G(v)$ is subdegree-finite.  Finally, $\Delta$ is irreducible by Theorem~\ref{thm:invariants}.  Thus (i) implies (ii).

Conversely, suppose (ii) holds.  Since $\Delta$ is irreducible, $(T,\Univ(\Delta))$ is geometrically dense by Theorem~\ref{thm:invariants}.  Since the local actions are all closed and subdegree-finite, $\Univ(\Delta)$ has compact arc stabilisers and hence is locally compact.  We see that $\Univ(\Delta)$ is compactly generated by Proposition~\ref{prop:compact_generation}.  Thus (ii) implies (i).

Suppose (i) and (ii) hold.  Taking $v \in V\Gamma$, then as a permutation group, $G(v) = \overline{G(v)}$ has finitely many orbits and is a compactly generated locally compact group; it follows that $X_v$ is countable.  Since $\Gamma$ is finite we conclude that $T$ is countable.  Thus $\Sym(VT \sqcup AT)$ is Polish, and hence its closed subgroup $\Univ(\Delta)$ is Polish.
\end{proof}

\renewcommand{\therestatedcor}{\ref{cor:comp_gen+simple}}
\begin{restatedcor}
Let $(T,G)$ be a faithful $\propP{}$-closed and strongly closed action on a tree $T$.  Then the following are equivalent:
\begin{enumerate}[(i)]
\item We have $G \in \ms{S}$ and the action does not fix any vertex of $T$.
\item 
    There is a unique smallest invariant subtree $T'$ (possibly equal to $T$) on which $G$ acts faithfully.  Moreover, letting $\Delta = (\Gamma,(X_a),(G(v)))$ be the local action diagram of $(T',G)$, then $\Gamma$ is a finite tree, and each of the groups $G(v)$ is closed, compactly generated, subdegree-finite and generated by point stabilisers, with $G(v) \neq \triv$ for every leaf $v$ of $\Gamma$.
\end{enumerate}
Furthermore, in (\ref{item:lad_description:comp_gen}) the action $(T', G)$ is $(T', \Univ(\Delta))$.
\end{restatedcor}

\begin{proof}[Proof of Corollary~\ref{cor:comp_gen+simple}]
As in the proof of Theorem~\ref{thm:UDelta_simple} we may assume that the action of $G$ on $T$ is parity-preserving, and in particular inversion-free: if $G$ is not parity-preserving, then (i) and (ii) are both false.  (The case $G = \Zb/2\Zb$ is incompatible with (i) since groups in $\ms{S}$ are not discrete, otherwise the argument is the same as in the proof of Theorem~\ref{thm:UDelta_simple}.)

Suppose (i) holds.  By hypothesis the action does not fix a vertex, and the fact that $G$ is nondiscrete and topologically simple rules out actions that are of lineal or focal type
(by Lemma~\ref{lem:Busemann:bis}).  Corollary~\ref{cor:horocyclic} then rules out a horocyclic action.  By process of elimination, the action is of general type, with a unique smallest $G$-invariant subtree $T'$ on which the action of $G$ is faithful and geometrically dense.
In fact the action of $G$ on $T'$ is strongly faithful: the topology of $G$ is already $\sigma$-compact, so there is no coarser locally compact group topology on $G$.  
Thus (via Lemma~\ref{lem:Pclosed_subtree}) we can regard $G$ as a $\propP{}$-closed, hence closed, subgroup of $\Aut(T')$ as a topological group.
Since $G$ is a closed and geometrically dense subgroup of $\Aut(T')$, we immediately have that $G$ is strongly closed on $T'$. Furthermore, the group $G^+$ generated by arc stabilisers is an open normal subgroup, and since $G$ is nondiscrete and topologically simple, $G^+ = G$.  Thus in fact by Tits' Theorem \ref{thm:Tits}, $G$ is abstractly simple.
  We then conclude by Theorem~\ref{thm:UDelta_simple} that $\Delta$ is irreducible, $\Gamma$ is a tree, and each of the groups $G(v)$ is closed and generated by point stabilisers.  By Theorem~\ref{thm:comp_gen+geom_dense}, $\Gamma$ is finite and each of the groups $G(v)$ is compactly generated and subdegree-finite. For every leaf $v$ of $\Gamma$, we have $X_v = X_a$ for the unique arc $a\in o\inv(v)$, and so $X_v$ is a single $G(v)$ orbit.  By Corollary~\ref{cor:DirectlyReadingSimplicity:finite}, for every leaf $v$ of $\Gamma$ we have $|X_v| > 1$ and hence $G(v) \neq \triv$. This completes the proof that (i) implies (ii).

Conversely, suppose that (ii) holds; as in Theorem~\ref{thm:UDelta_simple} we have $(T',G) = (T',\Univ(\Delta))$, so as a topological group we may identify $G$ with the closed subgroup $\Univ(\Delta)$ of $\Aut(T')$. For every leaf $v$ of $\Gamma$, since $G(v) \neq \triv$ we have $|X_v| >1$, and hence $\Delta$ is irreducible via Corollary~\ref{cor:DirectlyReadingSimplicity:finite}.
We deduce via Theorem~\ref{thm:UDelta_simple} that $G$ is a nondiscrete simple group, and it is clear that $G$ does not fix any vertex of $T$.  By Theorem~\ref{thm:comp_gen+geom_dense}, $\Univ(\Delta)$ is compactly generated and locally compact.  Thus $G \in \ms{S}$, showing that (ii) implies (i).
\end{proof}

\section{Examples}\label{sec:examples}

We give three example applications of our theory of local action diagrams: in \S\ref{sec:vertex_transitive} we list all $\propP{}$-closed actions on trees whose degree is at most $5$; in \S\ref{sec:conn_one} we determine all automorphism groups of simple, nontrivial, vertex-transitive graphs with vertex connectivity one; and in \S\ref{sec:new_simple} we give a  technique for combining simple groups in $\mathscr{S}$ to make new simple groups in $\mathscr{S}$.

\subsection{Vertex-transitive actions on trees of small degree}\label{sec:vertex_transitive}

Isomorphism types of local action diagrams can be used to classify isomorphism types of $\propP{}$-closed groups acting on trees, via the one-to-one correspondence we have developed.
This classification is most useful for families of groups acting on trees where the associated local action diagram is `small'.

Let us consider the special case of a vertex-transitive $\propP{}$-closed group $G$ acting on a locally finite tree $T$.  The tree is necessarily regular, of some degree $d$; let us write $T = T_d$, to indicate that $T$ is a regular tree of degree $d$.  In the local action diagram $\Delta = (\Gamma, (X_a),G(v))$ for $(T, G)$, the graph $\Gamma$ has a single vertex $v$; the set $X_v$ has size $d$ and there is a single permutation group $G(v)$, which is defined on $X_v$.  The set $\{X_a \mid a \in A\Gamma\}$ is the partition of $X_v$ into $G(v)$-orbits.  The only remaining piece of information in the local action diagram is the edge-reversal map $r$ on $\Gamma$; since there is only one vertex, this can be any permutation of $A\Gamma$ whose square is the identity.

Thus, up to conjugacy in $\Aut(T_d)$, there are only finitely many vertex-transitive $\propP{}$-closed subgroups of $\Aut(T_d)$.  The relevant conjugacy classes are in one-to-one correspondence with the set $\mc{V}_d$ of equivalence classes of pairs $(H,r)$, where $H$ is a subgroup of $\Sym(d)$ and $r$ is an \defbold{orbit pairing} for $H$, meaning a permutation of the set $H \bs [d]$ of $H$-orbits whose square is the identity.  Here we say two pairs $(H_1,r_1)$ and $(H_2,r_2)$ are equivalent if there is $g \in \Sym(d)$ such that $gH_1g\inv = H_2$ and the map $g': H_1 \bs [d] \rightarrow H_2 \bs [d]$ induced by $g$ satisfies $g'r_1 = r_2g'$.  Write $\Univ(H,r)$ for the subgroup $\Univ(\Delta) \leq \Aut(T_d)$, where $\Delta$ is the local action diagram associated to $(H,r)$ (here $\Univ(H,r)$ should be understood as specified up to conjugacy in $\Aut(T_d)$).  The orbit pairing captures the difference between the number of arc-orbits of $\Univ(H,r)$ and the number of edge-orbits: the arc-orbits of $\Univ(H,r)$ correspond to orbits of $H$, whereas the edge-orbits correspond to orbits of $r$ on $H \bs [d]$.  The orbits of $H$ fixed by $r$ correspond to those arc-orbits of $\Univ(H,r)$ that are closed under the reverse map on $T_d$, in other words, those arcs that are reversed by some element of $\Univ(H,r)$.  We see that $\Univ(H,r)$ fixes an end exactly in the following situation: $H$ has a fixed point and acts transitively on the remaining points, and the orbit pairing is nontrivial.

Let us first deal with the special case that $H$ is a free permutation group, that is, point stabilisers of $H$ are trivial.  In this case, $\Univ(H,r)$ acts freely on the arcs of $T_d$, and it follows by Theorem~\ref{thm:BassSerre} that it can be expressed as a free product of copies of $H$, $\Zb$ and $C_2$ with no amalgamation.  Specifically, writing $K^{\ast n}$ to mean a free product of $n$ copies of $K$, we have
\[
\Univ(H,r) \cong H \ast C^{\ast a}_2 \ast \Zb^{\ast b}
\]
where $a$ is the number of fixed points of $r$ (in other words, the number of reversible arc orbits of $\Univ(H,r)$ on the tree) and $b$ is the number of nontrivial orbits of $r$.  In fact, in this case we see that every group acting on $T_d$ with the same local action diagram as $\Univ(H,r)$ is $\Aut(T_d)$-conjugate.

For small values of $d$, it is feasible to list the conjugacy classes of vertex-transitive $\propP{}$-closed subgroups of $\Aut(T_d)$; we will describe the list for $d \le 5$.  The pairs $(H,r)$ given below should be understood as being taken up to equivalence.  Where it is unambiguous we will indicate the orbit pairing simply by the size of the paired orbits, so for instance $[12,22]$ indicates an orbit pairing where an orbit of size $1$ is paired with an orbit of size $2$, another orbit of size $2$ is paired with a third orbit of size $2$, and all other orbits are fixed by $r$.  
This notation is especially convenient when $d \le 5$, as in this case, for any $H \le \Sym(d)$, all the orbits of $H$ of the same size lie in a single orbit of the normaliser of $H$. For brevity we will write $(H,\id)$ as $(H)$.
Write $S_n := \Sym(n)$, $A_n = \Alt(n)$, $C_n$ for a cyclic group of order $n$ and $D_n$ for a dihedral group of order $n$.  We first recall the conjugacy classes of subgroups of $S_d$.

{\it In $S_0$ and $S_1$:} there is only the trivial group $1$.
{\it In $S_2$:} there are two subgroups, namely $1$ and $S_2$ itself.  Henceforth, $S_2$ is the group of order $2$ acting as a local action at a vertex of degree $2$, whereas $C_2$ (without further decoration) will represent an edge-reversing involution.
{\it In $S_3$:} there are four conjugacy classes of subgroup, namely: $1$; one class of subgroup of order $2$ that we denote as $C_{2+1}$; the alternating group $A_3 = C_3$; and $S_3$ itself.
{\it In $S_4$:} there are 11 conjugacy classes of subgroup, namely: the trivial group $1$; two classes $C^{-}_2$ and $C^+_2$ of subgroup of order $2$ (acting with two and zero fixed points respectively); one class each of cyclic subgroups $C_{3+1}$ and $C_4$ of orders $3$ and $4$; two classes $V^{-}$ and $V^+$ of the Klein $4$-group (the plus sign denoting the regular action, and the minus sign the faithful intransitive action); one class of point stabilisers $S_{3+1}$; one class of the dihedral group $D_8$ of order $8$; the alternating group $A_4$; and the symmetric group itself $S_4$.

{\it In $S_5$:} there are 19 conjugacy classes of subgroup of $S_5$.  These are as follows. There are $11$ classes of subgroup that fix a point (corresponding to conjugacy classes of subgroup of $S_4$). There are three classes of subgroup with orbit partition $(3,2)$, as follows: cyclic group $C_{3+2}$ of order $6$; `twisted $S_3$', viz. $S^*_3 = \langle (1,2,3),(1,2)(4,5) \rangle$; direct product $S_{3+2} = S_3 \times S_2$. There are five classes of transitive subgroup, as follows: cyclic group $C_5$ of order $5$; dihedral group $D_{10}$ of order $10$; general affine group $GA(1,5)$, a group of order $20$; alternating group $A_5$; $S_5$ itself.

The number of conjugacy classes of
Burger--Mozes subgroups 
in $\Aut(T_d)$ is thus $1,1,2,4,11,19$ for $d=0,1,2,3,4,5$ respectively.  However, due to nontrivial orbit pairings, the total number of conjugacy classes of $\propP{}$-closed subgroup in $\Aut(T_d)$ is larger for $d \geq 2$. Indeed, for $d=2,3,4,5$ there are, respectively, a total of $3,6,19,40$ conjugacy classes --- see  Table~\ref{fig:vt_upto4} on page \pageref{fig:vt_upto4} and Table~\ref{fig:vt_5} on page \pageref{fig:vt_5}.

Given $G = \Univ(H,r)$, we can determine the quotient $G/G^+$ as the fundamental group of a graph of groups with trivial edge groups by passing to the inversion-free subdivision of the action, as in Corollary~\ref{cor:quotient_tree:G+}.  In particular, $G/G^+$ can easily be written as a free product with no amalgamation.

In Tables~\ref{fig:vt_upto4} and \ref{fig:vt_5}, a blank entry means a repeat of the previous entry.  Note that the same group may appear several times, but with different actions on the tree.  `l.p.c.' stands for `local prime content', in other words, the primes $p$ such that the $p$-Sylow subgroup of a compact open subgroup of $\Univ(H,r)$ is infinite; in the present context, the local prime content is empty if and only if $\Univ(H,r)$ is discrete.

\begin{table}[!htbp]
\caption{The 30 vertex-transitive $\propP{}$-closed actions on trees of degree $d \le 4$}\label{fig:vt_upto4}%
\begin{tabular}{@{}lllllll@{}}
\toprule
$d$ & Local action & orbit pairing & l.p.c. & fixed end & $G/G^+$  & $G^+$ local action \\ 
\midrule
$0$ & $1$ & $\id$ & $\emptyset$ & N/A & $1$ & $1$  \\
\midrule
$1$ & $1$ & $\id$ & $\emptyset$ & N/A & $C_2$ & $1$  \\
\midrule
$2$ & $1$ & $\id$ & $\emptyset$  & No & $C^{\ast 2}_2$ & $1$ \\
  &   & $[11]$ &  & Yes & $\Zb$ & $1$ \\ 
\midrule
  &  $S_2$ & $\id$ & $\emptyset$ & No & $S_2 \ast C_2$ & $1$ \\
\midrule
$3$ & $1$ & $\id$ & $\emptyset$ & No  & $C^{\ast 3}_2$ & $1$ \\
 & & $[11]$ &  & No & $C_2 \ast \Zb$ & \\
\midrule
  & $S_{2}$ & $\id$ & $\{2\}$ & No & $C^{\ast 2}_2$ & $S_{2}$ \\
  & & $[12]$ & & Yes & $\Zb$ & \\
\midrule
  & $C_3$ & $\id$ & $\emptyset$ & No & $C_3 \ast C_2$ & $1$ \\
\midrule
  & $S_3$ & $\id$ & $\{2\}$ & No & $C_2$ & $S_3$ \\
\midrule
$4$ & $1$ & $\id$ & $\emptyset$ & No  & $C^{\ast 4}_2$ & $1$ \\
 &  & $[11]$ &  & No  & $C^{\ast 2}_2 \ast \Zb$ & \\
 & & $[11,11]$ &  & No  & $\Zb^{\ast 2}$ & \\
\midrule
 & $C^-_2$ & $\id$ & $\{2\}$ & No  & $C^{\ast 3}_2$ & $C^-_2$ \\
 &  & $[11]$ &  & No  & $C_2 \ast \Zb$ &  \\
 &  & $[12]$ &  & No  & $C_2 \ast \Zb$ & \\
\midrule
 & $C^+_2$ & $\id$ & $\emptyset$ & No  & $C^+_2 \ast C^{\ast 2}_2$ &  $1$ \\
 &  & $[22]$ &  & No  & $C^+_2 \ast \Zb$ &  \\
\midrule
 & $C_{3}$ & $\id$ & $\{3\}$ & No  & $C^{\ast 2}_2$ & $C_{3}$ \\
 &  & $[13]$ &  & Yes  & $\Zb$ & \\
 \midrule
 & $C_{4}$ & $\id$ & $\emptyset$ & No  & $C_4 \ast C_2$ & $1$ \\ 
 \midrule
 & $V^-$ & $\id$ & $\{2\}$ & No  & $C^{\ast 2}_2$ & $V^-$ \\
 &  & $[22]$ &  & No  & $\Zb$ & \\ 
 \midrule
 & $V^+$ & $\id$ & $\emptyset$ & No  & $V^+ \ast C_2$ & $1$ \\ 
 \midrule
 & $S_{3}$ & $\id$ & $\{2,3\}$ & No  & $C^{\ast 2}_2$ & $S_{3}$  \\
 &  & $[13]$ & & Yes  & $\Zb$ & \\ 
 \midrule
 & $D_8$ & $\id$ & $\{2\}$ & No  & $S_2 \ast C_2$ & $V^-$ \\ 
 \midrule
 & $A_4$ & $\id$ & $\{3\}$ & No  & $C_2$ & $A_4$ \\ 
 \midrule
 & $S_4$ & $\id$ & $\{2,3\}$ & No  & $C_2$ & $S_4$ \\
\botrule
\end{tabular}
\end{table}

\begin{table}[!htbp]
\caption{The 40 vertex-transitive $\propP{}$-closed actions on trees of degree $5$}\label{fig:vt_5}%
\begin{tabular}{@{}llllll@{}}
\toprule
Local action & orbit pairing & l.p.c. & fixed end & $G/G^+$  & $G^+$ local action \\ 
\midrule
$1$ & $\id$ & $\emptyset$ & No  &  $C^{\ast 5}_2$ & $1$ \\ 
 & $[11]$ &  & No  & $C^{\ast 3}_2 \ast \Zb$ &   \\ 
 & $[11,11]$ &  & No  & $C_2 \ast \Zb^{\ast 2}$ &  \\ \midrule
$C^-_2$ & $\id$ & $\{2\}$ & No  & $C^{\ast 4}_2$ & $C^-_2$ \\ 
 & $[11]$ &  & No  & $C^{\ast 2}_2 \ast \Zb$ &  \\ 
 & $[12]$ &  & No  & $C^{\ast 2}_2 \ast \Zb$ &  \\ 
 & $[11,12]$ &  & No  & $C_2 \ast \Zb^{\ast 2}$ &  \\ \midrule
 $C^+_2$ & $\id$ & $\{2\}$ & No  & $C^{\ast 3}_2$ &  $C^+_2$ \\ 
 & $[12]$ & & No  & $C_2 \ast \Zb$ &   \\ 
 & $[22]$ &  & No  & $C_2 \ast \Zb$ &   \\ \midrule
$C_{3}$ & $\id$ & $\{3\}$ & No  & $C^{\ast 3}_2$ & $C_{3}$ \\ 
 & $[11]$ &  & No  & $C_2 \ast \Zb$ &  \\ 
 & $[13]$ &  & No  & $C_2 \ast \Zb$ &  \\ \midrule
$C_{4}$ & $\id$ & $\{2\}$ & No  & $C^{\ast 2}_2$ & $C_{4}$ \\ 
& $[14]$ &  & Yes  & $\Zb$ &  \\ \midrule
$V^-$ & $\id$ & $\{2\}$ & No  & $C^{\ast 3}_2$ & $V^-$  \\ 
& $[12]$ &  & No  & $C_2 \ast \Zb$ &  \\ 
 & $[22]$ &  & No  & $C_2 \ast \Zb$ &  \\ \midrule
$V^+$ & $\id$ & $\{2\}$ & No  & $C^{\ast 2}_2$ & $V^+$ \\ 
 & $[14]$ &  & Yes  & $\Zb$ &  \\ \midrule
$S_{3}$ & $\id$ & $\{2,3\}$ & No  & $C^{\ast 3}_2$ & $S_{3}$ \\ 
 & $[11]$ &  & No  & $C_2 \ast \Zb$ &  \\ 
 & $[13]$ &   & No  & $C_2 \ast \Zb$ &  \\ \midrule
$D_8$ & $\id$ & $\{2\}$ & No  & $C^{\ast 2}_2$ & $D_8$  \\ 
 & $[14]$ &  & Yes  & $\Zb$ & \\ \midrule
$A_{4}$ & $\id$ & $\{2,3\}$ & No  & $C^{\ast 2}_2$ & $A_{4}$ \\ 
 & $[14]$ &  & Yes  & $\Zb$ &  \\ \midrule
$S_{4}$ & $\id$ & $\{2,3\}$ & No  & $C^{\ast 2}_2$ & $S_{4}$ \\ 
 & $[14]$ &  & Yes  & $\Zb$ &  \\ \midrule
$C_{3+2}$ & $\id$ & $\{2,3\}$ & No  & $C^{\ast 2}_2$ & $C_{3+2}$ \\ 
 & $[23]$ &  & No  & $\Zb$ &  \\ \midrule
$S^*_{3}$ & $\id$ & $\{2,3\}$ & No & $C^{\ast 2}_2$ & $S^*_{3}$  \\ 
 & $[23]$ &  & No  & $\Zb$ &  \\ \midrule
$S_{3+2}$ & $\id$ & $\{2,3\}$ & No  & $C^{\ast 2}_2$ & $S_{3+2}$  \\ 
 & $[23]$ &  & No  & $\Zb$ & \\ \midrule
$C_5$ & $\id$ & $\emptyset$ & No  & $C_5 \ast \Zb$ & $1$ \\ \midrule
$D_{10}$ & $\id$ & $\{2\}$ & No  & $C_2$ & $D_{10}$ \\ \midrule
$GA(1,5)$ & $\id$ & $\{2\}$ & No  & $C_2$ & $GA(1,5)$ \\ \midrule
$A_5$ & $\id$ & $\{2,3\}$ & No  & $C_2$ & $A_5$  \\ \midrule
$S_5$ & $\id$ & $\{2,3\}$ & No  & $C_2$ & $S_5$  \\
\botrule
\end{tabular}
\end{table}

\

We should remember that the list of groups we have obtained so far is up to equivalence of action on the tree, not up to isomorphism as groups.  For example, the group $\Zb \ast C_2$ appears as a vertex-transitive $\propP{}$-closed subgroup of both $\Aut(T_3)$ and $\Aut(T_4)$, but clearly the actions are not equivalent.  It is not clear what group isomorphisms could exist between the nondiscrete groups in the list (that is, all the groups listed such that the local action is not free).

Recall the class $\ms{S}$ of \tdlc groups that are compactly generated, nondiscrete and topologically simple.  If $G$ is a vertex-transitive $\propP{}$-closed subgroup of $\Aut(T_d)$ for $d \in \Nb$, then $G^+$ is compactly generated if and only if $G^+ \backslash T$ is finite, which means that $G/G^+$ must be finite; $G^+$ is nondiscrete if and only if the local prime content is nonempty.
Applying Corollary~\ref{cor:comp_gen+simple}, we see that of the 70 entries in Tables~\ref{fig:vt_upto4} and \ref{fig:vt_5}, only seven have $G^+ \in \ms{S}$, namely the Burger--Mozes groups $\Univ(F)$ with transitive local action for $F \in \{S_3,A_4,S_4,D_{10},GA(1,5),A_5,S_5\}$.  However, in a further 36 cases (those with nontrivial local prime content where $G$ does not fix an end), $G^+$ is nondiscrete and simple, but fails to be compactly generated.  The latter simple groups have a complicated structure in general, which may merit further investigation.  For instance, by \cite[Theorem~1.8]{CapraceWesolek}, in every nondiscrete Burger--Mozes group there is a compactly generated closed subgroup $K$ (where without loss of generality $K \le G^+$), and a discrete normal subgroup $D$ of $K$, such that $K/D \in \ms{S}$.  We do not know if any of these 36 noncompactly generated simple groups $G^+$ are isomorphic to one another as abstract or topological groups.

One motivation for studying vertex-transitive groups acting on trees of small degree is to understand compactly generated \tdlc groups in terms of their degree.  The \defbold{degree} $\deg(G)$ of a compactly generated \tdlc group $G$ is the smallest degree of a Cayley--Abels graph for $G$.
The notion of a Cayley--Abels graph was introduced by Abels in \cite{Abels}, and the degree of compactly generated \tdlc groups is investigated in \cite{DegreeCayleyAbels}.
The degree $0$ groups are the compact groups and the degree $2$ groups are the compact-by-cyclic groups, but even for degree $3$ the structure is not well-understood, except that the degree must be larger than the maximum of the local prime content.  What can be said in general, given a group $G$ acting with kernel $K$ on a Cayley--Abels graph $\Gamma$ of minimal degree $d$, is that the action lifts to an action of a group $\widetilde{G}$ acting vertex-transitively on $T_d$, with an associated homomorphism $\theta: \widetilde{G} \rightarrow G/K$ with discrete kernel.  We can then consider the $\propP{}$-closure $\widetilde{G}^{\propP{}}$ as a first step towards understanding $\widetilde{G}$ and the original group $G$.  Both the local action and the orbit pairing for $\widetilde{G}^{\propP{}}$ come from the action of $G$ on $\Gamma$.

\subsection{Simple vertex-transitive graphs with vertex connectivity one}\label{sec:conn_one}

We illustrate how to use our theory of local action diagrams to describe all automorphism groups of simple vertex-transitive graphs with vertex connectivity one.

Let $\Gamma$ be a simple connected graph; we shall call $\Gamma$ \defbold{trivial} if it consists of a single vertex and no edges. Recall that $\Gamma$ has \defbold{vertex connectivity one} (also \defbold{connectivity one} and \defbold{$1$-connected}) if there is a vertex $v$ in $\Gamma$ such that the induced graph $\Gamma \smallsetminus \{v\}$ (which arises by removing the vertex $v$ from $\Gamma$, together with all edges in $\Gamma$ containing $v$) is not connected.
Such a vertex $v$ is called a \defbold{cut vertex} of $\Gamma$. A connected graph with no cut vertices is \defbold{$2$-connected} (here we  consider the complete graph on two vertices $K_2$ to be $2$-connected, but a single vertex not to be $2$-connected). 
If $\Gamma$ has connectivity one, then the maximal $2$-connected subgraphs of $\Gamma$ are called the \defbold{lobes} (also sometimes called \defbold{blocks}) of $\Gamma$. For example, the lobes of the infinite $3$-regular tree $T_3$ are pairs of adjacent vertices together with the edge between them; each lobe of $T_3$ is thus isomorphic to $K_2$.

Vertex-transitive graphs with connectivity one were completely described by Heinz A.~Jung and Mark E.~Watkins in \cite{JungWatkins}. We adopt and extend their notation here. 
Suppose $\Gamma$ has connectivity one and let $L$ be the set of lobes of $\Gamma$, with $\{L_i : i \in I\}$ denoting the set of  isomorphism classes of $L$.
In each $L_i$ we fix some distinguished element $\hat{\Lambda}_i$ and for each $\Lambda \in L_i$ we fix some isomorphism $\theta_{\Lambda} : \hat{\Lambda}_i \rightarrow \Lambda$. For each $i \in I$ let $\{\hat{\Lambda}_i^{(j)} : j \in J_i\}$ be a decomposition of $V\hat{\Lambda}_i$ into $\Aut(\hat{\Lambda}_i)$-orbits. Now for any $\Lambda \in L_i$ and $j \in J_i$, let $\Lambda^{(j)}:= \theta_\Lambda (\hat{\Lambda}_i^{(j)})$.
If $\Lambda, \Lambda' \in L_i$ with $\tau : \Lambda \rightarrow \Lambda'$ an isomorphism, then 
for each $j \in J_i$ we have $\tau(\Lambda^{(j)}) = \Lambda'^{(j)}$.
Thus, for each $i \in I$ and $\Lambda \in L_i$ the set $\{\Lambda^{(j)} : j \in J_i\}$ is a decomposition of $V\Lambda$ into $\Aut(\Lambda)$-orbits.

One can easily verify that any two pairs $(v, \Lambda), (v', \Lambda')$, consisting of vertices $v, v' \in V\Gamma$ and lobes $\Lambda, \Lambda'$ of $\Gamma$ with $v \in V\Lambda$ and $v' \in V\Lambda'$, are isomorphic as rooted graphs if and only if there exists $i \in I$ and $j \in J_i$ such that $\Lambda, \Lambda' \in L_i$ and $v \in \Lambda^{(j)}$ and $v' \in \Lambda'^{(j)}$. 

 Now for any vertex $v \in V\Gamma$ and $i \in I$ and $j \in J_i$, define $L_i^{(j)}(v):=\{\Lambda \in L_i : v \in \Lambda^{(j)}\}$ and
$AL_i^{(j)}(v) := \{A\Lambda : \Lambda \in L_i^{(j)}(v)\}$ and $m_i^{(j)}(v) := |L_i^{(j)}(v)|$.

In \cite[Lemma 3.1, Theorem 3.2 and proof]{JungWatkins} Jung and Watkins show the following.

\begin{thm} \label{JungWatkins} Let $\Gamma$ be a simple connected graph with vertex connectivity one. Then:
\begin{enumerate}[(i)]
\item
	$\Gamma$ is vertex-transitive if and only if the functions $m_i^{(j)}$ are constant on $V\Gamma$.
\item
	If $\Gamma$ is vertex-transitive, and $v_1, v_2$ (resp.~$\Lambda_1, \Lambda_2$) are vertices (resp.~lobes) of $\Gamma$ with $v_i \in \Lambda_i$ for $i=1,2$, then any isomorphism of rooted graphs between $(v_1, \Lambda_1)$ and $(v_2, \Lambda_2)$ extends to an automorphism of $\Gamma$.
\end{enumerate}
\end{thm} 

Now suppose that $\Gamma$ is nontrivial, vertex-transitive with connectivity one.
Such a graph is infinite with each vertex $v$ lying in at least two lobes.
Notice that 
$v$ lies in precisely $\sum_{i \in I, j \in J_i} m_i^{(j)}(v)$ many lobes. 
The graph $\Gamma$ is tree-like, and this can be seen  by considering the \defbold{block-cut-vertex tree} of $\Gamma$ (also called a \defbold{structure tree}) $T$, defined as follows: $VT := V\Gamma \sqcup L$ and $ET := \{\{v, \Lambda\} : v \in V\Gamma \text{ and } \Lambda \in L \text{ and } v \in V\Lambda\}$ (it is easy to see that such a graph is indeed a tree). We call elements in $V\Gamma \subseteq VT$ (resp.~$L \subseteq VT$) the \defbold{graph vertices} (resp.~\defbold{lobe vertices}) of $T$.
Every lobe vertex lies between two graph vertices, so the automorphism group $\Aut(\Gamma)$ acts faithfully on $T$. We can use this action on $T$ to describe $\Aut(\Gamma)$ in the language of local action diagrams.

Let $T$ be the structure tree of $\Gamma$
 and let $G$ be the subgroup of $\Aut(T)$ induced by the action of $\Aut(\Gamma)$ on its structure tree $T$. In (A\ref{ItemConnectivityOne:AOne})--(A\ref{ItemConnectivityOne:AFour}) below we determine the associated local action diagram $\Delta$ of $G$ in its action on $T$; we then equip $T$ with the structure of a $\Delta$-tree in the usual way.
\begin{enumerate}[({A}1)]
\item \label{ItemConnectivityOne:AOne}
	The underlying graph $\Gamma'$ of the local action diagram is $G \backslash T$. By Theorem~\ref{JungWatkins}, any two isomorphic lobes lie in the same $G$-orbit. Thus the vertex set of $G \backslash T$ is $\{v^*\} \cup \{L_i : i \in I\}$ where $v^*$ is some representative element from $V\Gamma$.  For convenience, write $m_{i,j} := m_i^{(j)}(v^*)$.
\item
	Again by Theorem~\ref{JungWatkins}, the undirected edges in $\Gamma'$ correspond to orbits of rooted graphs $(v, \Lambda)$, where $\Lambda \in L$ and $v \in V\Lambda \subseteq V\Gamma$. As noted above, these orbits correspond precisely to isomorphism classes of rooted graphs, with $(v, \Lambda)$ and $(v', \Lambda')$ being isomorphic if and only if there exists $i \in I$ and $j \in J_i$ such that $\Lambda, \Lambda' \in L_i$ and $v \in \Lambda^{(j)}$ and $v' \in \Lambda'^{(j)}$. Choose a representative vertex $v_i^{(j)}$ from each orbit $\hat{\Lambda}_i^{(j)}$. The undirected edges in $\Gamma'$ between $v^*$ and each $L_i$ thus correspond to elements of the set $\{(v_i^{(j)}, \hat{\Lambda}_i) : j \in J_i\}$.
\item
	For all $\Gamma'$ arcs $a \in o\inv(v^*)$ there is some $i \in I$ and $j \in J_i$ such that $a$ corresponds to $(v_i^{(j)}, \hat{\Lambda}_i)$. The colour set $X_a$ is now easy to picture: it consists of all lobes $\Lambda$ in $\Gamma$ that contain $v^*$ and lie in $L_i$ such that $v^*$ lies in the orbit $\Lambda^{(j)}$. In our notation, $X_a = L_i^{(j)}(v^*)$.
	For the reverse arcs, the colour set is $X_{L_i} = V\hat{\Lambda}_i$, naturally partitioned according to the $\Aut(\hat{\Lambda}_i)$-orbits on $V\hat{\Lambda}_i$.
\item \label{ItemConnectivityOne:AFour}
	We now describe the vertex groups in $\Gamma'$. Clearly, for each lobe vertex $L_i$ in $V\Gamma'$, we have $X_{L_i} = V\hat{\Lambda}_i$ with $G(L_i) = \Aut(\hat{\Lambda}_i)$. So, it remains for us to determine $G(v^*)$ for the single graph vertex $v^*$ in $\Gamma'$. 
For each $i \in I$ and $j \in J_i$, observe that $G_{v^*}$ induces the symmetric group $S_{m_{i, j}}$ on $L_i^{(j)}(v^*)$.
As $G$ is the automorphism group of $\Gamma$, there are no further restrictions on $G_{v^*}$, and thus $G_{v^*}$ induces $S_{m_{i,j}}$ on $X_a$.
For arcs $a, b$ the actions of $G_{v^*}$ on disjoint subsets $X_a, X_b$ of $X_{v^*}$ are independent, and so $G(v^*)$ is the direct product,
	\[\prod_{i \in I, j \in J_i} S_{m_{i,j}},\]
acting on $X_{v^*} = \bigsqcup X_a$, where the $(i,j)$-th component in the above product acts on $X_a$ for the arc $a$ corresponding to $(v_i^{(j)}, \hat{\Lambda}_i)$.
\end{enumerate}

We have thus described the local action diagram $\Delta(T, G)$ of $G$, and hence also $\mathbf{T}$, the associated $\Delta$-tree structure on $T$. By Theorem~\ref{thm:G_to_U}, the universal group 
$\Univ(\mathbf{T},(G(v)))$ is the $\propP{}$-closure of $G$. Being the group induced by $ \Aut(\Gamma)$ acting on $T$, we have that $G$ is $\propP{}$-closed. Hence $G = \Univ(\mathbf{T},(G(v)))$. The group $\Aut(\Gamma)$ is thus the restriction of $\Univ(\mathbf{T},(G(v)))$ to the set of graph vertices in $\mathbf{T}$; that is, $\Univ(\mathbf{T},(G(v))) \big|_{V\Gamma}$.\\

Now we reverse this process, and show how any local action diagram $\Delta''$ with the above structure gives rise to the automorphism group of a nontrivial, vertex-transitive graph with connectivity one. We construct $\Delta''$ as follows.
\begin{enumerate}[({B}1)]
\item \label{ItemConnectivityOne:BZero}
Choose a nonempty set $\{\hat{\Lambda}_i : i \in I\}$ of pairwise non-isomorphic, $2$-connected graphs with $\{\hat{\Lambda}_i^{(j)} : j \in J_i\}$ a decomposition of $V\hat{\Lambda}_i$ into $\Aut(\hat{\Lambda}_i)$-orbits, with $v_i^{(j)}$ a representative vertex chosen from each orbit. For each $i \in I$ and $j \in J_i$ choose some cardinal $m_{i,j} \geq 1$,
such that if $|I| = 1$ (say $I = \{i\}$) and $|J_i| = 1$ (say $J_i = \{j\}$) then $m_{i,j} > 1$; this latter condition is to guarantee that in the connectivity one graph we construct, every vertex lies in at least two lobes.
\item \label{ItemConnectivityOne:BOne}
	The underlying graph $\Gamma''$ of the local action diagram will be a (possibly multi-edged) star graph, consisting of a central vertex $v^*$ together with $|I|$ other vertices labelled $L_i$ for $i \in I$.
\item
	For each $i \in I$ we draw some undirected edges between $v^*$ and the vertex labelled $L_i$; we draw precisely one edge for each rooted graph $(v_i^{(j)}, \hat{\Lambda}_i)$, $j \in J_i$. Recall that undirected edges consist of a pair of arcs, one in each direction; the two arcs in the edge corresponding to $(v_i^{(j)}, \hat{\Lambda}_i)$ are labelled $(v_i^{(j)}, \hat{\Lambda}_i)$.
\item
	For an arc $a \in o\inv(v^*)$ labelled $(v_i^{(j)}, \hat{\Lambda}_i)$, choose a colour set $C_a$ of cardinality $m_{i,j}$. These colour sets should be disjoint, so that if an arc $b \in o\inv(v^*)$	differs from $a$ then $C_a \cap C_b = \emptyset$. Set $X_{v^*}$ to be the union over all such arcs $a$.
	
	For the reverse arcs, we choose colours from the colour set $X_{L_i} = V\hat{\Lambda}_i$. Recall that the arcs in $o\inv(L_i)$ are labelled $(v_i^{(j)}, \hat{\Lambda}_i)$, $j \in J_i$, so the arcs in $o\inv(L_i)$ are in bijective correspondence with the orbits of $\Aut(\hat{\Lambda}_i)$ on $V \hat{\Lambda}_i$. The colour set of an arc $a \in o\inv(L_i)$ is taken to be the vertices in the $\Aut(\hat{\Lambda}_i)$-orbit corresponding to $a$.
\item \label{ItemConnectivityOne:BFour}
	For the vertex $L_i$ of $\Gamma''$, let $G(L_i)$ be $\Aut(\hat{\Lambda}_i)$, viewed as a subgroup of $\Sym(V\hat{\Lambda}_i)$.
 Let $G(v^*)$ be the direct product $\prod_{i \in I, j \in J_i} S_{m_{i,j}} \leq \Sym(X_{v^*}) = \Sym(\bigsqcup X_a)$, 
where each component acts on $X_a$ for the arc $a \in o\inv(v^*)$ corresponding to $(v_i^{(j)}, \hat{\Lambda}_i)$. Note that $G(v^*)$ is closed in the permutation topology on $\Sym(X_{v^*})$.
\end{enumerate}

We now have a local action diagram $\Delta''= (\Gamma'', (X_a), (G(v))$. Let $\mathbf{T''}$ be a $\Delta''$-tree, and let $U := \Univ(\mathbf{T''},(G(v)))$. We will show that $U$ arises from a connectivity one graph.

We construct a simple graph $\Gamma$ with connectivity one as follows (see \cite{WillisFree} for a more general free product construction along similar lines).
 Let $\Gamma_0$ be the graph consisting of a single vertex $v^*$. For each $i \in I$ and $j \in J_i$, glue $m_{i,j}$ copies of $\hat{\Lambda}_i$ to $v^*$ by associating $v^*$ with the vertex $v_i^{(j)} \in V\hat{\Lambda}_i$. Call the graph thus obtained $\Gamma_1$.
For $n>1$ construct $\Gamma_n$ by, for each vertex $v \in V\Gamma_{n-1} \smallsetminus V\Gamma_{n-2}$, gluing lobes to $v$ in the following way. Notice that there is a single lobe of $\Gamma_{n-1}$ that contains $v$. This lobe is isomorphic to $\hat{\Lambda}_{i'}$ for some $i' \in I$, with $v$ lying in some $\Aut(\hat{\Lambda}_{i'})$-orbit $\hat{\Lambda}_{i'}^{(j')}$. We glue $m_{i',j'} - 1$ new copies of $\hat{\Lambda}_{i'}$ to $v$ by associating $v$ with the vertex $v_{i'}^{(j')} \in V\hat{\Lambda}_{i'}$. Then, for all remaining $i, j$, glue $m_{i,j}$ new copies of $\hat{\Lambda}_{i}$ to $v$ by associating $v$ with the vertex $v_{i}^{(j)} \in V\hat{\Lambda}_{i}$. The limit of this process as $n \rightarrow \infty$ is a graph $\Gamma$ which is simple, nontrivial and has connectivity one; any lobe of $\Gamma$ is isomorphic to $\hat{\Lambda}_i$ for some $i \in I$.

Let $T$ be the block-cut-vertex tree of $\Gamma$ and let $G$ be the subgroup of $\Aut(T)$ induced by the action of $\Aut(\Gamma)$ on $T$. We now apply our prior analysis (A\ref{ItemConnectivityOne:AOne})--(A\ref{ItemConnectivityOne:AFour}) to obtain the associated local action diagram $\Delta(T,G)$ of $G$ and the associated $\Delta$-tree structure $\mathbf{T}$ on $T$. By construction, $\Delta''$ and $\Delta$ are the same, and so by Lemma~\ref{lem:sameT} we can consider $\mathbf{T}$ and $\mathbf{T''}$ to be identical. In particular, the set $V\Gamma \subseteq VT$ of graph vertices in $\mathbf{T}$ corresponds to $\pi^{-1}(v^*)$ in $\mathbf{T''}$.
We thus have $U = \Univ(\mathbf{T''},(G(v))) = \Univ(\mathbf{T},(G(v))) = G$, and so $\Aut(\Gamma) = \Univ(\mathbf{T''},(G(v))) \big|_{\pi^{-1}(v^*)}$.\\

We summarise the above in the following theorem.

\begin{thm}\label{thm:connectivity_one} Automorphism groups of simple, nontrivial, vertex-transitive graphs  $\Gamma$ with vertex connectivity one are precisely the groups $\Univ(\mathbf{T''},(G(v))) \big|_{\pi^{-1}(v^*)}$ where $\mathbf{T''}$ is a $\Delta''$-tree constructed via (B\ref{ItemConnectivityOne:BZero})--(B\ref{ItemConnectivityOne:BFour}) above.
\end{thm}

\subsection{Building more compactly generated simple groups}\label{sec:new_simple}

Recall that Corollary~\ref{cor:comp_gen+simple} provides  straightforward conditions that allow us to construct a local action diagram that will yield a group in $\ms{S}$. In particular, for each leaf of the local action diagram, we must provide a transitive subdegree-finite permutation group that is nontrivial, compactly generated and generated by point stabilisers.  In fact, given a compactly generated \tdlc group $G$, then every compact open subgroup $U$ of $G$ will give rise to a closed transitive subdegree-finite action of $G$ on $G/U$.  The only difficulty is in making sure that the action of $G$ on $G/U$ is generated by point stabilisers.  However, this latter condition will hold, for example, whenever $U \neq \triv$ and $G$ does not admit any proper discrete quotient. Indeed, if $H$ is the closed normal subgroup of $G$ generated by point stabilisers in $G$ then $G/H$ in its action on the set of $H$-orbits is semi-regular and thus discrete.

As an illustration, we prove Theorem~\ref{thm:combination_simple}, which shows that we can `combine' finitely many groups in $\ms{S}$ (chosen from $\ms{S}$ arbitrarily) to make another group in $\ms{S}$ of the form $\Univ(\Delta)$ for a suitably chosen $\Delta$.

\begin{proof}[Proof of Theorem~\ref{thm:combination_simple}]
Define $\Delta = (\Gamma,(X_a),(G(v)))$ as follows:
$\Gamma$ is a star with central vertex $v_0$, leaves $v_1,\dots,v_n$, and exactly one arc $a_i$ from $v_0$ to $v_i$ for $1 \le i \le n$.
If $n=1$ we set $G(v_0) = \Sym(3)$ acting on $X_{a_1} = \{1,2,3\}$.  If $n \ge 2$ we set $X_{a_i} = \{i\}$ for $1 \le i \le n$ and let $G(v_0)$ be the trivial group.

For $1 \le i \le n$, let $G(v_i)$ be $G_i$ acting by left translation on the left coset space $G_i/U_i =: X_{\ol{a_i}}$. 
It is now easy to see that we have defined a valid local action diagram, and taking the action of $\Univ(\Delta)$ on its defining tree, all the conditions of Corollary~\ref{cor:comp_gen+simple}(ii) are immediately apparent.  Thus $\Univ(\Delta) \in \ms{S}$.

Let $(T,\pi,\mc{L})$ be the $\Delta$-tree defining $\Univ(\Delta)$.  For $1 \le i \le n$ let $v^*_i \in \pi\inv(v_i)$ and let $O_i = \Univ(\Delta)_{v^*_i}$.  Then the action of $O_i$ on $o\inv(v^*_i)$ is exactly $G(v_i)$, which is isomorphic to $G_i$;  indeed, by Proposition~\ref{prop:semidirect}, the action homomorphism of $O_i$ on $o\inv(v^*_i)$ splits, so $O_i \cong K_i \rtimes G_i$ where $K_i$ is the kernel of the action.  In turn, $K_i$ fixes an arc, hence is compact by Proposition~\ref{prop:locally_compact}.
\end{proof}

A similar construction of groups in $\ms{S}$, this time as automorphism groups of locally finite graphs of connectivity one, can be extracted from Section~\ref{sec:conn_one}.

\begin{prop}
Let $n$ be a positive integer and let $\hat{\Lambda}_1,\dots,\hat{\Lambda}_n$ be pairwise non-isomorphic locally finite simple graphs, each of which is $2$-connected.  Suppose that for $1 \le i \le n$, the automorphism group $\Aut(\hat{\Lambda}_i)$ is vertex-transitive and generated by vertex stabilisers.  Let $m_1,\dots,m_n$ be positive integers, where $m_1 \ge 3$ if $n=1$.  Form the simple vertex-transitive graph $\Gamma$ of connectivity one as in (B\ref{ItemConnectivityOne:BZero})--(B\ref{ItemConnectivityOne:BFour}) of Section~\ref{sec:conn_one}, so that $\Gamma$ has lobe types $\hat{\Lambda}_1,\dots,\hat{\Lambda}_n$ and each vertex belongs to $m_i$ lobes of the $i$-th type.  Then $\Gamma$ is locally finite and $\Aut(\Gamma) \in \ms{S}$.
\end{prop}

\begin{proof}
By Theorem~\ref{thm:connectivity_one}, as a topological group we can identify $\Aut(\Gamma)$ with $\Univ(\mathbf{T''},(G(v)))$, where $\mathbf{T''}$ is a $\Delta''$-tree and $\Delta'' = (\Gamma'', (X_a), (G(v))$ is constructed as in (B\ref{ItemConnectivityOne:BZero})--(B\ref{ItemConnectivityOne:BFour}) of Section~\ref{sec:conn_one}.  It is clear from the construction that $\Gamma$ is locally finite, specifically of degree $\sum^n_{i=1}m_i\deg(\hat{\Lambda}_i)$.  It remains to check that $\Delta''$ satisfies the conditions of Corollary~\ref{cor:comp_gen+simple}(ii).

Now $\Gamma''$ has $n+1$ vertices $v^*,v_1,\dots,v_n$, where $v^*$ represents the vertices of $\Gamma$ and for $1 \le i \le n$, the vertex $v_i$ corresponds to the $i$-th type of lobe.  For $1 \le i \le n$ we have $G(v_i) = \Aut(\hat{\Lambda}_i)$ acting on $V\hat{\Lambda}_i$; since this is a transitive action, there is exactly one undirected edge between $v^*$ and $v_i$.  There are no other edges in $\Gamma''$, so $\Gamma''$ is a finite tree.  The fact that $\hat{\Lambda}_i$ is connected and locally finite ensures that $G(v_i)$ has compact point stabilisers, and the fact that $G(v_i)$ acts transitively ensures it is nontrivial and compactly generated (see for example \cite[Theorem~1$^+$]{KronMoller}).  Meanwhile, $G(v^*)$ is the permutation group acting on $\sum^n_{i=1}m_i$ points as a product of symmetric groups, acting independently on orbits of size $m_i$: this permutation group is generated by point stabilisers with the sole exception of the case $n=1$ and $m_1=2$, which we have excluded by hypothesis.  Thus the local actions of $\Delta''$ are all compactly generated, subdegree-finite and generated by point stabilisers.  If $n=1$ the leaves of $\Gamma''$ are $v^*$ and $v_1$: we have ensured $G(v^*)$ is nontrivial in this case by setting $m_1 \ge 3$.  Otherwise the leaves are $v_1,\dots,v_n$, all of which have nontrivial associated local actions.  We have now verified all the conditions of Corollary~\ref{cor:comp_gen+simple}(ii), and hence $\Univ(\mathbf{T''},(G(v)))$ is as in Corollary~\ref{cor:comp_gen+simple}(i); in particular, $\Aut(\Gamma) \in \ms{S}$.
\end{proof}

\section{Questions} \label{questions}

In this section we highlight some questions that might shape further research on local action diagrams.

The theory of local action diagrams developed in this paper is a classification of closed groups acting on trees with property $\propP{}$. It is plausible that one could develop a companion theory for closed groups acting on trees with property $\propP{k}$, using a modified version of the local action action diagram that is built around $k$-arcs rather than arcs.

\begin{que} Is it possible to classify closed groups of automorphisms of trees with property $\propP{k}$ via a modified version of the local action diagram?
\end{que}

Let $T_d$ be the regular tree of finite degree $d$ and let $N_d$ be the number of conjugacy classes of vertex-transitive $\propP{}$-closed subgroups of $\Aut(T_d)$.  As discussed in Section~\ref{sec:vertex_transitive}, $N_d$ is the number of $\Sym(d)$-conjugacy classes of pairs $(H,r)$, where $H$ is a subgroup of $\Sym(d)$ and $r$ is a function on the set of $H$-orbits on $[d]$ such that $r^2 = \mathrm{id}$.  For comparison, the number of conjugacy classes of Burger--Mozes groups of degree $d$ is simply the number $C_d := |\Sym(d)\backslash\mathrm{Sub}(\Sym(d))|$ of conjugacy classes of subgroups of $\Sym(d)$.  The problem of enumerating the vertex-transitive $\propP{}$-closed actions on locally finite trees is thus reduced to a fairly natural problem in finite permutation groups.  In Appendix~\ref{app:vt_1cl_gap} we give a GAP (\cite{GAP4}) implementation due to S.~Tornier that can be used to perform these enumerations for small $d$, and it would be very interesting to continue in this vein, both for larger values of $d$ and for biregular trees, to build a database of $\propP{}$-closed actions.  Given the limitations on what is practical to compute exactly, it would also be interesting to have some bounds on the growth rates of $N_d$ and $C_d$ coming from the theory of finite permutation groups.

\begin{que}What are the asymptotics of the number $N_d$ as a function of $d$?  How much faster does it grow than $C_d$?\end{que}

In this paper, we provide a method for determining the local action diagrams of subgroups of $\propP{}$-closed groups, via the notion of local subaction diagrams. However, as noted in Remark~\ref{rem:limitations_of_subaction}, this description falls short of a classification in two respects: (i) in general determining all local subaction diagrams of a given local action diagram appears to be intractable, and (ii) local subaction diagrams only permit the classification of $\propP{}$-closed subgroups of a $\propP{}$-closed action $(T,G)$ up to conjugacy in $\Aut(T)$, rather than up to conjugacy in $G$.

\begin{que}\label{que:subgroups} Is it possible to extend the idea of a local subaction diagram to address limitations (i) and (ii)? Such a structure will need to carry more information than a local subaction diagram, but possibly not significantly more.
\end{que}

Some insight into Question~\ref{que:subgroups} could come from considering the following question, which cannot be meaningfully addressed using local subaction diagrams alone.

\begin{que}\label{que:open_subgroups} Suppose $G \leq \Aut(T)$ is $\propP{}$-closed. Can one classify the open subgroups of $G$ that act with translation using only the local action diagram of $(T,G)$?
\end{que}

A general theme of interesting research would be to continue to identify global properties of $G \leq \Aut(T)$ that are completely characterised by properties of the  action of $G^{\propP{}}$ on $T$.  By the results of Section~\ref{sec:correspondence}, such properties can equivalently be described as those properties of $G$ characterised by its local action diagram. Such properties might be called {\it locally determined global properties of $(T,G)$}. For example, by Theorem~\ref{thm:invariants}, having geometrically dense action is a locally determined global property.

\begin{que} \label{q:local_to_global} What are further examples of locally determined global properties of $(T,G)$? 
\end{que}

Theorem~\ref{thm:combination_simple} suggests an interesting preorder on $\ms{S}$: say that $G_1 \prec_{OK} G_2$ if there is an open subgroup $O$ of $G_2$ and a compact normal subgroup $K$ of $O$ such that $O/K \cong G_1$.  The theorem shows that $\prec_{OK}$ is a directed preorder on $\ms{S}$, that is, any finite subset has an upper bound, and moreover, within $\ms{S}$, the groups admitting faithful $\propP{}$-closed actions on trees are cofinal.  On the other hand, every element of $\ms{S}$ is `close to the bottom' in the following sense: given $G \in \ms{S}$, there are only $\aleph_0$ compactly generated open subgroups, each of which has at most finitely many quotients in $\ms{S}$, so for each $G \in \ms{S}$ there are at most $\aleph_0$ different isomorphism types of $H \in \ms{S}$ such that $H \prec_{OK} G$.  At the same time, by \cite{SmithDuke}, $\ms{S}$ as a whole has $2^{\aleph_0}$ isomorphism classes.  In particular, writing $\ms{S}/OK$ for the poset generated by $\prec_{OK}$, it follows that $\ms{S}/OK$ has infinite ascending chains.  We are naturally led to a `well-foundedness' question:

\begin{que}\label{que:ordering}
Does $\ms{S}/OK$ have infinite descending chains?  That is, does there exist a sequence $G_0,G_1,\dots$ in $\ms{S}$ such that $G_{i+1} \prec_{OK} G_i$ but $G_i \not\prec_{OK} G_{i+1}$ for all $i$?
\end{que}

It would also be interesting to find a nontrivial $\prec_{OK}$-equivalence class, i.e. a set $\mc{X}$ of two or more pairwise nonisomorphic groups in $\ms{S}$ such that for any $G,H \in \mc{X}$, then $G$ can be realised as a quotient with compact kernel of an open subgroup of $H$ and \textit{vice versa}.

\section*{Declarations}

\begin{itemize}
%\item Funding: 
\item Conflict of interest: On behalf of all authors, the corresponding author states that there is no conflict of interest.
\item Data availability: There is no data connected to this publication.
%\item Author contribution: 
\end{itemize}

\begin{appendices}

\section{GAP implementation by Stephan Tornier}\label{app:vt_1cl_gap}

The following GAP code \cite{GAP4} implements the classification of vertex-transitive $\propP{}$-closed actions on regular trees explained in Section~\ref{sec:vertex_transitive}. Given $d\in\mathbb{N}_{\ge 2}$ it outputs a list of representatives of all associated local action diagrams. Here, a local action diagram takes the form
\begin{displaymath}
    \text{[local action, arcs, edge-reversal]}
\end{displaymath}
where the arcs are given as the list of orbits of the local action and the edge-reversal as an element of order $2$ of the symmetric group on said list.

\vspace{0.5cm}
\begin{lstlisting}[language=GAP]
LocalActionDiagrams:=function(d)
local list, G, cSubG, cGv, Gv, arcs, i, NGv, S, actNGv, R, cr, r;
# list to contain all the relevant local action diagrams
list:=[];
# initialize Sym(d) and its subgroup conjugacy classes
G:=SymmetricGroup(d);
cSubG:=ConjugacyClassesSubgroups(G);
# for each conjugacy class of subgroups of G...
for cGv in cSubG do
	# ...choose a representative and find orbits as a list of sets
	Gv:=cGv[1];
	arcs:=ShallowCopy(Orbits(Gv,[1..d]));
	for i in [1..Length(arcs)] do arcs[i]:=Set(arcs[i]); od;

	# initialize the normalizer of Gv in G
	NGv:=Normalizer(G,Gv);
	
	# choose edge-reversal, i.e. an element of order at most 2 of 
	# Sym(arcs), up to the action of NGv on arcs (Gv-orbits)
	S:=SymmetricGroup(Size(arcs));
	actNGv:=ActionHomomorphism(NGv,arcs,OnSets);
	R:=Image(actNGv);
	# in the line below, 'OrbitsDomain(R,S)' returns the list of 
	# orbits for the conjugation action of R on S
	for cr in OrbitsDomain(R,S) do
		r:=cr[1];
		if not r*r=() then continue; fi;
		Add(list,[Gv,arcs,r]);	
	od;
od;
return list;
end;

\end{lstlisting}

For example, we obtain the following output for the case $d=3$, which corresponds to the entries with $d=3$ in Table~\ref{fig:vt_upto4}.
\begin{lstlisting}[language=GAP]
LocalActionDiagrams(3);
[ [ Group(()), [ [ 1 ], [ 2 ], [ 3 ] ], () ], 
  [ Group(()), [ [ 1 ], [ 2 ], [ 3 ] ], (2,3) ], 
  [ Group([ (2,3) ]), [ [ 1 ], [ 2, 3 ] ], () ], 
  [ Group([ (2,3) ]), [ [ 1 ], [ 2, 3 ] ], (1,2) ], 
  [ Group([ (1,2,3) ]), [ [ 1, 2, 3 ] ], () ], 
  [ Group([ (1,2,3), (2,3) ]), [ [ 1, 2, 3 ] ], () ] ]
\end{lstlisting}

Table~\ref{tab:ConjClasses} records the number (up to conjugacy in $\Aut(T_d)$) of vertex-transitive $\propP{}$-closed actions on the regular tree $T_{d}$ for $d\in\{3,\dots,11\}$ in comparison to the number of conjugacy classes $|S_{d}\backslash\mathrm{Sub}(S_{d})|$ of subgroups of $S_{d}$, which is also the number of Burger--Mozes groups for $T_{d}$.

\renewcommand{\thetable}{\arabic{table}} % Removes the 'A' prefix
\setcounter{table}{3}

\begin{table}
\caption{Number of vertex-transitive $\propP{}$-closed actions up to degree $11$}\label{fig:nb_vt_upto11}%
\begin{tabular}{@{}lll@{}}
\toprule
$d$ & $|S_{d}\backslash\mathrm{Sub}(S_{d})|$ & vertex-transitive, $\propP{}$-closed actions on $T_{d}$\\
\midrule
$3$ & $4$ & $6$ \\ 
$4$ & $11$ & $19$ \\ 
$5$ & $19$ & $40$ \\ 
$6$ & $56$ & $125$ \\ 
$7$ & $96$ & $285$ \\ 
$8$ & $296$ & $904$ \\ 
$9$ & $554$ & $2240$ \\ 
$10$ & $1593$ & $7213$ \\ 
$11$ & $3094$ & $19326$ \\ 
$12$ & $10723$ & ? \\
\botrule
\end{tabular}
\label{tab:ConjClasses}
\end{table}

We remark that according to the Online Encyclopaedia of Integer Sequences \cite[A000638]{Slo}, the number of conjugacy classes of subgroups of $S_{d}$ has been calculated up to $d\le 18$, with the most recent reference being the work of Derek Holt \cite{Holt}.

\end{appendices}

%%===========================================================================================%%
%% If you are submitting to one of the Nature Portfolio journals, using the eJP submission   %%
%% system, please include the references within the manuscript file itself. You may do this  %%
%% by copying the reference list from your .bbl file, paste it into the main manuscript .tex %%
%% file, and delete the associated \verb+\bibliography+ commands.                            %%
%%===========================================================================================%%

%\bibliography{sn-bibliography}% common bib file
%%% if required, the content of .bbl file can be included here once bbl is generated
%%%\input sn-article.bbl

\newpage

\end{document}